\def\@settitle{\begin{center}%
    \bfseries
 \normalfont\LARGE\@title
  \end{center}%
}
\def\@setauthors{\begin{center}%
 \normalsize\@author
  \end{center}%
}
\numberwithin{equation}{section}
\renewcommand{\cal}{\mathcal}
\newcommand\cA{{\mathcal A}}
\newcommand\cB{{\mathcal B}}
\newcommand{\cC}{{\cal C}}
\newcommand{\cE}{{\cal E}}
\newcommand{\cG}{{\cal G}}
\newcommand{\cL}{{\cal L}}
\newcommand{\cM}{{\cal M}}
\newcommand{\cS}{{\mathcal S}}
\newcommand{ \sfx }{{\mathsf x}}
\newcommand{ \sft }{{\mathsf t}}
\newcommand{\sfW}{{\mathsf W}}
\newcommand{\sfG}{{\mathsf G}}
\newcommand{\sfK}{{\mathsf K}}
\newcommand{\sfE}{{\mathsf E}}
\newcommand{\sfP}{{\mathsf P}}
\newcommand{\sfH}{{\mathsf H}}
\newcommand{\fa}{{\mathfrak a}}
\newcommand{\fb}{{\mathfrak b}}
\newcommand{\fc}{{\mathfrak c}}
\newcommand{\fC}{{\mathfrak C}}
\newcommand{\fM}{{\mathfrak M}}
\newcommand{\fP}{{\mathfrak P}}
\newcommand{\bms}{\bm{s}}
\newcommand{\bme}{{\bm{e}}}
\newcommand{\bmv}{{\bm{v}}}
\newcommand{\bmx}{{\bm{x}}}
\newcommand{\rd}{{\rm d}}
\newcommand{\ri}{\mathrm{i}}
\newcommand{\bC}{{\mathbb C}}
\newcommand{\bE}{\mathbb{E}}
\newcommand{\bP}{\mathbb{P}}
\newcommand{\bR}{{\mathbb R}}
\newcommand{\bZ}{\mathbb{Z}}
\DeclareMathOperator{\supp}{supp}
\DeclareMathOperator{\dist}{dist}
\DeclareMathOperator{\cov}{cov}
\DeclareMathOperator{\OO}{O}
\DeclareMathOperator{\oo}{o}
\DeclareMathOperator{\argmax}{argmax}
\renewcommand{\Im}{\mathop{\mathrm{Im}}}
\newcommand{\deq}{\mathrel{\mathop:}=} 
\renewcommand{\leq}{\leqslant}
\renewcommand{\geq}{\geqslant}
\newcommand{\del}{\partial}
\newcommand{\beq}{\begin{equation}}
\newcommand{\eeq}{\end{equation}}
\theoremstyle{plain} 
\newtheorem{theorem}{Theorem}[section]
\newtheorem*{theorem*}{Theorem}
\newtheorem{lemma}[theorem]{Lemma}
\newtheorem*{lemma*}{Lemma}
\newtheorem{corollary}[theorem]{Corollary}
\newtheorem*{corollary*}{Corollary}
\newtheorem{proposition}[theorem]{Proposition}
\newtheorem*{proposition*}{Proposition}
\newtheorem{assumption}[theorem]{Assumption}
\newtheorem*{assumption*}{Assumption}
\newtheorem{definition}[theorem]{Definition}
\newtheorem*{definition*}{Definition}
\newtheorem*{example*}{Example}
\newtheorem{remark}[theorem]{Remark}
\newtheorem*{remark*}{Remark}
\newtheorem*{remarks*}{Remarks}
\newtheorem{ansatz}[theorem]{Ansatz}
\def\author#1{\par
    {\centering{\authorfont#1}\par\vspace*{0.05in}}
}
\def\titlefont{\fontsize{13}{15}\bfseries\boldmath\selectfont\centering{}}
\def\authorfont{\fontsize{13}{15}}
\let\affiliationfont\rhfont
\def\address#1{\par
    {\centering{\affiliationfont#1\par}}\par\vspace*{11pt}
}
\def\body{
\setcounter{footnote}{0}
\def\thefootnote{\alph{footnote}}
\def\@makefnmark{{$^{\rm \@thefnmark}$}}
}
\def\title#1{
    \thispagestyle{plain}
    \vspace*{-14pt}
    \vskip 79pt
    {\centering{\titlefont #1\par}}%
    \vskip 1em
}
\newcommand{\cout}{{\omega_+}}
\newcommand{\cin}{{\omega_-}}
\newcommand{\up}[1]{%
\begin{tikzpicture}[#1]%
\draw (0,0) -- (0ex,1ex);%
\draw (0,0) -- (1ex,0ex);%
\draw (0ex,1ex) -- (1ex,1ex);%
\draw (1ex,0ex) -- (1ex,1ex);%
\end{tikzpicture}%
}
\newcommand{\rig}[1]{%
\begin{tikzpicture}[#1]%
\draw (0,0) -- (1ex,0ex);%
\draw (0,0) -- (1ex,1ex);%
\draw (1ex,1ex) -- (2ex,1ex);%
\draw (1ex,0ex) -- (2ex,1ex);%
\end{tikzpicture}%
}
\newcommand{\emp}[1]{%
\begin{tikzpicture}[#1]%
\draw (0,0) -- (0ex,1ex);%
\draw (0,0) -- (1ex,1ex);%
\draw (1ex,1ex) -- (1ex,2ex);%
\draw (0ex,1ex) -- (1ex,2ex);%
\end{tikzpicture}%
}
\begin{document}

\title{Height  Fluctuations of Random Lozenge Tilings Through Nonintersecting Random Walks}

\vspace{1.2cm}

 \author{Jiaoyang Huang}
\address{New York University\\
   E-mail: jh4427@nyu.edu}

~\vspace{0.3cm}

\begin{abstract}
In this paper we study  height fluctuations of random lozenge tilings of polygonal domains on the triangular lattice through nonintersecting Bernoulli random walks.  For a large class of polygons which have exactly one horizontal upper boundary edge,
we show that these random height functions converge to a Gaussian Free Field as predicted by Kenyon and Okounkov \cite{MR2358053}. A key ingredient of our proof is a dynamical version of the discrete loop equations as introduced by Borodin, Guionnet and Gorin \cite{MR3668648},
which might be of independent interest.

\end{abstract}


\section{Introduction}
A dimer covering, or perfect matching, of a finite graph
is a set of edges covering all the vertices exactly once. The dimer model is the study
of a uniformly chosen dimer covering of a graph. 
It is a classical model of statistical physics, going back to works of Kasteleyn \cite{kasteleyn1961statistics} and Temperley--Fisher \cite{temperley1961dimer}, who computed its partition function as the determinant of a signed adjacency matrix of the graph, the Kasteleyn matrix. 
Since then extensive research has been devoted to various facets of dimer models; we refer the reader to \cite{MR2523460, VG2020} for discussions of recent progress. 

A Dimer covering of bipartite graphs such as the
honeycomb graph or square grid can be viewed as
a Lipschitz function in $\bR^3$, called the discrete height function \cite{thurston1990conway}
which contains all the information about the dimer configuration. 
Hence the dimer model can be
viewed as a model of random Lipschitz functions. A key question
in the dimer model concerns the large-scale behavior of this height function. It is widely believed
that under very general assumptions, the fluctuations of height functions
are described by a conformally invariant process, the Gaussian free
field.

In this paper we study height functions of random lozenge tilings of polygonal domains, corresponding to the dimer model on the honeycomb lattice. 
In this case, a lozenge tiling can be viewed as the projection of its height function $\sfH(\sfx,\sft)$ (a $3$-dimensional stepped sruface) onto the $(\sfx,\sft)$ plane.
To be concrete, we use the standard square grid, with all sides parallel either to the coordinate axes or the vector $(1,1)$, as shown in Figure \ref{f:lattice}, and the lozenges become
\begin{align*}
\up{scale=2}, \quad \rig{scale=2}, \quad \emp{scale=2}.
\end{align*}
We will denote the horizontal and the
vertical integer coordinates on the square grid by $\sfx$ and $\sft$, respectively.
We make the convention that lozenges of type $\emp{scale=1}$ correspond to horizontal planes, that is, planes where the height function is constant. 
We make $\sfH(\sfx,\sft)$ a continuous function by linear interpolation of the lattice points. 
We also
require that $\sfH(\sfx,\sft)$ is zero at the lower left corner of the polygon. The height function along the boundary of the polygonal domain is determined in the obvious way, i.e. the height does not change along boundaries 
with slopes $1,\infty$, and increases with rate $1$ along boundaries
with slope $0$.
After ignoring lozenges of shape $\emp{scale=1}$, we can also identify lozenge tilings with nonintersecting Bernoulli random walks, as shown in Figure \ref{f:heart_shape}. The random walk interpretation  plays an important role in several recent studies of lozenge tilings \cite{gorin2019universality, aggarwal2019universality}.

\begin{figure}
\begin{center}
 \includegraphics[scale=0.4,trim={0cm 8cm 0 7cm},clip]{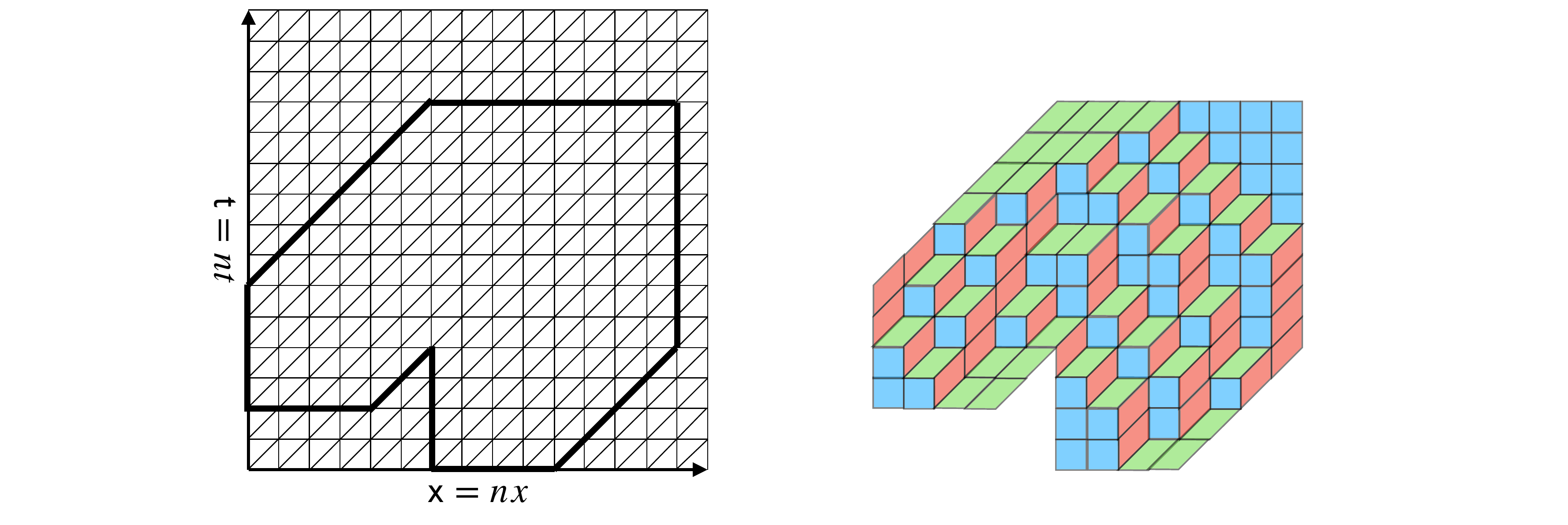}
 \caption{Lozenge tiling of polygonal domains on the standard square grid, where all sides are parallel either to the coordinate axes or the vector $(1,1)$.}
 \label{f:lattice}
 \end{center}
 \end{figure}
 

Let ${\mathfrak P}$ be a polygonal domain in the horizontal strip $t\in [0,T]$, formed by $3d$ segments
with slopes $0,1,\infty$, cyclically repeated as we follow the boundary in the
counterclockwise direction, and $\sfP$ is obtained from ${\mathfrak P}$ after rescaling by a factor $n$. It is proven in
\cite{MR1815214}, 
 height functions $\sfH(\sfx,\sft)$ for random lozenge tilings of $\sfP$ converge in probability,
\begin{align}\label{e:hlimit0}
\frac{\sfH(n x , n t )}{n}\rightarrow  h^*(x ,  t ),\quad ( x ,  t )\in {\mathfrak P}.
\end{align}
The limiting height function $h^*(x ,  t )$ is the surface tension minimizer among all Lipschitz functions $h$ taking given values on the boundary of the
domain $\fP$, given explicitly by a variational problem:
\begin{align}\label{e:vp0}
 h^*=\argmax_{ h\in {\mathfrak H({\mathfrak P})}}\iint \sigma(\nabla  h)\rd  x \rd  t .
\end{align}
 We will discuss more on the variational problem in Section \ref{s:VP}.
 
One feature of the limit shapes of random lozenge tilings is the presence of frozen regions which contain only one type of lozenges and the liquid regions which contain all three types of lozenges. The fluctuations of height functions are very different in the frozen and liquid region. In the frozen region, the height function is flat, and there are essentially no fluctuations. The situation is more interesting in the liquid region, where it is predicted in \cite{MR2358053},  height functions converge to the Gaussian Free Field  with zero boundary condition.
The main result of this paper verifies this prediction for a large class of polygons
which have exactly one horizontal upper boundary edge,
e.g. the heart-shaped polygon in Figure \ref{f:lattice}.
Those polygons correspond to nonintersecting Bernoulli random walks with tightly packed ending configurations. We refer to Section \ref{s:rw} for more precise definition of such domains.


\begin{theorem}\label{c:GFF}
Fix a polygonal domain ${\mathfrak P}$ which has exactly one horizontal upper boundary edge,
see Definition \ref{def:oneend} for a formal definition.
Let $\sfH(\sfx,\sft)$ be the height function of random lozenge tilings of the domain $\sfP$ obtained from ${\mathfrak P}$  by rescaling a factor $n$, then as $n$ goes to infinity, its fluctuation converges to a Gaussian Free Field on the liquid region $\Omega(\fP)$ with zero boundary condition,
\begin{align*}
\sqrt{\pi}\left(\sfH(n  x ,n t )-\bE[\sfH(n x , n t )]\right)\rightarrow {\rm{GFF}},\quad ( x ,  t )\in \Omega({\mathfrak P}).
\end{align*}
\end{theorem}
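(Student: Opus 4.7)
The plan is to work throughout in the nonintersecting Bernoulli walks picture, where a tiling of $\sfP$ is encoded by particle trajectories $\sfx_1(\sft)<\sfx_2(\sft)<\cdots<\sfx_N(\sft)$ on horizontal slices, with initial and terminal configurations determined by the bottom boundary of $\fP$ and by the single tightly packed upper edge, respectively. In this representation the height function is (up to a trivial shift) the counting function of the particles, so the desired GFF statement reduces to showing that for each macroscopic time $\tau$ and each sufficiently smooth test function $f$, the centered linear statistic $\sum_i f(\sfx_i(n\tau)/n)$ is asymptotically Gaussian and that its joint covariance across different slices agrees with the pull-back of the upper half plane Dirichlet Green's function under the Kenyon--Okounkov complex slope map.

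The main tool is the \emph{dynamical} extension of the Borodin--Guionnet--Gorin discrete loop equations. On a single slice, the BGG Nekrasov-type identity states that an explicit rational combination of expectations of the resolvent
\[
G_\sft(z)\;=\;\sum_{i=1}^{N}\frac{1}{z-\sfx_i(\sft)}
\]
extends to a holomorphic function on the upper half plane; its dynamical upgrade should encode the single-step Bernoulli transition and couple observables at consecutive time slices through an explicit linear operator plus an $\OO(1/n)$ remainder. I would first solve the leading order equation to produce a deterministic limiting Stieltjes transform $G^*_\sft(z)$ which, by uniqueness of the variational minimizer in \eqref{e:vp0}, recovers the Kenyon--Okounkov complex slope and hence the limit shape \eqref{e:hlimit0}. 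A bootstrap, run along the characteristics of the complex Burgers equation that governs $G^*_\sft$, would then upgrade this to a precise local law of the form $G_\sft(z)=G^*_\sft(z)+\OO(1/(n\Im z))$, uniformly in $\sft$ and in $z$ outside a neighborhood of the arctic curve.

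For the CLT I would linearize the dynamical loop equations around $G^*_\sft$. The fluctuation $\delta G_\sft(z) \deq n\,(G_\sft(z)-G_\sft^*(z))$ should satisfy a closed linear recursion whose noise is the variance of a single Bernoulli step and whose drift is inverted by transporting along the characteristics above; Gronwall along those characteristics would express $\delta G_\sft(z)$ as a Gaussian field with an explicit two-point function, and the decisive computation is to verify that in Kenyon--Okounkov coordinates this two-point function reduces to the $-\tfrac{1}{2\pi}\log|\cdot|$ kernel. Higher cumulants should gain a factor of $1/n$ per order from iterating the loop equations, making asymptotic Wick factorization automatic. Convergence of $\sfH$ itself then follows by contour-integrating $\delta G_\sft(z)$ against a smooth test function and appealing to the usual GFF regularization. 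I expect the main difficulties to be (i) propagating the local law down to the optimal scale $\Im z\gg 1/n$ uniformly in time while remaining inside the liquid region, (ii) controlling the characteristics that pinch into the tight-packed upper edge, where the natural contour deformation degenerates, and (iii) matching the resulting covariance with the Kenyon--Okounkov Green's function, which at the end of the day is a conformal-geometric verification rather than a probabilistic one.
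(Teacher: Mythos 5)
There is a genuine gap at the very start of the argument. The dynamical loop equations (the paper's Lemma \ref{l:loopeq}) require the one-step transition weight to have the explicit form \eqref{e:defL}: a Vandermonde ratio $V(\bmx+\bme/n)/V(\bmx)$ times a product of local weights $\prod_i \phi^+(x_i)^{e_i}\phi^-(x_i)^{1-e_i}$ times a quadratic exponential correction. But the uniform-tiling random walk \eqref{e:randomwalk} has transition probability $N_{(t+1/n)-}(\bmx+\bme/n)/N_t(\bmx)$, a ratio of tiling \emph{counts} of sub-polygons, which carries no a priori product structure. Your plan writes ``its dynamical upgrade should encode the single-step Bernoulli transition,'' but this presumes the transition kernel is already in a Nekrasov-compatible form. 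The central contribution of the paper is precisely to produce that form: the ansatz \eqref{e:ansatz0} / Ansatz \ref{a:defE} factors $N_t(\bmx)$ as $e^{n^2 Y_t(\beta)}$ times Vandermonde and $\Gamma$-product factors times a correction $E_t(\bmx)$, and then Proposition \ref{p:FK} expresses $E_t$ via a Feynman--Kac formula over an \emph{auxiliary} walk with explicit drift $g_t$ of the required product form. Only after the correction ratio $E_{t+1/n}/E_t$ is shown in Section \ref{s:solve} to split to leading order (so it can be absorbed into the drift) does the uniform-tiling walk itself become a member of the family \eqref{e:defLtnew} to which the loop equations and the CLT of Section \ref{s:Gauss} apply. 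Your proposal skips this entire machinery, and without it the loop-equation program never gets off the ground.

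A secondary inaccuracy: you identify the main difficulty with the top-packed edge as a degenerating contour deformation. In the paper the one-horizontal-upper-edge hypothesis enters in a different place: it guarantees (Proposition \ref{p:branchloc}) that the covering $\gamma_t^{\beta,s,+}\to\bC\bP^1$ has no branch point near the preimage of the current particle support, which is what makes the drift $g_t$ analytic in the neighborhood $\Upsilon$ needed for the discrete loop equations (Assumption \ref{a:weight}). The concern is not the arctic curve pinching at the top but branch points of the spectral curve approaching the bottom slice; the single-upper-edge condition rules out cusps facing upward, which would otherwise bring such branch points near the particle support. On the other hand, your high-level picture — loop equations $\Rightarrow$ SDE for centered Stieltjes transform along Burgers characteristics $\Rightarrow$ Gaussian field $\Rightarrow$ match covariance with the Kenyon--Okounkov complex structure — does agree with the paper's Sections \ref{s:Gauss} and \ref{s:GFF}, and the concluding identification with the GFF indeed follows Gorin's heuristic as you suggest.
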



{\begin{figure}
\begin{center}
 \includegraphics[scale=0.4,trim={0cm 8cm 0 7cm},clip]{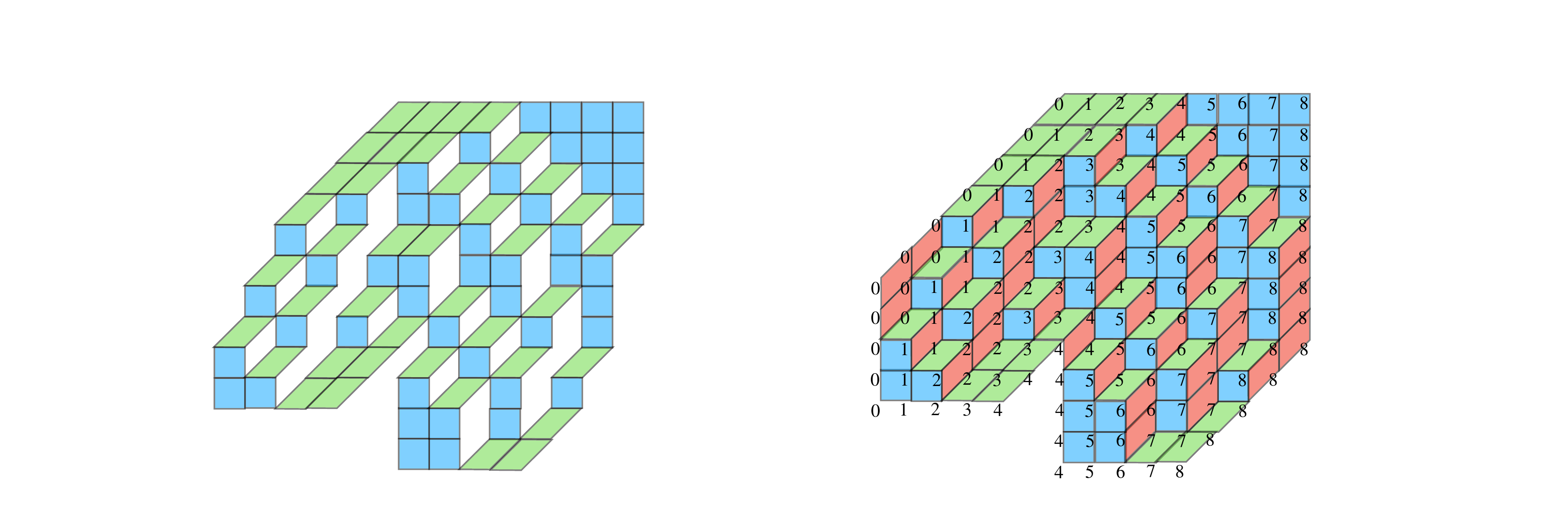}
 \caption{In the first subplot, we identify the lozenge tilings as nonintersecting Bernoulli walks; In the second subplot, we view lozenge tilings as projections of $3$-dimensional stepped surfaces onto the $(\sfx,\sft)$ plane.}
 \label{f:heart_shape}
 \end{center}
 \end{figure}}

 There are several approaches, as summarized in \cite{VG2020},  to the proof of convergence of height functions to the Gaussian Free Field for particular classes of domains.
One approach was suggested in \cite{kenyon2000conformal, kenyon2001dominos, kenyon2008height}.
The correlation functions of tilings can be computed using the inverse Kasteleyn matrix, which can be thought of as a discrete harmonic function. In this way the study of random tilings can be reduced to the study of the convergence of discrete harmonic functions.
This approach works for large families of domains which have no frozen regions \cite{russkikh2018dominos, russkikh2018dimers, berestycki2016universality}. We refer to  \cite{chelkak2020dimer, chelkak2020fluctuations} for most recent developments in this direction.
For trapezoidal domains,  its inverse Kasteleyn matrix can be expressed as a double contour integral, which makes it possible to study the asymptotics \cite{borodin2014anisotropic, petrov2015asymptotics}.
In some cases, the marginal distributions of random tilings can be identified
with discrete log-gases, and can be analyzed using discrete loop equations \cite{MR3668648, dimitrov2019log} and tools from discrete orthogonal polynomials \cite{duits2018global}.
For domains for which the asymptotics for the partition functions are known, the information of height functions can be extracted by applying differential operators to the partition functions, see \cite{bufetov2018fluctuations, bufetov2019fourier} for tilings of non-simply
connected domains.

In this paper we take a dynamical approach to study random lozenge tilings of polygonal domains. In the remaining of this introduction section, we give an outline of the proof. We identify lozenge tilings with nonintersecting Bernoulli  walks, and denote $N_t(x_1, x_2, \cdots, x_{m(t)})$ the number of nonintersecting Bernoulli walks staying in $\fP$ and starting from particle configuration $\bmx=(x_1,x_2,\cdots ,x_{m(t)})$ at time $t$,
where $m(t)$ is the number of particles at time $t$ and it is uniquely determined by $\fP$.
 Then it satisfies the following discrete heat equation
\begin{align}\label{e:Wt+recur0}
N_{t}(\bmx)
=\sum_{\bme=(e_1,e_2,\cdots, e_{m(t)})\in\{0,1\}^{m(t)}}
N_{t+1/n}(\bmx+\bme/n).
\end{align}
We can use the partition function \eqref{e:Wt+recur0} to define a nonintersecting Bernoulli random walk with transition probability given by
\begin{align}\label{e:randomwalk0}
\bP(\bmx(t+1/n)=\bmx+\bme/n|\bmx(t)=\bmx)=\frac{N_{t+1/n}(\bmx+\bme/n)}{N_{t}(\bmx)},
\end{align}
for any $t\in[0,T)\cap \bZ_n$, and $\bme=(e_1, e_2, \cdots, e_{m(t)})\in \{0,1\}^{m(t)}$, .
Then $\{\bmx(t)\}_{0\leq t\leq T}$ with $\bmx(0)$ corresponding to the bottom boundary of $\fP$,  is uniformly distributed among all possible nonintersecting Bernoulli walks. Especially, by our identification of lozenge tilings and nonintersecting Bernoulli  walks, it has the same law as uniform random lozenge tilings of the domain $\fP$.

The limit shapes of random lozenge tilings are characterized by the variational problem \eqref{e:vp0}. We can use it to define a measure valued  Hamiltonian system: Fix time $s\in [0,T)\cap \bZ_n$, and a particle configuration $\bmx=(x_1,x_2,\cdots ,x_{m(s)})$ at time $s$. Let $\beta$ be the height function corresponding to the particle configuration $\bmx$ and define
\begin{align}\label{e:varWt0}
W_ s ( \beta )=\sup_{ h\in H({\mathfrak P};\beta, s)}\int_ s ^T\int_\bR\sigma(\nabla  h)\rd x \rd t .
\end{align}
where the sup is over Lipschitz functions $h$ which take given values on the boundary of the domain $\fP\cap \bR\times [s, T]$, and coincide with $\beta$ on the bottom boundary.
There is a Riemann surface associated with the variational problem \eqref{e:varWt0}.  Properties of $W_s(\beta)$ can be understood using tools for Riemann surfaces, i.e. Rauch variational formula.

The  Hamilton's principal function $W_t(\beta)$ \eqref{e:varWt0} should give a good approximation for the partition function $N_t(\bmx)$. We make the following ansatz
\begin{align}\label{e:ansatz0}
N_t(\bmx)=e^{n^2 W_t(\beta)}E_t(\bmx),
\end{align}
where $\beta$ corresponds to the particle configuration $\bmx$. By plugging \eqref{e:ansatz0}, we can reformulate \eqref{e:randomwalk0} as
\begin{align}\label{e:Eratio}
\bP(\bmx(t+1/n)=\bmx+\bme/n|\bmx(t)=\bmx)\approx \frac{1}{Z_t(\bmx)} \frac{V(\bmx+\bme/n)}{V(\bmx)} \prod_{i=1}^m g_t^{e_i}(x_i;\beta,t) \frac{E_{t+1/n}(\bmx+\bme/n)}{E_{t}(\bmx)},
\end{align}
where $V(\bmx)$ is the Vandermonde determinant, $\ln g_t(x;\beta,t)=\del_\beta W_t(\beta)-H(\del_x\beta)$, and $H(\del_x\beta)$ is the Hilbert transform of $\del_x \beta$.
\eqref{e:ansatz0} and \eqref{e:Eratio} are only for illustration, for a rigorous proof, we need to keep track the higher order error from discretization, the change for the number of particles, and the boundary effects. The expressions are more involved, but basic ideas are the same.
Then we can solve for the correction terms $E_t(\bmx)$ in \eqref{e:Eratio} by the Feynman-Kac formula, using a nonintersecting Bernoulli random walk with drift $g_t(x;\beta,t)$
\begin{align}\label{e:form0}
\bP(\bmx(t+1/n)=\bmx+\bme/n|\bmx(t)=\bmx)\propto  \frac{V(\bmx+\bme/n)}{V(\bmx)} \prod_{i=1}^m g_t^{e_i}(x_i;\beta,t).
\end{align}
The boundary condition of $\fP$ is encoded in the drift $g_t(x;\beta,t)$, i.e. it has a pole at left boundaries of $\fP$ with slope $1$, and a zero at right boundaries of $\fP$ with slope $0$. Thus for nonintersecting Bernoulli random walks in the form \eqref{e:form0}, particles are constrained inside $\fP$.

The loop (or Dyson-Schwinger) equation is an important tool to study fluctuations of interacting particle systems.
They were introduced to the mathematical community by Johansson \cite{MR1487983} to derive macroscopic central limit theorems for general $\beta$-ensembles, see also \cite{MR3010191, borot-guionnet2, KrSh}.
For discrete $\beta$-ensembles,  the macroscopic central limit theorems were proved by Borodin, Guionnet and Gorin \cite{MR3668648}. The proof relies on Newrasov's equations, which  originated in the work of Nekrasov and his collaborators \cite{Nekrasov, Nek_PS,Nek_Pes}. These equations can be analyzed  in a spirit very close to the uses of loop equations in the continuous setting.
To analyze nonintersecting Bernoulli random walks in the form \eqref{e:form0}, 
we develop a dynamical version (discrete time and discrete space) of the discrete loop equations. Another dynamical version (continuous time and discrete space) was previously used to study the $\beta$-nonintersecting Poisson random walks in \cite{huang2017beta}.
Discrete multi-level analogues of loop equations were developed in \cite{dimitrov2019asymptotics} to study discrete $\beta$-corners processes. 
Unfortunately, in our setting, for the analysis of the discrete loop equations, we need that $g_t(x;\beta,t)$ is analytic in  a neighborhood of the support of the particle configuration $\bmx$. This is where we need the assumption that the domain $\fP$ has exactly one horizontal upper boundary edge (as in Definition \ref{def:oneend}). Otherwise, $g_t(x;\beta,t)$ may have branch points close to the support of the particle configuration $\bmx$.

Using the discrete loop equations, we then show that the fluctuations for nonintersecting Bernoulli random walks in the form \eqref{e:form0} satisfy a stochastic differential equation. As a consequence, the fluctuations are asymptotically Gaussian. Using a heuristic given by Gorin \cite[Lecture 12]{VG2020}, we identify these Gaussian fluctuations as a Gaussian Free Field on the liquid region, as predicted by Kenyon and Okounkov \cite{MR2358053}. In the last step, we solve for the correction terms $E_t(\bmx)$  in \eqref{e:Eratio}  by Feynman-Kac formula, using the nonintersecting Bernoulli random walk \eqref{e:form0}. It turns out the leading order term of the ratio $E_{t+1/n}(\bmx+\bme/n)/E_{t}(\bmx)$ in \eqref{e:Eratio} splits, and can be absorbed in $g_t(x;\beta,t)$. As a consequence, the nonintersecting Bernoulli random walk corresponding to uniform random lozenge tiling is also in the form of \eqref{e:form0}. Then we conclude that the fluctuations of height functions for random lozenge tilings converge to a Gaussian Free Field.



We now outline the organization for the rest of the paper. In Section \ref{s:VP}, we recall the variational problem which characterizes the limiting height function for random lozenge tilings from \cite{MR1815214}. 
In Section \ref{s:rw}, we identify lozenge tilings with nonintersecting Bernoulli walks, and introduce the assumption on our polygonal domains.
In Section \ref{s:burger}, we recall from \cite{MR2358053}, the complex Burgers equation and analytic curves associated with the variational problem. 
We use the rate function of the variational problem to define a measure valued Hamiltonian system, and study its properties using tools from Riemann surfaces, i.e. Rauch variational formula.  
In Section \ref{s:ansatz}, we make an ansatz for the partition function of the nonintersecting Bernoulli random walk, which corresponds to uniform random lozenge tilings.
In Section \ref{s:eq}, we express the correction term in the ansatz using Feynman-Kac formula, and a nonintersecting Bernoulli random walk with drift.
In Section \ref{s:loopeq}, we introduce a dynamical version of the  discrete loop equations \cite{MR3668648}, and study its asymptotic properties. The discrete loop equations are used to study a family of nonintersecting Bernoulli random walks with drift, in Section \ref{s:Gauss}. We prove the fluctuation of their height functions are asymptotically Gaussian.
In Section \ref{s:solve}, we solve the correction term in the ansatz, and show the nonintersecting Bernoulli random walk corresponding to uniform random lozenge tilings belongs to the family, which have been studied using the discrete loop equations.
In Section \ref{s:GFF}, we identify these fluctuations as a Gaussian Free Field on the liquid region, as predicted by Kenyon and Okounkov \cite{MR2358053}, following a heuristic given by Gorin \cite[Lecture 12]{VG2020}.

\paragraph{Notations}

We denote $\cM(\bR)$ the set of probability measures over $\bR$, and ${\mathcal{C}}([0,1],\cM(\bR))$ the set of continuous measure valued process over $[0,1]$.
We use $\fC$ to represent large universal constant, and $\fc$ a small universal constant,
which
may be different from line by line. We write that $X = \OO(Y )$ if there exists some universal constant such
that $|X| \leq \fC Y$ . We write $X = \oo(Y )$, or $X \ll Y$ if the ratio $|X|/Y\rightarrow \infty$ as $n$ goes to infinity. We write
$X\asymp Y$ if there exist universal constants such that $\fc Y \leq |X| \leq  \fC Y$.
We denote the set $\bZ_n=\{k/n: k\in \bZ\}$.

\paragraph{Acknowledgements}
The author thanks Vadim Gorin for helpful comments on the draft of this paper.
The research of J.H. is supported by the Simons Foundation as a Junior Fellow
at the Simons Society of Fellows.

\section{Variational Principle}\label{s:VP}
Let ${\mathfrak P}$ be a polygonal domain in the horizontal strip $t\in[0,T]$, formed by $3d$ segments
with slopes $0,1,\infty$, cyclically repeated as we follow the boundary in the
counterclockwise direction, and $\sfP$ is obtained from ${\mathfrak P}$ by rescaling  a factor $n$. It is proven in
\cite{MR1815214}, 
the height function $\sfH(\sfx,\sft)$ for random lozenge tilings of $\sfP$ converges in probability in sup norm,
\begin{align}\label{e:hlimit}
\frac{\sfH(n x , n t )}{n}\rightarrow  h^*(x ,  t ),\quad ( x ,  t )\in {\mathfrak P},
\end{align}
which is characterized by a variational problem. We remark the result in \cite{MR1815214} is more general, and works for simply connected domains of arbitrary shape, not necessary polygonal domain.

Before stating the results in \cite{MR1815214}, we recall the Lobachevsky function  $L(\cdot)$: 
\begin{align*}
L(\theta)=-\int_0^\theta \ln(2\sin t)\rd t,
\end{align*}
and the surface tension associated to a triple $(p_{\up{scale=0.8}}, p_{\rig{scale=0.8}}, p_{\emp{scale=0.8}})$ of densities with $p_{\up{scale=0.8}}+ p_{\rig{scale=0.8}}+ p_{\emp{scale=0.8}}=1$ and $p_{\up{scale=0.8}}, p_{\rig{scale=0.8}}, p_{\emp{scale=0.8}}\geq 0$ given by
\begin{align}\label{e:surfacetension}
\sigma(p_{\up{scale=0.8}}, p_{\rig{scale=0.8}}, p_{\emp{scale=0.8}})
=\frac{1}{\pi}(L(\pi p_{\up{scale=0.8}})+L(\pi p_{\rig{scale=0.8}})+L(\pi p_{\emp{scale=0.8}})).
\end{align}

Let ${\mathfrak P}$ be a polygonal domain, the height of its boundary, $\beta^{{\mathfrak P}}:\del{\mathfrak P}\mapsto \bR$, is determined in the following way:
$\beta^{{\mathfrak P}}=0$ at the lower left corner of ${\mathfrak P}$. $\beta^{{\mathfrak P}}$ does not change along boundaries of ${\mathfrak P}$ with slopes $1, \infty$, and increases with rate $1$ along boundaries with slopes $0$. 
If $ h:{\mathfrak P}\mapsto \bR$ is a possible limit of \eqref{e:hlimit}, by our convention of height functions, it is necessary that on the boundary of ${\mathfrak P}$,  $ h$ coincides with the height of the boundary of ${\mathfrak P}$, i.e. $ h(x,t)=\beta^{{\mathfrak P}} (x,t)$ for $ (x,t)\in \del {\mathfrak P}$.  Moreover, we can associate for $ h$ a local density triple
$(p_{\up{scale=0.8}}, p_{\rig{scale=0.8}}, p_{\emp{scale=0.8}})$ at any point $ (x,t)$ in the interior of ${\mathfrak P}$:
\begin{align}\label{e:ldensity}
\del_{x}  h(x,t)=p_{\up{scale=0.8}}+p_{\rig{scale=0.8}}=1-p_{\emp{scale=0.8}},\quad
\del_{t}  h(x,t)=-p_{\rig{scale=0.8}}.
\end{align}
Naturally, we have $p_{\up{scale=0.8}}+ p_{\rig{scale=0.8}}+ p_{\emp{scale=0.8}}=1$. For the local densities to be nonnegative $p_{\up{scale=0.8}}, p_{\rig{scale=0.8}}, p_{\emp{scale=0.8}}\geq 0$, we need that
\begin{align}\label{e:lip}
0\leq \del_{x}  h(x,t), -\del_{t}  h(x,t), \del_{x} h(x,t)+\del_t  h(x,t)\leq 1.
\end{align} 

\begin{definition}\label{def:HO}
For any polygonal domain $\mathfrak P$, We denote by ${\mathfrak H({\mathfrak P})}$ the set of functions
$ h:{\mathfrak P}\mapsto \bR$,
such that:
\begin{enumerate}
\item 
On the boundary of ${\mathfrak P}$,  $ h$ coincides with the height of the boundary of ${\mathfrak P}$, i.e. $ h(x,t)=\beta^{{\mathfrak P}} (x,t)$ for $ (x,t)\in \del {\mathfrak P}$
\item
$ h$ is Lipschitz and satisfies \eqref{e:lip} at  points where it is differentiable.
\end{enumerate}
\end{definition}

Given $ h \in {\mathfrak H}({\mathfrak P})$. For a point $ (x,t)$ in the interior
of ${\mathfrak P}$, using \eqref{e:ldensity} we identify the gradient $\nabla  h$ with the three local densities:
\begin{align*}
p_{\up{scale=0.8}}=\del_{x} h(x,t)+\del_t  h(x,t)
,\quad  p_{\rig{scale=0.8}}=-\del_t  h(x,t), \quad p_{\emp{scale=0.8}}=1-\del_{x} h(x,t).
\end{align*}
In the rest of the paper, we will just
write $\sigma(\nabla  h)=\sigma(p_{\up{scale=0.8}}, p_{\rig{scale=0.8}}, p_{\emp{scale=0.8}})$ for height functions $ h$, with the above convention.

\begin{theorem}[\cite{MR1815214}]
Let ${\mathfrak P}$ be a polygonal domain in the plane, and $\sfP$ is obtained from ${\mathfrak P}$ after rescaling by a factor $n$. The height function $\sfH(\sfx,\sft)$ of random lozenge tilings of the domain $\sfP$ converges in probability in sup norm, as $n$ goes to infinity,
\begin{align*}
\frac{H(n x , n t )}{n}\rightarrow  h^* ( x , t),\quad  ( x , t)\in {\mathfrak P},
\end{align*}
where $ h^*$ is the unique minimizer of the variational problem
The original variational principle:
\begin{align}\label{e:vp}
 h^*=\argmax_{ h\in {\mathfrak H({\mathfrak P})}}\iint \sigma(\nabla  h)\rd  x \rd  t ,
\end{align}
Moreover, we have
\begin{align*}
\frac{1}{n^2}\ln |\{\text{lozenge tilings of $\sfP$}\}| =\iint \sigma(\nabla  h^*)\rd  x \rd  t +\oo(1).
\end{align*}
\end{theorem}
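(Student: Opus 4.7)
The plan is to establish a large-deviation principle for the rescaled height function at speed $n^2$ with rate function $\iint \sigma(\nabla h^*)\rd x\rd t - \iint \sigma(\nabla h)\rd x\rd t$, from which both the partition function asymptotic and the sup-norm convergence will follow. The conceptual input is that the surface tension $\sigma$ in \eqref{e:surfacetension} is entropy per unit area: the number of lozenge tilings of an $\varepsilon n \times \varepsilon n$ torus (or rectangle) with density profile $(p_{\up{scale=0.5}}, p_{\rig{scale=0.5}}, p_{\emp{scale=0.5}})$ grows like $\exp(n^2 \varepsilon^2 \sigma(p_{\up{scale=0.5}}, p_{\rig{scale=0.5}}, p_{\emp{scale=0.5}}) + \oo(n^2 \varepsilon^2))$. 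This local entropy is computed via the Kasteleyn matrix of the honeycomb dimer model; diagonalizing on the torus yields a Riemann sum whose limit is precisely the Lobachevsky-function expression \eqref{e:surfacetension}, with the three $L$-terms tracking the three lozenge orientations.

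First I would prove the upper bound. Partition $\fP$ into cells of side $\varepsilon$. Any tiling height function is $1$-Lipschitz in the sense inherited from \eqref{e:lip}, hence $\sfH(n\cdot,n\cdot)/n$ is uniformly close to a piecewise-affine $\wt h$ whose slope on each cell is obtained by coarse-graining. After discretizing the admissible slopes into finitely many bins, the number of tilings with a given coarse-grained profile is bounded above by the product of local tiling counts of rectangles with matching boundary slopes, giving
\begin{equation*}
\bP\!\left(\tfrac{\sfH(n\cdot,n\cdot)}{n} \approx h\right)
\leq \exp\!\left(-n^2 \qB{\iint \sigma(\nabla h^*)\rd x\rd t - \iint \sigma(\nabla h)\rd x\rd t} + \oo(n^2)\right).
\end{equation*}
Sending $\varepsilon \to 0$ after $n\to\infty$, and using that $\mathfrak H(\fP)$ is compact in sup norm by Arzelà-Ascoli, yields the upper bound and the corresponding estimate on $\ln|\{\text{tilings of }\sfP\}|$.

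The lower bound requires constructing, for any $h\in \mathfrak H(\fP)$, a family of tilings whose rescaled height function approximates $h$ in sup norm and whose log-cardinality is at least $n^2\iint \sigma(\nabla h)\rd x\rd t + \oo(n^2)$. In each $\varepsilon$-cell I would use an independent uniform tiling of a rectangle with boundary slopes matching $\nabla h$ on that cell. The principal obstacle is the patching step: tilings from adjacent cells generically disagree along shared edges, and one must insert a thin buffer strip in which the tiling is adjusted so the two patches fit together, at an entropy cost of only $\oo(n^2)$. This is handled by exact enumeration via the Kasteleyn formula of tilings of thin rectangles with prescribed boundary heights, combined with monotone-coupling / height-swap arguments showing that the number of tilings interpolating between two compatible boundary profiles is only subexponential in the perimeter of the strip.

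Finally, the Lobachevsky function $L$ is strictly concave on $(0,\pi)$, which makes $\sigma$ strictly concave in the gradient $\nabla h$ on the open triangle of admissible densities. Hence \eqref{e:vp} has a unique maximizer $h^*$. The large-deviation principle with strictly convex rate function and unique minimizer, together with tightness from the uniform Lipschitz bound \eqref{e:lip}, forces any subsequential sup-norm limit of $\sfH(n\cdot,n\cdot)/n$ to equal $h^*$ almost surely, which is the stated convergence in probability; the partition function asymptotic is then just the matching upper and lower bounds.
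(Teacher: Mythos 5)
This theorem is not proved in the paper at all: it is cited as a known result from Cohn--Kenyon--Propp \cite{MR1815214}, so there is no in-paper proof to compare against. Your outline is essentially the structure of that reference's argument (local entropy via Kasteleyn, upper bound by coarse-graining into $\varepsilon$-cells, lower bound by patching, strict concavity for uniqueness, LDP to conclude), so as a reconstruction it captures the right ideas.

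However, one specific claim is wrong and would break the uniqueness step as written. You assert that the Lobachevsky function $L$ is strictly concave on $(0,\pi)$. It is not: from $L(\theta)=-\int_0^\theta\ln(2\sin t)\,\rd t$ one gets $L'(\theta)=-\ln(2\sin\theta)$ and $L''(\theta)=-\cot\theta$, which is negative on $(0,\pi/2)$ but \emph{positive} on $(\pi/2,\pi)$, so $L$ is concave only on the first half and convex on the second. Consequently $\sigma(p_{\up{scale=0.5}},p_{\rig{scale=0.5}},p_{\emp{scale=0.5}})$ is not a sum of concave terms. What actually saves the argument is that on the affine slice $p_{\up{scale=0.5}}+p_{\rig{scale=0.5}}+p_{\emp{scale=0.5}}=1$ the three arguments $\pi p_i$ are angles of a triangle, and the Hessian of the restricted $\sigma$ in two free variables has determinant $\pi^4\bigl(\cot(\pi p_1)\cot(\pi p_2)+\cot(\pi p_2)\cot(\pi p_3)+\cot(\pi p_3)\cot(\pi p_1)\bigr)=\pi^4$ by the triangle-cotangent identity, together with at least one strictly negative diagonal entry (since at most one $p_i$ can exceed $1/2$). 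That forces the restricted Hessian to be negative definite on the open simplex, giving strict concavity there — but this is a genuinely two-variable argument, not a consequence of one-variable concavity of $L$. You should also note that strict concavity of $\sigma$ fails on the boundary of the simplex (frozen slopes), so the uniqueness of the maximizer needs a small additional argument about where two distinct candidates can have gradients off the boundary simultaneously, as in \cite{MR1815214}.
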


\section{Random Walk Representation}\label{s:rw}
In this section, we formally identify lozenge tilings of polygonal domains with nonintersecting Bernoulli walks, and give the formal definition for polygonal domains which have exactly one horizontal upper boundary edge, i.e. the assumption in our main Theorem \ref{c:GFF}.

 Let ${\mathfrak P}$ be a polygonal domain in the plane, formed by $3d$ segments
with slopes $0,1,\infty$, cyclically repeated as we follow the boundary in the
counterclockwise direction. Without loss of generality, we take $\mathfrak P$ in the horizontal strip $0\leq t\leq T$, with horizontal boundary edges at times $0=T_0<T_1<T_2<\cdots<T_p=T$. Especially, the lowest horizontal boundary of $\mathfrak P$ is at $t=0$, and its highest horizontal boundary is at $t=T$. 

We recall from Definition \ref{def:HO} the set $\mathfrak H({\mathfrak P})$ of possible height functions over ${\mathfrak P}$. For any lozenge tiling of $\sfP$, its height function $\sfH(\sfx,\sft)$ after rescaling
\begin{align*}
h(x,t)\deq \frac{\sfH(nx, nt)}{n}, \quad (x, t)\in {\mathfrak P},
\end{align*}
gives a height function over ${\mathfrak P}$, it is easy to see that $h(x,t)
\in \mathfrak H({\mathfrak P})$. Especially, it follows that $\sfH(nx, nt)=n\beta^{{\mathfrak P}}(x,t)$ for $(x, t)\in \del {\mathfrak P}$ on the boundary of ${\mathfrak P}$.

For any time $t$, we denote the lower boundary of ${\mathfrak P}\cap\bR\times [t,T]$ as
\begin{align}\label{e:lb}
\{ x :  (x,t+)\in {\mathfrak P}\}=[\fa_1(t), \fb_1(t)]\cup[{\mathfrak a}_2(t), \fb_2(t)]\cup\cdots \cup [t_{r(t)}(t),\fb_{r(t)}(t)].
\end{align}
We remark that if ${\mathfrak P}$ contains horizontal edges at time $t$ i.e. $t\in\{T_0, T_1, T_2, \cdots, T_p\}$,  the time slice $\{ x :  (x,t)\in {\mathfrak P}\}$ is ambiguous. In \eqref{e:lb}, we define it as the lower boundary of ${\mathfrak P}\cap\bR\times [t,T]$. So a horizontal boundary edge of ${\mathfrak P}$ is not included in \eqref{e:lb} if it is an upper boundary edge of ${\mathfrak P}$. Later, to make statements precise, we also need the notation of the 
upper boundary of
${\mathfrak P}\cap\bR\times [0,t]$:
\begin{align}\label{e:upb}
\{x :  (x,t-)\in {\mathfrak P}\}=[\fa_1(t-), \fb_1(t-)]\cup[\fa_2(t-), \fb_2(t-)]\cup\cdots \cup [\fa_{r(t-)}(t-),\fb_{r(t-)}(t-)].
\end{align}
If there are no horizontal boundary edges of ${\mathfrak P}$ at time $t$, i.e. $t\neq T_0, T_1, T_2, \cdots, T_p$,  then \eqref{e:lb} and \eqref{e:upb} are the same. Otherwise, a horizontal boundary edge is not included in \eqref{e:lb} if it is a upper boundary edge of ${\mathfrak P}$; it is not included in \eqref{e:upb} if it is a lower boundary edge of ${\mathfrak P}$. See Figure \ref{f:timte_t_slice} for an illustration.
\begin{figure}
 \includegraphics[scale=0.48,trim={0cm 14cm 0 7cm},clip]{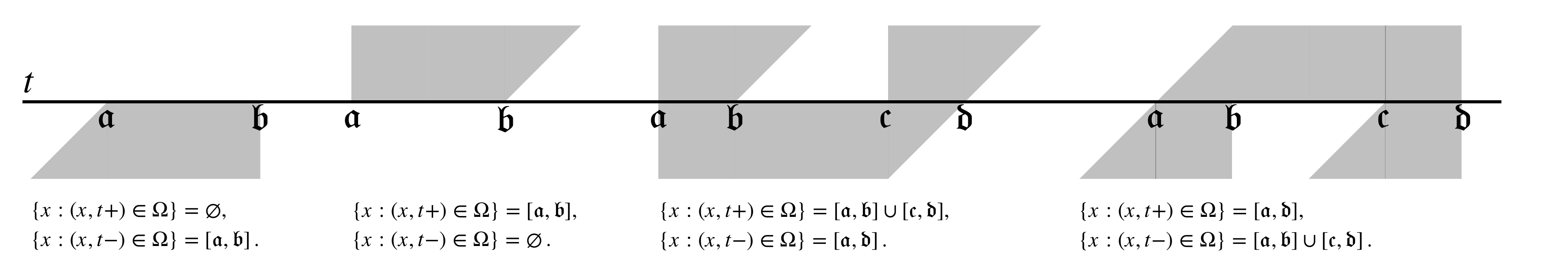}
 \caption{The slice of the polygonal domain ${\mathfrak P}$ at time $t$.}
 \label{f:timte_t_slice}
 \end{figure}

For any $0\leq t<T$, we can view $\del_{x}  h(x,t)$ as a positive measure on $[\fa_1(t), \fb_1(t)]\cup[\fa_2(t), \fb_2(t)]\cup\cdots \cup [\fa_{r(t)}(t),\fb_{r(t)}(t)]$. We denote it 
\begin{align}\label{e:defrhot+}
{ \rho}_{t}( x)\deq\del_{x}  h(x,t),\quad (x, t)\in {\mathfrak P}.
\end{align}
Then it has bounded density $0\leq \rho_{ t}(x )\leq 1$. If $[\fa',\fb']\subset [\fa_i(t), \fb_i(t)]$ is a horizontal boundary edge of ${\mathfrak P}$, then \begin{align}\label{e:cond1}
\rho_{ t }( x )=1,\quad x \in [\fa', \fb'].
\end{align} 
Moreover, the integrals of $\rho_{ t }( x )$ over the intervals $[\fa_i(t), \fb_i(t)]$ are determined by the boundary condition of $ h(x,t)$: 
\begin{align}\label{e:cond2}
\int_{\fa_i(t)}^{\fb_i(t)} \rho_{ t }( x )\rd x = \beta^{{\mathfrak P}}(\fb_i(t), t )- \beta^{{\mathfrak P}}(\fa_i(t),  t ).
\end{align}
We can recover the height function $ h(x,t)$ from ${ \rho}_{t}(x )$: for any $ x \in [\fa_i(t), \fb_i(t)]$, 
\begin{align}\label{e:recoh}
 h(x,t)= \beta^{{\mathfrak P}}(\fa_i(t),  t )+\int_{\fa_i(t)}^ x \rho_{ t }( x )\rd  x .
\end{align}

For any lozenge tiling of $\sfP$, with height function $\sfH(\sfx,\sft)$, we
can identify it with nonintersecting Bernoulli random walks, as shown in Figure \ref{f:heart_shape}. More precisely, let $h(x,t)=\sfH(nx,nt)/n$ for $0\leq t<T$. As a positive measure on $[\fa_1(t), \fb_1(t)]\cup[\fa_2(t), \fb_2(t)]\cup\cdots \cup [\fa_{r(t)}(t),\fb_{r(t)}(t)]$, $\del_{x}  h(x,t)$ is in the following form
\begin{align}\label{e:dht}
\del_{x}  h(x,t)=\sum_{i=1}^{m(t)} \bm1(x\in [x_i(t), x_i(t)+1/n]),
\end{align}
which is also $\rho_{t}(x)$ as in \eqref{e:defrhot+}. If we represent a particle at $x$ by the measure $\bm1([x, x+1/n])$, then we can view \eqref{e:dht} as the empirical measure of $m(t)$ particles $(x_1(t), x_2(t), \cdots, x_{m(t)}(t))$.
Since $\rho_{t}(x)$ satisfies \eqref{e:cond2}, $m(t)$, the number of particles at time $t$, is determined by
\begin{align*}
m(t)=n\left((\beta^{{\mathfrak P}}(\fb_1(t),t)-\beta^{{\mathfrak P}}(\fa_1(t),t))+\cdots +(\beta^{{\mathfrak P}}(\fb_{r(t)}(t),t)-,\beta^{{\mathfrak P}}(\fa_{r(t)}(t),t))\right).
\end{align*}

Similarly, for $0< t\leq T$, we can view $\del_{x}  h(x,t)$ as a positive measure on $[\fa_1(t-), \fb_1(t-)]\cup[\fa_2(t-), \fb_2(t-)]\cup\cdots \cup [\fa_{r(t-)}(t-),\fb_{r(t-)}(t-)]$. It is in the following form
\begin{align}\label{e:dht-}
\del_{x}  h(x,t)=\sum_{i=1}^{m(t-)} \bm1(x\in [x_i(t-), x_i(t-)+1/n]),
\end{align}
 where
 \begin{align*}
 m(t-)=n\left((\beta^{{\mathfrak P}}(\fb_1(t-),t)-\beta^{{\mathfrak P}}(\fa_1(t-),t))+\cdots +(\beta^{{\mathfrak P}}(\fb_{r(t-)}(t-),t)-,\beta^{{\mathfrak P}}(\fa_{r(t-)}(t-),t))\right).
 \end{align*}
Similarly, \eqref{e:dht-}  can be viewed as the empirical measure of $m(t-)$ particles $(x_1(t-), x_2(t-), \cdots, x_{m(t-)}(t-))$,

We remark if there are no horizontal boundary edges of ${\mathfrak P}$ at time $t$, i.e. $t\neq T_0, T_1,T_2,\cdots, T_p$, then \eqref{e:dht} and \eqref{e:dht-} are the same. 
For $t\in \{T_0, T_1,T_2,\cdots, T_p\}$, $(x_1(t-), x_2(t-), \cdots, x_{m(t-)}(t-))$ and $(x_1(t), x_2(t), \cdots, x_{m(t)}(t))$ also uniquely determine each other.
For any horizontal boundary edge $[\fa',\fb']$ of ${\mathfrak P}$ at time $t$, if it is a lower boundary edge of ${\mathfrak P}$, particles are created at $[\fa',\fb')\cap \bZ_n$;
 if it is an upper boundary edge of ${\mathfrak P}$, particles at $[\fa',\fb')\cap \bZ_n$ are annihilated. See Figure \ref{f:particle} for an illustration. In the rest of this paper, for any time $t\in(0,T)\cap \bZ_n$ and particle configuration $\bmx(t)=(x_1(t), x_2(t), \cdots, x_{m(t)}(t))$, we denote $\bmx(t-)=(x_1(t-), x_2(t-), \cdots, x_{m(t-)}(t-))$ the corresponding particle configuration at time $t-$.
\begin{figure}
 \includegraphics[scale=0.48,trim={0cm 14cm 0 7cm},clip]{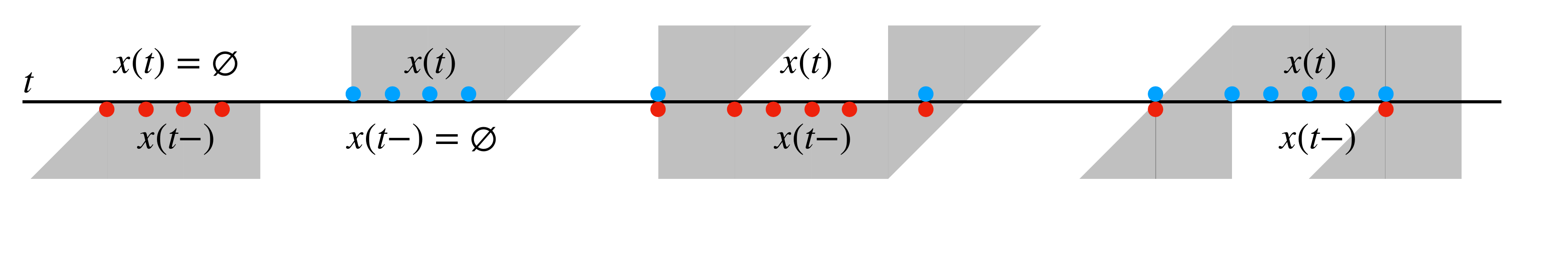}
 \caption{For any horizontal boundary edge $[\fa',\fb']$ of ${\mathfrak P}$ at time $t$, if it is a lower boundary edge of ${\mathfrak P}$, particles are created at $[\fa',\fb')\cap \bZ_n$;
 if it is an upper boundary edge of ${\mathfrak P}$, particles at $[\fa',\fb')\cap \bZ_n$ are annihilated. .}
 \label{f:particle}
 \end{figure}

\begin{definition}\label{def:Mt}
For any polygonal domain ${\mathfrak P}$, and any time $t\in [0,T)\cap \bZ_n$, we denote ${\mathfrak M}_{t}({\mathfrak P})$ the set of particle configurations
$(x_1(t)< x_2(t)<\cdots, x_{m(t)}(t))\in \bZ_n^{m(t)}$, such that its empirical measure 
\begin{align*}
\sum_{i=1}^{m(t)} \bm1\left(x\in \left[x_i(t), x_i(t)+1/n\right]\right)
\end{align*}
is a 
positive measure on
$[\fa_1(t), \fb_1(t)]\cup[\fa_2(t), \fb_2(t)]\cup\cdots \cup [\fa_{r(t)}(t),\fb_{r(t)}(t)]$, and satisfies \eqref{e:cond1} and \eqref{e:cond2}.
\end{definition}

%
%

%
%
%

%

In this way we have identified lozenge tilings of domain $\sfP$, with nonintersecting Bernoulli random walk $\{x_1(t), x_2(t), \cdots, x_{m(t)}(t)\}_{t\in[0,T]\cap \bZ_n}$ with $(x_1(t),x_2(t), \cdots, x_{m(t)}(t))\in \mathfrak M_t({\mathfrak P})$. In the rest of this paper, we restrict ourselves on the set of polygonal domains as defined below, which have exactly one horizontal upper boundary edge, and correspond to nonintersecting Bernoulli walks with tightly packed ending configurations.
\begin{definition}\label{def:oneend}
A polygonal domain $\fP$ has exactly one horizontal upper boundary edge, if
\begin{enumerate}
\item the boundary edge of $\fP$ at $t=T$ is a single interval, i.e. $\{x: (x,T-)\in \fP\}=[\fa(T-), \fb(T-)]$,
\item all the horizontal boundary edges of $\fP$ are lower boundary edges except at time $t=T$, i.e. $\{x: (x,t-)\in \fP\}\subset \{x: (x,t+)\in \fP\}$ for $t\in\{T_0, T_1,\cdots, T_{p-1}\}$.
\end{enumerate}
\end{definition}

We refer to Figure \ref{f:shapes} for some examples of domains described by Definition \ref{def:oneend}. Given a polygonal domain $\fP$ as in Definition \ref{def:oneend}, the corresponding nonintersecting Bernoulli walks all end on the interval $[\fa(T-), \fb(T-)]$ at time $t=T$. Especially no particles are annihilated at time $t\in [0,T)$, but might be created.
\begin{figure}
 \includegraphics[width=\textwidth,trim={0cm 8cm 0 8cm},clip]{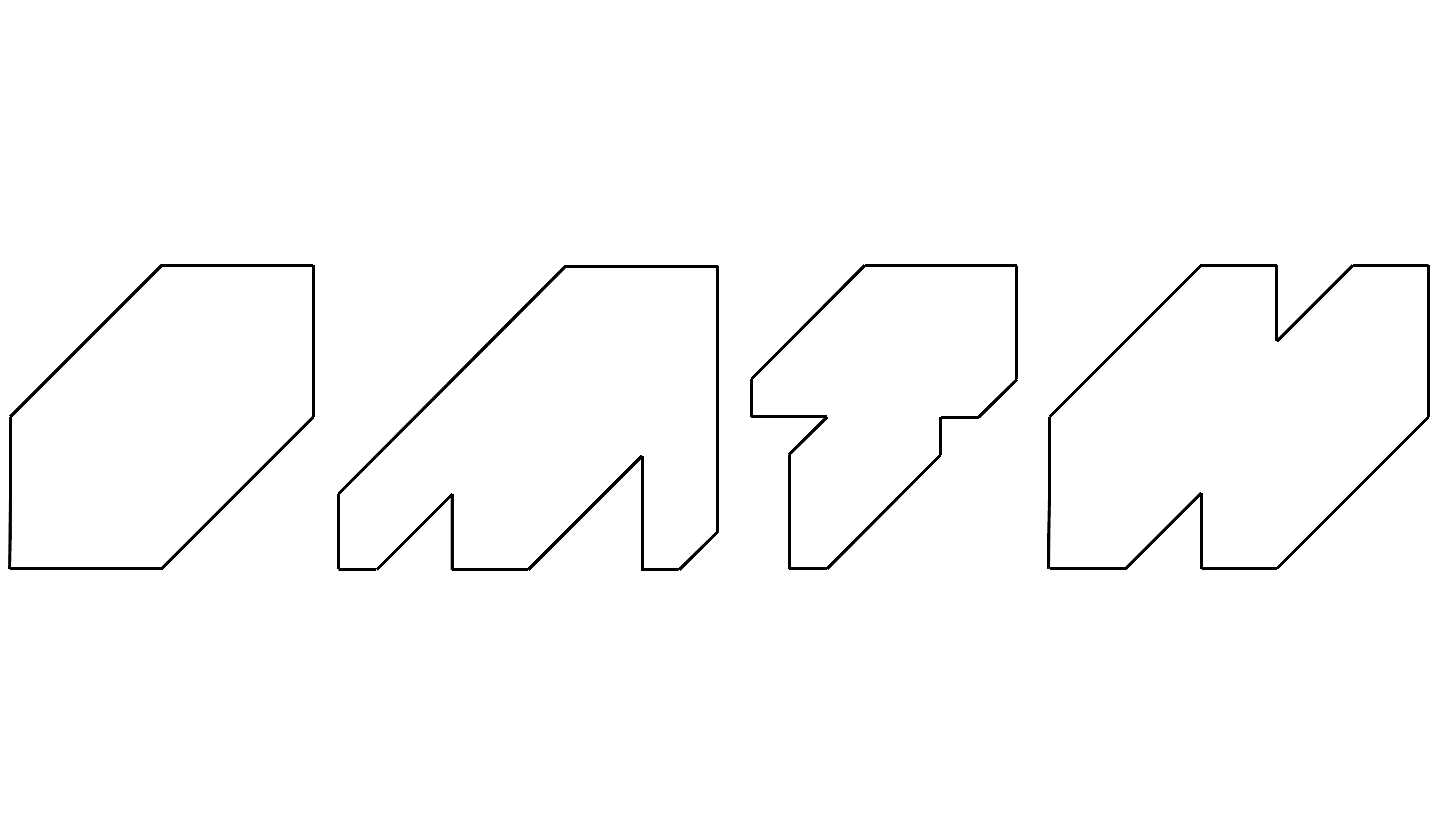}
 \caption{Some examples of domains described by Definition \ref{def:oneend} are hexagon ($1$-st subplot), trapezoidal domain ($2$-nd subplot), polygon with general left and right boundaries ($3$-rd subplot). The polygon in the $4$-th subplot does not satisfy Definition \eqref{def:oneend}, because after identifying with nonintersecting Bernoulli walks, the ending configuration consists of two components. }
 \label{f:shapes}
\end{figure}
 As we will see in Section \ref{s:loopeq}, for such domains, the corresponding discrete loop equations do not have singularity.

For any time $t\in [0,T)\cap \bZ_n$, 
and any particle configuration $(x_1,x_2,\cdots,x_{m})\in \mathfrak M_{t}({\mathfrak P})$, where
\begin{align*}
m=n\left((h(\fb_1(t),t)-h(\fa_1(t),t))
+\cdots +(h(\fb_{r(t)}(t),t)-h(\fa_{r(t)}(t),t))\right),
\end{align*}
we denote $N_{t}(x_1, x_2, \cdots, x_m)$ the number of nonintersecting Bernoulli walks starting from particle configuration $(x_1,x_2,\cdots,x_{m})$ at time $t$:
\begin{align}\label{e:defwalk}
N_{t}(x_1, x_2, \cdots, x_m)=\#\{\text{nonintersecting Bernoulli walks starting from $(x_1,x_2,\cdots,x_{m})$ at time $t$}\}.
\end{align}
And for $(x_1, x_2, \cdots, x_m)\not\in \mathfrak M_{t}({\mathfrak P})$, we simply set
$N_{t}(x_1, x_2, \cdots, x_m)=0$.
For any $t\in (0,T)\cap \bZ_n$, and any particle configuration $(x_1,x_2,\cdots,x_{m})\in \mathfrak M_{t}({\mathfrak P})$, we denote  $(x'_1,x'_2,\cdots,x'_{m'})$ the corresponding particle configuration at $t-$ and set
\begin{align}\label{e:defNt-}
N_{t-}(x'_1,x'_2,\cdots,x'_{m'})=N_{t}(x_1, x_2, \cdots, x_m).
\end{align}
Again for $t\neq\{T_0, T_1, T_2,\cdots, T_p\}$, \eqref{e:defNt-} reduces to 
$N_{t-}(x_1,x_2,\cdots,x_{m})=N_{t}(x_1, x_2, \cdots, x_m)$.
For $t=T$, we set $W_{t-}(x'_1,x'_2,\cdots,x'_{m'})=1$, if $(x'_1,x'_2,\cdots,x'_{m'})$ matches with the boundary of ${\mathfrak P}$, i.e. it equals
$(\fa(T-), \fa(T-)+1/n, \cdots, \fb(T-)-1/n)$.

We remark that if we denote the empirical measure of the particle configuration $\bmx=(x_1, x_2, \cdots, x_m)\in \fM_t(\fP)$ as
\begin{align}\label{e:defrho}
\rho(x; \bmx)=\sum_{i=1}^m \bm1\left(x\in \left[x_i, x_i+1/n\right]\right).
\end{align}
Then this particle configuration gives a height function on the bottom boundary $[\fa_1(t), \fb_1(t)]\cup[\fa_2(t), \fb_2(t)]\cup\cdots \cup [\fa_{r(t)}(t),\fb_{r(t)}(t)]$ of ${\mathfrak P}\cap \bR\times[t, T]$:
\begin{align}\label{e:defbeta0}
\beta(x;\bmx)= \left(\beta^{{\mathfrak P}}(\fa_i(t),  t )+\int_{\fa_i(t)}^ x  \rho( x;\bmx )\rd  x \right),
\end{align}
for any $ x \in [\fa_i(t), \fb_i(t)]$. From the point view of lozenge tilings,  $N_{t}(x_1, x_2, \cdots, x_m)$ (as in \eqref{e:defwalk}) is also the number of lozenge tilings of the domain $\sfP\cap \bR\times[nt, nT]$, with the bottom height function given by $nb(nx;\bmx)$.

%
%
%
%
%

We have the following recursion for the number of lozenge tilings: for any time $t\in [0,T)\cap \bZ_n$,
\begin{align}\label{e:Wt+recur}
N_{t}(x_1,x_2,\cdots,x_m)
=\sum_{(e_1,e_2,\cdots, e_m)\in\{0,1\}^m}
N_{(t+1/n)-}(x_1+e_1/n,x_2+e_2/n,\cdots,x_m+e_m/n).
\end{align}
We can use the partition function \eqref{e:defwalk} to define a nonintersecting Bernoulli random walk: Let  $\{\bmx(t)=(x_1(t), x_2(t),\cdots, x_{m(t)}(t))\}_{t\in[0,T]\cap \bZ_n}$ be the following nonintersecting Bernoulli random walk, with transition probability
\begin{align}\label{e:randomwalk}
\bP(\bmx((t+1/n)-)=\bmx+\bme/n|\bmx(t)=\bmx)=\frac{N_{(t+1)-}(\bmx+\bme/n)}{N_{t}(\bmx)},
\end{align}
for $\bme=(e_1, e_2, \cdots, e_m)\in \{0,1\}^m$.
It is easy to see that the joint law of $\{\bmx(t)=(x_1(t), x_2(t),\cdots, x_{m(t)}(t))\}_{{t\in[0,T]\cap \bZ_n}}$ is uniform among all possible nonintersecting Bernoulli walks, and we have the following proposition.

\begin{proposition}
The joint law of $\{\bmx(t)=(x_1(t), x_2(t),\cdots, x_{m(t)}(t))\}_{{t\in[0,T]\cap \bZ_n}}$, as defined in \eqref{e:randomwalk}, with initial data 
\begin{align*}
\bmx(0)=(\fa_1(0), \fa_1(0)+1/n, \cdots, \fb_1(0)-1/n, \cdots, \fa_{r(0)}(0), \fa_{r(0)}(0)+1/n, \cdots, \fb_{r(0)}(0)-1/n),
\end{align*}
is the same as  random lozenge tilings of domain $\sfP$, which is obtained from rescaling $\fP$ by a factor $n$.
\end{proposition}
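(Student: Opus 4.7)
The plan is to show that the Markov chain defined by \eqref{e:randomwalk} assigns probability $1/N_0(\bmx(0))$ to every admissible trajectory, which is precisely the uniform distribution on nonintersecting Bernoulli walks with the given tightly packed initial data. Combined with the bijection between lozenge tilings of $\sfP$ and such walks established earlier in Section \ref{s:rw} (with $\bmx(0)$ encoding the lowest horizontal boundary at $t=0$ and the tightly packed configuration at time $T-$ encoding the unique horizontal upper boundary), this immediately yields the claim.

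The core argument is a one-line telescoping computation. Fix an admissible trajectory $\{\bmx(t)\}_{t\in[0,T]\cap\bZ_n}$. At each horizontal boundary time $t\in\{T_1,\ldots,T_{p-1}\}$, by Definition \ref{def:oneend} only particle \emph{creation} (no annihilation) can occur, so $\bmx(t-)$ and $\bmx(t)$ determine each other deterministically. Multiplying the transition probabilities \eqref{e:randomwalk} along the trajectory, its total probability under the chain equals
\begin{align*}
\prod_{t\in[0,T-1/n]\cap\bZ_n}\frac{N_{(t+1/n)-}\bigl(\bmx((t+1/n)-)\bigr)}{N_t\bigl(\bmx(t)\bigr)}.
\end{align*}
Applying the identity $N_s(\bmx(s))=N_{s-}(\bmx(s-))$ from \eqref{e:defNt-} to rewrite each denominator $N_t(\bmx(t))$ as $N_{t-}(\bmx(t-))$, consecutive factors cancel and the product collapses to $N_{T-}(\bmx(T-))/N_0(\bmx(0))$.

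The hypothesis of a single horizontal upper boundary edge (Definition \ref{def:oneend}) forces the unique admissible configuration at time $T-$ to be the tightly packed $(\fa(T-),\fa(T-)+1/n,\ldots,\fb(T-)-1/n)$, so $N_{T-}(\bmx(T-))=1$ by the boundary convention stated just after \eqref{e:defNt-}. The probability of any admissible trajectory is therefore the constant $1/N_0(\bmx(0))$, i.e.\ the uniform measure on the set of all nonintersecting Bernoulli walks starting from the prescribed $\bmx(0)$ and consistent with $\fP$. There is essentially no technical obstacle in this argument: the only care needed is in consistently matching the $t-$ versus $t+$ notation at horizontal boundary times, and in verifying that admissibility at each intermediate step (the constraint $\bmx(t)\in\fM_t(\fP)$) is automatically enforced by the condition $N_{(t+1/n)-}(\bmx+\bme/n)>0$ appearing in the numerator.
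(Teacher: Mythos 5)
Your telescoping argument is correct and is precisely the computation the paper is gesturing at when it writes ``it is easy to see that the joint law\ldots is uniform'' (the paper offers no proof of this proposition). One small remark: the cancellation in the product relies only on the definitional identity $N_{s-}(\bmx(s-))=N_s(\bmx(s))$ from \eqref{e:defNt-}, which holds regardless of whether particles are created or annihilated at $s$, so the restriction to creation only (Definition \ref{def:oneend}) is not needed for the telescoping itself; where that hypothesis does enter is exactly where you use it, namely in knowing that the time-$T-$ configuration is the single tightly packed interval so that $N_{T-}(\bmx(T-))=1$ by the stated boundary convention.
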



\section{Complex Burger Equation}\label{s:burger}
Following \cite{MR2358053}, we can encode the local density triple
$(p_{\up{scale=0.8}}, p_{\rig{scale=0.8}}, p_{\emp{scale=0.8}})$  by the so called complex slope $ f\in \bC_-$ in the lower half plane, as in Figure \ref{f:z}. The complex slope $ f\in \bC_-$ (in the lower half complex plane) is uniquely determined by 
\begin{align*}
\arg^*( f)=\pi(p_{\emp{scale=0.8}}-1),\quad \arg^*( f+1)=-\pi p_{\rig{scale=0.8}},
\end{align*}
where we use the convention that $\arg^*(\cdot)\in[-\pi,0]$.
We recall that in our convention  $\del_{x}  h(x,t)=p_{\up{scale=0.8}}+p_{\rig{scale=0.8}}=1-p_{\emp{scale=0.8}}$ and $\del_{t}  h(x,t)=-p_{\rig{scale=0.8}}$.
(Our convention of the height function is slightly different from that in \cite{MR2358053}, that is why we need to take the complex slope in the lower half plane.)
Then the surface tension functional \eqref{e:surfacetension} satisfies
\begin{align*}
\frac{\del \sigma}{\del (\del_{x} h)}=-\frac{\del \sigma}{\del p_{\emp{scale=0.8}}}=\ln| f+1|,\quad
\frac{\del \sigma}{\del (\del_{t} h)}=-\frac{\del \sigma}{\del p_{\rig{scale=0.8}}}=\ln | f|.
\end{align*}
%
%
\begin{figure}
 \includegraphics[scale=0.48,trim={0cm 8cm 0 8cm},clip]{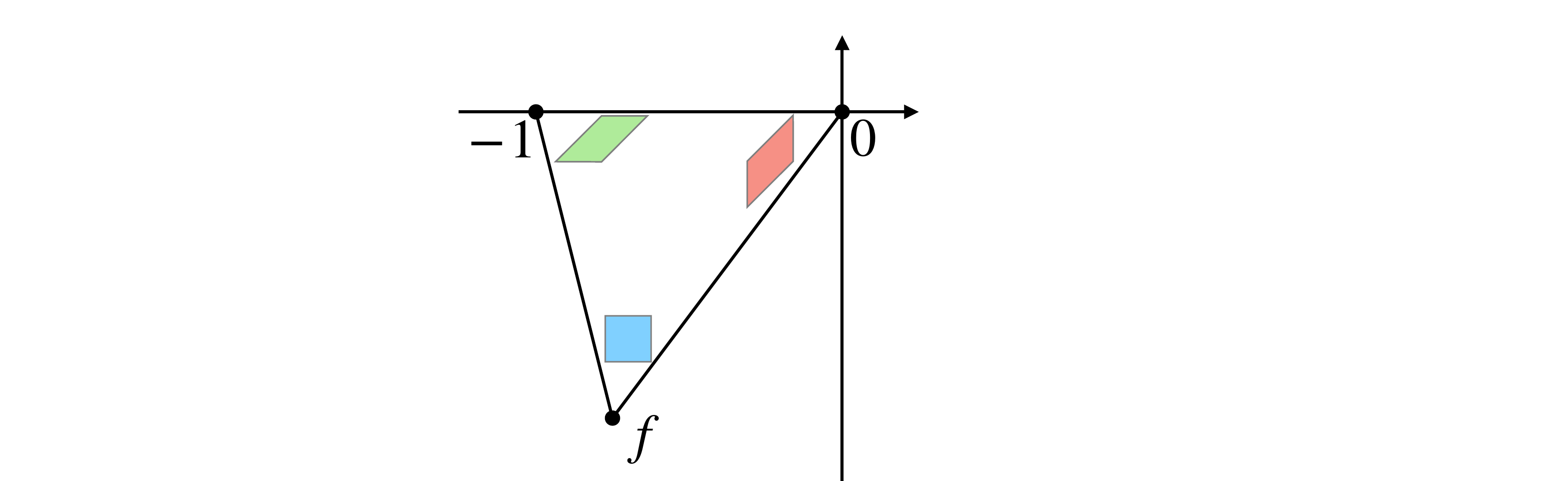}
 \caption{Triangle defining the complex slope $f$.}
 \label{f:z}
\end{figure}

We denote the liquid region,  i.e. where
$p_{\up{scale=0.8}}, p_{\rig{scale=0.8}}, p_{\emp{scale=0.8}}> 0$,
\begin{align}\label{e:liquid}
\Omega({\mathfrak P})\deq\{(x,t)\in {\mathfrak P}: 0<\del_{x}  h^*(x,t)<1\}.
\end{align}
The remaining part ${\mathfrak P}\setminus\Omega({\mathfrak P})$ is called the frozen region, where only one type of lozenges is observed. The boundary curves separate liquid region and frozen region are referred to as ``arctic curves".

\begin{theorem}[\cite{MR2358053}]
Let $ h^*$ be the maximizer of the variational problem \ref{e:vp}. In the liquid region $\Omega({\mathfrak P})$, there exists a function $ f_t(x)$ taking values in the 
lower half complex plane such that
\begin{align}\label{e:hfrelation}
(\del_{x}  h^*, \del_{t}  h^*)=\frac{1}{\pi}\left(-\arg^*( f_t(x)),\arg^*( f_t(x)+1)\right),
\end{align}
and $ f_t(x)$ satisfies the complex Burgers equation
\begin{align}\label{e:cb}
 \frac{\del_{t}  f_t(x)}{ f_t(x)}+\frac{\del_{x}  f_t(x)}{ f_t(x)+1}=0.
\end{align}
\end{theorem}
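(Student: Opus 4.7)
The first step is to construct $f_t(x)$ pointwise. Inside the liquid region $\Om(\mathfrak P)$ the gradient $\nb h^*$ corresponds via \eqref{e:ldensity} to a density triple with all three components strictly in $(0,1)$. The pair of conditions $\arg^*(f) = \pi(p_{\emp{scale=0.8}}-1)$ and $\arg^*(f+1) = -\pi p_{\rig{scale=0.8}}$ then admits a unique solution $f$ in the open lower half plane: geometrically $f$ is the third vertex of the triangle of Figure~\ref{f:z}, and the map from the open simplex of density triples onto the open lower half plane obtained in this way is a homeomorphism. This defines $f_t(x)$ on $\Om(\mathfrak P)$ and produces \eqref{e:hfrelation}.

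The second step is to extract the real part of the Burgers equation from the Euler--Lagrange equation of \eqref{e:vp}. Since all three densities are strictly positive everywhere in the liquid region, the Lipschitz constraints \eqref{e:lip} are inactive, and the first variation of the maximizer produces the unconstrained Euler--Lagrange equation
\begin{equation*}
\del_x \frac{\del \sigma}{\del(\del_x h)}(\nb h^*) + \del_t \frac{\del \sigma}{\del(\del_t h)}(\nb h^*) = 0.
\end{equation*}
Substituting the identities $\del\sigma/\del(\del_x h)=\ln|f+1|$ and $\del\sigma/\del(\del_t h)=\ln|f|$ recorded at the start of this section turns this into $\del_x\ln|f_t(x)+1| + \del_t\ln|f_t(x)| = 0$, equivalently
\begin{equation*}
\Re\pa{\frac{\del_t f_t(x)}{f_t(x)} + \frac{\del_x f_t(x)}{f_t(x)+1}} = 0,
\end{equation*}
which is the real part of \eqref{e:cb}.

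The third step is to obtain the imaginary part by exploiting the compatibility of mixed partial derivatives. Writing $\arg^*(\cdot) = \Im\ln(\cdot)$ with the branches fixed in the definition of the complex slope, \eqref{e:hfrelation} reads
\begin{equation*}
-\pi\,\del_x h^* = \Im\ln f_t(x), \qquad \pi\,\del_t h^* = \Im\ln(f_t(x)+1).
\end{equation*}
Differentiating the first relation in $t$, the second in $x$, and using $\del_t\del_x h^* = \del_x\del_t h^*$, yields $\Im(\del_t f_t(x)/f_t(x) + \del_x f_t(x)/(f_t(x)+1)) = 0$. Combined with the real part from the previous paragraph, this produces the complex Burgers equation \eqref{e:cb}.

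The main analytic obstacle is regularity: the computations above presuppose $h^* \in C^2(\Om(\mathfrak P))$ and differentiability of $f_t(x)$ so that the Euler--Lagrange equation and the mixed-partials identity both hold classically. This is a standard PDE input: inside the liquid region $\sigma$ is a strictly concave function on the open simplex of density triples (since the Lobachevsky function is strictly concave on $(0,\pi)$), which makes the Euler--Lagrange equation a uniformly elliptic quasilinear PDE there; elliptic regularity then upgrades the Lipschitz maximizer of \eqref{e:vp} to a smooth function, and smoothness of $f_t(x)$ follows from that of $\nb h^*$ via the homeomorphism in the first step. I expect this ellipticity/regularity step to be the only piece of the argument requiring genuine analytic work, with everything else reducing to the algebraic manipulations above.
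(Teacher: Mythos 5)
Your proposal is correct and follows essentially the same route as the paper's proof: the Euler--Lagrange equation for $\iint\sigma(\nabla h)$ together with the identities $\del\sigma/\del(\del_x h)=\ln|f+1|$, $\del\sigma/\del(\del_t h)=\ln|f|$ gives the real part of the Burgers equation, while equality of the mixed partials $\del_t\del_x h^* = \del_x\del_t h^*$ applied to the argument identities in \eqref{e:hfrelation} gives the imaginary part, and the two combine into \eqref{e:cb}. The only substantive difference is that you flag the regularity/ellipticity input needed to justify the classical Euler--Lagrange and mixed-partials computations, which the paper leaves implicit by citing \cite{MR2358053}; this is an honest and appropriate observation rather than a gap in your argument.
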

\begin{proof}
The Euler-Lagrange equations for the variational problem \eqref{e:vp} give
\begin{align*}
\frac{\del}{\del x}\frac{\del \sigma}{\del (\del_{x} h^*)}+\frac{\del}{\del t}\frac{\del \sigma}{\del (\del_{t} h^*)}=\del_{t} \ln | f|+\del_{x} \ln| f+1|=0.
\end{align*} 
Moreover, we also have that $\arg^*( f)=\pi(p_{\emp{scale=0.8}}-1)$ and $\arg^*( f+1)=-\pi p_{\rig{scale=0.8}}$. Then it follows that 
\begin{align*}
\pi\del_{tx} h^*=-\del_{x}\arg^*( f)= \del_{t} \arg^*( f+1).
\end{align*}
Thus we have 
\begin{align*}
\del_{t} \ln  f+\del_{x} \ln( f+1)=0,
\end{align*}
and \eqref{e:cb} follows.
\end{proof}

In physics
literature, the complex Burgers equation first appeared in the
work of Matytsin \cite{MR1257846}, to
describe the asymptotics of Harish-Chandra-Itzykson-Zuber integral formula. Its rigorous mathematical study appears in the work
of  Guionnet \cite{MR2034487}.

The complex Burgers equation can be solved using characteristic method as in  \cite[Corollary 1]{MR2358053}, there exists an analytic function $Q$ of two variables such that the solution of \eqref{e:cb} satisfies
\begin{align}\label{e:defQ}
Q\left( f_t(x), x- \frac{t  f_t(x)}{ f_t(x)+1}\right)=0.
\end{align}
When ${\mathfrak P}$ is  a polygonal domain in the plane, formed by $3d$ segments
with slopes $1,0,\infty$, cyclically repeated as we follow the boundary in the
counterclockwise direction, it is proven in \cite[Theorem 2]{MR2358053} that $Q$ is an algebraic curve of degree $d$ with genus zero. In this case, at the boundary of the liquid region, the so-called ``arctic curve", the solutions $( f,z)$ and $(\bar f,\bar z)$ of $Q=0$ become identical. Thus, the arctic curve corresponds to points where the equation \eqref{e:defQ}
has a double root, and is itself an analytic curve in the $(x,t)$-plane.

At a generic point, the arctic curve is smooth and \eqref{e:defQ} has exactly
one real double root. The function $ f_t(x)$ has a square-root
singularity at the arctic curve. Hence, $ f$ grows
like the square root of the distance as one moves from the arctic curve
inside the liquid region. At $(x,t)$ of the arctic curve, its tangent vector has slope 
\begin{align}\label{e:slope}
\frac{ f_t(x)+1}{ f_t(x)},
\end{align}
and locally, the liquid region lies completely on one side of its tangent lines. As we move along arctic curve counterclockwise, its tangent vector also rotates counterclockwise, and should rotate $d$ times in total. Moreover the arctic curve is tangent to all sides of the polygonal domain ${\mathfrak P}$ that we tile. We note that near a non-convex
corner, the arctic curve may be tangent to the line forming a side of ${\mathfrak P}$ but
not to the side itself. Those tangent points give the corresponding values of $ f_t(x)$, this can be used to fix $Q$.

Inside the liquid region $\Omega({\mathfrak P})$ as defined in \eqref{e:liquid}, we have $0<\del_{x} h^*(x,t)<1$. From \eqref{e:hfrelation}, it follows that $\Im[ f_t(x)]<0$ inside the liquid region. We can recover $ (x,t)$ from $( f_t(x), x-t f_t(x)/( f_t(x)+1))$, by noticing that $t=-\Im[x-t f_t(x)/( f_t(x)+1)]/\Im[ f_t(x)/( f_t(x)+1)]$.
Thus, the map 
\begin{align}\label{e:cover}
\pi_Q:  (x,t)\in \Omega({\mathfrak P})\mapsto ( f_t(x),x-t f_t(x)/( f_t(x)+1))
\end{align}
is an orientation preserving diffeomorphism from $\Omega({\mathfrak P})$ onto its image.
In fact, we can glue
$ f_t(x)$ and its complex conjugate
along the boundary of the liquid region $\Omega({\mathfrak P})$. This gives a map from the double of
$\overline {\Omega({\mathfrak P})}$ to the curve $Q( f,z)=0$. This map is orientation preserving and
unramified, and so a diffeomorphism of $Q( f,z)=0$. Especially, $Q( f,z)=0$ is of the same genus as the liquid region, both have genus zero. 

\begin{figure}
 \includegraphics[scale=0.48,trim={0cm 6cm 0 8cm},clip]{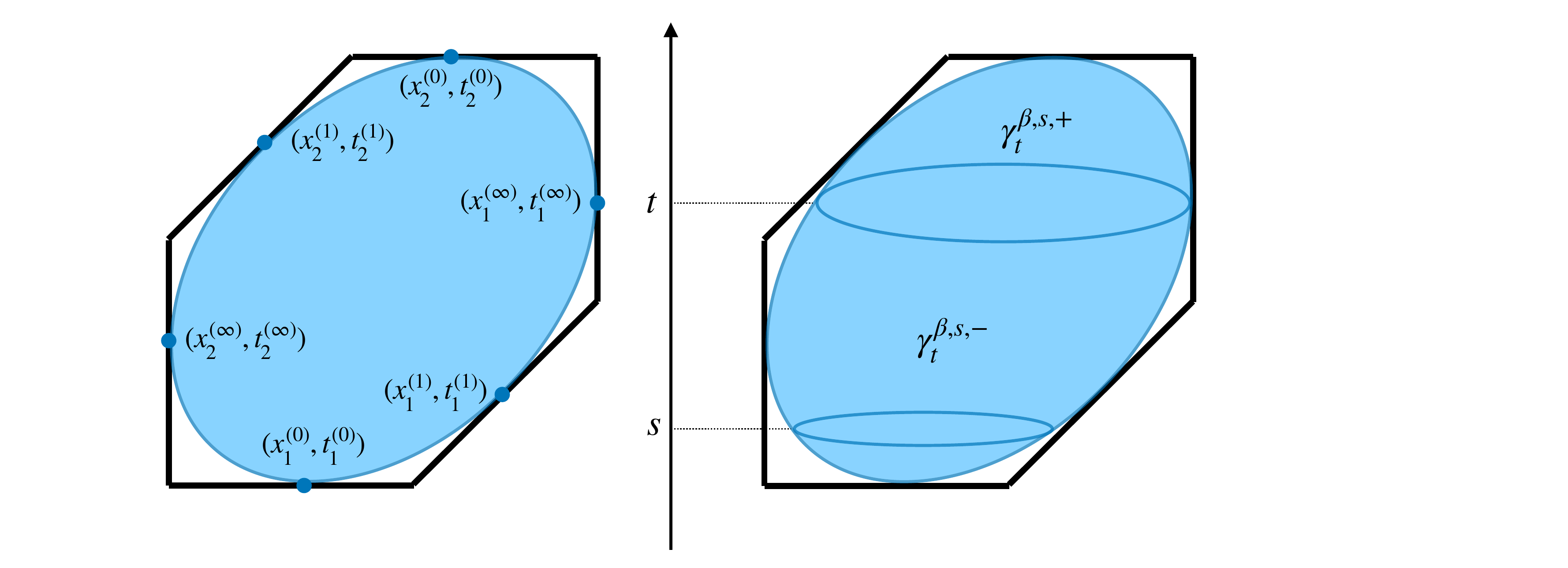}
 \caption{In the first subplot, the arctic curve is tangent to boundary of ${\mathfrak P}$  in counterclockwise order at $(x_1^{(0)}, t_1^{(0)}), (x_1^{(1)}, t_1^{(1)}), (x_1^{(\infty)}, t_1^{(\infty)}), (x_2^{(0)}, t_2^{(0)}), (x_2^{(1)}, t_2^{(1)}), (x_2^{(\infty)}, t_2^{(\infty)})$; In the second subplot, the image of the segment $\{ (x,t): (x,t)\in\Omega({\mathfrak P};\beta,s)\}$ under the map $\pi^t_{Q^{ \beta , s }}$, and its complex conjugate  cuts the Riemann surface $\gamma_t^{ \beta , s }$ into two parts:  $\gamma_t^{ \beta , s  ,-}, \gamma_t^{ \beta , s  ,+}$. }
 \label{f:circle}
\end{figure}

As we move along the boundary of ${\mathfrak P}$ counterclockwise, the boundary of the liquid region is tangent to the boundary of ${\mathfrak P}$ at $(x_1^{(0)}, t_1^{(0)}), (x_1^{(1)}, t_1^{(1)}), (x_1^{(\infty)}, t_1^{(\infty)}), \cdots, (x_d^{(0)}, t_d^{(0)}), (x_d^{(1)}, t_d^{(1)}), (x_d^{(\infty)}, t_d^{(\infty)})$, sequentially, where the points $(x_i^{(0)}, t_i^{(0)}), (x_i^{(1)}, t_i^{(1)}), (x_i^{(\infty)}, t_i^{(\infty)})$ are tangent points on boundary edges of ${\mathfrak P}$ with slopes $0, 1, \infty$ respectively,  as in Figure \ref{f:circle}.
For $1\leq i\leq d$, using the relation \eqref{e:slope}
\begin{align}\label{e:bcond}
 f_{t_i^{(0)}}(x_i^{(0)})=-1,\quad
 f_{t_i^{(1)}}(x_i^{(1)})=\infty,\quad
 f_{t_i^{(\infty)}}(x_i^{(\infty)})=0.
\end{align}
 Using the map \eqref{e:cover}, those tangent points correspond to points on $Q$, for $1\leq i\leq d$
  \begin{align}\label{e:x0}
 X_i^{(0)}=( f_{t_i^{(0)}}(x_i^{(0)}), x_i^{(0)}-t_i^{(0)} f_{t_i^{(0)}}(x_i^{(0)})/( f_{t_i^{(0)}}(x_i^{(0)})+1))
 =
 \left\{
 \begin{array}{cc}
 (-1, x_i^{(0)}), &t_i^{(0)}=0,\\
  (-1, \infty), &t_i^{(0)}>0,\\
 \end{array}
 \right.
 \end{align}
  and 
  \begin{align}\begin{split}\label{e:x1inf}
 &X_i^{(1)}=( f_{t_i^{(1)}}(x_i^{(1)}), x_i^{(1)}-t_i^{(1)} f_{t_i^{(1)}}(x_i^{(1)})/( f_{t_i^{(1)}}(x_i^{(1)})+1))
 =(\infty, x_i^{(1)}-t_i^{(1)}),\\
  &X_i^{(\infty)}=( f_{t_i^{(\infty)}}(x_i^{(\infty)}), x_i^{(\infty)}-t_i^{(\infty)} f_{t_i^{(\infty)}}(x_i^{(\infty)})/( f_{t_i^{(\infty)}}(x_i^{(\infty)})+1))
 =(0, x_i^{(\infty)}).
 \end{split}\end{align}
We notice that  $(x_i^{(1)},t_i^{(1)})$ belongs to an edge of ${\mathfrak P}$ with slope $1$, the difference $x_i^{(1)}-t_i^{(1)}$ depends only on that edge. Similarly $(x_i^{(\infty)},t_i^{(\infty)})$ belongs to an edge of ${\mathfrak P}$ with slope $\infty$, $x_i^{(\infty)}$ depends only on that edge. Therefore, we can directly compute the coordinates of those points $X_1^{(1)}, X_1^{(\infty)}, \cdots, X_d^{(1)}, X_d^{(\infty)}$ from the polygonal domain ${\mathfrak P}$. Moreover, these relations \eqref{e:x1inf} give local behavior of $Q$ at points $X_1^{(1)}, X_1^{(\infty)},X_2^{(1)}, X_2^{(\infty)},\cdots, X_d^{(1)}, X_d^{(\infty)}$. The algebraic curve $Q( f,z)=0$ behaves like:
 $ f$ has a pole at $( f,z)=X_i^{(1)}$, for $1\leq i\leq d$;
$ f$ has a zero at $( f,z)=X_i^{(\infty)}$, for $1\leq i\leq d$;
and $\sf$ does not have other zeros or poles.

\begin{remark}
The prescribed $d$ zeros and $d$ poles, given by the above conditions, uniquely determines the degree $d$ algebraic curve $Q( f,z)=0$. However, it is not easy to directly write down the expression of $ f$.
\end{remark}

With the algebraic curve $Q( f,z)=0$, we can extend the function \eqref{e:hfrelation} $ f_t(x)$ from $\{ x :  (x,t)\in\Omega({\mathfrak P})\}$ to a meromorphic function on certain algebraic curve.
At time $ t $, the equation $\{( f,z)\in (\bC\cup\infty)^2:  Q( f, z- t  f/( f+1))=0\}$ defines an algebraic curve $\gamma_t$. We notice $\gamma_{0}$ is simply the algebraic curve $Q( f,z)=0$, and $\gamma_t$ can  be obtained from $\gamma_0$ by the characteristic flow:
\begin{align}\label{e:flow}
( f,z)\in \gamma_{0}\mapsto ( f, z+t f/( f+1))\in \gamma_t.
\end{align}
As a consequence, the family of algebraic curves $\{\gamma_t\}_{0\leq t\leq T}$ are homeomorphic to each other.
Comparing with \eqref{e:defQ}, this gives a natural extension for $ f_t(x)$ from $\{ x :  (x,t)\in\Omega({\mathfrak P})\}$ to the algebraic curve $\gamma_t$. In this way $ f_t: Z=( f,z)\in \gamma_t\mapsto  f\in \bC\bP^1$. If the context is clear we will simply write $ f_t(Z)= f_t(( f,z))$ as $ f_t(z)$ for $Z=( f,z)\in \gamma_t$.


\subsection{More General Boundary Condition}\label{s:generalb}

Slightly more general than \eqref{e:vp}, we can consider the variational problem 
on 
\begin{align}\label{e:omegas}
{\mathfrak P}^{ s }={\mathfrak P}\cap\{(x,t):  s\leq t\leq T \},
\end{align}
with general boundary condition at time $ s $. As in \eqref{e:lb}, we denote the  lower boundary of ${\mathfrak P}^{ s }$ as
\begin{align}\label{e:lbcopy}
\{ x :  (x,s+)\in {\mathfrak P}\}=[\fa_1(s), \fb_1(s)]\cup[\fa_2(s), \fb_2(s)]\cup\cdots \cup [\fa_{r(s)}(s),\fb_{r(s)}(s)].
\end{align}
We further denote  $ \beta : [\fa_1(s), \fb_1(s)]\cup[\fa_2(s), \fb_2(s)]\cup\cdots \cup [\fa_{r(s)}(s),\fb_{r(s)}(s)]\mapsto \bR$ any nondecreasing Lipschitz function with Lipschitz constant $1$: $0\leq \del_{x}  \beta ( x )\leq 1$.
Moreover on the boundary of ${\mathfrak P}$, $ \beta $ coincides with the height of the boundary of ${\mathfrak P}$, i.e. $ \beta ( x )=\beta^{{\mathfrak P}}(x , s )$ for $(x , s )\in \del {\mathfrak P}$.
We denote $H({\mathfrak P};  \beta, s)$ the set of height functions over ${\mathfrak P}^{ s }$ with the height at time $ s $ given by $ \beta $.

\begin{definition}\label{def:Hpb}
For any polygonal domain ${\mathfrak P}$, we denote by $H({\mathfrak P}; \beta, s)$ the set of functions
$ h:{\mathfrak P}^s\mapsto \bR$,
such that:
\begin{enumerate}
\item 
$ h(x,t)=\beta^{{\mathfrak P}} (x,t)$ on $\del{\mathfrak P}\cap\{(x,t):  s<t\leq T \}$ and 
$ h( x , s )= \beta(x)$ on $\{ x : ( x , s+ )\in {\mathfrak P}\}$.
\item
$ h$ is Lipschitz and satisfies \eqref{e:lip} at  points where it is differentiable.
\end{enumerate}
\end{definition}

In this section, we consider the following variational problem 
on ${\mathfrak P}^{ s }$, with general boundary condition at time $ s $ given by $ \beta $,
\begin{align}\label{e:varWt}
W_ s ( \beta )=\sup_{ h\in H({\mathfrak P};\beta, s)}\int_ s ^T\int_\bR\sigma(\nabla  h)\rd x \rd t .
\end{align}

The same as \eqref{e:hfrelation},
if we denote the minimizer of \eqref{e:varWt} as $ h_t^*(x;\beta,s)$ and the corresponding complex slope $ f_t( x ;\beta, s)$
\begin{align}\label{e:ft2}
(\del_{x}  h^*, \del_{t}  h^*)=\frac{1}{\pi}\left(-\arg^*( f_t( x ;\beta, s)),\arg^*( f_t( x ;\beta, s)+1)\right),
\end{align}
then $ f_t( x ;\beta, s)$ satisfies the complex Burgers equation
\begin{align}\label{e:burgereq22}
 \frac{\del_{t}  f_t( x ;\beta, s)}{ f_t( x ;\beta, s)}+\frac{\del_{x}  f_t( x ;\beta, s)}{ f_t( x ;\beta, s)+1}=0,
\end{align}
for $(x,t)\in\Omega({\mathfrak P};\beta,s)$ in the liquid region:
\begin{align}\label{e:defliquid}
\Omega({\mathfrak P};\beta,s)\deq \{ (x,t): 0<\del_{x}  h^*<1, s<t<T\}.
\end{align}
There exists an analytic function $Q^{ \beta , s }$ of two variables such that the solution of \eqref{e:burgereq22} satisfies
\begin{align}\label{e:defQatt}
Q^{ \beta , s }\left( f_t( x ;\beta, s),  x - \frac{ t   f_t( x ;\beta, s)}{ f_t( x ;\beta, s)+1}\right)=0.
\end{align}
 With the analytic function $Q^{ \beta , s }$, we can define the Riemann surface:
$\gamma^{ \beta , s }_ t =\{( f,z)\in (\bC\cup\infty)^2: Q^{ \beta , s }( f,z- t  f/( f+1))=0\}$. Similarly to \eqref{e:flow}, 
 for $ s \leq  t \leq T$, $\gamma^{ \beta , s }_ t $ can  be obtained from $\gamma^{ \beta , s }_ s $ by the characteristic flow:
\begin{align}\label{e:flowatt}
Z=( f,z)\in \gamma^{ \beta , s }_ s \mapsto Z_t=( f, z+( t - s ) f/( f+1))\in \gamma^{ \beta , s }_ t .
\end{align}
As a consequence, the family of algebraic curves $\{\gamma_t^{ \beta , s }\}_{s\leq t\leq T}$ are homeomorphic to each other.
Comparing  \eqref{e:defQatt} with \eqref{e:flowatt}, this gives a natural extension for $ f_t(x;\beta,s)$ from $\{ x :  (x,t)\in\Omega({\mathfrak P};\beta,s)\}$ to the Riemann surface $\gamma_t^{ \beta , s }$. In this way 
\begin{align}\label{e:extend}
f_t(\cdot; \beta,s): Z=( f,z)\in \gamma^{ \beta , s }_t\mapsto  f\in \bC\bP^1.
\end{align} 
If the context is clear we will simply write $ f_t(Z;\beta,s)$ as $ f_t(z;\beta,s)$ for $Z=( f,z)\in \gamma_t^{ \beta , s }$. The complex Burgers equation \eqref{e:burgereq22} can be extended to $\gamma^{ \beta , s }_t$,
\begin{align}\label{e:burgereq3}
 \frac{\del_{t}  f_t( Z ;\beta, s)}{ f_t( Z ;\beta, s)}+\frac{\del_{z}  f_t( Z ;\beta, s)}{ f_t( Z ;\beta, s)+1}=0,\quad Z\in \gamma^{ \beta , s }_t.
\end{align}
Using the characteristic flow \eqref{e:flowatt}, we can rewrite above complex Burgers equation in the following simple form
\begin{align}\label{e:burgernew}
\del_tf_t(Z_t;\beta,s)=0,\quad \del_t z(Z_t)=\frac{f_t(Z_t;\beta,s)}{f_t(Z_t;\beta,s)+1}.
\end{align}

Using the map 
\begin{align}\label{e:cover2}
\pi_{Q^{ \beta , s }}: ( x , r)\in\Omega({\mathfrak P};\beta,s) \mapsto (f_r(x;\beta, s),x-(r- s )f_r(x; \beta, s)/(f_r(x; \beta, s)+1))\in \gamma_ s ^{ \beta , s },
\end{align}
we can identify $\gamma_ s ^{ \beta , s }$ with two copies of $\Omega({\mathfrak P};\beta,s) $ gluing along the boundary of $\Omega({\mathfrak P};\beta,s)$.
Moreover, using the characteristic flow \eqref{e:flowatt}, we can also map $\Omega({\mathfrak P};\beta,s)$ to $\gamma_t^{ \beta , s }$, for any $ s \leq  t \leq T$:
\begin{align}\label{e:coveratr}
\pi^t_{Q^{ \beta , s }}: ( x , r)\in \Omega({\mathfrak P};\beta,s)\mapsto ( f_ r( x ; \beta, s), x -( r- t )f_ r(x;\beta, s))\in \gamma_t^{ \beta , s },
\end{align}
and identify $\gamma_t^{ \beta , s }$ as the gluing of two copies of $ \Omega({\mathfrak P};\beta,s)$ along the boundary.

Using the above identification, the points $(f,z)\in \gamma_t^{\beta,s}$ with $z$ real, i.e. $z\in \bR\bP^1$, correspond to the boundary of $\Omega(\fP;\beta,s)$ where $f_t(x;\beta,s)\in \bR\bP^1$, and the horizontal segments $\{(x,t):(x,t)\in \Omega(\fP;\beta,s) \}$. The image of $\{(x,t):(x,t)\in \Omega(\fP;\beta,s) \}$ under the map \eqref{e:coveratr} $\{( f_t(x;\beta,s),x): (x,t)\in\Omega(\fP;\beta,s)\}$ and its 
complex conjugate $ \{(\overline{ f_t(x;\beta,s)},x): (x,t)\in\Omega(\fP;\beta,s)\}$ glue together to cycles on the Riemann surface $\gamma_t^{\beta,s}$. They
 cut the Riemann surface $\gamma_t^{\beta,s}$ into two parts:  $\gamma_t^{\beta,s,-}, \gamma_t^{\beta,s,+}$. $\gamma_t^{ \beta , s  ,-}$ corresponds to $\{( x , r): ( x , r)\in\Omega({\mathfrak P};\beta,s),  s \leq  r\leq  t \}$, and $\gamma_t^{ \beta , s  ,+}$ corresponds to $\{( x , r): ( x , r)\in\Omega({\mathfrak P};\beta,s),  t \leq  r\leq T\}$, as shown in Figure \ref{f:circle}.
If we consider the covering map, 
\begin{align}\label{e:zmatr}
( f,z)\in \gamma^{ \beta , s,+}_ t \mapsto z\in \bC\bP^1,
\end{align}
it maps the cut $\{(  f_t( x ;\beta, s), x ):  (x,t)\in\Omega({\mathfrak P};\beta,s)\}\cup \{(\overline{  f_t( x ;\beta, s)}, x ):  (x,t)\in\Omega({\mathfrak P};\beta,s)\}$ to the segment $\{ (x,t): (x,t)\in\Omega({\mathfrak P};\beta,s)\}$ twice. We remark that in the special case when $t=s$, we have $\gamma_s^{\beta, s,+}=\gamma_s^{\beta,s}$, and $\gamma_s^{\beta, s,-}=\emptyset$.

Fix any $s\leq t\leq T$, it follows from the same discussion as for \eqref{e:slope},  at a generic point $(x,r)$ on the boundary of the liquid region in time interval $[t,T]$,
\begin{align*}
\Omega^t({\mathfrak P};\beta,s)\deq \{ (x,r): (x,r)\in\Omega({\mathfrak P};\beta,s), t\leq r\leq T\}, 
\end{align*}
its tangent vector has slope 
\begin{align}\label{e:slope2}
\frac{ f_r(x;\beta,s)+1}{ f_r(x;\beta,s)}.
\end{align}
As we move along the boundary of $\Omega^t({\mathfrak P};\beta,s)$ counterclockwise, its tangent vector also rotates counterclockwise. Moreover the boundary of $\Omega^t({\mathfrak P};\beta,s)$ is tangent to all sides of ${\mathfrak P}^t$ (as in \eqref{e:omegas}) that we tile, except possibly for edges leaving the bottom of ${\mathfrak P}^t$, which have slopes $1,\infty$. 
We denote $(x_1^{(0)}, t_1^{(0)}), (x_2^{(0)}, t_2^{(0)}), \cdots, (x_{d_0}^{(0)}, t_{d_0}^{(0)})$ the tangent points on boundary edges of ${\mathfrak P}^t$ with slope $0$. The number $d_0$ is the number of horizontal edges of ${\mathfrak P}$ above $t$. We remark it depends only on the polygonal domain ${\mathfrak P}$ and the time $t$, and is independent of the bottom height function $\beta$. Then from \eqref{e:slope2},
for $1\leq i\leq d_0$,
$
 f_{t_i^{(0)}}(x_i^{(0)})=-1.
$
Those tangent points are mapped by \eqref{e:coveratr} to points on $\gamma_t^{\beta,s,+}$, for $1\leq i\leq d_0$
  \begin{align}\label{e:x02}
 X_i^{(0)}=\left( f_{t_i^{(0)}}(x_i^{(0)}), x_i^{(0)}-(t_i^{(0)}-t)\frac{ f_{t_i^{(0)}}(x_i^{(0)})}{( f_{t_i^{(0)}}(x_i^{(0)})+1)}\right)
 =
  (-1, \infty).
 \end{align}
It follows that  the covering map \eqref{e:zmatr}
hits $\infty$ only at $\{X_i^{(0)}: 1\leq i\leq d_0\}$, and it has degree $d_0$, which depends only on the polygonal domain ${\mathfrak P}$.

The boundary of $\Omega^t({\mathfrak P};\beta,s)$ is also tangent to all sides of ${\mathfrak P}^t$ with slopes $1,\infty$, which are above $t$. We denote them $(x_1^{(1)}, t_1^{(1)}), (x_2^{(1)}, t_2^{(1)}), \cdots, (x_{d_1}^{(1)}, t_{d_1}^{(1)})$
and
$(x_1^{(\infty)}, t_1^{(\infty)}), (x_2^{(\infty)}, t_2^{(\infty)}), \cdots, (x_{d_\infty}^{(\infty)}, t_{d_\infty}^{(\infty)})$, where $d_1, d_\infty$ are the number of sides of ${\mathfrak P}^t$ with slopes $1,\infty$ above $t$.
Then from \eqref{e:slope2},
for $1\leq i\leq d_1$,
$
 f_{t_i^{(1)}}(x_i^{(1)})=\infty,
$
and 
for $1\leq i\leq d_\infty$,
$
 f_{t_i^{(\infty)}}(x_i^{(\infty)})=0.
$
Those tangent points are mapped by \eqref{e:coveratr} to points on $\gamma_t^{\beta,s,+}$, for $1\leq i\leq d_1$
  \begin{align}\label{e:x0d1}
 X_i^{(1)}=\left( f_{t_i^{(1)}}(x_i^{(1)}), x_i^{(1)}-(t_i^{(1)}-t)\frac{ f_{t_i^{(1)}}(x_i^{(1)})}{( f_{t_i^{(1)}}(x_i^{(1)})+1)}\right)
 =
  (\infty, x_i^{(1)}-(t_i^{(1)}-t)),
 \end{align}
and for $1\leq i\leq d_\infty$, let 
   \begin{align}\label{e:x0dinfty}
 X_i^{(\infty)}=\left( f_{t_i^{(\infty)}}(x_i^{(\infty)}), x_i^{(\infty)}-(t_i^{(\infty)}-t)\frac{ f_{t_i^{(\infty)}}(x_i^{(\infty)})}{( f_{t_i^{(\infty)}}(x_i^{(\infty)})+1)}\right)
 =
  (0, x_i^{(\infty)}).
 \end{align}
 We notice that $(x_i^{(1)}, t_i^{(1)})$ belongs to an edge of ${\mathfrak P}$ with slope $1$, the difference $x_i^{(1)}-(t_i^{(1)}-t)$ depends only on that 
edge; similarly $(x_i^{(\infty)}, t_i^{(\infty)})$ belongs to an edge of ${\mathfrak P}$ with slope $\infty$,  $x_i^{(\infty)}$ depends only on that 
edge. Therefore $X_i^{(1)}, X_i^{(\infty)}$ depend only on the polygonal domain ${\mathfrak P}$.

For the edges leaving the bottom of ${\mathfrak P}^t$, the boundary of the liquid region $\Omega^t({\mathfrak P};\beta,s)$ may or may not be tangent to them, depending on the height function $\beta$ of the bottom boundary, as shown in Figure \ref{f:lbcase}. 
For example, in the first subplot of Figure \ref{f:lbcase}, the frozen region consists of lozenges of shape $\emp{scale=1}$. Then on the boundary of the liquid region, the complex slope $f$ is nonnegative. Using \eqref{e:slope2}, we conclude that the slope of the tangent vector is between $1$ and $\infty$. Therefore, if we move along the boundary of $\Omega^t({\mathfrak P};\beta,s)$ clockwise, the slope will decrease until it hits $1$, that is where the boundary edge of ${\mathfrak P}^t$ and the boundary of the liquid region $\Omega^t({\mathfrak P};\beta,s)$ (the shadow) are tangent to each other. In the sixth subplot of Figure \ref{f:lbcase}, the frozen region consists of lozengs of shape $\rig{scale=1}$, so on the boundary of the liquid region, the complex slope $f$ is less or equal than $-1$. Using \eqref{e:slope2}, we conclude that the slope of the tangent vector is between $0$ and $1$. Therefore, if we move along the boundary of $\Omega^t({\mathfrak P}; \beta,s)$ clockwise, the slope will keep decreasing, and will not hit $1$. In this case, the boundary edge of ${\mathfrak P}^t$ and the boundary of the liquid region $\Omega^t({\mathfrak P};\beta,s)$ are not tangent to each other. 

\begin{figure}
 \includegraphics[scale=0.48,trim={0cm 6cm 0 5cm},clip]{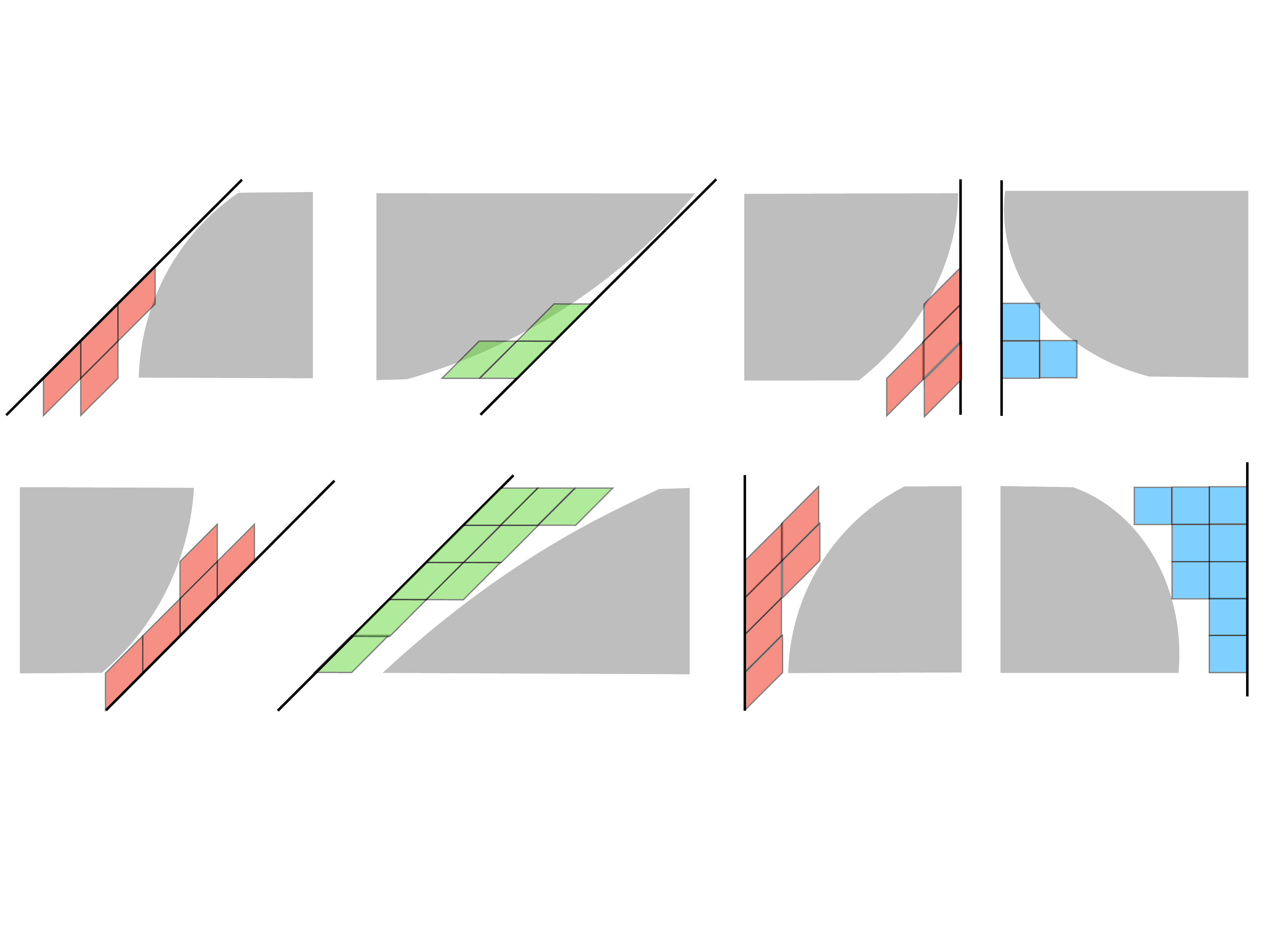}
 \caption{In the first row, the boundary edge of ${\mathfrak P}^t$ and the boundary of the liquid region (the shadow) are tangent to each other, in the second row they are not.}
 \label{f:lbcase}
\end{figure}

We recall from \eqref{e:extend} that the solution $f_t(\cdot;\beta,s)$ of the complex Burgers equation \eqref{e:burgereq22} at time $t$ can be extended from $\{ x :  (x,t)\in\Omega({\mathfrak P};\beta,s)\}$
to a meromorphic function on $\gamma_t^{\beta,s,+}$.  In the rest of this section we describe the zeros and poles of $ f_t(Z;\beta,s)$. From the discussion above, using $\pi^t_{Q^{ \beta , s }}$ from \eqref{e:coveratr},  $\gamma_t^{ \beta , s  ,+}$ can be identified as two copies of $\Omega^t({\mathfrak P};\beta,s)=\{( x , r): ( x , r)\in\Omega({\mathfrak P};\beta,s),  t \leq  r\leq T\}$ gluing together. In the interior of each copy, either $ f_t(Z;\beta,s)$ is in the lower half complex plane, or in the upper half complex plane. So $ f_t(Z;\beta,s)$ does not have zeros or poles in the interior of those two copies. Thus its zeros and poles appear only on the boundary. In the following we analyze $\arg^*f_t(Z;\beta,s)$ when $Z$ moves along the  boundary of $\Omega^t({\mathfrak P};\beta,s)$ under $\pi^t_{Q^{ \beta , s }}$ counterclockwise, which will give a full description of zeros and poles of $f_t(Z;\beta,s)$.

%
%
%

As in \eqref{e:lb}, we denote the slice of ${\mathfrak P}$ at time $t$ as
\begin{align}\label{e:lbcopy}
\{ x :  (x,t+)\in {\mathfrak P}\}=[\fa_1(t), \fb_1(t)]\cup[\fa_2(t), \fb_2(t)]\cup\cdots \cup [\fa_{r(t)}(t),\fb_{r(t)}(t)],
\end{align}
Then we can identify the spacial derivative of the height function $h^*$, $\del_x h^*_t(x;\beta,s)$ as a positive measure on 
$ [\fa_1(t), \fb_1(t)]\cup[\fa_2(t), \fb_2(t)]\cup\cdots \cup [\fa_{r(t)}(t),\fb_{r(t)}(t)]$, and it  satisfies \eqref{e:cond1} and \eqref{e:cond2}.

We decompose the boundary of the liquid region $\Omega^t({\mathfrak P};\beta,s)$
into two parts: 
\begin{align*}
\del\Omega^t({\mathfrak P};\beta,s)=\{(x,r)\in \del \Omega^t({\mathfrak P};\beta,s): r>t\}\cup \{(x,r)\in \del \Omega^t({\mathfrak P};\beta,s): r=t\}.
\end{align*}
When we move along the
boundary of $\Omega^t({\mathfrak P};\beta,s)$ corresponding to the part $\{(x,r)\in \del \Omega^t({\mathfrak P};\beta,s): r>t\}$ counterclockwise, its tangent vector also rotates counterclockwise. Using the relation \eqref{e:slope2},  $f_t(Z;\beta,s)$ takes value in $\bR\bP^1$ and continuously moves in negative direction, i.e. once it hits $-\infty$ then immediately starts from $+\infty$ again.  It follows our discussion before \eqref{e:x02}, it hits $-1$ exactly $d_0$ times, at $X_1^{(0)}, X_2^{(0)}, \cdots, X_{d_0}^{(0)}$.

%
%
%
For the part of the boundary $\Omega^t({\mathfrak P};\beta,s)$ corresponding to $\{(x,r)\in \del \Omega^t({\mathfrak P};\beta,s): r=t\}$, the argument of the complex slope $f_t(Z;\beta,s)$ is given explicitly by the defining relation \eqref{e:ft2}. More precisely,
for $Z$ corresponding to the liquid region $\{ x :  (x,t)\in\Omega^t({\mathfrak P};\beta,s)\}\cap (\fa_i(t), \fb_i(t))$, i.e. $Z=(f_t(x;\beta,s), x)\in \gamma_t^{\beta,s,+}$ with $z(Z)\in \{ x :  (x,t)\in\Omega^t({\mathfrak P};\beta,s)\}\cap (\fa_i(t), \fb_i(t))$, the definition of the complex slope \eqref{e:ft2} gives,
\begin{align}\label{e:argft}
\arg^* f_t(Z;\beta,s)=\arg^* f_t(x;\beta,s)=-\pi \del_x h_t^*(x;\beta,s).
\end{align}
It turns out the above relation \eqref{e:argft} holds on the whole interval $(\fa_i(t), \fb_i(t))$. For $Z\in \gamma_t^{\beta,s,+}$ with $z(Z)\in (\fa_i(t), \fb_i(t))\setminus \{ x :  (x,t)\in\Omega^t({\mathfrak P};\beta,s)\}$, either $\del_x h_t^*(z(Z);\beta,s)=0
$ or $\del_x h_t^*(z(Z);\beta,s)=1$. In this case, by our construction of the extension $f_t(Z;\beta,s)$, there exists some point $(x,r)\in \overline{\Omega^t({\mathfrak P};\beta,s)} $, such that
\begin{align}\label{e:xrpair}
f_t(Z;\beta,s)=f_r(x;\beta,s),\quad
z(Z)-\frac{tf_t(Z;\beta,s)}{f_t(Z;\beta,s)+1}=x-\frac{rf_r(x;\beta,s)}{f_r(x;\beta,s)+1}.
\end{align}
Since both $z(Z)$ and $x$ are real, it follows that $f_t(Z;\beta,s)=f_r(x;\beta,s)$ are real, and $(x,r)$ is on the boundary of the liquid region $\Omega^t({\mathfrak P};\beta,s)$. We can rewrite the second relation in \eqref{e:xrpair} as
\begin{align}\label{e:xrpair2}
\frac{r-t}{x-z(Z)}=\frac{f_r(x;\beta,s)+1}{f_r(x;\beta,s)}.
\end{align}
Comparing with \eqref{e:slope2}, the righthand side of \eqref{e:xrpair2} is the slope of the tangent vector of the liquid region $\Omega^t({\mathfrak P};\beta,s)$ at $(x,r)$. In other words, the line from $(z(Z),t)$ to $(x,r)$ is tangent to the boundary of the liquid region $\Omega^t({\mathfrak P};\beta,s)$. Since the segment from $(z(Z),t)$ to $(x,r)$ stays in the frozen region, either it is tiled with lozenges of shapes $\up{scale=1},\rig{scale=1}$, then $\del_x h_t^*(z(Z);\beta,s)=1$ and $f_t(Z;\beta,s)=f_r(x;\beta,s)\leq 0$;
or it is tiled with lozenges of shape $\emp{scale=1}$, then $\del_x h_t^*(z(Z);\beta,s)=0$ and $f_t(Z;\beta,s)=f_r(x;\beta,s)\geq 0$,
as illustrated in Figure \ref{f:tangent}. In both cases we have
\begin{align}\label{e:argft2}
\arg^* f_t(Z;\beta,s)=\arg^* f_r(x;\beta,s)=-\pi \del_x h_t^*(z(Z);\beta,s).
\end{align}

\begin{figure}
 \includegraphics[scale=0.5,trim={0cm 10cm 5cm 9cm},clip]{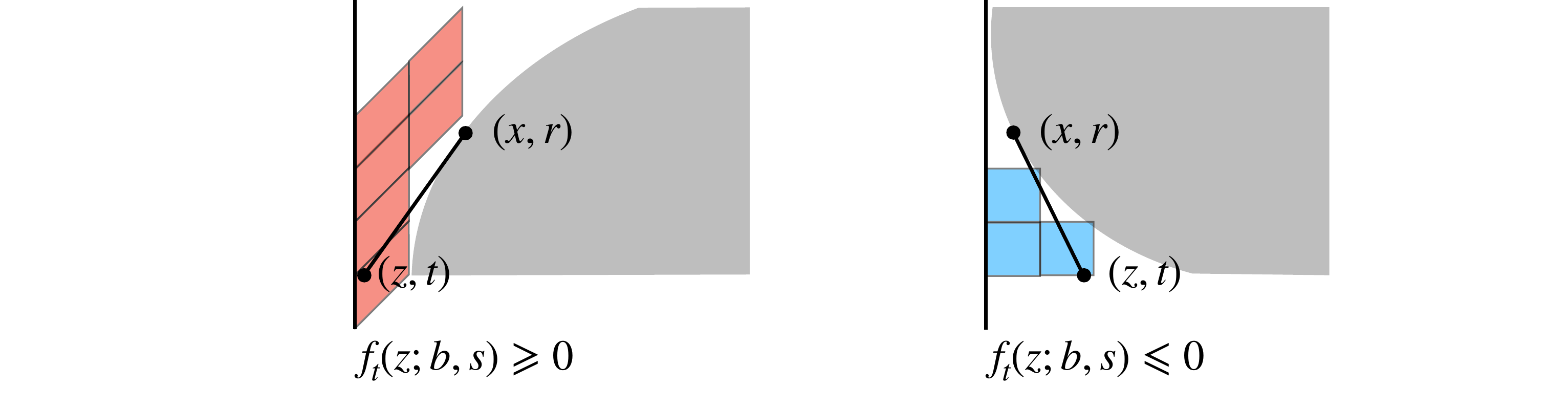}
\caption{The sign of $f_t(z;\beta,s)$ is determined by the type of lozenges in frozen region. }
\label{f:tangent}
 \end{figure}

By the same argument, we can also study $\arg^* f_t(Z;\beta,s)$ in small neighborhoods of the boundary points $\fa_i(t), \fb_i(t)$. There are several cases: for $z(Z)\in (\fa_i(t)-\varepsilon, \fa_i(t))$
\begin{align}\label{e:argf1}
\arg^* f_t(Z;\beta,s)
=\left\{
\begin{array}{cc}
0, &\text{if the boundary edge of ${\mathfrak P}$ containing $\fa_i(t)$ has slope $\infty$, }\\
-\pi, &\text{if the boundary edge of ${\mathfrak P}$ containing $\fa_i(t)$ has slope $1$. }
\end{array}
\right.
\end{align}
And for $z(Z)\in (\fb_i(t), \fb_i(t)+\varepsilon)$
\begin{align}\label{e:argf2}
\arg^* f_t(Z;\beta,s)
=\left\{
\begin{array}{cc}
-\pi, &\text{if the boundary edge of ${\mathfrak P}$ containing $\fb_i(t)$ has slope $\infty$, }\\
0, &\text{if the boundary edge of ${\mathfrak P}$ containing $\fb_i(t)$ has slope $1$. }
\end{array}
\right.
\end{align}
If we extend $\del_x h_t^*(x;\beta,s)$ such that for $x\in (\fa_i(t)-\varepsilon, \fa_i(t))$
\begin{align*}
\del_x h_t^*(x;\beta,s)
=\left\{
\begin{array}{cc}
0, &\text{if the boundary edge of ${\mathfrak P}$ containing $\fa_i(t)$ has slope $\infty$, }\\
1, &\text{if the boundary edge of ${\mathfrak P}$ containing $\fa_i(t)$ has slope $1$. }
\end{array}
\right.
\end{align*}
and for $x\in (\fb_i(t), \fb_i(t)+\varepsilon)$
\begin{align*}
\del_x h_t^*(z;\beta,s)
=\left\{
\begin{array}{cc}
1, &\text{if the boundary edge of ${\mathfrak P}$ containing $\fb_i(t)$ has slope $\infty$, }\\
0, &\text{if the boundary edge of ${\mathfrak P}$ containing $\fb_i(t)$ has slope $1$. }
\end{array}
\right.
\end{align*}
Then the relation \eqref{e:argft2} can be extended to the whole interval $(-\fa_i(t)-\varepsilon, \fb_i(t)+\varepsilon)$:
\begin{align}
\label{e:argftall}
\arg^* f_t(Z;\beta,s)=-\pi \del_x h_t^*(z(Z);\beta,s),\quad z(Z)\in(-\fa_i(t)-\varepsilon, \fb_i(t)+\varepsilon).
\end{align}

The above discussions give a complete description of $\arg^* f_t(Z;\beta,s)$ when $Z$ moves along the
boundary of $\Omega^t({\mathfrak P};\beta,s)$ under $\pi^t_{Q^{ \beta , s }}$ counterclockwise.

Let $m_{ t }(z;\beta, s)$ be the Stieltjes transform of the measure $\del_{x}  h_t^*(x;\beta,s) $, 
\begin{align*}
m_{ t }(z;\beta, s)=\int_{\{ x :  (x,t+)\in {\mathfrak P}\}} \frac{\del_{x} h_ t ^*( x ; \beta, s)}{z- x }\rd  x .
\end{align*}
As a meromorphic function over $\bC\bP^1$, we can lift $m_{ t }(z;\beta, s)$ to a meromorphic function over $\gamma_t^{ \beta , s  ,+}$ using the covering map \eqref{e:zmatr}. 
We notice that on $x\in (\fa_i(t), \fb_i(t))$ for any $1\leq i\leq r(t)$,
\begin{align*}\begin{split}
&\lim_{\eta\rightarrow 0+}\Im[m_{ t }( x +\ri\eta;\beta, s)]=-\ri\pi \del_{x} h_ t ^*( x ; \beta, s)=\arg^*  f_t( Z ;\beta, s),\\ 
&\lim_{\eta\rightarrow 0-}\Im[m_{ t }( x +\ri\eta;\beta, s)]=\ri\pi \del_{x} h_ t ^*( x ; \beta, s)=-\arg^* \overline{ f_t( Z ;\beta, s)},
\end{split}\end{align*}
where $z(Z)=x\in (\fa_i(t),\fb_i(t))$.
Therefore, if we take the ratio of $e^{m_{ t }(z;\beta, s)}$ and $ f_t(z;\beta, s)$, the remaining can be glued along the cut $\{(  f_t( x ;\beta, s), x ):  (x,t)\in\Omega^t({\mathfrak P};\beta,s)\}\cup \{(\overline{  f_t( x ;\beta, s)}, x ):  (x,t)\in\Omega^t({\mathfrak P};\beta,s)\}$ i.e. identifying $( f_t( x ;\beta, s), x )$ with $(\overline{ f_t( x ;\beta, s)}, x )$, to a meromorphic function, which does not have zeros or poles on $(\fa_i(t), \fb_i(t))$.
More precisely,
let
\begin{align}\label{e:gszmut}
 f_t(Z;\beta, s)=e^{m_{ t }(z;\beta, s)}g_ t (Z;\beta, s), \quad z=z(Z),
\end{align}
then $g_ t (Z;\beta, s)$, as a meromorphic function over $\gamma^{ \beta , s  ,+}_ t $, can be glued along the cut $\{(  f_t( x ;\beta, s), x ):  (x,t)\in\Omega^t({\mathfrak P};\beta,s)\}\cup \{(\overline{  f_t( x ;\beta, s)}, x ):  (x,t)\in\Omega^t({\mathfrak P};\beta,s)\}$, to a meromorphic function, which does not have zeros or poles on $(\fa_i(t), \fb_i(t))$. Moreover, for $x\in (\fa_i(t)-\varepsilon, \fa_i(t))$, we have 
\begin{align}\label{e:mtx}
\lim_{\eta\rightarrow 0}\Im[m_{ t }( x +\ri\eta;\beta, s)]=0.
\end{align}
If the boundary edge of ${\mathfrak P}$ containing $\fa_i(t)$ has slope $\infty$, then \eqref{e:argf1} implies that $\arg^* f_t(Z;\beta,s)=0$, where $z(Z)=x$.  It matches with \eqref{e:mtx}. We conclude that $g_t(Z;\beta,s)$ does not have zeros or poles for $z(Z)\in (\fa_i(t)-\varepsilon, \fb_i(t))$.
If the boundary edge of ${\mathfrak P}$ containing $\fa_i(t)$ has slope $1$, then \eqref{e:argf1} implies that  $\arg^* f_t(x;\beta,s)=-\pi$. Then $e^{m_t(x;\beta,s)}$ and $f_t(Z;\beta,s)$ differ by a sign. We conclude that  $g_t(Z;\beta,s)$ has a pole at $z(Z)=\fa_i(t)$. Similarly, using \eqref{e:argf2},  if the boundary edge of ${\mathfrak P}$ containing $\fb_i(t)$ has slope $1$, then $g_t(Z;\beta,s)$ does not have zeros or poles on $z(Z)\in (\fa_i(t), \fb_i(t)+\varepsilon)$;
If the boundary edge of ${\mathfrak P}$ containing $\fb_i(t)$ has slope $\infty$, then  $g_t(Z;\beta,s)$ has a zero at $z(Z)=\fb_i(t)$. In summary, we have the following description of zeros of poles for $g_t(Z;\beta,s)$ for $Z$ in a neighborhood of $z^{-1}([\fa_i(t), \fb_i(t)])\subset \gamma_t^{\beta,s,+}$, for any $1\leq i\leq r(t)$,
\begin{enumerate}
\item $g_t(Z;\beta,s)$ has a pole at $z(Z)=\fa_i(t)$ if only if the boundary edge of ${\mathfrak P}$ containing $\fa_i(t)$ has slope $1$.
\item $g_t(Z;\beta,s)$ has a zero at $z(Z)=\fb_i(t)$ if only if the boundary edge of ${\mathfrak P}$ containing $\fa_i(t)$ has slope $\infty$.
\item $g_t(Z;\beta,s)$ does not have other poles or zeros.
\end{enumerate}
Using the decomposition \eqref{e:gszmut}, we can rewrite the complex Burgers equation \eqref{e:burgereq3} as
\begin{align}\label{e:burgereq4}
\del_{t} m_{ t }(z;\beta, s)+\frac{\del_{t}g_ t (Z;\beta, s)}{g_ t (Z;\beta, s)}
=-\frac{\del_{z}f_t(Z;\beta, s)}{f_t(Z;\beta, s)+1},\quad z=z(Z),\quad Z\in \gamma^{\beta,s,+}_t.
\end{align}

\subsection{Hurwitz Space and Branch Points}
We recall that Hurwitz spaces are moduli spaces of covers of $\bC\bP^{1}$
 with fixed monodromy group. Two covers $( f,z)\in \gamma\mapsto \bC\bP^{1}$ and $( f',z)\in \gamma'\mapsto \bC\bP^{1}$ 
are equivalent if there
exists an isomorphism $\pi : \gamma\mapsto \gamma'$ such that $\pi(f,z)=(f',z)$.

We recall the Riemann surface $\gamma_t^{ \beta , s  ,+}$ from \eqref{e:zmatr},
and $d_0$ is the number of horizontal boundary edges of ${\mathfrak P}$ above $t$. It follows from \eqref{e:x02},
the map \eqref{e:zmatr}
\begin{align}\label{e:zmattcopy}
(f,z)\in \gamma_t^{ \beta , s  ,+}\mapsto z\in \bC\bP^1,
\end{align}
makes $\gamma_t^{ \beta , s  ,+}$ a $d_0$-sheeted branched covering of $\bC\bP^1$. Since $Q^{ \beta , s }$ is real, the branch points of \eqref{e:zmattcopy} come in pairs. We denote the branch points of \eqref{e:zmattcopy} as $(f_1, z_1), (\bar{f}_1, \bar{z}_1), (f_2, z_2), (\bar{f}_2, \bar{z}_2), \cdots, (f_w, z_w), (\bar{f}_w, \bar{z}_w)$ with ramification indices $r_1, r_1, r_2,r_2,\cdots, r_w, r_w$ respectively. 
In a neighborhood $V_i$ of the branch point $(f_i, z_i)$, $f$
is given by:
\begin{align*}
f_t(Z;\beta, s)= f_i+c_i(z(Z)-z_i)^{1/r_i}+\cdots, \quad Z=(f_t(Z; \beta, s),z(Z))\in V_i,
\end{align*}
and analogous expression holds in a neighborhood of $(\bar f_i, \bar z_i)$.
Moreover $\gamma_t^{ \beta , s  ,+}$ has genus $0$, the Riemann--Hurwitz formula gives that
\begin{align*}
\sum_{i=1}^w (r_i-1)=d_0-\#\{\text{connected components of $\gamma_t^{ \beta , s  ,+}$}\}=d_0-1.
\end{align*}

For general polygonal domain $\fP$, we are lack of a good description of the locations of those  branch points $(f_1, z_1), (\bar{f}_1, \bar{z}_1), (f_2, z_2), (\bar{f}_2, \bar{z}_2), \cdots, (f_w, z_w), (\bar{f}_w, \bar{z}_w)$. However, if $\fP$ is as in Definition \ref{def:oneend}, we do know something about those branch points. 
We recall from our discussion around \eqref{e:zmatr}, $\gamma_t^{\beta,s,+}$ corresponds to $\Omega^t(\fP;\beta,s)\deq \{( x , r): ( x , r)\in\Omega({\mathfrak P};\beta,s),  t \leq  r\leq T\}$, and can be identified as the gluing of two copies of $\Omega^t(\fP;\beta,s)$ along the boundary using the map \eqref{e:coveratr}.  The points which are mapped to $\bR\bP^1$ under the covering map \eqref{e:zmattcopy} are the boundary of $\Omega^t({\mathfrak P};\beta,s)$. 
The preimage of $\{ x :  (x,t+)\in {\mathfrak P}\}$ under the covering map \eqref{e:zmattcopy} may consist of several copies. By slightly abuse of notation, in the rest of this paper, we denote  $ z^{-1}(\{ x :  (x,t+)\in {\mathfrak P}\}$, the specific copy which includes the bottom boundary of $\Omega^t({\mathfrak P};\beta,s)$.
The following proposition states that if $\fP$ is as in Definition \ref{def:oneend},  those branch points can not be too close to $ z^{-1}(\{ x :  (x,t+)\in {\mathfrak P}\}$.

\begin{proposition}\label{p:branchloc}
If the polygonal domain $\fP$ has exactly one horizontal upper boundary edge (as in Definition \ref{def:oneend}), then the covering map \eqref{e:zmattcopy} does not have a branch point in a neighborhood of ${ z^{-1}(\{ x :  (x,t+)\in {\mathfrak P}\})}$ over $\gamma_t^{\beta,s,+}$.
\end{proposition}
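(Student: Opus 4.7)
The plan is to view the $z$-projection $\pi: \gamma_t^{\beta,s,+} \to \bC\bP^1$ as a degree-$d_0$ branched covering of the Riemann sphere by a genus-zero Riemann surface, and to show that Definition~\ref{def:oneend} forces all of its branch points to have $z$-values bounded away from the bottom slice $\{x:(x,t+)\in\fP\}$.

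First I would describe the distinguished sheet of $\pi$ over the bottom slice. By construction it is the image of the bottom boundary of $\Omega^t(\fP;\beta,s)$ under $\pi^t_{Q^{\beta,s}}$, continued analytically across the frozen portions of the slice. On this sheet $z=x$ is real and $f_t(x;\beta,s)\in\bC_-\cup\bR$, with $\Im f_t<0$ strictly on the liquid cross-section and with the boundary behavior at $\fa_i(t),\fb_i(t)$ dictated by \eqref{e:argf1}--\eqref{e:argf2} and \eqref{e:argftall}. The remaining $d_0-1$ sheets of $\pi$ correspond to the other preimages $X_i^{(0)}=(-1,\infty)$ of $\infty$, one for each horizontal edge of $\fP$ above $t$. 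Under Definition~\ref{def:oneend}, the sole horizontal upper edge is at $T$, which corresponds to the distinguished sheet; each of the other $d_0-1$ preimages comes from an intermediate lower horizontal edge at some $T_i\in(t,T)$, where the arctic curve is tangent from above.

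The key step is a sheet-separation argument. Using the local expansions of the algebraic curve $Q^{\beta,s}$ near each $X_i^{(0)}$ from \eqref{e:x02}, together with the orientation of the arctic curve's tangency (from above at every such lower edge), I would show that the $f$-value on each phantom sheet, for $z$ in a complex neighborhood of the bottom slice, lies in the closed upper half plane $\bC_+\cup\bR$, whereas on the distinguished sheet $f\in\bC_-\cup\bR$. Hence the distinguished and phantom sheets cannot coincide in this neighborhood except possibly on the real $f$-axis. Coincidence on the real axis is then ruled out by comparing the sign of $\arg^* f_t$ on the distinguished sheet from \eqref{e:argftall} with the local $f$-values on each phantom sheet near $X_i^{(0)}$, which sit in a small neighborhood of $f=-1$ with a definite sign pattern from the tangency expansion.

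By continuity and compactness of the closure of the distinguished copy, the sheet separation propagates to an open complex neighborhood in $\gamma_t^{\beta,s,+}$, ruling out branch points there. The hard part will be making the sheet-separation statement genuinely quantitative, in particular ruling out accidental coincidences on the real $f$-axis; this is where Definition~\ref{def:oneend} is essential. Indeed, an intermediate upper horizontal edge above $t$ would reverse the tangency direction (tangent from below rather than from above) and force the corresponding phantom sheet's $f$-value into $\bC_-$, where it could merge with the distinguished sheet and produce a branch point in a neighborhood of the bottom slice, invalidating the proposition in the absence of the single-upper-edge assumption.
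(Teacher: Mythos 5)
You take a genuinely different route from the paper, and there is a genuine gap in your key step. The paper's proof is purely topological: it observes that a branch point near the bottom slice would make another piece of the boundary of the liquid region $\Omega^t(\fP;\beta,s)$ lie close to its bottom boundary, which can only happen if the arctic curve has a cusp facing upwards; such a cusp would split a horizontal slice of $\Omega^t(\fP;\beta,s)$ into two pieces, and since by Definition~\ref{def:oneend} every point of the liquid region is path-connected upward to the unique tangency with the top edge, the split would force $\Omega^t(\fP;\beta,s)$ to enclose a hole, contradicting simple connectivity. The single-upper-edge hypothesis enters entirely through this connectivity argument, not through any sign condition on $f$.

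Your proposal replaces this with a sheet-separation argument, and the pivotal claim --- that on each phantom sheet $\Im f\geq 0$ for $z$ in a complex neighborhood of the bottom slice --- is asserted rather than proved, and the justification you offer sits in the wrong place. The local expansions near $X_i^{(0)}$ from \eqref{e:x02} describe the curve near the point $(-1,\infty)$, i.e.\ for $z$ near $\infty$, not for $z$ near the bottom slice. Carrying the sign of $\Im f$ from $\infty$ down to the real axis would require tracking each sheet globally, but the labeling ``the sheet through $X_i^{(0)}$'' is destroyed by monodromy around whatever branch points do exist, so the argument is circular: global sheet separation presupposes control over the branch locus, which is exactly what you are trying to establish. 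Nor is there a structural reason the phantom sheets avoid the lower half plane: $\gamma_t^{\beta,s,+}$ is a double of $\Omega^t(\fP;\beta,s)$ with $\Im f<0$ on one copy, and nothing in the tangency expansion prevents a phantom preimage of $z$ from wandering into that copy as $z$ descends from $\infty$. Your closing remark that Definition~\ref{def:oneend} is what rescues the proposition is correct in spirit, but the mechanism you gesture at (a reversed tangency at an intermediate upper edge pushing a phantom $f$-value into the lower half plane) is not the one at work; what an extra upper edge actually does is permit a horizontal slice of the liquid region to disconnect, which is precisely the topological pathology the paper's connectivity argument excludes.
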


\begin{proof}
We identify $\gamma_t^{\beta,s,+}$ with $\Omega^t(\fP;\beta,s)$, then
$ z^{-1}(\{ x :  (x,t+)\in {\mathfrak P}\}$ is the copy which includes the bottom boundary of $\Omega^t({\mathfrak P};\beta,s)$.
If there is a branch point in a small neighborhood of $ z^{-1}(\{ x :  (x,t+)\in {\mathfrak P}\})$, then there is another copy of the preimage of $\{ x :  (x,t+)\in {\mathfrak P}\}$ under the covering map \eqref{e:zmattcopy}  which is close to $ z^{-1}(\{ x :  (x,t+)\in {\mathfrak P}\})$.
 There is another part of the boundary of $\Omega^t({\mathfrak P};\beta,s)$ which is close to its bottom boundary. This happens only if the boundary of $\Omega^t({\mathfrak P};\beta,s)$ has a cusp facing upwards. Say $(x,r)$ is a cusp point of $\Omega^t(\fP;\beta,s)$, such that the cusp at $(x,r)$ faces upwards.  Then the horizontal slice of $\Omega^t(\fP;\beta,s)$ at time $r$ splits at $(x,r)$.
By our assumption on the domain $\fP$ (as in Definition \ref{def:oneend}), the corresponding nonintersecting Bernoulli walks all end on the interval $[\fa(T-), \fb(T-)]$ at time $T$. Especially the liquid region $\Omega^t(\fP;\beta,s)\cap \bR\times[r,T]$ is connected. In fact it is path-connected to the point where $\Omega^t(\fP;\beta,s)$ is tangent to $[\fa(T-), \fb(T-)]\times T$. If the horizontal slice of $\Omega^t(\fP;\beta,s)$ at time $r$ splits at $(r,s)$, we conclude that $\Omega^t(\fP;\beta,s)$ contains a hole. However, this contradicts to the fact that 
$\Omega^t(\fP;\beta,s)$ is simply connected.
This finishes the proof of Proposition \ref{p:branchloc}.
\end{proof}

\subsection{Rauch Variational Formula}
In Sections \ref{s:Gauss} and \ref{s:solve}, we need to estimate the difference of two solutions of the complex Burgers equation \eqref{e:ft2} with different boundary data: $\del_z \ln f_s(Z^1; \beta^1,s)-\del_z \ln f_s(Z^0;\beta^0,s)$ where $z(Z^0)=z(Z^1)=z$. To estimate it we interpolate the boundary data 
\begin{align*}
\beta^\theta=(1-\theta)\beta^0+\theta \beta^1, \quad 0\leq \theta\leq 1,
\end{align*}
and write 
\begin{align*}
\del_z \ln f_s(Z^1; \beta^1,s)-\del_z \ln f_s(Z^0;\beta^0,s)=\int_0^1 \del_\theta \del_z \ln f_s(Z^\theta;\beta^\theta,s)\rd \theta,\quad Z_\theta\in \gamma_s^{\beta^\theta,s}, \quad z(Z^\theta)=z.
\end{align*}
For any $0\leq \theta\leq 1$,  we have that
\begin{align*}
(f,z)\in \gamma_s^{\beta^\theta,s}\mapsto z\in \bC\bP^1
\end{align*}
is a $d_0$-sheeted branched covering of the Riemann sphere $\bC\bP^1$, where $d_0$ is the number of horizontal boundary edges of ${\mathfrak P}$ above $s$.
Similarly we denote its branch points as $Z_1^\theta=(f_1^\theta, z^\theta_1), \bar Z_1^\theta=(\bar{f}^\theta_1, \bar{z}^\theta_1), Z_2^\theta=(f^\theta_2, z^\theta_2), \bar Z_2^\theta=(\bar{f}^\theta_2, \bar{z}^\theta_2), \cdots, Z_w^\theta=(f^\theta_w, z^\theta_w), \bar Z_w^\theta=(\bar{f}^\theta_w, \bar{z}^\theta_w)$ with ramification indices $r_1, r_1, r_2,r_2,\cdots, r_w, r_w$ respectively. 

%

In the rest of this section, we recall the Rauch variational formula \cite{MR110798,MR110799,MR2131384}, which can be used to describe the changes of the Schiffer kernel \cite{MR39812,BE2018} on $\gamma^{\beta^\theta,s}_s$. Then the change $\del_\theta 
\del_z f_s(Z^\theta;\beta^\theta,s)$ of $\del_z f_s(Z^\theta;\beta^\theta,s)$ can be expressed as an integral of the  Schiffer kernel. 

We recall the Schiffer kernel $B(Z,Z';\beta^\theta,s)$ from \cite{MR39812,BE2018} on $(Z,Z')=((f,z), (f',z'))\in\gamma_s^{\beta^\theta,s}\times \gamma_s^{\beta^\theta,s}$:  it is a symmetric meromorphic bilinear differential (meromorphic $1$-form of $z$, tensored by
a meromorphic $1$-form of $z'$, $B(Z,Z';\beta^\theta,s)=B(Z',Z;\beta^\theta,s)$); it has  double poles on the diagonal, such that in a small neighborhood $V$ of $(f,z)\in \gamma^{\beta^\theta,s}_s$, for $(Z,Z')=((f,z), (f',z'))\in V\times V$,
\begin{align}\label{e:Schiffer}
B(Z,Z';\beta^\theta,s)=\frac{1}{(z-z')^2}\rd z\rd z'
+ \OO(1). 
\end{align}
The integral of the Schiffer kernel
\begin{align}\label{e:third}
Q_{Z_0, Z_1}(Z;\beta^\theta,s)=\int_{Z'=Z_0}^{Z'=Z_1}B(Z,Z';\beta^\theta,s),
\end{align}
is a meromorphic $1$-form (called the third kind form) over  $\gamma_s^{\beta^\theta,s}$, having only poles at $Z_0, Z_1$ with residual $-1,1$.

The Rauch variational formula describes the change of the Schiffer kernel $B(Z,Z';\beta^\theta,s)\rd z\rd z'$ along $\theta$ in terms of the change of the branch points $Z_1^\theta, \bar Z_1^\theta, \cdots, Z_w^\theta, \bar Z_w^\theta$.
\begin{theorem}\label{thm:Rauch}
The derivative of the Schiffer kernel $B(Z,Z';\beta^\theta,s)$ with respect to the branch point $Z_i^\theta$ is given by
\begin{align}\begin{split}\label{e:Rauch}
\del_{Z_i^\theta} B(Z,Z';\beta^\theta,s)
= \frac{1}{2\pi\ri}\oint_{\cS_i} B(Z,Z'';\beta^\theta,s)B(Z'',Z';\beta^\theta,s)/\rd z(Z''), \end{split}\end{align}
where the contour $\cS_i$ encloses $Z_i^\theta$. 
The derivative of the third kind kernel $Q_{Z_1, Z_0}(Z; \beta^\theta, s)$ with respect to the branch point $Z_i^\theta$ is given by
\begin{align}\begin{split}\label{e:Rauch}
\del_{Z_i^\theta} Q_{Z_1, Z_0}(Z;\beta^\theta,s)
= \frac{1}{2\pi\ri}\oint_{\cS_i} B(Z,Z'';\beta^\theta,s)Q_{Z_1, Z_0}(Z'';\beta^\theta,s)/\rd z(Z''), \end{split}\end{align}
where the contour $\cS_i$ encloses $Z_i^\theta$. The same statements hold for other branch points.

\end{theorem}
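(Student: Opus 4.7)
The plan is to exploit the uniqueness characterization of the Schiffer kernel on the genus-zero Riemann surface $\gamma_s^{\beta^\theta,s}$. Recall that $B(Z,Z';\beta^\theta,s)$ is the unique symmetric meromorphic bidifferential whose only singularity is the double pole on the diagonal with principal part $\rd z\,\rd z'/(z-z')^2$. Since the surface has genus zero, there are no $A$-period normalizations to impose, and this singularity structure alone determines the kernel. The strategy is therefore to show that both sides of \eqref{e:Rauch} are symmetric meromorphic bidifferentials on $\gamma_s^{\beta^\theta,s}$ with matching singular parts; their difference must then be a holomorphic bidifferential on a genus-zero surface, hence identically zero.

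First I would describe how $\gamma_s^{\beta^\theta,s}$ deforms as $\theta$ varies. Away from the branch points the coordinate $z$ can be held fixed; in a small disk around $Z_i^\theta$ with ramification index $r_i$, introduce a local uniformizer $\zeta$ with $z = z_i^\theta + \zeta^{r_i}$. An infinitesimal displacement $\delta z_i^\theta$ of the branch point induces a change of local coordinate of the form $\zeta \mapsto \zeta - \delta z_i^\theta/(r_i \zeta^{r_i-1}) + \cdots$. Substituting this into the local expansion of $B(Z,Z';\beta^\theta,s)$ near $Z_i^\theta$ yields an explicit formula for $\partial_{Z_i^\theta} B(Z,Z';\beta^\theta,s)$ in terms of the local coefficients of $B$ at the branch point.

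Next I would analyze the right-hand side of \eqref{e:Rauch}. The integrand $B(Z,Z'';\beta^\theta,s) B(Z'',Z';\beta^\theta,s)/\rd z(Z'')$ is a meromorphic $1$-form in $Z''$ whose only singularities inside $\cS_i$ are at the branch point $Z_i^\theta$ (the denominator $\rd z(Z'')$ vanishes to order $r_i-1$ in $\zeta$), so the contour integral is a residue at $Z_i^\theta$. Expanding each factor of $B$ in the uniformizer $\zeta$ and extracting the residue reproduces exactly the singular part in $(Z,Z')$ found in the previous paragraph. Both sides of \eqref{e:Rauch} are manifestly symmetric under $Z \leftrightarrow Z'$ and holomorphic away from $Z_i^\theta$, so by the uniqueness argument their difference vanishes. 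This establishes the Rauch formula for $B$.

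Finally, for the third-kind form $Q_{Z_1,Z_0}(Z;\beta^\theta,s) = \int_{Z'=Z_0}^{Z'=Z_1} B(Z,Z';\beta^\theta,s)$, I would commute $\partial_{Z_i^\theta}$ with the integral over $Z'$, choosing the path from $Z_0$ to $Z_1$ to avoid $\cS_i$ so that no boundary term arises, and then substitute the first identity and exchange the order of integration to obtain the stated contour integral formula. The main obstacle is the local branch-point analysis at higher ramification $r_i \geq 3$, where the transformation of the local uniformizer is more delicate and one must carefully verify that the residue at $Z_i^\theta$ reproduces the variational derivative computed from the direct local expansion; however, this is a finite, purely local residue calculation that is independent of the global geometry of $\gamma_s^{\beta^\theta,s}$, so the argument is insensitive to the number of branch points or the overall structure of the cover.
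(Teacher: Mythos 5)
Note first that the paper supplies no proof of Theorem~\ref{thm:Rauch}: it states the Rauch variational formula as a recalled fact from the cited references on variational theory and the Schiffer/Bergman kernel, and immediately applies it in Proposition~\ref{p:derft}, so there is no internal argument to compare against. Your reconstruction follows the standard route for this classical result and is sound in outline. The two load-bearing observations are correct: since $\gamma_s^{\beta^\theta,s}$ has genus zero, the Schiffer kernel coincides with the Bergman kernel and is the unique symmetric bidifferential with normalized double pole on the diagonal (no holomorphic bidifferentials can be added on a genus-zero surface, so no period normalization is needed); and because that double pole has constant coefficient $1$ when $z(Z)$, $z(Z')$ are held fixed, it drops out of $\partial_{Z_i^\theta}B$, so both sides of \eqref{e:Rauch} are symmetric bidifferentials whose only singularities occur as $Z$ or $Z'$ approaches $Z_i^\theta$, and equality reduces to a local residue computation in the uniformizer $\zeta$ with $z=z_i^\theta+\zeta^{r_i}$.

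Two points to tighten in the write-up. First, the residue matching at $r_i\geq 3$ is more than a remark: the integrand $B(Z,Z'')B(Z'',Z')/\rd z(Z'')$ then has a pole of order $r_i-1$ at $Z''=Z_i^\theta$, so the residue brings in higher $\zeta$-derivatives of $B$ at the branch point, and one must check order by order that the resulting coefficients coincide with those generated by the reparametrization $\zeta\mapsto\zeta-\delta z_i^\theta/(r_i\zeta^{r_i-1})$ acting on both $\zeta^k$ and $\rd\zeta$. This is true but not automatic; in the paper's application the branch points may be taken simple, $r_i=2$, in which case it is a single evaluation. Second, for the third-kind kernel you should make the standing conventions explicit: the endpoints $Z_0,Z_1$ are identified by their $z$-coordinates and held fixed as the branch point moves (in the paper they are the fixed points $X_i^{(1)}$, $X_i^{(\infty)}$), and the path of integration in $Z'$ is chosen away from the branch locus. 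Under those conventions the interchange of $\partial_{Z_i^\theta}$ with the path integral produces no boundary terms and the argument carries through as you describe.
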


We recall from \eqref{e:gszmut}, that we can decompose $f_s(Z;\beta^\theta,s)$ as
\begin{align}\label{e:fgtt0}
f_s(Z;\beta^\theta,s)=e^{m_s(z;\beta^\theta,s)}g_s(Z;\beta^\theta,s), \quad z=z(Z).
\end{align} 
As a meromorphic function over $\gamma_s^{\beta^\theta,s}$, the zeros and poles of $g_s(Z;\beta^\theta,s)$ are described at the end of Section \ref{s:generalb}. We recall the bottom boundary of ${\mathfrak P}^s$ from \eqref{e:lbcopy}
\begin{align}\label{e:lbcopy2}
\{ x :  (x,s+)\in {\mathfrak P}\}=[\fa_1(s), \fb_1(s)]\cup[\fa_2(s), \fb_2(s)]\cup\cdots \cup [\fa_{r(s)}(s),\fb_{r(s)}(s)].
\end{align}
There are $d_0$ boundary edges of ${\mathfrak P}^s$ with slope $0$.
The poles of $g_s(Z;\beta^\theta,s)$ correspond to edges of ${\mathfrak P}^s$ with slope $1$:
either the edge is not adjacent to the bottom boundary of ${\mathfrak P}^s$, or the edge is a left boundary, i.e. containing $\fa_i(s)$ for some $1\leq i\leq r(s)$. In total there are $d_0$ such edges. We denote those poles as $X_1^{(1)}, X_2^{(1)}, \cdots, X_{d_1}^{(1)}$ and $X_{d_1+1}^{(1)}=A_1, X_{d_1+2}^{(1)}=A_2, \cdots, X_{d_0}^{(1)}=A_{d_0-d_1}$.
The zeros of $g_s(Z;\beta^\theta,s)$ correspond to edges of ${\mathfrak P}^s$ with slope $\infty$:
either the edge is not adjacent to the bottom boundary of ${\mathfrak P}^s$, or the edge is a right boundary, i.e. containing $\fb_i(s)$ for some $1\leq i\leq r(s)$. In total there are $d_0$ such edges. We denote those poles as $X_1^{(\infty)}, X_2^{(\infty)}, \cdots, X_{d_\infty}^{(\infty)}$ and $X_{d_\infty+1}^{(\infty)}=B_1, X_{d_1+2}^{(\infty)}=B_2, \cdots, X_{d_0}^{(\infty)}=B_{d_0-d_\infty}$.
We also remark that inside the bottom boundary of $\fP^s$, for  $Z\in z^{-1}((\fa_1(s), \fb_1(s))\cup(\fa_2(s), \fb_2(s))\cup\cdots \cup (\fa_{r(s)}(s),\fb_{r(s)}(s)))\subset \gamma_s^{\beta^\theta,s}$, $g_s(Z;\beta^\theta,s)$ is real and positive.

From the discussion above, as a meromorphic function $ \ln f_s(Z;\beta^\theta,s)$ over $\gamma_s^{\beta^\theta,s}$, it has residuals along the bottom boundary of ${\mathfrak P}^s$ given by the measure $\del_x \beta^\theta(x)$,  
behaves like $-\ln(z(Z)-x_i^{(\infty)})$ in neighborhoods of  $X_1^{(\infty)}, X_2^{(\infty)},\cdots,  X_{d_0}^{(\infty)}$, and behaves like $\ln(z(Z)-x_i^{(1)})$ in neighborhoods of   $X_1^{(1)}, X_2^{(1)},\cdots,  X_{d_0}^{(1)}$.   We can write down $\ln f_s(Z;\beta^\theta,s)$ explicitly as a linear combination of the Schiffer kernel \eqref{e:Schiffer} and the third kind form \eqref{e:third}: for $Z=(f,z)\in \gamma_s^{\beta^\theta,s}$
\begin{align}\begin{split}\label{e:ftexp}
\ln f_s(Z;\beta^\theta,s)\rd z(Z)
&=-\int B(Z, X^\theta(x);\beta^\theta,s) \beta^\theta (x)\rd x\\
&+
\sum_{i=1}^{d_0}\int^{X_i^{(\infty)}}_{Z_0}Q_{W, Z_0}(Z;\beta^\theta,s)\rd W-\sum_{i=1}^{d_0}\int^{X_i^{(1)}}_{Z_0}Q_{W, Z_0}(Z;\beta^\theta,s)\rd W.
\end{split}\end{align}
We remark that the second line does not have a logarithmic singularity at $Z_0$, and is in fact independent of $Z_0$.
Using the Rauch variational formula Theorem \ref{thm:Rauch}, the derivative of $\ln f_s(Z;\beta^\theta,s)$ with respect to $\theta$ can be written down explicitly using the Schiffer kernel.
\begin{proposition}\label{p:derft}
The derivative of $ \ln f_s(Z;\beta^\theta,s)$ with respect to $\theta$ is given by
\begin{align}\label{e:dlogf}
\del_\theta \ln f_s(Z;\beta^\theta,s) \rd z(Z) =-\int B(Z,X^\theta(x);\beta^\theta,s)( \beta^1(x)- \beta^0(x)).
\end{align}
\end{proposition}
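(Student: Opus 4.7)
The plan is to differentiate the explicit integral representation \eqref{e:ftexp} for $\ln f_s(Z;\beta^\theta,s)\rd z(Z)$ with respect to $\theta$ and show that, up to the explicit term $-\int B(Z,X^\theta(x);\beta^\theta,s)(\beta^1(x)-\beta^0(x))\rd x$ coming from the direct dependence of $\beta^\theta$, all contributions arising from the variation of the underlying Riemann surface $\gamma_s^{\beta^\theta,s}$ cancel out. First I would note that the points $X_i^{(\infty)}$, $X_i^{(1)}$ and the endpoints $\fa_j(s),\fb_j(s)$ are all $\theta$-independent (their $z$-coordinates are determined by $\fP$ alone via \eqref{e:x0d1}--\eqref{e:x0dinfty}), so there are no boundary contributions, and the only indirect dependence on $\theta$ is through the Schiffer kernel $B(\cdot,\cdot;\beta^\theta,s)$ and the third-kind form $Q_{W,Z_0}(\cdot;\beta^\theta,s)$, mediated by the moving branch points $Z_i^\theta,\bar Z_i^\theta$.

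Next I would invoke the Rauch variational formula (Theorem \ref{thm:Rauch}) to write the indirect variation as a sum of contour integrals around the branch points. Concretely, $\del_\theta$ brings down a sum of terms of the form $(\del_\theta z_i^\theta)\del_{Z_i^\theta}$ (plus the conjugate piece), and \eqref{e:Rauch} expresses $\del_{Z_i^\theta}B$ and $\del_{Z_i^\theta}Q_{W,Z_0}$ both as small contour integrals of the shape $\frac{1}{2\pi\ri}\oint_{\cS_i}B(Z,Z'')(\cdots)/\rd z(Z'')$. The key observation is that after substituting these into the $\theta$-derivative of \eqref{e:ftexp}, the second factor in the integrand rearranges into exactly the right-hand side of \eqref{e:ftexp} evaluated at $Z''$, namely $\ln f_s(Z'';\beta^\theta,s)\rd z(Z'')$. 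Thus the entire indirect contribution collapses to
\begin{align*}
\sum_i \pB{\frac{\del_\theta z_i^\theta}{2\pi\ri}\oint_{\cS_i}B(Z,Z'')\,\ln f_s(Z'';\beta^\theta,s) + \text{c.c.}}.
\end{align*}

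Finally, I would argue that each of these contour integrals vanishes. By choosing $\cS_i$ small enough around $Z_i^\theta$ so that it encloses neither any $X_j^{(\infty)}$, $X_j^{(1)}$ nor the fixed point $Z$, the form $B(Z,\cdot)$ is a holomorphic $1$-form inside $\cS_i$, and $\ln f_s(\cdot;\beta^\theta,s)$ is holomorphic inside $\cS_i$ since $f_s(Z_i^\theta)=f_i^\theta\notin\{0,\infty\}$ at a generic branch point. By the residue theorem the contour integral vanishes, so only the direct contribution from the variation of $\beta^\theta$ survives, yielding \eqref{e:dlogf}.

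The main obstacle is the verification in the last step that $f_i^\theta\notin\{0,\infty\}$ at every branch point of the $z$-projection, i.e.\ that branch points do not coincide with zeros or poles of $f_s$. This should follow from the Hurwitz-cover structure set up earlier: the zeros and poles of $f_s$ occur exactly at $X_j^{(\infty)}$ and $X_j^{(1)}$, which by \eqref{e:x0d1}--\eqref{e:x0dinfty} are smooth points for the $z$-projection and are therefore distinct from the branch points $Z_i^\theta$; Proposition \ref{p:branchloc} additionally rules out branch points near the bottom boundary, which ensures we may perform the chain-rule differentiation through a smooth family of covers in a uniform way. A secondary subtlety is bookkeeping the complex-conjugate branch points $\bar Z_i^\theta$ and ensuring that the Rauch formula is applied consistently with the characteristic flow \eqref{e:flowatt} used to transport the Riemann surfaces across $\theta$.
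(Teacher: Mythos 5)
Your proposal follows essentially the same argument as the paper: differentiate the integral representation \eqref{e:ftexp}, isolate the direct dependence on $\beta^\theta$, use the Rauch variational formula to express the indirect dependence as contour integrals around the branch points, recognize the integrand as $B(Z,Z'';\beta^\theta,s)\ln f_s(Z'';\beta^\theta,s)$, and conclude it vanishes by holomorphicity at $Z_i^\theta$. Your additional observation that $f_i^\theta\notin\{0,\infty\}$ because the zeros/poles of $f_s$ live at the points $X_j^{(\infty)}, X_j^{(1)}$ (unramified for the $z$-projection) is a reasonable and correct justification of a step the paper states without elaboration.
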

\begin{proof}
The derivative of $\ln f_s(Z;\beta^\theta,s)$ with respect to $\theta$ consists of two parts, either the derivative hits $ \beta^\theta(x)$ in the first term of \eqref{e:ftexp}, or the derivative hits one of those Schiffer kernels or third kind kernels. In the first case, $\del_\theta  \beta^\theta(x)=\beta^1(x)- \beta^0(x)$, and we get the term
\begin{align*}
\int B(Z,X^\theta(x);\beta^\theta,s)( \beta^1(x)- \beta^0(x)).
\end{align*} 
The derivative corresponds to the second case turns out to be zero. As in Theorem \ref{thm:Rauch}, the derivative of the Schiffer kernel $B(Z,Z';\beta^\theta,s)$, and the third kind kernel $Q_{Z_1, Z_0}(Z;\beta^\theta, s)$ with respect to $\theta$ breaks down into several contour integrals. Each contour integral corresponds to a branch point. The contribution corresponding to the branch point $Z_i^\theta$ is given by
\begin{align*}\begin{split}
&\phantom{{}={}}\frac{\del_\theta Z_i^\theta}{2\pi\ri} 
\oint_{\cS_i} \frac{B(Z,Z'';\beta^\theta,s)}{\rd z(Z'')}\left(-\int   B(Z'',X^\theta(x);\beta^\theta,s)\beta^\theta (x)\right.\\
&\phantom{{}\frac{\del_\theta Z_i^\theta}{2\pi\ri} 
\oint_{\cS_i} \frac{B(Z,Z'';\beta^\theta,s)}{\rd z(Z'')}=={}}\left.+
\sum_{i=1}^{d_0}\int^{X_i^{(\infty)}}_{Z_0}Q_{W, Z_0}(Z'';\beta^\theta,s)\rd W-\sum_{i=1}^{d_0}\int^{X_i^{(1)}}_{Z_0}Q_{W, Z_0}(Z'';\beta^\theta,s)\rd W\right)\\
&=\frac{\del_\theta Z_i^\theta}{2\pi\ri} \oint_{\cS_i} \frac{B(Z,Z'';\beta^\theta,s)}{\rd z(Z'')}\ln f_s(Z'';\beta^\theta,s)\rd z(Z'')\\
&=\frac{\del_\theta Z_i^\theta}{2\pi\ri} \oint_{\cS_i} B(Z,Z'';\beta^\theta,s)\ln f_s(Z'';\beta^\theta,s)=0,
\end{split}\end{align*}
where in the last line the integral vanishes, because the integrand is holomorphic at the 
branch point $Z_i^\theta$. Similarly the contribution from other branch points also vanishes. This finishes the proof of Proposition \ref{p:derft}.
\end{proof}

For any boundary data $\beta$, we can rewrite \eqref{e:dlogf} as the derivative with respect to the boundary data $\beta$,
\begin{align}\label{e:ddlogf}
\frac{ \del \ln f_s(Z;\beta,s)}{\del \beta}(Z') =-\frac{B(Z,Z';\beta,s)}{\rd z(Z)\rd z(Z')},\quad Z'\in \gamma_s^{\beta,s,+},
\end{align}
which is  the Schiffer kernel \eqref{e:Schiffer}. 
By plugging \eqref{e:ddlogf} into \eqref{e:fgtt0}, and noticing $\del_\beta m_t(z;\beta,s)=1/(z-x)^2$, we get
\begin{align}\label{e:ddlogg}
K(Z,Z';\beta,s)\deq \frac{ \del \ln g_s(Z;\beta,s)}{\del \beta}(Z') =\frac{1}{(z(Z)- z(Z'))^2}-\frac{B(Z,Z';\beta,s)}{\rd z(Z)\rd z(Z')},\quad Z'\in \gamma_s^{\beta,s,+},
\end{align}
which no longer have double poles along the diagonal.

We recall from Proposition \ref{p:branchloc}, in a  neighborhood of $ z^{-1}(\{ x :  (x,s+)\in {\mathfrak P}\})$, 
$f_s(Z;\beta,s)$
does not have a branch point. So does $\del_\beta \ln g_s(Z;\beta,s)$. Especially, combining with the discussion above, we conclude that $\del_\beta g_s(Z;\beta,s)$ is analytic. We can take further derivatives with respect to $Z$ and $\beta$, the results are still analytic in a small neighborhood of $ z^{-1}(\{ x :  (x,s+)\in {\mathfrak P}\})$. We collect this fact in the following Proposition, which will be used in Sections \ref{s:solve}.

\begin{proposition}\label{p:derg}
If the polygonal domain $\fP$ has exactly one horizontal upper boundary edge (as in Definition \ref{def:oneend}), for any time $0\leq s\leq T$ and boundary data $\beta$, as a function of $Z$, $\del_\beta \ln g_s(Z;\beta,s)$ and its derivatives with respect to $Z$ and $\beta$ are analytic  in a small neighborhood of $ z^{-1}(\{ x :  (x,s+)\in {\mathfrak P}\})$ over $\gamma_s^{\beta,s,+}$.
%

\end{proposition}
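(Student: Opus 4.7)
The plan is to read off analyticity directly from the explicit representation \eqref{e:ddlogg}, then propagate it to higher derivatives by combining the Rauch variational formula with Proposition \ref{p:branchloc}.

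First I would analyze $K(Z,Z';\beta,s)=(\del \ln g_s/\del\beta)(Z,Z')$ itself. By definition the Schiffer kernel $B(Z,Z';\beta,s)$ is a meromorphic bilinear differential on $\gamma_s^{\beta,s}\times\gamma_s^{\beta,s}$ whose only singularity is the double pole \eqref{e:Schiffer} on the diagonal. The subtraction in \eqref{e:ddlogg} cancels precisely this diagonal principal part, so $K(Z,Z';\beta,s)$ extends holomorphically across the diagonal. Away from the diagonal the only obstruction to treating $B/(\rd z(Z)\rd z(Z'))$ as an analytic function of the coordinate $z(Z)$ on $\gamma_s^{\beta,s,+}$ comes from zeros of the differential $\rd z$, i.e.\ from the ramification points of the covering $Z\mapsto z(Z)$. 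By Proposition \ref{p:branchloc} (which is exactly where the single-upper-edge hypothesis on $\fP$ enters), no such branch point lies in a neighborhood of $z^{-1}(\{x:(x,s+)\in\fP\})\subset\gamma_s^{\beta,s,+}$. Hence $K(Z,Z';\beta,s)$ is analytic in $Z$ on this neighborhood for every fixed $Z'$; differentiating in $Z$ any number of times preserves this, giving the $Z$-derivative part of the claim.

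Next I would handle derivatives in $\beta$. The idea is to write $\del_\beta\ln g_s(Z;\beta,s)=\del_\beta\ln f_s(Z;\beta,s)-\del_\beta m_s(z(Z);\beta,s)$ and recall from \eqref{e:ftexp} that $\ln f_s(Z;\beta,s)$ is an integral of Schiffer and third-kind kernels against $\beta$. By Proposition \ref{p:derft} and the computation leading to \eqref{e:ddlogg}, one variation in $\beta$ lands exactly on $K(Z,Z';\beta,s)$, which we have just shown is analytic in $Z$ on our neighborhood. For a second variation, all remaining $\beta$-dependence of $K$ comes either through the parameters of the kernels or through the locations of the branch points. The Rauch variational formula \eqref{e:Rauch} expresses the branch-point contribution as a finite sum of contour integrals
\[
\del_{Z_i^\theta}K(Z,Z';\beta,s)\;=\;\frac{1}{2\pi\ri}\oint_{\cS_i}\frac{B(Z,Z'';\beta,s)\,B(Z'',Z';\beta,s)}{\rd z(Z'')},
\]
where each $\cS_i$ encloses a branch point. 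Since by Proposition \ref{p:branchloc} the branch points stay uniformly away from $z^{-1}(\{x:(x,s+)\in\fP\})$ as $\beta$ varies (the topology of $\Omega^s(\fP;\beta,s)$ is controlled by the fixed upper boundary of $\fP$), I can keep the contours $\cS_i$ fixed and bounded away from our region, so the integrand is a jointly analytic function of $(Z,Z'',Z')$ on a neighborhood of $z^{-1}(\{x:(x,s+)\in\fP\})\times\cS_i\times\{Z'\}$. The contour integral therefore remains analytic in $Z$ on our neighborhood.

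Finally I would iterate. Each additional $\del_\beta$ either produces another kernel evaluation at an auxiliary point $Z'$ (which, again by the argument above, is analytic in $Z$) or, via Rauch, another contour integral around a fixed $\cS_i$ of a product of Schiffer kernels and previously constructed analytic functions. In every case the integrand is analytic in $Z$ on a neighborhood of $z^{-1}(\{x:(x,s+)\in\fP\})$ uniformly along the contour, so the result is analytic in $Z$ on the same neighborhood; commuting with $\del_Z$ is then automatic. The main obstacle I anticipate is being careful that the contours $\cS_i$ around the branch points can indeed be chosen independently of $\beta$ (and stay away from our region) under the single-upper-edge hypothesis; this is essentially the content of Proposition \ref{p:branchloc} combined with the fact that $\gamma_s^{\beta,s,+}$ is simply connected, which prevents upward-facing cusps from pinching the bottom boundary against any other part of $\del\Omega^s(\fP;\beta,s)$.
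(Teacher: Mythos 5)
Your proposal takes the same approach as the paper's (very terse) argument: read off analyticity of $\del_\beta\ln g_s=K$ from \eqref{e:ddlogg} (the double pole of the Schiffer kernel cancels, and Proposition \ref{p:branchloc} guarantees no ramification points near $z^{-1}(\{x:(x,s+)\in\fP\})$), then propagate to higher $\beta$-derivatives via the Rauch variational formula so that all new terms are contour integrals around the distant branch points. The paper elides the iteration as ``we can take further derivatives... the results are still analytic''; you correctly fill in that step with Theorem \ref{thm:Rauch} and with Proposition \ref{p:derft}, which is exactly the machinery the paper sets up for this purpose.
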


\section{An Ansatz}\label{s:ansatz}

In Section \ref{s:rw},
we have identified the lozenge tilings of $\fP$, with nonintersecting Bernoulli random walks $\{x_1(t), x_2(t), \cdots, x_{m(s)}(s)\}_{s\in[0,T]\cap \bZ_n}$ with $(x_1(s),x_2(s), \cdots, x_{m(s)}(s))\in \mathfrak M_s(\fP)$.
For any time $s\in [0,T)\cap \bZ_n$, 
and any particle configuration $(x_1,x_2,\cdots,x_{m})\in \mathfrak M_{s}(\fP)$, where
\begin{align*}
m=n\left((\beta^\fP(\fb_1(s),s)-\beta^\fP(\fa_1(s),s))
+\cdots +(\beta^\fP(\fb_{r(s)}(s),s)-\beta^\fP(\fa_{r(s)}(s),s))\right),
\end{align*}
we recall that $N_{s}(x_1, x_2, \cdots, x_m)$ is the number of nonintersecting Bernoulli walks starting from particle configuration $(x_1,x_2,\cdots,x_{m})$ at time $s$ as defined in \eqref{e:defwalk}. From the particle configuration $(x_1,x_2,\cdots,x_{m})\in \mathfrak M_{s}({\mathfrak P})$, by removing particles created at time $s$, we get a particle configuration  $(x'_1,x'_2,\cdots,x'_{m'})$, and
\begin{align*}
N_{s-}(x'_1,x'_2,\cdots,x'_{m'})=N_{s}(x_1, x_2, \cdots, x_m).
\end{align*}

We denote the empirical measure of the particle configuration $\bmx=(x_1, x_2, \cdots, x_m)$ as
\begin{align*}
\rho(x; \bmx)=\sum_{i=1}^m \bm1(x\in [x_i, x_i+1/n]).
\end{align*}
Then it gives a height function $\beta(x;\bmx)$ on the bottom boundary of $\fP\cap \bR\times[s, T]$, $[\fa_1(s), \fb_1(s)]\cup[\fa_2(s), \fb_2(s)]\cup\cdots \cup [\fa_{r(s)}(s),\fb_{r(s)}(s)]$ :
\begin{align*}
\beta(x;\bmx)= \left(\beta^{{\mathfrak P}}(\fa_i(s), s)+\int_{\fa_i(s)}^ x  \rho( x;\bmx )\rd  x \right),
\end{align*}
for any $ x \in [\fa_i(s), \fb_i(s)]$. Then $N_{t}(x_1, x_2, \cdots, x_m)$ is also the number of lozenge tilings of the domain $\fP^s=\fP\cap \bR\times[s, T]$ (after rescaling by a factor $n$), with bottom height function given by $\beta(x;\bmx)$.

We recall the region $\fP^s$ from \eqref{e:omegas}, and the minimizer $\{h_t^*(x;\beta,s)\}_{s\leq t\leq T}$ of the variational problem \ref{e:varWt}.  We denote the corresponding complex slope as $f_t(Z;\beta,s)$, which is a meromorphic function on the Riemann surface $Z\in \gamma_t^{\beta, s,+}$. It has the following decomposition 
\begin{align}\label{e:fgtt}
f_t(Z;\beta,s)=e^{m_t(z;\beta,s)}g_t(Z;\beta,s), \quad z=z(Z),\quad Z\in \gamma_t^{\beta, s,+}.
\end{align} 
As a meromorphic function over $\gamma_t^{\beta,s,+}$, the zeros and poles of $g_t(Z;\beta,s)$ are described at the end of Section \ref{s:generalb}. We recall the bottom boundary of ${\mathfrak P}^t$ from \eqref{e:lbcopy}
\begin{align}\label{e:lbcopy2}
\{ x :  (x,t+)\in {\mathfrak P}\}=[\fa_1(t), \fb_1(t)]\cup[\fa_2(t), \fb_2(t)]\cup\cdots \cup [\fa_{r(t)}(t),\fb_{r(t)}(t)].
\end{align}
There are $d_0$ boundary edges of ${\mathfrak P}^t$ with slope $0$.
The poles of $g_t(Z;\beta,s)$ correspond to edges of ${\mathfrak P}^t$ with slope $1$:
either the edge is not adjacent to the bottom boundary of ${\mathfrak P}^t$, or the edge is a left boundary, i.e. containing $\fa_i(t)$ for some $1\leq i\leq r(t)$. In total there are $d_0$ such edges. We denote those poles as $X_1^{(1)}, X_2^{(1)}, \cdots, X_{d_1}^{(1)}$ and $X_{d_1+1}^{(1)}=A_1, X_{d_1+2}^{(1)}=A_2, \cdots, X_{d_0}^{(1)}=A_{d_0-d_1}$.
The zeros of $g_t(Z;\beta,s)$ correspond to edges of ${\mathfrak P}^t$ with slope $\infty$:
either the edge is not adjacent to the bottom boundary of ${\mathfrak P}^t$, or the edge is a right boundary, i.e. containing $\fb_i(t)$ for some $1\leq i\leq r(t)$. In total there are $d_0$ such edges. We denote those poles as $X_1^{(\infty)}, X_2^{(\infty)}, \cdots, X_{d_\infty}^{(\infty)}$ and $X_{d_\infty+1}^{(\infty)}=B_1, X_{d_1+2}^{(\infty)}=B_2, \cdots, X_{d_0}^{(\infty)}=B_{d_0-d_\infty}$.

For any $x\in[\fa_k(t), \fb_k(t)]$, we define the sets  $A_{t}(x), B_{t}(x)$, such that $A_{t}(x)$ collects poles of $g_t(Z;\beta,s)$, and $B_{t}(x)$ collects zeros of $g_t(Z;\beta,s)$:
\begin{align}\begin{split}\label{e:defAB}
&A_{t}(x)\deq\{z(X_i^{(1)}): 1\leq i\leq d_0, z(X_i^{(1)})\leq \fa_k(t), X_i^{(1)} \text{ corresponds to a left boundary edge} \},\\
&B_{t}(x)\deq\{z(X_i^{(\infty)}): 1\leq i\leq d_0, z(X_i^{(\infty)})\geq \fb_k(t), X_i^{(\infty)} \text{ corresponds to a right boundary edge} \}.
\end{split}\end{align}
For any $a\in A_{t}(x)$ and $b\in B_{t}(x)$, we have $a\leq x\leq b$. We remark that $A_{t}(x), B_{t}(x)$ only depend on the interval 
$[\fa_k(t),\fb_k(t)]$. And we  can extend them to a constant function in a neighborhood of $z^{-1}([\fa_k(t),\fb_k(t)])$ on $\gamma_t^{\beta,s,+}$. The poles $a=z(X_i^{(1)})\in A_t(x)$ travel at rate $1$, i.e.
$\del_t a=1$, and the zeros $b=z(X_i^{(\infty)})\in B_t(x)$ does not move, i.e. $\del_t b=0$.

If ${\mathfrak P}$ contains a left boundary edge with slope $1$ ending at time $t$, or a right boundary edge with slope $\infty$ ending at time $t$,  $A_t, B_t$ are not continuous at time $t$.
For any $x\in [\fa_1(t-), \fb_1(t-)]\cup[\fa_2(t-), \fb_2(t-)]\cup\cdots \cup [\fa_{r(t-)}(t-),\fb_{r(t-)}(t-)]$, we can similarly define $A_{t-}(x)$ and $B_{t-}(x)$. More precisely, 
$A_{t-}(x)=\lim_{r\rightarrow t-}A_r(x)$ is the same as  adding  the projection of left boundary edges with slope $1$ ending at time $t$ to $A_t(x)$; 
$B_{t-}(x)=\lim_{r\rightarrow t-}B_r(x)$ is the same as    adding  the projection of right boundary edges with slope $\infty$ ending at time $s$ to $B_t(x)$. Since the domain $\fP$ is formed by $3d$ segments with slopes $0,1,\infty$ cyclically repeated as we follow the boundary in the counterclockwise direction, a left boundary edge with slope $1$ is followed by a horizontal edge, and a right boundary edge with slope $\infty$ is preceded by a horizontal edge, as in Figure \ref{f:newp}. 
We have $A_{t-}=A_t$ and $B_{t-}=B_t$ for $t\neq \{T_0, T_1,T_2,\cdots, T_{p}\}$, when $\fP$ does not contain horizontal edges at $t$.

We also recall the functional $W_s(\beta)$ from \eqref{e:varWt}, and define
\begin{align}\begin{split}\label{e:defYt+}
&\phantom{{}={}}Y_{s}(\beta)=Y_{s}(\bmx)
\deq W_s(\beta)-\frac{1}{2}\int \ln|x-y|\rho(x;\bmx)\rho(y;\bmx)\rd x\rd y\\
&+\int \sum_{a\in A_{s}(x)}\int_{a}^x \ln(x-y)\rho(x;\bmx)\rd y\rd x 
+\int \sum_{b\in B_{s}(x)} \int_x^{b}\ln (y-x)\rho(x;\bmx)\rd y\rd x,
\end{split}\end{align}
and similarly
\begin{align}\begin{split}\label{e:defYt-}
&\phantom{{}={}}Y_{s-}(\beta)=Y_{s-}(\bmx)
\deq W_s(\beta)-\frac{1}{2}\int \ln|x-y|\rho(x;\bmx')\rho(y;\bmx')\rd x\rd y\\
&+\int \sum_{a\in A_{s-}(x)}\int_{a}^x \ln(x-y)\rho(x;\bmx)\rd y\rd x 
+\int \sum_{b\in B_{s-}(x)} \int_x^{b}\ln (y-x)\rho(x;\bmx)\rd y\rd x.
\end{split}\end{align}

%

The main result of this section is the following ansatz for the number of tilings of the domain $\fP^s$ with bottom height function given by $\beta(x;\bmx)$.
\begin{ansatz}\label{a:defE}
For any time $s\in [0,T)\cap \bZ_n$, and any particle configuration $(x_1,x_2,\cdots,x_{m})\in {\mathfrak M}_{s}({\mathfrak P})$, let  $(x'_1,x'_2,\cdots,x'_{m'})$ be the particle configuration from $\bmx$ by removing particles created at time $s$. We make the following ansatz
\begin{align*}
N_{s}(x_1, x_2, \cdots, x_m)=\frac{V(\bmx)e^{n^2Y_{s}(\beta)}E_{s}(x_1, x_2, \cdots, x_m)}{\prod_{j=1}^m\left(\prod_{a\in A_{s}(x_j)} \Gamma_n(x_j-a)\prod_{b\in B_{s}(x_j)}\Gamma_n(b-x_j-1/n)\right)},
\end{align*}
where $\Gamma_n(k/n)=(k/n)((k-1)/n)\cdots (1/n)=k!/n^k$ and
\begin{align*}
N_{s-}(x_1', x_2', \cdots, x_{m'}')=\frac{V(\bmx')e^{n^2Y_{s-}(\beta)} E_{s-}(x_1', x_2', \cdots, x_m')}{\prod_{j=1}^m\left(\prod_{a\in A_{s-}(x_j')} \Gamma_n(x_j'-a)\prod_{b\in B_{t-}(x_j')}\Gamma_n(b-x_j'-1/n)\right)},
\end{align*}
where 
\begin{align*}
\del_x \beta(x)=\sum_{i=1}^m \bm1(x\in [x_i, x_i+1/n]).
\end{align*}
\end{ansatz}

It turns out the correction terms  have no influence on the fluctuations of height functions. In Section \ref{s:eq} we derive the equation for $E_{s}$, and solve for the leading order term of $E_{s}$ in Section \ref{s:solve}.

By our definition, $N_{s}(x_1, x_2,\cdots, x_m)=N_{s-}(x_1', x_2', \cdots, x_{m'}')$, we have
the following relation between $ E_{s+}(\bmx)$ and $ E_{s-}(\bmx')$
\begin{align*}
\frac{  E_{s-}(\bmx')}{ E_{s}(\bmx)}
=\frac{V(\bmx)e^{n^2Y_{s}(\beta)}}{V(\bmx')e^{n^2Y_{s-}(\beta)}}
\frac{\prod_{j=1}^{m'}\left(\prod_{a\in A_{s-}(x_j')} \Gamma_n(x_j'-a)\prod_{b\in B_{s-}(x_j')}\Gamma_n(b-x_j'-1/n)\right)}{\prod_{j=1}^m\left(\prod_{a\in A_{s}(x_j)} \Gamma_n(x_j-a)\prod_{b\in B_{s}(x_j)}\Gamma_n(b-x_j-1/n)\right)}.
\end{align*}
If $\fP$ does not contain a horizontal boundary edge at $s$, i.e. $s\neq \{T_0, T_1,T_2,\cdots, T_p\}$, then $\bmx'=\bmx$, $A_{s-}=A_s$, $B_{s-}=B_s$ and $Y_{s}(\beta)=Y_{s-}(\beta)$. It follows  that $  E_{s-}(\bmx')/ E_{s}(\bmx)=1$. If $\fP$ contains a horizontal boundary edge at $s$, i.e. $s\in\{T_0,T_1, T_2,\cdots, T_p\}$, denoting the horizontal boundary edge as $[b,a]$, there are four cases as in Figure \ref{f:newp}. 
\begin{figure}
\begin{center}
 \includegraphics[scale=0.4,trim={0cm 16cm 0 7cm},clip]{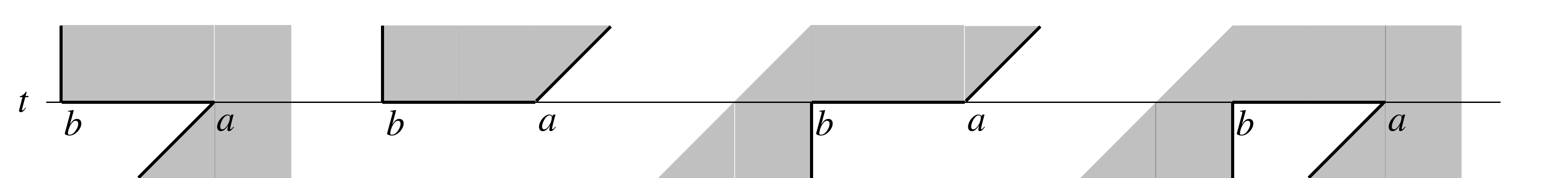}
 \caption{The polygon $\fP$ contains a horizontal edge $[b,a]$ at time $t$.}
 \label{f:newp}
 \end{center}
 \end{figure}
In this case we have
\begin{align}\label{e:etratio}
\frac{  E_{s-}(\bmx')}{ E_{s}(\bmx)}
\propto \frac{\prod_{j=1}^{m'}\left( \prod_{y\in [b,a)\cap \bZ_n}|x'_j-y| \right)\prod_{j: a\in A_{s-}(x_j')}\Gamma_n(x_j'-a) \prod_{j:  b\in B_{s-}(x_j')}\Gamma_n(b-x_j'-1/n)}
{e^{n^2\int (\int_{b}^{a}+\int_{a}^x\bm1(a\in A_{s-}(x))+ \int_{x}^{b}\bm1(b\in B_{s-}(x)))\ln|x-y| \rho(x;\bmx')\rd y\rd x   }},
\end{align}
up to a universal multiplicative factor depending only on the polygon $\fP$.
We want to emphasize that the righthand side of \eqref{e:etratio} splits
\begin{align}\label{e:split}
\prod_{j=1}^{m'}
\frac{e^{\left(\sum_{y\in [b,a)\cap \bZ_n}+\sum_{y\in [a, x_j')\cap \bZ_n}\bm1(a\in A_{s-}(x_j'))+\sum_{y\in (x_j', b)\cap \bZ_n}\bm1(b\in B_{s-}(x_j'))\right)\ln |x'_j-y| }}
{e^{n^2 \int_{x_{j'}}^{x_{j'+1/n}}(\int_{b}^{a}+\int_{a}^x\bm1(a\in A_{s-}(x))+ \int_{x}^{b}\bm1(b\in B_{s-}(x)))\rd y\ln|x-y| \rd x   }}.
\end{align}
We can further simplify the righthand side of \eqref{e:split}. For example, in the case as in the first subplot of Figure \ref{f:newp}, if  $x_j'\geq a$, then $a\in A_{s-}(x_j')$ and $b\not\in B_{s-}(x_j')$. And the term corresponding to $x_j'$ on the righthand side of \eqref{e:split} simplifies to
\begin{align}\label{e:onet}
\frac{e^{\sum_{y\in [b,x_j')\cap \bZ_n}\ln |x'_j-y| }}
{e^{n^2 \int_{x_{j'}}^{x_{j'+1/n}}\int_{b}^{x}\ln|x-y| \rd y\rd x   }}.
\end{align}
Using Stirling's formula, 
\begin{align*}\begin{split}
&\phantom{{}={}}\sum_{y\in [b,x_j')\cap \bZ_n}\ln |x'_j-y|=\ln((n(x_j'-b))!/n^{n(x_j'-b)})\\
&=n(x_j'-b)\ln(x_j'-b)-n(x_j'-b)+\frac{1}{2}\ln (n(x_j'-b))+\sqrt{2\pi}+\frac{1}{12n(x_j'-b)}+\OO(\frac{1}{n^2}).
\end{split}\end{align*}
We can also do a Taylor expansion for the denominator of \eqref{e:onet}
\begin{align*}\begin{split}
n^2 \int_{x_{j'}}^{x_{j'+1/n}}\int_{b}^{x}\ln|x-y| \rd y\rd x  
=n^2\int_{x_j'}^{x_j'+1/n}(x-b)\ln(x-b)-(x-b)\rd x\\
=n((x_j'-b)\ln(x_j'-b)-(x'-b))+\frac{1}{2}\ln(x_j'-b)+\frac{1}{6n(x_j'-b)}+\OO(\frac{1}{n^2}).
\end{split}\end{align*}
Therefore up to a multiplicative universal factor (depending only on the polygon $\fP$), the ratio \eqref{e:onet} simplifies to $\exp\{-1/(12n(x_j'-b))+\OO(1/n^2)\}$.
 It turns out, in all four cases as in Figure \ref{f:newp}, the righthand side of \eqref{e:split} is of order $\OO(1)$:
If $x_j'\geq a$, up to errors of order $\OO(1/n)$, the ratios are given by
\begin{align*}
\prod_{j=1}^{m'}e^{-\frac{1}{12 n(x_{j}'-b)} }, \quad
\prod_{j=1}^{m'}e^{-\frac{1}{12 n(x_{j}'-b)}+\frac{1}{12 n(x_{j}'-a)} },\quad
\prod_{j=1}^{m'}e^{-\frac{1}{12 n(x_{j}'-b)}+\frac{1}{12 n(x_{j}'-a)} },\quad
\prod_{j=1}^{m'}e^{-\frac{1}{12 n(x_{j}'-b)}};
\end{align*}
If $x_j'<b$, up to errors of order $\OO(1/n)$, the ratios are given by
\begin{align*}
\prod_{j=1}^{m'}e^{-\frac{1}{12 n(a-x_{j}')}+\frac{1}{12 n(b-x_{j}')} },\quad
\prod_{j=1}^{m'}e^{-\frac{1}{12 n(a-x_{j}')}+\frac{1}{12 n(b-x_{j}')} },\quad
\prod_{j=1}^{m'}e^{-\frac{1}{12 n(a-x_{j}')} }, \quad
\prod_{j=1}^{m'}e^{-\frac{1}{12 n(a-x_{j}')}}.
\end{align*}

\section{Equation of the Correction Term}\label{s:eq}
In this Section, we derive the difference equation for the correction term $E_{s}(\bmx)$ defined in Ansatz \ref{a:defE}. 
We can rewrite the recursion \eqref{e:Wt+recur} as a recursion for $E_{s}(\bmx)$: for any time $s\in[0,T)\cap \bZ_{n}$, and $\bmx=(x_1,x_2,\cdots,x_m)\in \fM_s(\fP)$
\begin{align}\label{e:cEeq}
E_{s}(\bmx)
=\sum_{\bme\in\{0,1\}^m}a_s(\bme;\bmx) E_{(s+1/n)-}(\bmx+\bme/n),
\end{align}
where
\begin{align}\label{e:defat}
a_s(\bme;\bmx)=\frac{V(\bmx+\bme/n)}{V(\bmx)}\prod_{j=1}^m\left(
\prod_{a\in A_{s}(x_j)}  (x_j-a)^{1-e_j}
\prod_{b\in B_{s}(x_j)} (b-x_j-1/n)^{e_j}\right)e^{n^2(Y_{(s+1/n)-}(\bmx+\bme/n)-Y_{s}(\bmx))}.
\end{align}
We denote the partition function as
\begin{align*}
Z_s(\bmx)\deq \sum_{\bme\in\{0,1\}^m}a_s(\bme;\bmx)
\end{align*}

The equation \eqref{e:cEeq} can be solved using Feynman-Kac formula. The polygon $\fP$ contains horizontal edges at times $0=T_0<T_1<T_2<\cdots<T_p=T$. From the discussion at the end of Section \ref{s:ansatz}, for any $s\not\in\{T_0,T_1,\cdots, T_p\}$, we have $E_{s}(\bmx)= E_{s-}(\bmx')$ and for $s\in \{T_1,T_2,\cdots, T_{p-1}\}$, we have explicit formula \eqref{e:etratio} for the ratio $ E_{s-}(\bmx')/E_{s}(\bmx')$. For any time $s\in[0,T)\cap \bZ_{n}$, to solve for $E_{s}(\bmx)$, we construct a Markov process $\{\bmx_t\}_{t\in[s, T]\cap \bZ_n}$ starting from the configuration $\bmx_s=\bmx$ with a time dependent generator given by 
\begin{align}\label{e:gener}
\cL_{t} f(\bmx)=\sum_{\bme\in \{0,1\}^{m(t)}}\sum_{\bme \in \{0,1\}^{m(t)}}\frac{a_t(\bme;\bmx)}{Z_t(\bmx)}(f(\bmx+\bme/n)-f(\bmx)),
 \quad \bmx=(x_1, x_2, \cdots, x_{m(t)})\in \fM_{t}(\fP).
\end{align}

We remark that the boundary condition of $\fP$ is encoded in $a_t(\bme;\bmx)$ as in \eqref{e:defat}. If a particle $x_j$ is at the left boundary of $\fP$ with slope $1$ corresponding to $a$, i.e. $x_j=a$. It has to jump to the right, otherwise $e_j=0$ and $a_t(\bme;\bmx)=0$. If a particle $x_j$ is at the right boundary of $\fP$ with slope $\infty$ corresponding to $b$, i.e. $x_j=b-1/n$. It has to stay, otherwise $e_j=1$ and $a_t(\bme;\bmx)=0$.
Thus for nonintersecting Bernoulli random walks in the form \eqref{e:gener}, particles are constrained inside $\fP$.

Using the Markov process $\{\bmx_t\}_{t\in[s, T]\cap \bZ_n}$ defined above, $E_{s}$ can be solved using the Feynman-Kac formula.
\begin{proposition}\label{p:FK}
For any time $s\in[0,T)\cap \bZ_n$ and $\bmx\in \fM_{s}(\fP)$, $E_{s}(\bmx)$ is given by
\begin{align}\begin{split}\label{e:FK}
E_{s}(\bmx)
&=\bE\left[\left.\prod_{t\in[s,T)\cap \bZ_n}Z_t(\bmx_t)
\prod_{k:s<T_k<T}\frac{E_{T_k-}(\bmx_{T_k}')}{ E_{T_k}(\bmx_{T_k})}
\right| \bmx_s=\bmx\right].
\end{split}\end{align}
\end{proposition}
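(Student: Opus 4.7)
The plan is a straightforward backward induction on $s \in [0,T) \cap \bZ_n$, turning the one-step recursion \eqref{e:cEeq} into a probabilistic expectation against the Markov chain $\{\bmx_t\}_{t \in [s,T]\cap \bZ_n}$ with generator \eqref{e:gener}. First, I would rewrite \eqref{e:cEeq} by factoring out the partition function $Z_s(\bmx) = \sum_\bme a_s(\bme;\bmx)$:
\[
E_s(\bmx) \;=\; Z_s(\bmx) \sum_{\bme \in \{0,1\}^m} \frac{a_s(\bme;\bmx)}{Z_s(\bmx)}\, E_{(s+1/n)-}(\bmx+\bme/n) \;=\; Z_s(\bmx)\, \bE\bigl[ E_{(s+1/n)-}(\bmx_{(s+1/n)-}) \,\big|\, \bmx_s = \bmx\bigr],
\]
which is the discrete backward Kolmogorov identity for $\{\bmx_t\}$. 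The remainder of the proof amounts to iterating this identity and carefully bookkeeping the jumps in the ratio $E_{t-}/E_t$ that occur exactly at the times $T_k$ where $\fP$ has horizontal edges.

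The reduction from $E_{(s+1/n)-}$ to $E_{s+1/n}$ splits into cases according to the discussion at the end of Section~\ref{s:ansatz}. If $s+1/n \notin \{T_0, T_1, \ldots, T_p\}$ then $\bmx_{(s+1/n)-} = \bmx_{s+1/n}$ and no correction is needed. If $s+1/n = T_k$ for some $1 \leq k \leq p-1$, I would trivially rewrite
\[
E_{T_k-}(\bmx_{T_k}') \;=\; \frac{E_{T_k-}(\bmx_{T_k}')}{E_{T_k}(\bmx_{T_k})}\cdot E_{T_k}(\bmx_{T_k}),
\]
which produces the factors appearing in the product $\prod_{k:\, s<T_k<T}$ in \eqref{e:FK}. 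Finally, if $s+1/n = T$, the tightly packed top boundary condition stated just after \eqref{e:defNt-} forces $N_{T-}(\bmx_{T-}) \in \{0,1\}$, whence $E_{T-}(\bmx_{T-}) = 1$ on the support of the chain, giving the base case $E_{T-1/n}(\bmx) = Z_{T-1/n}(\bmx)$.

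The induction step is then a direct application of the tower property: assuming \eqref{e:FK} at time $s+1/n$, I would substitute the inductive expression for $E_{s+1/n}(\bmx_{s+1/n})$, invoke the Markov property to collapse the nested conditioning on $\bmx_{s+1/n}$ into a single expectation given $\bmx_s = \bmx$, and absorb the outer factor $Z_s(\bmx)$ into the $t=s$ slot of the product $\prod_{t \in [s,T)\cap \bZ_n} Z_t(\bmx_t)$. When $s+1/n = T_k$, the additional ratio factor extracted above extends $\prod_{k':\, s+1/n<T_{k'}<T}$ to $\prod_{k':\, s<T_{k'}<T}$, matching \eqref{e:FK} exactly. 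There is no analytic difficulty; the only point requiring genuine care is to check that the particle configuration $\bmx_{T_k}$ produced by the Markov chain (whose generator \eqref{e:gener} acts on the $m(t)$ existing particles and automatically inherits the created particles at time $T_k$ via the definition of $\fM_{T_k}(\fP)$) is exactly the configuration at which $E_{T_k}$ is evaluated in the ansatz, so that the ratio inside the expectation coincides with the deterministic formula \eqref{e:etratio}.
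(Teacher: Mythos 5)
Your proof is correct and takes essentially the same approach as the paper: a backward induction that iterates the one-step recursion \eqref{e:cEeq} against the Markov chain \eqref{e:gener} via the tower property, with the $T_k$ jump ratios peeled off exactly as you describe. The only cosmetic difference is that you induct directly on the start time $s$, whereas the paper fixes $s$ and inducts on an auxiliary time $r$ inside a single expectation; these are equivalent formulations of the same argument.
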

\begin{proof}
We prove \eqref{e:FK} by induction on $r\in [s,T)\cap\bZ_n$, that
\begin{align}\begin{split}\label{e:indcE}
&\phantom{{}={}}\bE\left[\left.\prod_{t\in [s,T)\cap \bZ_n}Z_t(\bmx_t)
\prod_{k: s<T_k<T}\frac{E_{T_k-}(\bmx_{T_k}')}{ E_{T_k}(\bmx_{T_k})}
\right| \bmx_s=\bmx\right]\\
&=\bE\left[\left.\prod_{t\in [s,r]\cap\bZ_n}Z_t(\bmx_t) E_{(r+1/n)-}(\bmx_{r+1/n}')
\prod_{k: s<T_k\leq r}\frac{E_{T_k-}(\bmx_{T_k}')}{ E_{T_k}(\bmx_{T_k})}
\right| \bmx_s=\bmx\right]
\end{split}\end{align}
The statement holds trivially for $r=T-1/n$, since $ E_{T-}(\bmx_{T}')=1$. We assume the statement \eqref{e:indcE} holds for $r$ and prove it for $r-1/n$.
We can rewrite the righthand side of \eqref{e:indcE} as 
\begin{align}\begin{split}\label{e:rarr}
&\phantom{{}={}}\prod_{t\in [s,r]\cap \bZ_n}Z_t(\bmx_t) E_{(r+1/n)-}(\bmx'_{r+1/n})
\prod_{k: s<T_k\leq r}\frac{E_{T_k-}(\bmx_{T_k}')}{E_{T_k}(\bmx_{T_k})}\\
&=
\prod_{t\in [s,r-1/n]\cap \bZ_n}Z_t(\bmx_t)
\prod_{k: s<T_k\leq r}\frac{E_{T_k-}(\bmx_{T_k}')}{ E_{T_k}(\bmx_{T_k})}
\left( Z_r(\bmx_r) E_{(r+1/n)-}(\bmx_{r+1/n}')\right).
\end{split}\end{align}
To take the expectation conditioning on $\bmx_s=\bmx$, we can first take expectation conditioning on $\bmx_r$. Using \eqref{e:cEeq}, we have
\begin{align*}\begin{split}
&\phantom{{}={}}Z_r(\bmx_r) E_{(r+1/n)-}(\bmx_{r+1/n}')- E_{r}(\bmx_r)\\
&=Z_r(\bmx_r)\left( E_{(r+1/n)-}(\bmx_{r+1/n}')
-\sum_{\bme\in\{0,1\}^{m(r)}}\frac{a_{r}(\bme, \bmx_{r})}{Z_{r}(\bmx_{r})} E_{(r+1/n)-}(\bmx_{r}+\bme/n)\right),
\end{split}\end{align*}
which has mean zero conditioning on $\bmx_{r}$.
Therefore, after taking expectation conditioning on $\bmx_r$, \eqref{e:rarr} becomes
\begin{align}\label{e:rarr2}
\prod_{t\in [s,r-1/n]\cap Z_n}Z_t(\bmx_t) E_{r}(\bmx_r)
\prod_{k: s<T_k\leq r}\frac{E_{T_k-}(\bmx_{T_k}')}{ E_{T_k}(\bmx_{T_k})}.
\end{align}
If $r\not\in \{T_0, T_1,T_2,\cdots, T_p\}$, then $ E_r(\bmx_r)= E_{r-}(\bmx_r')$; if  $r\in \{T_0, T_1,T_2,\cdots, T_p\}$, then 
\begin{align*}
 E_{r}(\bmx_r)
\prod_{k: s<T_k\leq r}\frac{E_{T_k-}(\bmx_{T_k}')}{ E_{T_k}(\bmx_{T_k})}
= E_{r}(\bmx_r)
\prod_{k: s<T_k\leq r-1/n}\frac{E_{T_k-}(\bmx_{T_k}')}{ E_{T_k}(\bmx_{T_k})}\frac{ E_{r-}(\bmx_{r}')}{ E_{r}(\bmx_{r})}
= E_{r-}(\bmx'_r)
\prod_{k: s<T_k\leq r-1/n}\frac{E_{T_k-}(\bmx_{T_k}')}{ E_{T_k}(\bmx_{T_k})}
\end{align*}
In both cases, we can rewrite \eqref{e:rarr2} as
\begin{align*}
\prod_{t\in [s,r-1/n]\cap Z_n}Z_t(\bmx_t) E_{r-}(\bmx_r')
\prod_{k: s<T_k\leq r-1/n}\frac{E_{T_k-}(\bmx_{T_k}')}{ E_{T_k}(\bmx_{T_k})}.
\end{align*}
Then we can take expectation conditioning on $\bmx_s=\bmx$ to get
\begin{align*}
&\phantom{{}={}}\bE\left[\left.\prod_{t\in [s,T)\cap \bZ_n}Z_t(\bmx_t)
\prod_{k: s<T_k<T}\frac{E_{T_k-}(\bmx_{T_k}')}{E_{T_k+}(\bmx_{T_k})}
\right| \bmx_s=\bmx\right]\\
&=\bE\left[\left.\prod_{t\in [s,r-1/n]\cap \bZ_n}Z_t(\bmx_t) E_{r-}(\bmx_r')
\prod_{k: s<T_k\leq r-1/n}\frac{E_{T_k-}(\bmx_{T_k}')}{E_{T_k+}(\bmx_{T_k})}
\right| \bmx_s=\bmx\right]
\end{align*}
This gives the statement \eqref{e:indcE} for $r-1/n$ and finishes our induction. The claim \eqref{e:FK} follows by taking $r=s$ in \eqref{e:indcE}.

\end{proof}
%
%

To use Proposition \ref{p:FK} to solve for the correction terms $E_{s}$, we need to understand the Markov process $\{\bmx_t\}_{t\in [s,T]\cap \bZ_n}$ with generator $\cL_t$ as given in \eqref{e:gener}. In the weights $a_s(\bme,\bmx)$ as in \eqref{e:defat}, all the terms are explicit, except for $Y_{(s+1/n)-}(\bmx+\bme/n)-Y_s(\bmx)$. In the rest of this section, we derive an $1/n$-expansion of $Y_{(s+1/n)-}(\bmx+\bme/n)-Y_s(\bmx)$. 

We recall the variational problem from \eqref{e:varWt} 
\begin{align}\label{e:varWtcopy}
W_ s ( \beta )=\sup_{ h\in H({\mathfrak P};\beta, s)}\int_ s ^T\int_\bR\sigma(\nabla  h)\rd x \rd t ,
\end{align}
where the set of height functions $H({\mathfrak P};\beta, s)$ is defined in \eqref{def:Hpb}. We denote the minimizer of \eqref{e:varWtcopy} as $\{h_t^*(\cdot;\beta,s)\}_{s\leq t\leq T}$ and the complex slope $ f_t( x ;\beta, s)$
\begin{align}\label{e:ft3}
(\del_{x}  h^*, \del_{t}  h^*)=\frac{1}{\pi}\left(-\arg^*( f_t( x ;\beta, s)),\arg^*( f_t( x ;\beta, s)+1)\right).
\end{align}
Then $ f_t( x ;\beta, s)$ satisfies the complex Burgers equation
\begin{align}\label{e:burgereq2}
 \frac{\del_{t}  f_t( x ;\beta, s)}{ f_t( x ;\beta, s)}+\frac{\del_{x}  f_t( x ;\beta, s)}{ f_t( x ;\beta, s)+1}=0,
\end{align}
for $(x,t)\in\Omega({\mathfrak P};\beta,s)$ in the liquid region:
$
\Omega({\mathfrak P};\beta,s)\deq \{ (x,t): 0<\del_{x}  h^*<1, s<t<T\}.
$

\begin{proposition}\label{p:Ws}
The functional derivative of $W_s(\beta)$ is given by
\begin{align}\label{e:dbW}
\frac{\del W_ s ( \beta )}{\del  \beta }=-\ln | f_s( x ;\beta, s)|=-\ln (g_s(x;\beta,s)) H(\del_x \beta),
\end{align}
where $H(\del_x \beta)$ is the Hilbert transform of $\del_x \beta$.
The time derivative of the functional $W_s(\beta)$ is given by
\begin{align}\begin{split}\label{e:dsW0}
\del_s W_s(\beta)
&=\int  \ln|f_s(x;\beta,s)| \del_s h_s^*(x;\beta,s)\rd  x  - \int\sigma(\del_x \beta, \del_s h_s^*(x;\beta,s))\rd  x.
\end{split}\end{align}

\end{proposition}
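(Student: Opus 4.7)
The plan is to prove both identities by the envelope (Hamilton--Jacobi) theorem applied to the variational problem \eqref{e:varWtcopy}. Since the maximizer $h^*_t(\cdot;\beta,s)$ satisfies the Euler--Lagrange equation in the liquid region $\Omega(\fP;\beta,s)$, any infinitesimal perturbation of $h^*$ that preserves the Dirichlet data on $\del\fP$ changes $\int_s^T\!\int \sigma(\nabla h^*)\,dx\,dt$ only through boundary contributions.

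For the functional derivative I perturb $\beta \mapsto \beta + \epsilon\,\delta\beta$ (with $\delta\beta$ vanishing at the endpoints $\fa_i(s),\fb_i(s)$ so that matching with $\beta^{\fP}$ is preserved) and write
\begin{align*}
\delta W_s = \int_s^T\!\int\Bigl[\frac{\del\sigma}{\del(\del_x h)}\,\del_x\delta h^* + \frac{\del\sigma}{\del(\del_t h)}\,\del_t\delta h^*\Bigr]\,dx\,dt.
\end{align*}
Integrating by parts in $x$ and $t$, the interior term vanishes by the Euler--Lagrange equation derived in Section~\ref{s:burger}; the spatial boundary contribution vanishes because $\delta h^*\equiv 0$ on $\del\fP$; and the upper time boundary at $t=T$ vanishes since $h^*|_{t=T}$ is fixed. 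Only the $t=s$ boundary survives, where $\delta h^* = \delta\beta$, yielding
\begin{align*}
\delta W_s = -\!\int \frac{\del\sigma}{\del(\del_t h)}\bigg|_{t=s}\delta\beta(x)\,dx = -\!\int \ln|f_s(x;\beta,s)|\,\delta\beta(x)\,dx,
\end{align*}
using the identity $\del\sigma/\del(\del_t h) = \ln|f|$ from Section~\ref{s:burger}. The second equality in \eqref{e:dbW} then follows from the factorization $f_s = e^{m_s(z;\beta,s)}g_s$ of \eqref{e:fgtt}: on the real support of $\del_x\beta$, the Stieltjes transform satisfies $\Re m_s(x+\ri 0;\beta,s) = H(\del_x\beta)(x)$, and $g_s$ is real and positive on the bottom boundary (as noted in Section~\ref{s:generalb}), so $\ln|f_s(x;\beta,s)| = H(\del_x\beta)(x) + \ln g_s(x;\beta,s)$.

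For the time derivative I differentiate $W_s(\beta) = \int_s^T\!\int\sigma(\nabla h^*_t(x;\beta,s))\,dx\,dt$ with respect to $s$. Moving $s$ in the lower limit contributes $-\!\int \sigma(\nabla h^*_s)\,dx$, while the implicit $s$-dependence of $h^*_t$ for $t>s$ contributes an interior variation which the same integration-by-parts/Euler--Lagrange argument collapses to the single boundary term $-\!\int \ln|f_s(x;\beta,s)|\,[\del_s h^*_t]_{t=s}\,dx$ (the $t=T$ boundary drops because $h^*_T$ is pinned to $\beta^{\fP}$ independently of $s$). Differentiating the identity $h^*_s(x;\beta,s) = \beta(x)$ in the initial time yields $[\del_s h^*_t]_{t=s} = -[\del_t h^*_t]_{t=s}$; with the notational convention $\del_s h^*_s := [\del_t h^*_t]_{t=s}$ used in the statement, and $\nabla h^*_s = (\del_x\beta, \del_s h^*_s)$, the two contributions combine to give \eqref{e:dsW0}. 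The main care required throughout is tracking the signs of the boundary terms; the non-smoothness of $\sigma$ at the edge of the liquid region is harmless because admissible variations of $\beta$ and of $s$ leave the frozen regions untouched, so the Euler--Lagrange equation is only applied where it is valid.
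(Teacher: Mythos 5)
Your proof is correct and follows essentially the same route as the paper: first variation plus integration by parts, with the Euler--Lagrange equation killing the interior term and the Dirichlet boundary conditions killing all boundary contributions except at $t=s$, for both the $\beta$-derivative and the $s$-derivative. Your sign bookkeeping for \eqref{e:dsW0} via $[\del_s h^*_t]_{t=s} = -[\del_t h^*_t]_{t=s}$ is equivalent to the paper's direct $\Delta s$-expansion in which $\delta h|_{t=s+\Delta s} = -\Delta s\,\del_t h^{*}|_{t=s}$ and the two minus signs cancel, and your derivation of $\ln|f_s| = H(\del_x\beta) + \ln g_s$ from the factorization $f_s = e^{m_s}g_s$ matches the paper's \eqref{e:gss}; note that the displayed formula \eqref{e:dbW} in the statement contains a typographical slip (it should read $-\ln g_s(x;\beta,s)-H(\del_x\beta)$, not a product), which you have in fact corrected.
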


\begin{proof}
We can perturb the functional $W_s$, for any other boundary height function $\beta +\delta  \beta$, we denote the corresponding minimizer of \eqref{e:varWtcopy} as $h^*+ \delta h$, then
\begin{align*}\begin{split}
W_ s ( \beta +\delta  \beta )
&=\int_ s ^T\int \sigma(\nabla  h^*+\nabla \delta h)\rd  x \rd  t \\
&=\int_ s ^T\int  \frac{\del\sigma(\nabla  h^*)}{\del(\del_{x} h^*)}\del_{x}\delta  h\rd  x \rd  t +\frac{\del\sigma(\nabla  h^*)}{\del(\del_{t} h^*)}\del_{t}\delta  h\rd  x \rd  t +\OO((\delta  h)^2)\\
&=-\int_ s ^T\int  \del_{x}\frac{\del\sigma(\nabla  h^*)}{\del(\del_{x} h^*)}\delta  h\rd  x \rd  t +\del_{t}\frac{\del\sigma(\nabla  h^*)}{\del(\del_{t} h^*)}\delta  h\rd  x \rd  t 
+\left.\int\frac{\del\sigma(\nabla  h^*)}{\del(\del_{t} h^*)}\delta  h\rd  x \right|_{ s }^{T}+\OO((\delta  h)^2)\\
&=-\int \ln| f_{ s }( x ;\beta, s)|\delta  \beta ( x )\rd  x +\OO((\delta  h)^2).
\end{split}\end{align*}
It follows that the functional derivative of $W_s(\beta)$ with respect to the boundary height function $\beta$ is given by $-\ln |f_s(x;\beta,s)|$, and \eqref{e:dbW} follows.

We recall the definition \eqref{e:ft3} of $ f_s( x ;\beta, s)$,
\begin{align}\label{e:fim}
\arg^*( f_{ s }( x ;\beta, s))=-\pi\del_{x} h^*=-\pi\del_{x}  \beta ( x ).
\end{align}
Using \eqref{e:dbW} and \eqref{e:fim}, we can write $f_s(x;\beta,s)$ explicitly in terms of $\del_x \beta(x)$ and $\del_\beta W_s(\beta)$.
\begin{align}\label{e:fnewexp}
 f_{ s }( x ;\beta, s)=e^{-\ri \pi \del_{x}  \beta ( x )-\del_\beta W_s(\beta)}
=e^{-\ri \pi \del_x \beta+H(\del_x \beta)-\del_\beta W_s(\beta)-H(\del_x \beta)},
\end{align}
where $H(\del_x \beta)$ is the Hilbert transform of the measure $\del_x \beta$.
We notice that $-\ri \pi \del_x \beta+H(\del_x \beta)$ is the Stieltjes transform of  the measure $\del_x \beta$. It follows by comparing with \eqref{e:gszmut}, we conclude that 
\begin{align}\label{e:gss}
g_s(x;\beta,s)=e^{-\del_\beta W_s(\beta)-H(\del_x \beta)},\quad x\in \supp(\del_x \beta).
\end{align}
Moreover, with this notation, it holds that
\begin{align*}
|f_s(x;\beta,s)|=g_s(x;\beta,s)e^{H(\del_x \beta)},\quad x\in \supp(\del_x \beta).
\end{align*}

In the following, we compute the time derivative of $W_s(\beta)$. Let $h^*+\delta h$ be the minimizer of \eqref{e:varWtcopy} starting at time $s+\Delta s$, then
\begin{align*}\begin{split}
W_ {s+\Delta s} ( \beta )-W_ {s} ( \beta )
&=\int_ {s+\Delta s} ^T\int \left(\sigma(\nabla  h^*+\nabla \delta h)-\sigma(\nabla  h^*)\right)\rd  x \rd  t -\int_{s}^{s+\Delta s} \int\sigma(\nabla  h^*)\rd  x \rd  t\\
&=\left. \int  \frac{\del\sigma(\nabla  h^*)}{\del(\del_{t} h^*)} \delta  h\rd  x \right |_{s+\Delta s}^T -\Delta s \left.\int\sigma(\nabla  h^*)\rd  x\right|_{t=s}+\OO((\Delta s)^2)\\
&=\Delta s\left. \int  \frac{\del\sigma(\nabla  h^*)}{\del(\del_{t} h^*)} \del_t h^*\rd  x \right |_{t=s} -\Delta s \left.\int\sigma(\nabla  h^*)\rd  x\right|_{t=s}+\OO((\Delta s)^2)\\
\end{split}\end{align*}
It follows that 
\begin{align}\begin{split}\label{e:dsW}
\del_s W_s(\beta)
&=\int  \frac{\del\sigma(\nabla  h^*)}{\del(\del_{t} h^*)} \del_s h_s^*(x;\beta,s)\rd  x  - \int\sigma(\nabla  h_s^*(x;\beta,s))\rd  x\\
&=\int  \ln|f_s(x;\beta,s)| \del_s h_s^*(x;\beta,s)\rd  x  - \int\sigma(\del_x \beta, \del_s h_s^*(x;\beta,s))\rd  x\\
&=\int  (\ln g_s(x;\beta,s)+H(\del_x \beta)) \del_s h_s^*(x;\beta,s)\rd  x  - \int\sigma(\del_x \beta, \del_s h_s^*(x;\beta,s))\rd  x.
\end{split}\end{align}
This finishes the proof of \eqref{e:dsW0}.
\end{proof}

We recall that  the poles $a=z(X_i^{(1)})\in A_s(x)$ travel at rate $1$, i.e.
$\del_s a=1$, and the zeros $b=z(X_i^{(\infty)})\in B_s(x)$ does not move, i.e. $\del_s b=0$.  From the definition of $Y_s(\bmx)$ as in \eqref{e:defYt+}, we have
\begin{align}\label{e:dsdY}
\del_s Y_s(\bmx)=\del_s W_s(\bmx)-\int \sum_{a\in A_{s}(x)}\bm1(x\geq a)  \ln(x-a)\rho(x;\bmx)\rd x.
\end{align}
For the functional derivative of $Y_s(\beta)$ with respect to $\beta$, using \eqref{e:dbW} we have
\begin{align}\begin{split}\label{e:deftg}
\frac{\del Y_s(\beta)}{\del \beta}
&=\frac{\del W_s(\beta)}{\del \beta}+H(\del_x \beta)-\sum_{a\in A_{s}(x)} \ln(x-a)+\sum_{b\in B_{s}(x)}\ln(b-x)\\
&=-\ln g_s(x;\beta,s)-\sum_{a\in A_{s}(x)} \ln(x-a)+\sum_{b\in B_{s}(x)}\ln(b-x)=:-\ln  g_s'(x;\beta,s).
\end{split}\end{align}
More generally, for any $s\leq t\leq T$, we define 
\begin{align}\label{e:deftgt}
\ln  g_t'(x;\beta,s)\deq\ln g_t(x;\beta,s)+\sum_{a\in A_{t}(x)} \ln(x-a)-\sum_{b\in B_{t}(x)}\ln(b-x).
\end{align}
$g_t'(x;\beta,s)$ extends to a meromorphic function in a neighborhood of $z^{-1}(\{x:(x,t+)\in \fP\})$ over $\gamma_t^{\beta,s,+}$. From  the discussion of zeros and poles of $g_t(Z;\beta,s)$ at the end of Section \ref{s:generalb}, and the definition of $A_t(x), B_t(x)$ as in \eqref{e:defAB}, we know that $\ln  g_t'(Z;\beta,s)$ is analytic in a neighborhood of $z^{-1}(\{x:(x,t+)\in \fP\})$ over $\gamma_t^{\beta,s,+}$. And it does not have zeros or poles.
In the specially case for $t=s$, corresponding to \eqref{e:deftg}, \eqref{e:ddlogg} gives
\begin{align}\label{e:deeKt}
\del_\beta \ln  g_s'(x;\beta,s)=\del_\beta\ln  g_s(x;\beta,s)=K(x,y;\beta,s)= \left(\frac{1}{(x-y)^2}-\frac{B(X,Y;\beta,s)}{\rd z(X)\rd z(Y)}\right),
\end{align}
where $X=z^{-1}(x)$ and $Y=z^{-1}(y)$.
The time derivative of $\ln  g_t'(x;\beta,s)$ is 
\begin{align*}
\del_t \ln  g_t'(x;\beta,s)=\del_t \ln g_t(x;\beta,s)+\sum_{a\in A_t(x)}\frac{1}{a-x},
\end{align*}
which also extends to analytic function in a neighborhood of $z^{-1}(\{x:(x,t+)\in \fP\})$ over $\gamma_t^{\beta,s,+}$.
%

\begin{proposition}\label{p:YYdif}
Let $\beta=\beta(x;\bmx)$, and the indicator function $\Lambda(x)=x$ for $0\leq x\leq 1/n$; $\Lambda(x)=2/n-x$ for $1/n\leq x\leq 2/n$; $\Lambda(x)=0$ elsewhere. Then
\begin{align}\label{e:YYdif}
n^2(Y_{(s+1/n)-}(\bmx+\bme/n)-Y_{s}(\bmx))=\sum_i e_i \chi(x_i;\beta,s)+\frac{1}{n^2}\sum_{ij} e_i e_j \kappa(x_i,x_j;\beta,s) +\zeta(\beta,s)+\OO(1/n),
 \end{align}
 where
 \begin{align}\begin{split}\label{e:defka}
\chi(x_i;\beta,s)&=
n^2\int \ln  g_s'(x;\beta,s)\Lambda(x-x_i)\rd x+n\int \del_s(\ln  g_s'(x;\beta,s))\Lambda(x-x_i)\rd x\\
\kappa(x_i,x_j;\beta,s)&=-\frac{n^4}{2}\int K(x,y;\beta,s)\Lambda(x-x_i)\Lambda(y-x_j)\rd x\rd y.
\end{split} \end{align}
the functions $ g_s'(x;\beta,s), K(x,y;\beta,s)$ are defined in \eqref{e:deftgt}, \eqref{e:deeKt}, and
 \begin{align}\label{e:defzeta}
 \zeta(\beta,s)=(n\del_s Y_s(\beta)+\frac{1}{2}\del_s^2 Y_s(\beta)).
 \end{align}
\end{proposition}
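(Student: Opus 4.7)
My plan is to establish Proposition \ref{p:YYdif} by a second-order Taylor expansion of the functional $Y$ around the base point $(s,\beta)$, with increments $\delta s = 1/n$ in time and $\delta\beta$ in the boundary data. The first task is to compute $\delta\beta$ explicitly: when $e_i=1$, the particle at $x_i$ shifts to $x_i+1/n$, so the slope bump $\mathbf{1}_{[x_i,x_i+1/n]}$ in $\rho$ is replaced by $\mathbf{1}_{[x_i+1/n,x_i+2/n]}$; integrating once gives a triangular dip $-\Lambda(x-x_i)$ in $\beta$. Summing over $i$, I obtain the clean formula $\delta\beta(x) = -\sum_i e_i\,\Lambda(x-x_i)$, and a quick check shows the supports of the triangles do not overlap for distinct $i$.

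Next, I expand
\begin{align*}
Y_{s+\delta s}(\beta+\delta\beta)-Y_s(\beta)
&= \delta s\,\del_s Y_s + \tfrac{1}{2}(\delta s)^2 \del_s^2 Y_s + \int \tfrac{\del Y_s}{\del\beta}(x)\delta\beta(x)\,\rd x\\
&\quad + \delta s\int \tfrac{\del^2 Y_s}{\del s\,\del\beta}(x)\delta\beta(x)\,\rd x + \tfrac{1}{2}\iint \tfrac{\del^2 Y_s}{\del\beta^2}(x,y)\delta\beta(x)\delta\beta(y)\,\rd x\,\rd y + R,
\end{align*}
and substitute the identities $\del Y_s/\del\beta = -\ln g_s'(x;\beta,s)$ from \eqref{e:deftg} and $\del^2 Y_s/\del\beta^2 = -K(x,y;\beta,s)$ from \eqref{e:deeKt}, together with $\del_s\del_\beta Y_s = -\del_s\ln g_s'$. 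After multiplying by $n^2$, the two purely temporal terms combine to produce $\zeta(\beta,s) = n\del_s Y_s + \tfrac{1}{2}\del_s^2 Y_s$; the linear-in-$\delta\beta$ and mixed $\delta s\cdot\delta\beta$ terms (the product of two minus signs in each case giving a plus) combine into $\sum_i e_i\,\chi(x_i;\beta,s)$; and the quadratic-in-$\delta\beta$ term gives $\frac{1}{n^2}\sum_{i,j} e_i e_j\,\kappa(x_i,x_j;\beta,s)$. The matching of $n$-powers is straightforward: each $\Lambda$ contributes a factor $1/n^2$ under integration against a smooth function, $\delta s$ contributes $1/n$, and the overall prefactor $n^2$ balances everything to give quantities of order $1$.

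The main technical step is controlling the remainder $n^2 R$. Cubic and higher terms in the expansion involve either $(\delta s)^3 = n^{-3}$, $\delta s \cdot \delta\beta^2$, or $\delta\beta^3$, and after inserting the scales of $\Lambda$ and summing over the $\OO(n)$ particles, each such term is $\OO(1/n)$, provided the higher functional derivatives of $Y_s$ in $(s,\beta)$ are uniformly bounded on the support of the particle configuration. This is exactly where the hypothesis of Definition \ref{def:oneend} enters: by Proposition \ref{p:branchloc} the branch points of $\gamma_s^{\beta,s,+}$ stay away from $z^{-1}(\{x:(x,s+)\in \fP\})$, and by Proposition \ref{p:derg} the functions $\ln g_s'$, $K$, and their iterated derivatives in $(s,\beta)$ are analytic in a neighborhood of this set, hence uniformly bounded. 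The exceptional case $s+1/n\in\{T_1,\ldots,T_{p-1}\}$, where $Y_{(s+1/n)-}$ uses the slightly different sets $A_{s-},B_{s-}$, introduces discrete boundary corrections from newly exposed horizontal edges; by the explicit Stirling-type computation at the end of Section \ref{s:ansatz} these corrections are $\OO(1/n)$ and can be absorbed into $R$. The hardest part of executing this plan will be bookkeeping the various subleading contributions so that all $\OO(1/\sqrt n)$-looking cancellations go through cleanly, and verifying that the combinatorial sums $\sum_{i,j,k}e_ie_je_k$ arising from cubic-in-$\delta\beta$ terms are tamed by the decay of $K$ and its derivatives away from the diagonal.
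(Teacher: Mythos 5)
Your proposal is correct and follows essentially the same route as the paper's proof: both carry out a second-order Taylor expansion of $Y$ in the increments $\delta s=1/n$ and $\delta\beta=-\sum_i e_i\Lambda(\cdot-x_i)$, then substitute the derivative formulas \eqref{e:dsdY}, \eqref{e:deftg}, \eqref{e:deeKt} to read off $\chi$, $\kappa$, $\zeta$; you are, if anything, more explicit than the paper about controlling the remainder (invoking Propositions~\ref{p:branchloc} and~\ref{p:derg}) and about the times $s+1/n\in\{T_1,\dots,T_{p-1}\}$. One small aside in your write-up is false --- the triangles $\Lambda(\cdot-x_i)$ need not have pairwise disjoint supports, since adjacent particles with $x_{i+1}=x_i+1/n$ produce overlapping bumps on an interval of length $1/n$ --- but the identity $\delta\beta(x)=-\sum_i e_i\Lambda(x-x_i)$ is verified by directly integrating the difference of indicator bumps and nothing downstream relies on disjointness, so the argument is unaffected.
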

\begin{proof}

For simplification of notations, we write $\beta=\beta(x;\bmx)$ and 
\begin{align}\label{e:defLa}
\delta \beta=\beta(x;\bmx+\bme/n)-\beta(x;\bmx)
=-\sum_{i}e_i \Lambda(x-x_i). 
\end{align}
$Y_s(\beta)$ depends smoothly on $s$ and $\beta$, and the derivatives are given in \eqref{e:dsdY} and \eqref{e:deftg}.
By a Taylor expansion we get
\begin{align}\begin{split}
\label{Y:diff}
&\phantom{{}={}}Y_{(s+1/n)-}(\beta+\delta \beta)-Y_{s}(\beta)
=
\frac{1}{n}\del_sY_{s}(\beta)
+\int \del_\beta Y_s(\beta)\delta\beta \rd x\\
&+\frac{1}{2n^2}\del^2_sY_{s}(\beta)
+\frac{1}{n}\int \del_s\del_\beta Y_{s}(\beta)\delta\beta\rd x
+\frac{1}{2}\int \del^2_\beta Y_{s}(\beta)\delta\beta\delta \beta \rd x\rd y+\OO(1/n^3)
\end{split}\end{align}
In the following, we give explicit expressions of the terms on the righthand side of \eqref{Y:diff}. For $\int \del_\beta Y_s(\beta)\delta\beta$, using \eqref{e:deftg}, we have
\begin{align}
\begin{split}\label{e:yy1}
\int \del_\beta Y_s(\beta)\delta\beta\rd x
&=\sum_{i=1}^m e_i\int \ln  g_s'(x;\beta,s)\Lambda(x-x_i)\rd x\\
&=\sum_{i=1}^m \frac{e_i}{n^2}\left(\ln  g_s'(x_i;\beta,s)
+\frac{\del_x \ln  g_s'(x;\beta,s)}{n} +\OO(1/n^2)\right).
\end{split}
\end{align}
For $(1/n)\int \del_s\del_\beta Y_{s}(\beta)\delta\beta \rd x
$, we have
\begin{align}\label{e:yy2}
\frac{1}{n}\int \del_s\del_\beta Y_{s}(\beta)\delta\beta \rd x
=\sum_{i=1}^m\frac{e_i}{n}\int \del_s(\ln  g_s'(x;\beta,s))\Lambda(x-x_i)\rd x.
\end{align}
For $(1/2)\int \del^2_\beta Y_{s}(\beta)\delta\beta\delta \beta \rd x\rd y$, we have
\begin{align}\label{e:yy3}
\frac{1}{2}\int \del^2_\beta Y_{s}(\beta)\delta\beta\delta\beta
=\sum_{i,j}\iint\frac{e_ie_j}{n^2}K(x,y;\beta,s)\Lambda(x-x_i)\Lambda(y-x_j)\rd x\rd y.
\end{align}
The claim \eqref{e:YYdif} follows from plugging \eqref{e:yy1}, \eqref{e:yy2} and \eqref{e:yy3} into \eqref{e:YYdif}.
\end{proof}

Using Proposition \ref{p:YYdif}, we can rewrite $a_s(\bme;\bmx)$ in the following form
\begin{align}\label{e:newat0}
a_s(\bme;\bmx)&=\frac{V(\bmx+\bme/n)}{V(\bmx)}\prod_{j} \phi^+(x_j;\beta,s)^{e_i}\phi^-(x_j;\beta,s)^{1-e_i}
e^{\frac{1}{n^2}\sum_{i} e_i e_j \kappa(x_i,x_j;\beta,s) +\zeta(\beta,s)+\OO(1/n)},
\end{align}
where $\beta=\beta(x;\bmx)$, and 
\begin{align*}
\phi^+(x;\beta,s)=\prod_{b\in B_{s}(x)} (b-x-1/n)e^{ \chi(x;\beta,s)},\quad 
\phi^-(x;\beta,s)=\prod_{a\in A_{s}(x)}  (x-a).
\end{align*}

\section{Discrete Loop Equation}
\label{s:loopeq}

%
%
%

In this section, we study the nonintersecting random Bernoulli walk with transition probability in the following form: 
Fix $\bmx=(x_1, x_2, \cdots, x_m)\in \bZ_n^m$, and its empirical measure satisfies
\begin{align*}
\rho(x;\bmx)=\frac{1}{n}\sum_{i=1}^m \bm1(x\in [x_i, x_i+1/n]), \quad \supp(\rho(x;\bmx))\subset [\fa_1, \fb_1]\cup[\fa_2, \fb_2]\cup\cdots [\fa_r, \fb_r].
\end{align*}
There are meromorphic functions  $\phi^+(Z), \phi^-(Z), \kappa(Z,W)$  over certain Riemann surface $\gamma$ equipped with the covering map $Z=(f,z)\in\gamma\mapsto z\in\bC\bP^1$, and $\kappa(Z,W)=\kappa(W,Z)$ is symmetric. The intervals $[\fa_i, \fb_i]$ are lifted to $\gamma$, and we denote it by $z^{-1}([\fa_1, \fb_1]\cup[\fa_2, \fb_2]\cup\cdots [\fa_r, \fb_r])$. We simply write $\phi^+(x_i)=\phi^+(z^{-1}(x_i)), \phi^-(x_i)=\phi^-(z^{-1}(x_i)), \kappa(x_i,x_j)=\kappa(z^{-1}(x_i),z^{-1}(x_j))$. In the applications we will take $\bmx\in \fM_s(\fP)$ as in definition \ref{def:Mt}, then its empirical measure satisfies $\supp(\rho(x;\bmx))\subset [\fa_1(s), \fb_1(s)]\cup[\fa_2(s), \fb_2(s)]\cup\cdots [\fa_{r(s)}(s), \fb_{r(s)}(s)]$, and $\gamma$ to be $\gamma_s^{\beta,s,+}$ as defined in \eqref{e:zmatr}, 

For any $\bme=\{0,1\}^m$, the transition probability $\bP_\bme$ is given by
\begin{align}\label{e:defL}
\bP_\bme= \frac{a_\bme}{Z}\deq\frac{1}{Z}\frac{V(\bmx+\bme/n)}{V(\bmx)}\prod_{i=1}^m\phi^+(x_i)^{e_i}\phi^-(x_i)^{1-e_i}\exp\left\{\sum_{1\leq i,j\leq m}\frac{e_ie_j}{n^2}\kappa(x_i,x_j)\right\}.
\end{align}
We make the following assumptions
\begin{assumption}\label{a:weight}
We assume the weights $\phi^\pm(Z)$ satisfy
\begin{enumerate}
\item In a neighborhood $\Upsilon$ of  $z^{-1}([\fa_1, \fb_1]\cup[\fa_2, \fb_2]\cup\cdots [\fa_r, \fb_r])$ over $\gamma$, the covering map $\gamma\mapsto \bC\bP^1$ does not have a branch point, and $\phi^\pm(Z), K(Z,W)$ are analytic. 
\item $\phi^\pm(Z)$ can be decomposed as
\begin{align}\label{e:defvphi}
\phi^\pm(z)=\varphi_\pm(z)e^{\frac{1}{n}\psi^\pm(z)}.
\end{align}
where $\psi^\pm(z)$ are analytic and uniformly bounded in a neighborhood of $z^{-1}([\fa_1, \fb_1]\cup[\fa_2, \fb_2]\cup\cdots [\fa_r, \fb_r])$ over $\gamma$.
\end{enumerate}
\end{assumption}
Under Assumption \ref{a:weight}, in a neighborhood $\Upsilon$ of  $z^{-1}([\fa_1, \fb_1]\cup[\fa_2, \fb_2]\cup\cdots [\fa_r, \fb_r])$ over $\gamma$, the covering map $\gamma\mapsto \bC\bP^1$ does not have a branch point. By taking $\Upsilon$ small enough, $\gamma\mapsto \bC\bP^1$ restricted on $\Upsilon$ is a bijection. In this way we can identify $\Upsilon$ with a neighborhood of $[\fa_1, \fb_1]\cup[\fa_2, \fb_2]\cup\cdots [\fa_r, \fb_r]$ over $\bC$. In the rest of this Section, we use this identification, and view $\Upsilon$ as a subset of $\bC$.

We remark that the weights $a_s(\bme;\bmx)$ in \eqref{e:newat0} are in the form of \eqref{e:defL} with 
\begin{align*}
\phi^+(z)=e^{\chi(z;\beta,s)}\prod_{b\in B_{s}(z)} (b-z-1/n)^{e_j},\quad
\phi^-(z)=\prod_{a\in A_{s}(z)}  (z-a),\quad
\kappa(z,w)=\kappa(z,w;\beta,s),
\end{align*}
 and they satisfy Assumption \ref{a:weight}.

The main goal of this section is to understand the difference of the empirical measures $\sum_i\delta_{x_i+e_i/n}$ and $\sum_i\delta_{x_i}$ under the probability \eqref{e:defL}:
\begin{align}\label{e:mun}
\mu_n=\sum_i e_i \left(\delta_{x_i+e_i/n}-\delta_{x_i}\right).
\end{align}

Our analysis of the transition probability \eqref{e:defL} is based on the following lemma. 
\begin{lemma}\label{l:loopeq}
Let $a_\bme$ as defined in \eqref{e:defL}. Under assumption \ref{a:weight}, the following function is analytic on $\Upsilon$,
\begin{align}\label{e:sum1}
\sum_{\bme}a_\bme\left(\prod_{i=1}^m\frac{z-x_i+(1-e_i)/n}{z-x_i}\phi^+ \exp\left\{\sum_{i=1}^m \frac{e_i}{n^2}\kappa(x_i, z)\right\}+\prod_{i=1}^m\frac{z-x_i-e_i/n}{z-x_i}\phi^-(z)\right)
\end{align}
\end{lemma}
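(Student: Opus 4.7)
The plan is to verify the analyticity of \eqref{e:sum1} on $\Upsilon$ by showing that the only potential singularities---simple poles at $z = x_i$, $i = 1, \ldots, m$---all cancel. Under Assumption \ref{a:weight}, the functions $\phi^\pm(z)$ and $z \mapsto \kappa(x_i, z)$ are analytic on $\Upsilon$, so poles can only come from the denominators $1/(z - x_i)$ in the two rational products.

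First I would compute the residue at each $z = x_i$ contributed separately by the two sums. In the first (``$\phi^+$'') sum, the factor $\frac{z - x_i + (1 - e_i)/n}{z - x_i}$ has a pole at $z = x_i$ only when $e_i = 0$, with residue $1/n$; in the second (``$\phi^-$'') sum, the factor $\frac{z - x_i - e_i/n}{z - x_i}$ has a pole at $z = x_i$ only when $e_i = 1$, with residue $-1/n$. Thus the total residue at $z = x_i$ splits into a sum over configurations $\bme$ with $e_i = 0$ (from the first sum) and configurations with $e_i = 1$ (from the second sum).

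Next I would pair each configuration $\bme$ with $e_i = 0$ against the configuration $\bme'$ obtained by setting $e'_i = 1$ (keeping $e'_j = e_j$ for $j \neq i$), and verify that their residues cancel. The ratio $a_{\bme'}/a_{\bme}$ decomposes into three explicit pieces: a Vandermonde ratio $\prod_{j \neq i} (x_i - x_j + (1 - e_j)/n)/(x_i - x_j - e_j/n)$, the single-site weight ratio $\phi^+(x_i)/\phi^-(x_i)$, and an exponential factor accounting for the change of $\sum_{k,l} e_k e_l \kappa(x_k, x_l)/n^2$ when $e_i$ flips from $0$ to $1$. When this ratio is applied to the residue from $\bme'$ in the second sum, the Vandermonde and rational pieces telescope exactly with the rational factors of the residue from $\bme$ in the first sum, while the single-site factor converts $\phi^-(x_i)$ into $\phi^+(x_i)$. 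The extra exponential $\exp\{\sum_j e_j \kappa(x_j, z)/n^2\}$ inserted in the first term of \eqref{e:sum1}, evaluated at $z = x_i$, is designed precisely to cancel the remaining $\kappa$-discrepancy, so the paired contributions sum to zero.

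Summing over all paired configurations shows that the residue at each $z = x_i$ vanishes, so the expression has no poles in $\Upsilon$ and is analytic there. The main obstacle---and the heart of the identity---is the careful algebraic bookkeeping of the quadratic $\kappa$-exponent under the pairing: one must track the diagonal self-interaction term and exploit the symmetry $\kappa(x,y) = \kappa(y,x)$ to verify that the specific form of the linear $\kappa$-insertion in the first term of \eqref{e:sum1} matches the change of the quadratic exponent exactly. Once this combinatorial identity is checked, analyticity follows immediately from the residue computation.
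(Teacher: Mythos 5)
Your proposal follows the same residue-cancellation strategy as the paper: under Assumption~\ref{a:weight} the only possible poles are at $z = x_k$, and for each $k$ the paper pairs each configuration $\bme$ with $e_k = 0$ (whose residue comes from the $\phi^+$ term, with factor $+1/n$) against the one with $e_k = 1$ (residue from the $\phi^-$ term, factor $-1/n$) holding the complementary entries $\bme^{(k)}$ fixed, and verifies the paired residues cancel via the Vandermonde, $\phi^\pm$, and $\kappa$-exponent bookkeeping you describe. Your identification of the $\kappa$-exponent matching as the crux of the algebra is exactly the step the paper executes (somewhat tersely) in its residue formulas.
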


\begin{remark}
Lemma \ref{e:sum1} can be viewed as a dynamical version of the discrete loop equations introduced in \cite{MR3668648}, which is crucial for the proof of the central limit theorems of the discrete $\beta$ ensembles. 
\end{remark}

\begin{proof}
Under Assumption \ref{a:weight}, the only possible poles of \eqref{e:sum1} are $z^{-1}(x_k)$, for $1\leq k\leq m$. In the following we check that the residual at each $z^{-1}(x_k)$ is zero.

For any $1\leq k\leq m$, let $\bmx^{(k)}=(x_1, x_2,\cdots, x_{k-1}, x_{k+1},\cdots, x_m)$, $\bme^{(k)}=(e_1, e_2,\cdots, e_{k-1}, e_{k+1},\cdots, e_m)\in \{0,1\}^{m-1}$, and 
\begin{align*}
P_{\bme^{(k)}}=\frac{V(\bmx^{(k)}+\bme^{(k)}/n)}{V(\bmx^{(k)})}\prod_{i: i\neq k} \phi^+(x_i)^{e_i}\phi^-(x_i)^{1-e_i}\exp\left\{\sum_{i,j: i,j\neq k}\frac{e_ie_j}{n^2}\kappa(x_i,x_j)\right\}.
\end{align*}
The residual of \eqref{e:sum1} at $z^{-1}(x_k)$ is: for $e_k=1$ 
\begin{align*}
-\frac{1}{n}P_{\bme^{(k)}}\exp\left\{\sum_{i=1}^m \frac{e_i}{n^2}\kappa(x_i, x_k)\right\} \prod_{i:i\neq k}\frac{x_k-x_i+(1-e_i)/n}{x_k-x_i}\phi^-(x_k)\prod_{i:i\neq k}\frac{x_k-x_i-e_i/n}{x_k-x_i},
\end{align*}
and for $e_k=0$
\begin{align*}
\frac{1}{n}P_{\bme^{(k)}}\prod_{i:i\neq k}\frac{x_k-x_i-e_i/n}{x_k-x_i}\phi^+(x_k)\prod_{i:i\neq k}\frac{x_k-x_i+(1-e_i)/n}{x_k-x_i}\exp\left\{\sum_{i=1}^m \frac{e_i}{n^2}\kappa(x_i, x_k)\right\}.
\end{align*}
They cancel out. Therefore, $z^{-1}(x_k)$ is not a pole of \eqref{e:sum1}. This finishes the proof of Lemma \ref{l:loopeq}.
\end{proof}

We will use Lemma \ref{l:loopeq} to analyze the following quantities related to the transition probability as in \eqref{e:defL}. For any $z\in \Upsilon$ we define
\begin{align}\begin{split}\label{e:ABC}
\cA(z)&=\sum_{\bme}a_\bme\prod_{i}\frac{z-x_i-e_i/n}{z-x_i}=Z\bE_\bme\left[\prod_{i}\left(\frac{z-x_i-1/n}{z-x_i}\right)^{e_i}\right],\\
\cB(z)&=\cG(z)\varphi^+(z)+\varphi^-(z), \quad \quad \cG(z)=\prod_{i}\left(1+\frac{1/n}{z-x_i-1/n}\right)=\exp\left\{\int \frac{\rho(x;\bmx)}{z-x}\rd x\right\},\\
\cC(z)&=\sum_{\bme}a_\bme\left(\prod_{i=1}^m\frac{z-x_i+(1-e_i)/n}{z-x_i}\exp\left\{\sum_{i=1}^m \frac{e_i}{n^2}\kappa(x_i, z)\right\}\phi^+(z)+\prod_{i=1}^m\frac{z-x_i-e_i/n}{z-x_i}\phi^-(z)\right).
\end{split}\end{align}

Among the quantities $\cA(z), \cB(z), \cC(z)$, $\cA(z)$ encodes the information of the measure $\mu_n$, which we want to understand, $\cC(z)$ encodes the information of the partition function for the measure $\bP_\bme$, and $\cB(z)$ is known explicitly, depending only on $\bmx$. In the rest of this section, we will solve $\cA(z)$ and $\cC(z)$ explicitly in terms of $\cB(z)$, more precisely as contour integrals of $\cB(z)$.
To do it, we need to make the following assumption of the quantity $\cB$ as in \eqref{e:ABC}, which is satisfied in all our applications.
\begin{assumption}\label{a:cancelB}
We assume that the quantity $\cB(z)$ as defined in \eqref{e:ABC} satisfes
\begin{align*}
\frac{1}{2\pi \ri}\oint_{\cin}\frac{\del_w \cB(w)}{\cB(w)}\frac{\rd w}{w-z}\sim \frac{1}{z^2},\quad z\rightarrow \infty,
\end{align*}
where the contour $\cin$ encloses $[\fa_1, \fb_1]\cup[\fa_2, \fb_2]\cup\cdots [\fa_r, \fb_r]$ but not $z$.
\end{assumption}

We rewrite the first part of $\cC(z)$ as
\begin{align*}\begin{split}
\phantom{{}={}}\sum_{\bme}a_\bme\prod_{i=1}^m\frac{z-x_i+(1-e_i)/n}{z-x_i}\phi^+(z)+\sum_{\bme}a_\bme\prod_{i=1}^m\frac{z-x_i+(1-e_i)/n}{z-x_i}\left(e^{\sum_{i=1}^m \frac{e_i}{n^2}\kappa(x_i, z)}-1\right)\phi^+(z)\\
=\cG(z+1/n)\cA(z+1/n)\phi^+(z)+\sum_{\bme}a_\bme\prod_{i=1}^m\frac{z-x_i+(1-e_i)/n}{z-x_i}\left(e^{\sum_{i=1}^m \frac{e_i}{n^2}\kappa(x_i, z)}-1\right)\phi^+(z),
\end{split}\end{align*}
where the second term is of order $\OO(Z/n)$,
\begin{align*}\begin{split}
&\phantom{{}={}}\sum_{\bme}a_\bme\prod_{i=1}^m\frac{z-x_i+(1-e_i)/n}{z-x_i}\left(e^{\sum_{i=1}^m \frac{e_i}{n^2}\kappa(x_i, z)}-1\right)\phi^+(z)\\
&=\sum_{\bme}a_\bme\prod_{i=1}^m\frac{z-x_i+(1-e_i)/n}{z-x_i}\left(\sum_{i=1}^m \frac{e_i}{n^2}\kappa(x_i, z)+\OO(\frac{1}{n^2})\right)\phi^+(z)=\OO\left(\frac{Z}{n}\right).
\end{split}\end{align*}
The second term in $\cC(z)$ is simply $\cA(z)\phi^-(z)$. Then in this way we have 
\begin{align}\label{e:ABCprecise}
\cC(z)=\cA(z)\cB(z)\left(1+\frac{\cE_0(z)}{n}\right)
\end{align}
where 
\begin{align}\begin{split}\label{e:defcE}
\cE_0(z)&=
\frac{n}{\cA(z)\cB(z)}\left(\cA(z)((\cG(z)\phi^+(z)+\phi^-(z))-\cB(z))+(\cG(z+1/n)\cA(z+1/n)-\cG(z)\cA(z))\phi^+(z)\right.\\
&\left.+\sum_{\bme}a_\bme\prod_{i=1}^m\frac{z-x_i+(1-e_i)/n}{z-x_i}\left(e^{\sum_{i=1}^m \frac{e_i}{n^2}\kappa(x_i, z)}-1\right)\phi^+(z)\right)=\OO(1).
\end{split}\end{align}

\subsection{First Order Term}
In this section we derive the first order terms of the quantities $\cA(z), \cC(z)$ as defined in \eqref{e:ABC} in terms of contour integrals of $\cB(z)$.
\begin{proposition}\label{p:firstod}
We assume Assumptions \ref{a:weight} and \ref{a:cancelB}, then
\begin{align*}
\frac{\del_z \cA(z)}{\cA(z)}
=\frac{1}{2\pi \ri}\oint_{\cin}\frac{\del_w \cB(w)}{\cB(w)}\frac{\rd w}{w-z}+\OO\left(\frac{1}{n}\right),
\end{align*}
where the contour $\cin$ encloses  $[\fa_1, \fb_1]\cup[\fa_2, \fb_2]\cup\cdots [\fa_r, \fb_r]$ but not $z$, and 
\begin{align*}
\frac{\del_{z} \cC(z)}{\cC(z)}
&=\frac{1}{2\pi \ri}\oint_{\cout}\frac{\del_w\cB}{\cB}\frac{\rd w}{w-z}+\OO\left(\frac{1}{n}\right),
\end{align*}
where the contour $\cout$ encloses $z$ and $[\fa_1, \fb_1]\cup[\fa_2, \fb_2]\cup\cdots [\fa_r, \fb_r]$.
\end{proposition}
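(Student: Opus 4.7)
My plan is to combine the analyticity of $\cC(z)$ on $\Upsilon$ provided by Lemma \ref{l:loopeq} with the multiplicative decomposition $\cC(z) = \cA(z)\cB(z)(1+\cE_0(z)/n)$ from \eqref{e:ABCprecise}, and then extract each of the two formulas by integrating against a Cauchy kernel over a suitable contour so that one of the logarithmic derivatives drops out. Taking $\del_z\ln$ of the factorization gives
\begin{equation*}
\frac{\del_z\cC(z)}{\cC(z)} = \frac{\del_z\cA(z)}{\cA(z)} + \frac{\del_z\cB(z)}{\cB(z)} + \OO(1/n),
\end{equation*}
uniformly on compact subsets of $\Upsilon$. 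The error is $\del_z\ln(1+\cE_0/n) = (\del_z\cE_0/n)/(1+\cE_0/n)$; Cauchy estimates applied to $\cE_0$ on a slightly enlarged neighborhood of $\Upsilon$, where \eqref{e:defcE} shows $\cE_0 = \OO(1)$, give $\del_z\cE_0 = \OO(1)$, so this correction is indeed $\OO(1/n)$. Two other analytic facts I will use: (i) $\cA(z)$ is a rational function of $z$ with poles only at the $x_i$ (hence on the support) and limit $Z$ as $z\to\infty$, so $\del_z\cA(z)/\cA(z)$ is analytic off the support and decays as $\OO(1/z^2)$ at infinity; (ii) $\cB(z)$ is analytic on $\Upsilon\setminus\{\text{support}\}$, while $\cC(z)$ is analytic on all of $\Upsilon$.

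For the first formula, take $z$ outside $\cin$. Since $\cC$ is analytic (and, for $n$ large, nonzero) inside $\cin$, $(\del_w\cC(w)/\cC(w))/(w-z)$ is analytic inside $\cin$, and its contour integral vanishes. On the other hand, $\del_w\cA(w)/\cA(w)$ is analytic outside $\cin$, with only a simple pole at $w=z$ (residue $\del_z\cA(z)/\cA(z)$) in the exterior region, and decays as $\OO(1/w^2)$ at infinity. Applying the residue theorem to the annular region between $\cin$ and a large circle $C_R$ (whose contribution vanishes as $R\to\infty$) and accounting for orientation yields
\begin{equation*}
\frac{1}{2\pi\ri}\oint_{\cin}\frac{\del_w\cA(w)/\cA(w)}{w-z}\,\rd w = -\frac{\del_z\cA(z)}{\cA(z)}.
\end{equation*}
Substituting both computations into the integrated logarithmic derivative identity produces the first formula. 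The second formula follows from the same template, this time with the contour $\cout$ enclosing both $z$ and the support: Cauchy's formula gives $\tfrac{1}{2\pi\ri}\oint_{\cout}(\del_w\cC(w)/\cC(w))/(w-z)\,\rd w = \del_z\cC(z)/\cC(z)$ directly, while the $\cA$-integral now vanishes, because the integrand is analytic throughout the exterior of $\cout$ (the pole at $w=z$ now lies inside) and decays as $\OO(1/w^3)$ at infinity.

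The main obstacle will be the uniformity of the $\OO(1/n)$ error. It requires $\cA$, $\cB$, and $\cC$ to be bounded away from zero on the contours $\cin$ and $\cout$, and $\cE_0$ together with $\del_z\cE_0$ to be uniformly bounded on a neighborhood of those contours. The non-vanishing of $\cA$ and $\cB$ is expected because $\cA(z)\to Z$ at infinity and $\cB$ is an explicit expression, but their zeros must be excluded from the specific contours chosen inside $\Upsilon$; the non-vanishing of $\cC$ then follows from $\cC\approx\cA\cB$ to leading order. The boundedness of $\cE_0$ and its derivative is obtained by a term-by-term analysis of the three summands in \eqref{e:defcE}, using the decomposition $\phi^\pm = \varphi_\pm(1+\OO(1/n))$ of Assumption \ref{a:weight} and a discrete Taylor expansion for the difference $\cG(z+1/n)\cA(z+1/n) - \cG(z)\cA(z)$, combined with Cauchy estimates on the enlarged neighborhood.
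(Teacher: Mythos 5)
Your contour-integral template matches the paper's: both integrate the identity $\del_z\ln\cC = \del_z\ln\cA + \del_z\ln\cB + \del_z\ln(1+\cE_0/n)$ against a Cauchy kernel over $\cin$ (respectively $\cout$) and exploit the analyticity of $\cC$ on $\Upsilon$ from Lemma \ref{l:loopeq} together with the $\OO(1/z^2)$ decay of $\del_z\cA/\cA$. However, there is a genuine gap. You assert that ``$\cC$ is analytic (and, for $n$ large, nonzero) inside $\cin$'' so that the $\cin$-integral of $(\del_w\cC/\cC)/(w-z)$ vanishes, but this non-vanishing is never justified. The justification you sketch in your final paragraph --- ``the non-vanishing of $\cC$ then follows from $\cC\approx\cA\cB$ to leading order'' --- fails exactly where it matters, namely in the interior of $\cin$: there the support $[\fa_1,\fb_1]\cup\cdots\cup[\fa_r,\fb_r]$ lives, $\cA$ has poles at the $x_i$, $\cB=\cG\varphi^++\varphi^-$ has both poles and zeros coming from $\cG(z)=\prod_i(1+\tfrac{1/n}{z-x_i-1/n})$, and the bound $\cE_0=\OO(1)$ on which the factorization rests is only controlled away from the support. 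You also discuss non-vanishing only ``on the contours'' in your closing paragraph, whereas the residue computation actually needs non-vanishing of $\cC$ throughout the regions the contours enclose.

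The paper closes this gap without presupposing it. It carries the unknown residue sum through the calculation, arriving at
\begin{align*}
\frac{\del_z \cA(z)}{\cA(z)}-\frac{1}{2\pi \ri}\oint_{\cin}\frac{\del_w \cB(w)}{\cB(w)}\frac{\rd w}{w-z}=\sum_{p:\, \cC(p)=0,\ \text{$p$ inside $\cin$}}\frac{1}{z-p}+\OO\left(\frac{1}{n}\right),
\end{align*}
and then invokes Assumption \ref{a:cancelB} --- which your proposal never uses, even though the proposition explicitly assumes it --- to conclude that the left side is $\OO(1)/z^2$ as $z\to\infty$. Each $1/(z-p)$ on the right contributes a $1/z$ tail, so the sum must be empty; hence $\cC$ has no zeros inside $\cin$. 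This $1/z$-tail argument is the actual content of Assumption \ref{a:cancelB} and is the step your proposal is missing. Once it is established, $\cC$ is also non-vanishing on the annulus between $\cin$ and $\cout$ (where the factorization $\cC=\cA\cB(1+\cE_0/n)$ with $\cA,\cB\neq 0$ is legitimate), which is what the second formula additionally requires.
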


\begin{proof}
We can rewrite \eqref{e:ABCprecise} in the following linear form
\begin{align}\label{e:dzC}
\frac{\del_z \cC(z)}{\cC(z)}=\frac{\del_z \cA(z)}{\cA(z)}+\frac{\del_z\cB(z)}{\cB(z)}+\frac{\del_z\cE_0(z)/n}{1+\cE_0(z)/n}.
\end{align}
From the defining relation \eqref{e:ABC}, we have that for $z\rightarrow \infty$
\begin{align*}
\frac{\del_z \cA(z)}{\cA(z)}\sim \frac{\OO(1)}{ z^2}.
\end{align*}
Thanks to Lemma \ref{l:loopeq}, $\cC(z)$ is analytic in a neighborhood of $[\fa_1, \fb_1]\cup[\fa_2, \fb_2]\cup\cdots [\fa_r, \fb_r]$, we can use a contour integral to get rid of $\del_z\cC(z)/\cC(z)$ and recover $\del_z \cA(z)/\cA(z)$,
\begin{align*}\begin{split}
-\frac{\del_z \cA(z)}{\cA(z)}&=\frac{1}{2\pi \ri}\oint_{\cin}\frac{\del_w \cA(w)}{\cA(w)}\frac{\rd w}{w-z}=-\frac{1}{2\pi \ri}\oint_{\cin}\left(\frac{\del_w \cB(w)}{\cB(w)}+\frac{\del_w \cE_0/n}{1+\cE_0/n}-\frac{\del_w \cC(w)}{\cC(w)}\right)\frac{\rd w}{w-z}\\
&=-\frac{1}{2\pi \ri}\oint_{\cin}\frac{\del_w \cB(w)}{\cB(w)}\frac{\rd w}{w-z}-\sum_{p: \cC(p)=0,\text{$p$ inside the contour}}\frac{1}{z-p}+\OO\left(\frac{1}{n}\right),
\end{split}\end{align*}
where the contour $\cin$ encloses $[\fa_1, \fb_1]\cup[\fa_2, \fb_2]\cup\cdots [\fa_r, \fb_r]$ but not $z$.
Therefore, by our assumption \ref{a:cancelB}, the lefthand side of 
\begin{align*}
\frac{\del_z \cA(z)}{\cA(z)}
-\frac{1}{2\pi \ri}\oint_{\cin}\frac{\del_z \cB(w)}{\cB(w)}\frac{\rd w}{w-z}=\sum_{p: \cC(p)=0,\text{$p$ inside the contour}}\frac{1}{z-p}+\OO\left(\frac{1}{n}\right),
\end{align*}
behaves like $\OO(1)/z^2$ for $z$ large. This implies that $\cC(z)$ does not have zeros inside the contour, otherwise, the righthand side behaves like $\Omega(1)/z$ for $z$ large. It follows that
\begin{align}\label{e:A1}
\frac{\del_z \cA(z)}{\cA(z)}
=\frac{1}{2\pi \ri}\oint_{\cin}\frac{\del_z \cB(w)}{\cB(w)}\frac{\rd w}{w-z}+\OO\left(\frac{1}{n}\right).
\end{align}

Since $\del_z \cA(z)/\cA(z)$ behaves like $\OO(1)/z^2$ when $z$ is large, we can also do a contour integral to kill $\del_z \cA(z)/\cA(z)$ and recover $\del_z \cC(z)/\cC(z)$,
\begin{align}
\frac{\del_{z} \cC(z)}{\cC(z)}\label{e:C1}
&=\frac{1}{2\pi \ri}\oint_{\cout}\frac{\del_w\cB(w)}{\cB(w)}\frac{\rd w}{w-z}+\OO\left(\frac{1}{n}\right),
\end{align}
where the contour $\cout$ encloses $[\fa_1, \fb_1]\cup[\fa_2, \fb_2]\cup\cdots [\fa_r, \fb_r]$ and $z$.
This finishes the proof of Proposition \ref{p:firstod}.
\end{proof}

\begin{proposition}\label{p:difmeasure}
We assume Assumptions \ref{a:weight} and \ref{a:cancelB}, then the measure $\mu_n$ as defined in \eqref{e:mun} satisfies
\begin{align*}
\bE_\bme\left[
\int  \frac{\rd \mu_n}{z-x}\right]
=\frac{1}{2\pi \ri}\oint_{\cin}\frac{\del_w\cB}{\cB}\frac{\rd w}{w-z}+\OO\left(\frac{1}{n}\right),
\end{align*}
where the contour $\cin$ encloses $[\fa_1, \fb_1]\cup[\fa_2, \fb_2]\cup\cdots [\fa_r, \fb_r]$ but not $z$.
\end{proposition}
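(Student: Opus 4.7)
The plan is to deduce Proposition \ref{p:difmeasure} directly from Proposition \ref{p:firstod} by comparing the logarithmic derivative $\partial_z \cA(z)/\cA(z)$ with the mean Stieltjes transform $\bE_\bme[\int d\mu_n/(z-x)]$, and arguing that the two agree up to $\OO(1/n)$.

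The starting point is the identity
\begin{equation*}
\partial_z \log F_\bme(z) \;=\; \int \frac{d\mu_n}{z-x}, \qquad F_\bme(z) \deq \prod_{i=1}^m \frac{z-x_i-e_i/n}{z-x_i},
\end{equation*}
which holds by direct computation since the $e_i=0$ terms vanish. Writing $\cA(z) = \sum_\bme a_\bme F_\bme(z)$ and differentiating under the sum gives
\begin{equation*}
\frac{\partial_z \cA(z)}{\cA(z)} \;=\; \frac{1}{\cA(z)} \sum_\bme a_\bme F_\bme(z) \int \frac{d\mu_n}{z-x},
\end{equation*}
which is the expectation of $\int d\mu_n/(z-x)$ under the $F_\bme(z)$-tilted measure with weights $a_\bme F_\bme(z)/\cA(z)$. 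Since the quantity of interest is the same expectation under the original law $\bP_\bme = a_\bme/Z$, the two differ by a single covariance:
\begin{equation*}
\frac{\partial_z \cA(z)}{\cA(z)} - \bE_\bme\!\left[\int \frac{d\mu_n}{z-x}\right] \;=\; \frac{\cov_\bme\!\left(F_\bme(z),\,\int d\mu_n/(z-x)\right)}{\bE_\bme[F_\bme(z)]}.
\end{equation*}

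To bound this by $\OO(1/n)$ I would apply Cauchy--Schwarz and estimate the two variances separately. Both $F_\bme(z)/\bE_\bme[F_\bme(z)] - 1$ and $\int d\mu_n/(z-x)$ decompose as sums of $m=\OO(n)$ contributions of individual magnitude $\OO(1/n)$ on compact subsets of $\Upsilon$ bounded away from the support, so at the natural scale each has variance $\OO(1/n)$. The covariance is then $\OO(\bE_\bme[F_\bme(z)]/n)$, and combining with Proposition \ref{p:firstod} yields the stated formula.

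The main obstacle is the variance bound itself, since under $\bP_\bme$ the Bernoulli increments $\{e_i\}$ are strongly correlated through the Vandermonde and interaction factors in $a_\bme$; independence-based estimates are unavailable. A natural strategy is to bootstrap Lemma \ref{l:loopeq} to the two-point generating function $\sum_\bme a_\bme F_\bme(z)F_\bme(w)$, derive a two-point version of the loop equation in $(z,w)$, and extract the second-order information needed to control $\var_\bme(F_\bme(z))$; an analogous argument controls $\var_\bme(\int d\mu_n/(z-x))$. Once these estimates are in hand, the short covariance-to-Stieltjes reduction above closes the proof.
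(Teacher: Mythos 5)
Your algebraic reduction is correct as far as it goes: $\partial_z \log F_\bme(z) = \int d\mu_n/(z-x)$ (the $e_i=0$ terms vanish on both sides), $\partial_z\cA/\cA$ is the $F_\bme(z)$-tilted mean of $\int d\mu_n/(z-x)$, and the gap between the tilted and untilted means is a normalized covariance. But the proof as written has a genuine hole where it matters most: the bound $\cov_\bme\bigl(F_\bme(z), \int d\mu_n/(z-x)\bigr)=\OO(\bE_\bme[F_\bme(z)]/n)$ is precisely the kind of second-order (CLT-scale) concentration estimate that the whole machinery of the paper is being built to produce. You correctly flag this and sketch a two-point loop equation as a way out, but that route would essentially require re-deriving a substantial piece of the later covariance analysis (Section~\ref{s:Cov}), and is not carried out; so the argument does not close.

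The paper's proof sidesteps the comparison entirely with a reweighting trick that you have half-discovered. Rather than tilting $\bP_\bme$ by $F_\bme(z)$ and then estimating the covariance correction, it goes the other way: it defines $\tilde a_\bme = a_\bme/F_\bme(z) = a_\bme\prod_i\bigl(\tfrac{z-x_i}{z-x_i-1/n}\bigr)^{e_i}$, so that the $F_\bme(z')$-tilting of $\tilde a_\bme$ at $z'=z$ restores the original law exactly, giving the \emph{identity}
\begin{align*}
\left.\frac{\partial_{z'}\tilde\cA(z')}{\tilde\cA(z')}\right|_{z'=z}
= \bE_\bme\!\left[\int\frac{d\mu_n}{z-x}\right]
\end{align*}
with no covariance error at all. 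The extra factor $(z-z')/(z-z'-1/n)$ is $1+\OO(1/n)$ uniformly for $z'\in\Upsilon$ (because $z$ is outside $\Upsilon$), so it is absorbed into $\psi^+$; hence $\varphi^\pm$ and therefore $\cB$ are unchanged, Assumption~\ref{a:weight} still holds, and Proposition~\ref{p:firstod} applied to $\tilde\cA$ gives the stated contour formula directly. The lesson: in your covariance decomposition, instead of estimating the covariance, one should rescale the weights so that the covariance term is identically zero at the evaluation point. If you want to salvage your route, you would need to establish $\var_\bme\bigl(\int d\mu_n/(z-x)\bigr)=\OO(1/n)$ and $\var_\bme\bigl(\log F_\bme(z)\bigr)=\OO(1/n)$ by a self-contained argument (e.g.\ a genuine two-point Nekrasov equation), and you should check that this does not secretly depend on Proposition~\ref{p:difmeasure} itself, since the paper's Section~\ref{s:Cov} derives exactly those variance statements \emph{from} this proposition.
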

This implies that there exists a limiting profile $\delta \beta^*$ such that 
\begin{align}\label{e:defbeta}
\frac{1}{2\pi \ri}\oint_{\cin}\frac{\del_w\cB}{\cB}\frac{\rd w}{w-z}=\int \frac{\rd \delta \beta^*(x)}{z-x},
\end{align}
and $\rd \mu_n$ converges to $\rd\delta \beta^*$ in expectation.
\begin{proof}
Fix any $z$, we deform the weights $a_\bme$, 
\begin{align}\label{e:tae}
\tilde a_\bme=a_\bme \prod\frac{z-x_i}{z-x_i-e_i/n}
=a_\bme \prod_i\left(\frac{z-x_i}{z-x_i-1/n}\right)^{e_i}.
\end{align}
This is equivalent to change $\phi^+(z')$ by a factor $(z-z')/(z-z'-1/n)$, which can be absorbed in $\psi^+(z')$:
\begin{align*}
\tilde \phi^+(z')
=\phi^+(z')\frac{z-z'}{z-z'-1/n}
=\varphi^+(z')e^{\frac{1}{n}\left(\psi^+(z')+n\ln \left(\frac{z-z'}{z-z'-1/n}\right)\right)}
=:\varphi^+(z')e^{\frac{1}{n}\tilde \psi^+(z')}.
\end{align*}
Since the leading order term in $\tilde \phi^+(z')$ is still $\varphi^+(z')$, 
we can compute using Proposition \ref{p:firstod}
\begin{align}\label{e:tA}
\tilde \cA(z')=\sum_\bme \tilde a_\bme \prod_i \frac{z'-x_i-e_i/n}{z'-x_i},
\quad
\tilde \cB(z')=\cG(z')\varphi^+(z')+\varphi^-(z'),
\end{align}
and 
\begin{align}\begin{split}\label{e:dtA}
\frac{\del_{z'}\tilde \cA(z')}{\tilde \cA(z')}
&=\frac{\sum_\bme \tilde a_\bme \prod_i \left(\frac{z'-x_i-1/n}{z'-x_i}\right)^{e_i}\left(\sum_i\frac{1}{z'-x_i-e_i/n}-\frac{1}{z'-x_i}\right)}{\sum_\bme \tilde a_\bme \prod_i \left(\frac{z'-x_i-1/n}{z'-x_i}\right)^{e_i}}\\
&=\frac{1}{2\pi \ri}\oint_{\cin}\frac{\del_w\cB(w)}{\cB(w)}\frac{\rd w}{w-z'}+\OO\left(\frac{1}{n}\right),
\end{split}\end{align}
where the contour $\cin$ encloses $[\fa_1, \fb_1]\cup[\fa_2, \fb_2]\cup\cdots [\fa_r, \fb_r]$ but not $z,z'$.
Then we get 
\begin{align*}\begin{split}
&\phantom{{}={}}\bE_\bme\left[
\int  \frac{\rd \mu_n}{z-x}\rd x\right]
=\bE_\bme\left[ \sum_i\left(\frac{1}{z-x_i-e_i/n}-\frac{1}{z-x_i}\right)\right]\\
&
=\left.\frac{\del_{z'}\tilde \cA(z')}{\tilde \cA(z')}\right|_{z'=z}
=\frac{1}{2\pi \ri}\oint_{\cin}\frac{\del_w\cB(w)}{\cB(w)}\frac{\rd w}{w-z}+\OO\left(\frac{1}{n}\right),
\end{split}\end{align*}
by setting $z'=z$ in \eqref{e:dtA}. This finishes the proof of Proposition \ref{p:difmeasure}.
\end{proof}

\subsection{Higher Order Term}
In this section we derive higher order terms of the quantities $\cA(z), \cC(z)$ as defined in \eqref{e:ABC} in terms of $\cB(z)$, and the limiting profile $\delta\beta^*$ as defined in \eqref{e:defbeta}.
\begin{proposition}\label{p:order2}
We assume Assumptions \ref{a:weight} and \ref{a:cancelB}, then
\begin{align*}
\frac{\del_z \cA(z)}{\cA(z)}
=\frac{1}{2\pi \ri}\oint_{\cin}\frac{\del_w \cB}{\cB}\frac{\rd w}{w-z}+\frac{1}{n}\frac{1}{2\pi \ri}\oint_{\cin}\frac{\del_w \cE_0\rd w}{w-z}+\OO\left(\frac{1}{n^2}\right),
\end{align*}
where the contour $\cin$ encloses $[\fa_1, \fb_1]\cup[\fa_2, \fb_2]\cup\cdots [\fa_r, \fb_r]$ but not $z$, and
\begin{align*}
\frac{\del_{z} \cC(z)}{\cC(z)}
&=\frac{1}{2\pi \ri}\oint_{\cout}\frac{\del_w\cB}{\cB}\frac{\rd w}{w-z}+\frac{1}{n}\frac{1}{2\pi \ri}\oint_{\cout}\frac{\del_w \cE_0\rd w}{w-z}+\OO\left(\frac{1}{n^2}\right),
\end{align*}
where the contour $\cout$ encloses $z$ and $[\fa_1, \fb_1]\cup[\fa_2, \fb_2]\cup\cdots [\fa_r, \fb_r]$, and
\begin{align*}
\cE_0(z)=\frac{1}{\cB}\left(\varphi^-\psi^-+\varphi^+\del_z \cG+\varphi^+\cG\left(\psi^++\int\frac{\rd \delta\beta^*}{z-x}-
\int\kappa(x,z)\delta\beta^*\rd x\right)\right)+\OO\left(\frac{1}{n}\right).
\end{align*}
and $\delta \beta^*$ is as defined in \eqref{e:defbeta}.
\end{proposition}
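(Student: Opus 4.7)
The plan is to refine the proof of Proposition \ref{p:firstod}, keeping one additional order in the $1/n$ expansion, and then to compute $\cE_0$ to leading order by Taylor-expanding each of the three summands in the defining expression \eqref{e:defcE}.

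For the contour identities, I would start from \eqref{e:dzC}. Since $\cE_0=\OO(1)$ (already established in \eqref{e:defcE}), the identity can be rewritten as
\begin{equation*}
\frac{\del_z\cC(z)}{\cC(z)}=\frac{\del_z\cA(z)}{\cA(z)}+\frac{\del_z\cB(z)}{\cB(z)}+\frac{1}{n}\del_z\cE_0(z)+\OO\!\left(\frac{1}{n^2}\right).
\end{equation*}
Lemma \ref{l:loopeq} implies $\cC$ is analytic in $\Upsilon$, and the proof of Proposition \ref{p:firstod} already showed that $\cC$ has no zeros inside $\cin$, so $\del_w\cC(w)/\cC(w)$ is holomorphic inside $\cin$. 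Combined with the elementary decay $\del_z\cA(z)/\cA(z)=\OO(1/z^2)$ at infinity, integrating the identity above against $\rd w/(w-z)$ over $\cin$ eliminates the $\del_w\cC/\cC$ term and produces the stated expression for $\del_z\cA/\cA$. The statement for $\del_z\cC/\cC$ follows by the same contour argument with $\cout$ in place of $\cin$.

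For the explicit value of $\cE_0$, I would expand each summand in \eqref{e:defcE} using $\phi^{\pm}=\varphi^{\pm}(1+\psi^{\pm}/n+\OO(1/n^2))$. The first summand simplifies via $\cG\phi^++\phi^--\cB=(1/n)(\cG\varphi^+\psi^++\varphi^-\psi^-)+\OO(1/n^2)$, giving the contribution $(\cG\varphi^+\psi^++\varphi^-\psi^-)/\cB$. For the second summand, I would Taylor-expand $\cG(z+1/n)=\cG(z)+(1/n)\del_z\cG+\OO(1/n^2)$ and combine it with the first-order identity $\cA(z+1/n)/\cA(z)=1+(1/n)\int\rd\delta\beta^*/(z-x)+\OO(1/n^2)$ from Proposition \ref{p:difmeasure}, producing the contribution $\varphi^+(\del_z\cG+\cG\int\rd\delta\beta^*/(z-x))/\cB$. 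For the third summand, the expansion $e^{\sum_ie_i\kappa(x_i,z)/n^2}-1=(1/n^2)\sum_ie_i\kappa(x_i,z)+\OO(1/n^2)$ reduces matters to evaluating the $\OO(n)$-sized linear statistic $\bE_{\bme}[\sum_ie_i\kappa(x_i,z)]$. This quantity can be accessed by the same deformation trick as in the proof of Proposition \ref{p:difmeasure}: replace $\varphi^+(w)$ by $\varphi^+(w)(1+\varepsilon\kappa(w,z))$, observe that this amounts to multiplying each $a_{\bme}$ by $1+\varepsilon\sum_ie_i\kappa(x_i,z)+\OO(\varepsilon^2)$, and then read off $\bE_{\bme}[\sum_ie_i\kappa(x_i,z)]$ as the $\varepsilon$-derivative at $\varepsilon=0$ of $\ln\tilde\cA(\infty)$ using Proposition \ref{p:firstod} applied to the deformed system. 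After one integration by parts against the limiting profile $\delta\beta^*$, this produces the missing piece $-\varphi^+\cG\int\kappa(x,z)\delta\beta^*(x)\rd x/\cB$, and summing the three contributions yields the stated formula.

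The main obstacle will be the third-term calculation. Proposition \ref{p:difmeasure} provides only the convergence of the Stieltjes transform of $\mu_n$, whereas $\bE_{\bme}[\sum_ie_i\kappa(x_i,z)]$ is a linear statistic of size $\OO(n)$ against a general analytic kernel; controlling it by a direct concentration estimate would require fluctuation bounds that lie outside the scope of the first-order analysis. The deformation route sketched above keeps the argument inside the framework of Proposition \ref{p:firstod}, but requires careful bookkeeping to track how the $\varepsilon$-deformation of $\varphi^+$ alters $\cB$ (and hence the contour integral in Proposition \ref{p:firstod}) while leaving the analyticity conclusions of Lemma \ref{l:loopeq} intact; verifying Assumptions \ref{a:weight} and \ref{a:cancelB} for the perturbed system, and matching the resulting $\varepsilon$-derivative to the desired integral against $\delta\beta^*$, is the step that requires the most care.
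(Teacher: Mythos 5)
Your contour-integral argument for the two displayed identities and your computations of the first two summands of $\cE_0$ from \eqref{e:defcE} follow the paper's proof exactly. The gap is in the third summand, which you correctly flag as the main obstacle, but your proposed route does not close it.

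You reduce the third summand to the $\OO(n)$-sized quantity $\bE_{\bme}\bigl[\sum_i e_i\kappa(x_i,z)\bigr]$ and propose to access it via $\varphi^+(w)\mapsto\varphi^+(w)(1+\varepsilon\kappa(w,z))$, reading it off as $\del_\varepsilon\ln\tilde\cA(\infty)\big|_{\varepsilon=0}$. That identity is algebraically correct, since $\tilde\cA(\infty)=\tilde Z$. But it cannot be completed with Proposition \ref{p:firstod}: the loop equation controls only $\del_z\ln\cA(z)$, never $\ln\cA$ itself, so $\ln\cA(\infty)=\ln Z$, the constant of integration, is precisely the object out of its reach. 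The deformation in the proof of Proposition \ref{p:difmeasure} is structurally different: there the factor $\prod_i\bigl(\tfrac{z-x_i}{z-x_i-1/n}\bigr)^{e_i}$ is engineered so that $\tilde\cA(z')\big|_{z'=z}=Z$, so the target expectation appears as a $z'$-derivative evaluated at the one point $z'=z$ where the normalization telescopes away, and Proposition \ref{p:firstod} does control that derivative. Your $\varepsilon$-derivative has no analogous evaluation point; nowhere does a $z$-derivative of $\ln\tilde\cA$ collapse into $\del_\varepsilon\ln\tilde Z$. There is also a smaller slip: the third summand of \eqref{e:defcE} carries the factor $\prod_i\tfrac{z-x_i+(1-e_i)/n}{z-x_i}$, so what must be evaluated is an expectation under a tilted measure (the tilt sending $\phi^-(x_i)$ to $\phi^-(x_i)\bigl(1+\tfrac{1}{n(z-x_i)}\bigr)$), normalized by $\cG(z+1/n)\cA(z+1/n)$ — it is not $Z\,\bE_\bme[\cdots]$.

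The paper handles both points in one move: it rewrites $\sum_i\tfrac{e_i}{n}\kappa(x_i,z)=\int\bigl(\int^x\kappa(y,z)\rd y\bigr)\rd\mu_n(x)+\OO(1/n)$ as an $\OO(1)$-sized linear statistic of $\mu_n$, observes that the third summand is (after dividing by $\cG(z+1/n)\cA(z+1/n)$) the expectation of this linear statistic under the $\phi^-$-tilted measure, and notes the $\OO(1/n)$ tilt does not affect the leading order. Linear statistics of analytic test functions are contour integrals of the Stieltjes transform $\int\tfrac{\rd\mu_n}{w-x}$, whose leading order $\int\tfrac{\rd\delta\beta^*}{w-x}$ is exactly what Proposition \ref{p:difmeasure} supplies; an integration by parts then gives $\int\bigl(\int^x\kappa(y,z)\rd y\bigr)\rd\delta\beta^*(x)=-\int\kappa(x,z)\delta\beta^*\rd x$. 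This route stays entirely within the $z$-derivative framework, never needing $\ln Z$, which is the step you would need to supply to make your version work.
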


\begin{proof}
Following the proof of Proposition \ref{p:firstod}, we can write $\del_z \cA(z)/\cA(z)$ and $\del_z \cC(z)/\cC(z)$ as contour integrals,
\begin{align*}
\frac{\del_z \cA(z)}{\cA(z)}&=\frac{1}{2\pi \ri}\oint_{\cin}\left(\frac{\del_w \cB}{\cB}+\frac{\del_w \cE_0}{n}\right)\frac{\rd w}{w-z}+\OO\left(\frac{1}{n}\right),
\end{align*}
and
\begin{align*}
\frac{\del_{z} \cC(z)}{\cC(z)}
&=\frac{1}{2\pi \ri}\oint_{\cout}\left(\frac{\del_w\cB}{\cB}+\frac{\del_w \cE_0}{n}\right)\frac{\rd w}{w-z}+\OO\left(\frac{1}{n}\right).
\end{align*}
To get the next order expansion, we need to estimate $\cE_0(z)$ as defined in \eqref{e:defcE}.
Using our assumption \eqref{e:defvphi}, the first term in \eqref{e:defcE} simplifies to
\begin{align}\begin{split}\label{e:fft}
\frac{n}{\cB(z)}\left((\cG(z)\phi^+(z)+\phi^-(z))-\cB(z)\right)
=\frac{1}{\cB(z)}\left(\cG(z)\varphi^+(z)\psi^+(z)+\varphi^-(z)\psi^-(z)\right)+\OO\left(\frac{1}{n}\right).
\end{split}\end{align}
By a Taylor expansion, the second term in \eqref{e:defcE} simplifies to
\begin{align}\begin{split}\label{e:sst}
&\phantom{{}={}}\frac{n}{\cA(z)\cB(z)}\left(\cG(z+1/n)\cA(z+1/n)-\cG(z)\cA(z)\right)\phi^+(z)
=\frac{\del_z(\cG(z)\cA(z))\varphi^+(z)}{\cA(z)\cB(z)}+\OO\left(\frac{1}{n}\right)\\
&=\left(\frac{\del_z \cG(z)}{\cB(z)}+\frac{\cG(z)}{\cB(z)}\frac{\del_z \cA(z)}{\cA(z)}\right)\varphi^+(z)+\OO\left(\frac{1}{n}\right)
=\left(\frac{\del_z \cG(z)}{\cB(z)}+\frac{\cG(z)}{\cB(z)}\int\frac{\rd\delta\beta^* }{z-x}\right)\varphi^+(z)+\OO\left(\frac{1}{n}\right),
\end{split}\end{align}
where we used Proposition \ref{p:firstod} and \eqref{e:defbeta}. For the last term in \eqref{e:defcE}, we have
\begin{align}\begin{split}\label{e:defcEz}
&\phantom{{}={}}\frac{n}{\cA(z)\cB(z)}\sum_{\bme}a_\bme\prod_{i=1}^m\frac{z-x_i+(1-e_i)/n}{z-x_i}\left(e^{\sum_{i=1}^m \frac{e_i}{n^2}\kappa(x_i, z)}-1\right)\phi^+(z)\\
&=\frac{n}{\cA(z)\cB(z)}\sum_{\bme}a_\bme\prod_{i=1}^m\frac{z-x_i+(1-e_i)/n}{z-x_i}\left(\sum_{i=1}^m \frac{e_i}{n^2}\kappa(x_i, z)+\OO\left(\frac{1}{n^2}\right)\right)\phi^+(z)\\
&=\frac{1}{\cA(z)\cB(z)}\sum_{\bme}a_\bme\prod_{i=1}^m\frac{z-x_i+(1-e_i)/n}{z-x_i}\left(\sum_{i=1}^m \frac{e_i}{n}\kappa(x_i, z)\right)\varphi^+(z)+\OO\left(\frac{1}{n}\right).
\end{split}\end{align}
The above expression can be viewed as the expectation of
\begin{align*}\begin{split}
\sum_{i=1}^m \frac{e_i}{n}\kappa(x_i, z)
&=\sum_i e_i \left(\int^{x_i+1/n} \kappa(y,z)-\int^{x_i} \kappa(y,z)\right)+\OO\left(\frac{1}{n}\right)\\
&=\int^x \kappa(y,z)\rd y\rd\mu_n(x)+\OO\left(\frac{1}{n}\right),
\end{split}\end{align*}
under the deformed measure 
\begin{align*}
\frac{a_\bme}{\cG(z+1/n)\cA(z+1/n)}\prod_{i=1}^m\frac{z-x_i+(1-e_i)/n}{z-x_i}
=\frac{a_\bme}{\cG(z+1/n)\cA(z+1/n)}\prod_{i=1}^m\left(1+\frac{1}{n}\frac{1}{z-x_i}\right)^{1-e_i}.
\end{align*}
The above measure is equivalent to change $\phi^-(x_i)$ by a factor $1+1/(n(z-x_i))$, which will not affect the first order term.
Therefore Proposition \ref{p:difmeasure} gives that
\begin{align}\begin{split}\label{e:thirdt}
&\phantom{{}={}}\frac{1}{\cA(z)\cB(z)}\sum_{\bme}a_\bme\prod_{i=1}^m\frac{z-x_i+(1-e_i)/n}{z-x_i}\sum_{i=1}^m \frac{e_i}{n}\kappa(x_i, z)\varphi^+(z)\\
&=\frac{\cG(z+1/n)\cA(z+1/n)}{\cA(z)\cB(z)}\bE_\bme\left[\int^x \kappa(y,z)\rd y\rd\mu_n(x) \right]\varphi^+(z)+\OO\left(\frac{1}{n}\right)\\
&=\frac{\cG(z)\varphi^+(z)}{\cB(z)}
\int\int^x\kappa(y,z)\rd y\rd\delta\beta^*
+\OO\left(\frac{1}{n}\right)
=-\frac{\cG(z)\varphi^+(z)}{\cB(z)}
\int\kappa(x,z)\delta\beta^*\rd x
+\OO\left(\frac{1}{n}\right),
\end{split}\end{align}
where $\delta\beta^*$ is as defined in \eqref{e:defbeta}.

The estimates \eqref{e:fft}, \eqref{e:sst} and \eqref{e:thirdt} all together imply
\begin{align*}\begin{split}
\cE_0(z)&=\frac{1}{\cB}\left(\cG\varphi^+\psi^++\varphi^-\psi^-+\varphi^+\del_z \cG+\varphi^+\cG\int\frac{\rd \delta\beta^*}{z-x}-\varphi^+\cG
\int\kappa(x,z)\delta\beta^*\rd x\right)+\OO\left(\frac{1}{n}\right)\\
&=\frac{1}{\cB}\left(\varphi^-\psi^-+\varphi^+\del_z \cG+\varphi^+\cG\left(\psi^++\int\frac{\rd \delta\beta^*}{z-x}-
\int\kappa(x,z)\delta\beta^*\rd x\right)\right)+\OO\left(\frac{1}{n}\right).
\end{split}\end{align*}
This finishes the proof of Proposition \ref{p:order2}.
\end{proof}

\begin{proposition}\label{p:second}
We assume Assumptions \ref{a:weight} and \ref{a:cancelB}, then the measure $\mu_n$ as defined in \eqref{e:mun} satisfies
\begin{align*}
\bE_\bme\left[
\int  \frac{\rd\mu_n}{z-x}\right]
=\frac{1}{2\pi \ri}\oint_{\cin}\frac{\del_w\cB}{\cB}\frac{\rd w}{w-z}+\frac{1}{n}\frac{1}{2\pi \ri}\oint_{\cin}\frac{\del_w \cE_1(w)\rd w}{w-z}+\OO\left(\frac{1}{n^2}\right),
\end{align*}
where the contour $\cin$ encloses $[\fa_1, \fb_1]\cup[\fa_2, \fb_2]\cup\cdots [\fa_r, \fb_r]$ but not $z$,
\begin{align*}
\cE_1(w)=
\frac{1}{\cB}\left(\varphi^-\psi^-+\varphi^+\del_{w} \cG+\varphi^+\cG\left(\psi^++\frac{1}{z-w}+\int\frac{\rd \delta\beta^*}{w-x}-
\int\kappa(x,w)\delta\beta^*\rd x\right)\right)+\OO\left(\frac{1}{n}\right).
\end{align*}
\end{proposition}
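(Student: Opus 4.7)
The plan is to reduce Proposition \ref{p:second} to the already-established Proposition \ref{p:order2} by the deformation trick used in Proposition \ref{p:difmeasure}, now carried to one higher order in $1/n$.

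Fix $z$ outside the support $[\fa_1,\fb_1]\cup\cdots\cup[\fa_r,\fb_r]$ and define deformed weights $\tilde a_\bme = a_\bme \prod_i [(z-x_i)/(z-x_i-1/n)]^{e_i}$, which corresponds to replacing $\phi^+(w)$ by $\tilde\phi^+(w) = \phi^+(w)\cdot(z-w)/(z-w-1/n)$. Absorbing the extra factor into $\psi^+$ yields
\[
\tilde\psi^+(w) \;=\; \psi^+(w) + n\log\frac{z-w}{z-w-1/n} \;=\; \psi^+(w) + \frac{1}{z-w} + \OO(1/n),
\]
while $\tilde\varphi^\pm = \varphi^\pm$. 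Crucially, $\tilde\cB = \cG\tilde\varphi^+ + \tilde\varphi^- = \cB$, so the limiting profile $\delta\beta^*$ defined in \eqref{e:defbeta} is unchanged and Assumption \ref{a:cancelB} transfers verbatim; Assumption \ref{a:weight} also transfers since $\tilde\psi^+$ remains analytic and bounded in a neighborhood of the support, provided $z$ is kept at a positive distance from the support.

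Exactly as in the proof of Proposition \ref{p:difmeasure}, a direct computation (rewriting $\sum_i[1/(z-x_i-e_i/n)-1/(z-x_i)]$ as a logarithmic derivative in an auxiliary variable $z'$) gives
\[
\bE_\bme\left[\int\frac{\rd\mu_n}{z-x}\right] \;=\; \left.\frac{\del_{z'}\tilde\cA(z')}{\tilde\cA(z')}\right|_{z'=z}.
\]
Applying Proposition \ref{p:order2} to the deformed family produces its own correction kernel $\tilde\cE_0(w)$, obtained from the formula for $\cE_0$ by the substitution $\psi^+ \mapsto \tilde\psi^+$ (while $\varphi^\pm,\psi^-,\cG,\cB,\delta\beta^*,\kappa$ are all unchanged). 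Plugging in $\tilde\psi^+(w) = \psi^+(w)+1/(z-w)+\OO(1/n)$ transforms $\tilde\cE_0(w)$ into exactly the $\cE_1(w)$ stated in the proposition, and setting $z'=z$ in the contour formula of Proposition \ref{p:order2} yields the claim.

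The one technical point worth flagging is the contour choice: $\cin$ should enclose the support while excluding both $z$ and, for $n$ large, the singular point $w=z-1/n$ of the deforming factor. This is possible because $z$ is bounded away from the support. The apparent pole of $\tilde\cE_0(w)$ at $w=z$ produced by the term $1/(z-w)$ then also lies outside $\cin$, so the contour integral is well-defined, and the remainder estimates from Proposition \ref{p:order2} carry over without modification. The main thing to be careful about is tracking the $\OO(1/n)$ contributions from the logarithm expansion so that only the $1/(z-w)$ term survives in $\cE_1$ at the required order, which is routine.
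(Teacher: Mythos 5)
Your proposal is correct and follows essentially the same route as the paper: deform the weights by the factor $\prod_i[(z-x_i)/(z-x_i-1/n)]^{e_i}$, absorb it into $\psi^+$ as $\tilde\psi^+(w)=\psi^+(w)+1/(z-w)+\OO(1/n)$, observe that $\varphi^\pm$ (hence $\cB$ and $\delta\beta^*$) are unchanged, apply Proposition~\ref{p:order2} to the deformed family, and set $z'=z$. Your added remarks that $\tilde\cB=\cB$ and that the contour can be chosen to exclude $z$ and $z-1/n$ while still enclosing the support are sound (and implicitly used in the paper's argument as well).
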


\begin{proof}
We recall the deformed weight $\tilde a_\bme$ and corresponding $\tilde \cA(z')$ as defined in \eqref{e:tae} and \eqref{e:tA}. 
They correspond to change $\phi^+(z')$ by a factor $(z-z')/(z-z'-1/n)$, which can be absorbed in $\psi^+(z')$:
\begin{align*}
\tilde \phi^+(z')
=\phi^+(z')\frac{z-z'}{z-z'-1/n}
=\varphi^+(z')e^{\frac{1}{n}\left(\psi^+(z')+n\ln \left(\frac{z-z'}{z-z'-1/n}\right)\right)}
=:\varphi^+(z')e^{\frac{1}{n}\tilde \psi^+(z')}.
\end{align*}
The new weight $\tilde \psi^+(z')$ is 
\begin{align*}
\tilde \psi^+(z')=\psi^+(z')+n\ln \left(\frac{z-z'}{z-z'-1/n}\right)
=\psi^+(z')+\frac{1}{z-z'}+\OO\left(\frac{1}{n}\right).
\end{align*}
Then thanks to Proposition \ref{p:order2}, 
\begin{align}\label{e:dtA2}
\frac{\del_{z'} \tilde \cA(z')}{\tilde \cA(z')}&=\frac{1}{2\pi \ri}\oint_{\cin}\frac{\del_w \cB}{\cB}\frac{\rd w}{w-z'}+\frac{1}{n}\frac{1}{2\pi \ri}\oint_{\cin}\frac{\del_w \cE_1\rd w}{w-z'}+\OO\left(\frac{1}{n^2}\right),
\end{align}
where  the contour $\cin$ encloses $[\fa_1, \fb_1]\cup[\fa_2, \fb_2]\cup\cdots [\fa_r, \fb_r]$ but not $z'$, and
\begin{align*}\begin{split}
\cE_1(w)=\frac{1}{\cB}\left(\varphi^-\psi^-+\varphi^+\del_{w} \cG+\varphi^+\cG\left(\tilde \psi^++\int\frac{\rd \delta\beta^*}{w-x}-
\int\kappa(x,w)\delta\beta^*\rd x\right)\right)+\OO\left(\frac{1}{n}\right)\\
=\frac{1}{\cB}\left(\varphi^-\psi^-+\varphi^+\del_{w} \cG+\varphi^+\cG\left(\psi^++\frac{1}{z-w}+\int\frac{\rd \delta\beta^*}{w-x}-
\int\kappa(x,w)\delta\beta^*\rd x\right)\right)+\OO\left(\frac{1}{n}\right).
\end{split}\end{align*}
Then we get 
\begin{align*}\begin{split}
&\phantom{{}={}}\bE_\bme\left[
\int  \frac{\rd \mu_n}{z-x}\right]
=\bE_\bme\left[ \sum_i\left(\frac{1}{z-x_i-e_i/n}-\frac{1}{z-x_i}\right)\right]
=\left.\frac{\del_{z'}\tilde \cA(z')}{\tilde \cA(z')}\right|_{z'=z}\\
&=\frac{1}{2\pi \ri}\oint_{\cin}\frac{\del_z \cB}{\cB}\frac{\rd w}{w-z}+\frac{1}{n}\frac{1}{2\pi \ri}\oint_{\cin}\frac{\del_w \cE_1\rd w}{w-z}+\OO\left(\frac{1}{n^2}\right),
\end{split}\end{align*}
by setting $z'=z$ in \eqref{e:dtA2} and this finishes the proof of Proposition \ref{p:second}.

\end{proof}

\section{Global Gaussian Fluctuation}\label{s:Gauss}

Fix a polygonal domain ${\mathfrak P}$ satisfying (1) the top side is connected,
(2) other horizontal boundary sides are lower boundaries of the polygon,
as in Definition \ref{def:oneend}.
For any time $s\in [0,T)$,  we recall the variational problem from \eqref{e:varWt}, and the corresponding complex Burgers equation from \eqref{e:burgereq4}
\begin{align}\label{e:bgcopy}
\del_t m_t(z;\beta,s)+\del_t \ln g_t(Z;\beta,s)+\del_z \ln (f_t(Z;\beta,s)+1)=0,\quad z=z(Z),\quad Z\in \gamma_t^{\beta,s}.
\end{align}
We recall the bottom boundary of $\mathfrak P\cap \bR\times[t,T]$ from \eqref{e:lbcopy}
\begin{align*}
\{ x :  (x,t+)\in {\mathfrak P}\}=[\fa_1(t), \fb_1(t)]\cup[\fa_2(t), \fb_2(t)]\cup\cdots \cup [\fa_{r(t)}(t),\fb_{r(t)}(t)].
\end{align*}
And for any $x\in [\fa_1(t), \fb_1(t)]\cup[\fa_2(t), \fb_2(t)]\cup\cdots \cup [\fa_{r(t)}(t),\fb_{r(t)}(t)]$, we recall the sets $A_{t}(x)$ and $B_{t}(x)$ from \eqref{e:defAB}, which can be extended to a constant function in a neighborhood of $z^{-1}(\{ x :  (x,t+)\in {\mathfrak P}\})$ on $\gamma_t^{\beta, s,+}$.
The poles $a=z(X_i^{(1)})\in A_t(Z)$ travel at rate $1$, i.e.
$\del_t a=1$, and the zeros $b=z(X_i^{(\infty)})\in B_t(Z)$ does not move, i.e. $\del_t b=0$.  We denote 
\begin{align}\label{e:defvarphi}
\varphi_t^-(Z;\beta,s)=\prod_{a\in A_{t}(Z)}(z-a), \quad \varphi_t^+(Z;\beta,s)= g_t'(Z;\beta,s)\prod_{b\in B_{t}(Z)}  (b-z),\quad z=z(Z),
\end{align}
where $g_t'(Z;\beta,s)$ is an extension of $g_t'(x;\beta,s)$ (as defined in \eqref{e:deftgt}) to a neighborhood of $z^{-1}(\{x:(x,t+)\in \fP\})$ on $\gamma_t^{\beta,s,+}$.
Then we have $\del_t \ln\varphi_t^-(Z;\beta,s)+\del_z \ln\varphi_t^-(Z;\beta,s)=0$. We add it to both sides of the complex Burgers equation \eqref{e:bgcopy}  and get
\begin{align}\label{e:bgcopy2}
\del_t m_t(z;\beta,s)+\del_t \ln  g_t'(Z;\beta,s)+\del_z \ln (e^{m_t(z;\beta,s)}\varphi_t^+(Z;\beta,s)+\varphi_t^-(Z;\beta,s))=0,
\end{align}

Thanks to Proposition \ref{p:branchloc}, in a neighborhood $\Upsilon_t^{\beta,s}$ of $z^{-1}(\{ x :  (x,t+)\in {\mathfrak P}\})$ over $\gamma_t^{\beta,s,+}$, the covering map  $\gamma_t^{\beta,s,+}\mapsto \bC\bP^1$ does not have a branch point. By taking $\Upsilon_t^{\beta,s}$ small enough,  the covering map  $\gamma_t^{\beta,s,+}\mapsto \bC\bP^1$ restricted on $\Upsilon_t^{\beta,s}$ is a bijection. In this way we can identify $\Upsilon_t^{\beta,s}$ with a neighborhood of $[\fa_1(t), \fb_1(t)]\cup[\fa_2(t), \fb_2(t)]\cup\cdots \cup [\fa_{r(t)}(t),\fb_{r(t)}(t)]$ over $\bC$. In the rest of this Section, we use this identification, and view $\Upsilon_t^{\beta,s}$ as a subset of $\bC$.

In this section we consider nonintersecting Bernoulli random walks with generator $\cL_t$ at time $t$ given by
\begin{align}\label{e:defLtnew}
\frac{a_t(\bme;\bmx)}{Z_t(\bmx)}\deq \frac{1}{Z_t(\bmx)}\frac{V(\bmx+\bme/n)}{V(\bmx)}\prod_{i=1}^m\phi_t^+(x_i;\beta,t)^{e_i}\phi_t^-(x_i;\beta,t)^{1-e_i}\exp\left\{\sum_{1\leq i,j\leq m}\frac{e_ie_j}{n^2}\kappa(x_i,x_j;\beta,t)\right\},
\end{align}
where 
\begin{align}\label{e:defvphit}
\phi_t^+(z;\beta,t)=\varphi_t^+(z;\beta,t)e^{\frac{1}{n}\psi_t^+(z;\beta,t)},\quad 
\phi_t^-(z;\beta,t)=\varphi_t^-(z;\beta,t),
\end{align}
$\varphi_t^{\pm}$ are given in \eqref{e:defvarphi}, $\psi_t^+(z;\beta,t)$ is analytic on $\Upsilon_t^{\beta,t}$, which will be chosen later, and $\kappa(z,w;\beta,t)=\kappa(w,z;\beta,t)$ is constructed in \eqref{e:defka}. From our construction of $\varphi_t^\pm(z;\beta,t)$ in \eqref{e:defvarphi}, they are analytic function over $\Upsilon_t^{\beta,s}$. {and so is $\kappa(z,w;\beta,t)$ analytic}. Therefore, the generator $\cL_t$ satisfies Assumption \ref{a:weight}.


We remark that the Markov process \eqref{e:newat0}, which we used to solve $E_{s}$ in Proposition \ref{p:FK} is a special case of \eqref{e:defLtnew}, with 
\begin{align*}
\psi_t^+(z;\beta,t)=-\sum_{b\in B_t(x)}n\ln\left(1-\frac{1}{n(b-z)}\right)+
\frac{1}{2}\del_z \ln  g_t'(z;\beta,t)+\del_t(\ln g_t'(z;\beta,t))+
\OO(1/n).
\end{align*}
As we will see in the proof, the $\OO(1/n)$ error contributes to an error of $\OO(1/n^2)$ for the Stieltjes transform of the empirical particle density, which is negligible for our analysis.
More importantly, later we will see in Section \ref{s:solve} that the nonintersecting Bernoulli walk \eqref{e:randomwalk}, which has the same law as random lozenge tilings of the domain $\fP$ is also in the form of \eqref{e:defLtnew}, with some suitably chosen $\psi_t^+(z;\beta,t)$.

We will study the generator $\cL_t$ in \eqref{e:defLtnew} using the discrete loop equations we developed in Section \ref{s:loopeq}. We recall the function $\cB(z)$ from \eqref{e:ABC}, in our setting it is 
\begin{align*}
\cB_t(z;\beta,t)=e^{m_t(z;\beta,t)}\varphi_t^+(z;\beta,t)+\varphi_t^-(z;\beta,t).
\end{align*}
With $\cB_t(z;\beta,t)$ we can rewrite the complex Burgers equation \eqref{e:bgcopy2} as
\begin{align}\label{e:bgcopy3}
\del_t m_t(z;\beta,t)+(\del_t \ln  g_t')(z;\beta,t)+\del_z \ln (\cB_t(z;\beta,t))=0.
\end{align}

We recall from the discussion after \eqref{e:deftgt}, $ \ln g_t'(z;\beta,t)$ is analytic over $\Upsilon_t^{\beta,s}$. We can do a contour integral on both sides of \eqref{e:bgcopy3} to get ride of $\del_t \ln  g_t'(Z;\beta,t)$,
\begin{align}\label{e:B1}
\del_t m_t(z;\beta,t)&=\frac{1}{2\pi \ri}\oint_{\cin}\frac{\del_z  \cB_t(w;\beta,t)}{\cB_t(w;\beta,t)}\frac{\rd w}{w-z},
\end{align}
where the contour $\cin$ encloses $z^{-1}(\{x:(x,t+)\in \fP\})$ but not $z$.  Since the derivative of the Stieltjes transform $\del_t m_s(z;\beta,t)$ behaves like $\OO(1)/z^2$ as $z\rightarrow \infty$, Assumption \ref{a:cancelB} holds in our setting. And we can also do a contour integral on both sides of \eqref{e:bgcopy3} to get ride of $\del_t m_t(z;\beta,t)$,
\begin{align}\label{e:B2}
(\del_t \ln g_t')(z;\beta,t)=(\del_t \ln g_t)(z;\beta,t)-\sum_{a\in A_t(Z)}\frac{1}{z-a}=-\frac{1}{2\pi \ri}\oint_{\cout}\frac{\del_z  \cB_t(w;\beta,t)}{\cB_t(w;\beta,t)}\frac{\rd w}{w-z},
\end{align}
where the contour $\cout$ encloses $z^{-1}(\{x:(x,t+)\in \fP\})$ and $z$.
%

\subsection{Dynamic Equation}
We denote the  Markov process starting from the configuration $\bmx=\bmx_s=\{x_1(s), x_2(s),\cdots, x_{m(s)}(s)\}\in \fM_s(\fP)$ with a time dependent generator $\cL_t$ given by \eqref{e:defLtnew} as $\{\bmx_t\}_{t\in[s, T]\cap \bZ_n}$.
In this section, we will study the dynamics of the  Stieltjes transform of its empirical particle density
\begin{align}\label{e:defrhot}
\rho(x;\bmx_t)=\sum_{i=1}^{m(t)}\bm1(x\in[x_i(t), x_i(t)+1/n]).
\end{align}
The  empirical particle density of the initial configuration is $\rho(x;\bmx)=\rho(x;\bmx_s)$. 
Let $\beta_t=\beta(x;\bmx_t)$ as in \eqref{e:defbeta0} with $\rho(x;\bmx_t)=\del_x \beta(x;\bmx_t)$, and simply write $\beta=\beta_s$. The  Stieltjes transform of its empirical particle density \eqref{e:defrhot} is
\begin{align*}
\tilde m_t(z;\beta,s)=\int \frac{\rho(x;\bmx_t)}{z-x}\rd x=\int^z_{z-1/n}\sum_{i=1}^{m(t)}\frac{1}{u-x_i(t)} \rd u.
\end{align*}
Then
\begin{align}\begin{split}\label{e:dm}
n(\tilde m_{t+1/n}(z;\beta,s)-\tilde m_t(z;\beta,s))&=n\int_{z-1/n}^z \left(\sum_{i=1}^{m(t)}\frac{1}{u-x_i(t+1/n)}-\sum_{i=1}^{m(t)}\frac{1}{u-x_i(t)}\right)\rd u\\
&=\Delta M_t +n\int_{z-1/n}^z \cL_t(n\tilde m_t(u))\rd u,
\end{split}\end{align}
where $\Delta M_t$ is a martingale difference, i.e. $\bE[\Delta M_t|\bmx_t]=0$, and 
the drift term is given by
\begin{align}\label{e:keyt}
\cL_t(n\tilde m_t(u))
=\sum_{\bme\in\{0,1\}^{m(t)}} \frac{a_t(\bme;\bmx)}{Z_t(\bmx)}\left(\sum_{i=1}^{m(t)}\frac{1}{u-x_i(t)-e_i/n}-\frac{1}{u-x_i(t)}\right).
\end{align}

We recall the characteristic flow from  \eqref{e:burgernew}, which solves the complex Burgers equation
\begin{align}\label{e:ccff}
\del_t f_t(Z_t;\beta,s)=0,\quad \del_t z_t=\frac{f_t(Z_t;\beta,s)}{f_t(Z_t;\beta,s)+1},\quad z_t=z(Z_t).
\end{align}
By plugging the characteristic flow \eqref{e:ccff} into \eqref{e:dm}, we have 
\begin{align}\begin{split}\label{e:Deltatmt}
&\phantom{{}={}} n\Delta_t (\tilde m_t(z_t))
\deq
 n(\tilde m_{t+1/n}(z_{t+1/n})-\tilde m_{t}(z_t))\\
& =
 n(\tilde m_{t+1/n}(z_{t+1/n})-\tilde m_{t}(z_{t+1/n}))
+ n(\tilde m_{t}(z_{t+1/n})-\tilde m_{t}(z_{t}))\\
& =\Delta M_t(z_{t+1/n})+n\int_{z_{t+1/n}-1/n}^{z_{t+1/n}} \cL_t(n\tilde m_t(u))\rd u
+\del_z \tilde m_t(z_t) \del_t z_t\\
&+\frac{1}{2n}\left(\del_z ^2 \tilde m_t(z_t)(\del_t z_t)^2+\del_z \tilde m_t(z_t)\del^2_t z_t\right)+\OO\left(\frac{1}{n^2}\right).
\end{split}\end{align}
Thanks to Proposition \ref{p:second}, we can compute explicitly $\cL_t(\tilde m_{t}(u))$ as
\begin{align}\begin{split}\label{e:bbt0}
&\phantom{{}={}}n\int_{z_{t+1/n}-1/n}^{z_{t+1/n}} \cL_t(n\tilde m_t(u))\rd u =n\int_{z_{t+1/n}-1/n}^{z_{t+1/n}}\cL_t\left(\sum_{i=1}^{m(t)}\frac{1}{u-x_i(t)}\right)\rd u\\
&=n\int_{z_{t+1/n}-1/n}^{z_{t+1/n}} \frac{1}{2\pi \ri}\oint_{\cin}\frac{\del_w\cB_t}{\cB_t}\frac{\rd w}{w-u}\rd u+\frac{1}{n}\frac{1}{2\pi \ri}\oint_{\cin}\frac{\del_w \cE_1\rd w}{w-z_{t}}+\OO\left(\frac{1}{n^2}\right),
\end{split}\end{align}
where the contour $\omega_-$ encloses $z^{-1}(\{x:(x,t+)\in \fP\})$ but not $z_t, z_{t+1/n}$, the function $\cG(w)$ in Proposition \ref{p:second} is $e^{\tilde m_t(w;\beta,s)}$, and
\begin{align}\label{e:bbt1}
\cE_1(w)=
\frac{e^{\tilde m_t}\varphi_t^+}{\cB_t}\left(\del_w \tilde m_t+\psi_t^++\frac{1}{z_t-w}+\int\frac{\rd \del_t h_t(x;\beta_t,t)}{w-x}-
\int K(x,w;\beta_t,t)\del_t h_t(x;\beta_t,t)\rd x\right).
\end{align}
We can rewrite the leading term in \eqref{e:bbt0} as
\begin{align}\begin{split}\label{e:bbt2}
&\phantom{{}={}}n\int_{z_{t+1/n}-1/n}^{z_{t+1/n}} \frac{1}{2\pi \ri}\oint_{\cin}\frac{\del_w\cB_t}{\cB_t}\frac{\rd w}{w-u}\rd u\\
&=\frac{1}{2\pi \ri}\oint_{\cin}\frac{\del_w\cB_t}{\cB_t}\frac{\rd w}{w-z_{t}}
+\frac{2\del_t z_t-1}{4 n\pi \ri}\oint_{\cin}\frac{\del_w\cB_t}{\cB_t}\frac{\rd w}{(w-z_{t})^2}+\OO\left(\frac{1}{n^2}\right).
\end{split}\end{align}
By plugging \eqref{e:bbt1} and \eqref{e:bbt2} into \eqref{e:bbt0}, we get 
\begin{align}\begin{split}\label{e:bbt3}
&\phantom{{}={}}n\int_{z_{t+1/n}-1/n}^{z_{t+1/n}} \cL_t(n\tilde m_t(u))\rd u
=\frac{1}{2\pi \ri}\oint_{\cin}\frac{\del_w\cB_t}{\cB_t}\frac{\rd w}{w-z_{t}}\\
&+\frac{1}{n}\frac{1}{2\pi \ri}\left((\del_t z_t-1/2)\oint_{\cin}\frac{\del_w\cB_t}{\cB_t}\frac{\rd w}{(w-z_{t})^2}+\oint_{\cin}\frac{\del_w \cE_1\rd w}{w-z_{t}}\right)+\OO\left(\frac{1}{n^2}\right).
\end{split}\end{align}
Using \eqref{e:bbt3}, we have the following more explicit expression of $n \Delta_t (\tilde m_t(z_t))$ from \eqref{e:Deltatmt}
\begin{align}\begin{split}\label{e:Deltatmt2}
&\phantom{{}={}}n \Delta_t (\tilde m_t(z_t))
 =\Delta M_t(z_{t+1/n})+\frac{1}{2\pi \ri}\oint_{\cin}\frac{\del_w\cB_t}{\cB_t}\frac{\rd w}{w-z_{t}}+\del_z \tilde m_t(z_t) \del_t z_t\\
&+\frac{1}{n}\frac{1}{2\pi \ri}\left((\del_t z_t-1/2)\oint_{\cin}\frac{\del_w\cB_t}{\cB_t}\frac{\rd w}{(w-z_{t})^2}+\oint_{\cin}\frac{\del_w \cE_1\rd w}{w-z_{t}}\right)\\&+\frac{1}{2n}\left(\del_z ^2 \tilde m_t(z_t)(\del_t z_t)^2+\del_z \tilde m_t(z_t)\del^2_t z_t\right)+\OO\left(\frac{1}{n^2}\right).
\end{split}\end{align}

\begin{remark}\label{r:leadingorder}
We can also directly simplify \eqref{e:dm} without plugging into the characteristic flow to get
\begin{align}\begin{split}\label{e:leadingorder}
n(\tilde m_{t+1/n}(z;\beta,s)-\tilde m_t(z;\beta,s))&
=\Delta M_t +\frac{1}{2\pi \ri}\oint_{\cin}\frac{\del_w\cB_t}{\cB_t}\frac{\rd w}{w-z} 
+\OO(1/n)\\
&=\Delta M_t +\del_t m_t(z;\beta,t) 
+\OO(1/n),
\end{split}\end{align}
where we used \eqref{e:B1} in the last line. It turns out the fluctuations of the Stieltjes transform is determined by the martingale difference $\Delta M_t$ and the leading order term $\del_t m_t(z;\beta,t)$. The $\OO(1/n)$ term affects only the mean of the Stieltjes transform.
\end{remark}

We denote the solution of the variational problem \eqref{e:varWt} starting at time $s$ with boundary condition $\beta$ as $h_t^*=h_t^*(x;\beta,s)$, then $g_t(Z;\beta,s)=g_t(Z;h^*_t,t)$ and $f_t(Z;\beta,s)=f_t(Z;h_t^*,t)$, and by identifying the neighborhood $\Upsilon_t^{h_t^*,t}$ of $z^{-1}([\fa_1(t), \fb_1(t)]\cup[\fa_2(t), \fb_2(t)]\cup\cdots \cup [\fa_{r(t)}(t),\fb_{r(t)}(t)])$ over $\gamma_t^{h_t^*,t}$ with the neighborhood of $[\fa_1(t), \fb_1(t)]\cup[\fa_2(t), \fb_2(t)]\cup\cdots \cup [\fa_{r(t)}(t),\fb_{r(t)}(t)])$ over $\fC$, we can rewrite \eqref{e:bgcopy} as 
\begin{align*}
\del_t m_t(z;\beta,s)+\del_t \ln g_t(z;h_t^*,t)+\del_z \ln (f_t(z;h_t^*,t)+1)=0,\quad \quad z\in \Upsilon^{h_t^*,t}.
\end{align*}
Similar to \eqref{e:B1}, we can express $\del_t m_t(z;\beta,s)$ as a contour integral
\begin{align}\label{e:dmtta}
\del_t m_t(z;\beta,s)&=\frac{1}{2\pi \ri}\oint_{\cin}\frac{\del_w  \cB_t^*}{\cB_t^*}\frac{\rd w}{w-z},
\end{align}
where the contour $\omega_-$ encloses $z^{-1}(\{x:(x,t+)\in \fP\})$ but not $z$, and
\begin{align*}
 \cB_t^*:=\cB_t(z;h_t^*,t)=\left(e^{m_t(z;\beta, s)} g_t'(z;h_t^*, t)+1\right)\prod_{b\in B_{t}(z)}(b-z).
\end{align*}
Using \eqref{e:dmtta}, and the characteristic flow \eqref{e:ccff}, it follows that 
\begin{align}\begin{split}\label{e:Deltamt}
&\phantom{{}={}}n\Delta_t  ( m_t(z_t;\beta,s))
 \deq n(m_{t+1/n}(z_{t+1/n};\beta,s)-m_t(z_t;\beta,s))\\
 &=\frac{1}{2\pi \ri}\oint_{\cin}\frac{\del_w  \cB_t^*}{\cB_t^*}\frac{\rd w}{w-z_{t}}+\del_z  m_t(z_t) \del_t z_t+\frac{\del_t z_t}{2n\pi \ri}\oint_{\cin}\frac{\del_w\cB_t^*}{\cB_t^*}\frac{\rd w}{(w-z_{t})^2}\\
 &
 +\frac{1}{2n}\left(\del_t^2 m_t(z_t;\beta, s)+\del_z ^2  m_t(z_t)(\del_t z_t)^2+\del_z  m_t(z_t)\del^2_t z_t\right)+\OO\left(\frac{1}{n^2}\right).
\end{split}\end{align}

By taking difference of \eqref{e:Deltatmt2} and \eqref{e:Deltamt} we get 
\begin{align}\label{e:leq1}\begin{split}
n\Delta_t (\tilde m_t(z_t;\beta,s)- m_t(z_t;\beta,s))
&=\Delta M_t+\frac{1}{2\pi \ri}\oint_{\cin}\left(\frac{\del_w \cB_t}{\cB_t}-\frac{\del_w  \cB_t^*}{\cB_t^*}\right)\frac{\rd w}{w-z_{t}}\\
&+(\del_z\tilde m_t(z_t)-\del_z m_t(z_t))\del_t z_t
+\frac{\cE_2(z_t)}{n}+\OO\left(\frac{1}{n^2}\right),
\end{split}\end{align}
where the contour $\omega_-$ encloses $z^{-1}(\{x:(x,t+)\in \fP\})$ but not $z_t$, and the correction term $\cE_2(z_t)$ is given by
\begin{align}\begin{split}\label{e:cE2}
    &\phantom{{}={}}\cE_2(z_t)=\frac{1}{2}\del_t^2 m_t(z_t;\beta, s)+\frac{1}{2\pi \ri}\oint_{\cin}\frac{\del_w \cE_1\rd w}{w-z_{t}}
    +\frac{\del_t z_t}{2\pi \ri}\oint_{\cin}\left(\frac{\del_w\cB_t}{\cB_t}-\frac{\del_w\cB_t^*}{\cB_t^*}\right)\frac{\rd w}{(w-z_{t})^2}\\
    &-\frac{1}{4\pi \ri}\oint_{\cin}\frac{\del_w\cB_t}{\cB_t}\frac{\rd w}{(w-z_{t})^2}+\frac{1}{2}\left(\del_z ^2  \tilde m_t(z_t)(\del_t z_t)^2+\del_z  \tilde m_t(z_t)\del^2_t z_t-\del_z ^2  m_t(z_t)(\del_t z_t)^2-\del_z  m_t(z_t)\del^2_t z_t\right).
\end{split}\end{align}
We remark that for the difference of $\tilde m_t(z_t;\beta,s)$ with $m_t(z_t;\beta,s)$, if there are new particles added at time $t$, i.e. $t\in \{T_0, T_1, T_2,\cdots, T_p\}$, the contributions from those new particles cancel out.

Let $\tilde f_t\deq f_t(w;\beta_t,t)$ $\tilde g_t\deq g_t(w;\beta_t,t)$, and $f_t\deq f_t(w;\beta,s)=f_t(z;h_t^*,t)$, $g_t\deq g_t(w;\beta,s)=g_t(z;h_t^*,t)$. For the second term on the righthand side of \eqref{e:leq1}, its integrand is
\begin{align}\begin{split}\label{e:bdidd}
\frac{\del_w \cB_t}{\cB_t}-\frac{\del_w  \cB_t^*}{\cB_t^*}
&=\del_w (\ln (\tilde f_t+1)-\ln ( f_t+1))=\frac{\del_w (\ln \tilde f_t-\ln f_t) f_t}{f_t+1}+\del_w \ln\tilde f_t\frac{(e^{\ln \tilde f_t-\ln f_t}-1)f_t}{(\tilde f_t+1)(f_t+1)}\\
&=\frac{\del_w ( \tilde m_t-m_t) f_t}{f_t+1}+\frac{\del_w (\ln  \tilde g_t-\ln g_t) f_t}{f_t+1}+\del_w \ln\tilde f_t\frac{(e^{\ln \tilde f_t-\ln f_t}-1)f_t}{(\tilde f_t+1)(f_t+1)}.
\end{split}\end{align}
Thanks to Proposition \ref{p:derg}, $\ln g_t(z;\beta,t)$ depends on $\beta$ smoothly. For the differences $\ln\tilde f_t-\ln f_t$ and $\ln\tilde g_t-\ln g_t$,  we can Taylor expand them around $h_t^*$, and use \eqref{e:ddlogg}
\begin{align}\begin{split}\label{e:gdidd}
\ln g_t(w)-\ln  \tilde g_t(w)
&=\int \del_\beta \ln g_t(w;h_t^*,t) (\beta_t-h_t^*)\rd x + \OO\left(\int \del^2_\beta \ln g_t(w;h_t^*,t) (\beta_t-h_t^*)(\beta_t-h_t^*)\rd x\rd y\right)\\
&= \int K(w,x;h_t^*,t)(\beta_t-h_t^*)\rd x
+\OO((\tilde m_t-m_t)^2),
\end{split}\end{align}
where for the integral with respect to $\beta_t-h_t^*$, we can rewrite them as contour integrals with respect to $\tilde m_t-m_t$. Later we will show that with probability $1-\oo(1)$, $\tilde m_t-m_t$ is uniformly small, i.e. of order $\OO(1/n) $. So we simply denote the error term as $\OO((\tilde m_t-m_t)^2)$, which is of order $\OO(1/n^2)$.
The difference $\ln\tilde f_t-\ln f_t$ is given by
\begin{align}\begin{split}\label{e:fdidd}
\ln f_t(w)-\ln \tilde f_t(w)
&=(m_t(w)-\tilde m_t(w))+\int \del_\beta \ln g_t(w;h_t^*,t) (\beta_t-h_t^*)\rd x
+\OO((\tilde m_t-m_t)^2)\\
&=(m_t(w)-\tilde m_t(w))+\int K(w,x;h_t^*,t)(\beta_t-h_t^*)\rd x
+\OO((\tilde m_t-m_t)^2).
\end{split}\end{align}
By plugging \eqref{e:gdidd} and \eqref{e:fdidd} into \eqref{e:bdidd}, we get
\begin{align*}\begin{split}
\frac{\del_w \cB_t}{\cB_t}-\frac{\del_w  \cB_t^*}{\cB_t^*}
=\del_w\left(\frac{ ( \tilde m_t-m_t) f_t}{f_t+1}+\frac{f_t}{f_t+1}\int K(w,x;h_t^*,t)(\beta_t-h_t^*)\rd x\right)+\OO((\tilde m_t-m_t)^2).
\end{split}\end{align*}
The integral with respect to $\beta_t-h_t^*$ can be written as a contour integral with respect to the difference of Stieltjest transforms $\tilde m_t-m_t$,
\begin{align*}
\int K(w,x;h_t^*,t)(\beta_t-h_t^*)\rd x
=-\frac{1}{2\pi\ri}\oint_\omega \int^u K(w,x;h_t^*,t)\rd x  (\tilde m_t(u)-m_t(u))\rd u,
\end{align*}
where the contour integral $\omega$ encloses $z^{-1}(\{x: (x,t+)\in \fP\})$.
The contour $\omega_-$ in \eqref{e:leq1} encloses $z^{-1}(\{x: (x,t+)\in \fP\})$ but not $z_t$, we can deform it to the contour $\omega_+$, which encloses both $z_t$ and $z^{-1}(\{x: (x,t+)\in \fP\})$, and subtract the residual at $z_t$. In this way we can rewrite the second term on the righthand side of \eqref{e:leq1} as
\begin{align}\begin{split}\label{e:BBdiff}
&\phantom{{}={}}\frac{1}{2\pi \ri}\oint_{\omega_-} \left(\frac{\del_w \cB_t}{\cB_t}-\frac{\del_w  \cB_t^*}{\cB_t^*}\right)\frac{\rd w}{w-z_{t}}\\
&=-\del_w \left.\left(\frac{(\tilde m_t- m_t)f_t}{(f_t+1)}\right)\right|_{w=z_t}+\frac{1}{2\pi\ri}\oint_{\omega_+} \sfK_t(w,z_t;\beta,s)(\tilde m_t(w)- m_t(w))\rd w+\OO((\tilde m_t-m_t)^2),
\end{split}\end{align}
where 
\begin{align*}
\sfK_t(w,z_t;\beta,s)=\frac{f_t}{f_t+1}\frac{1}{(w-z_t)^2}-\frac{1}{2\pi\ri}\oint_{\omega_-}\frac{f_t(u)}{f_t(u)+1}\frac{\int^w K(u,x;h_t^*,t)\rd x}{(u-z_t)^2}\rd u.
\end{align*}
For the first term on the righthand side of \eqref{e:BBdiff}, we recall that $f_t(z_t;\beta,s)/(f_t(z_t;\beta,s)+1)=\del_t z_t$ from \eqref{e:ccff}, and $\del_w(f_t/(f_t+1))|_{w=z_t}=\del_t \rd z_t/\rd z_t$. We can rewrite it as
\begin{align}\label{e:BBdiff2}
-\del_w \left.\left(\frac{(\tilde m_t- m_t)f_t}{(f_t+1)}\right)\right|_{w=z_t}
=-(\del_z\tilde m_t(z_t)- \del_z m_t(z_t))\del_t z_t-(\tilde m_t- m_t)\frac{\del_t \rd z_t}{\rd z_t}.
\end{align}

By plugging \eqref{e:BBdiff} and \eqref{e:BBdiff2} back into \eqref{e:leq1}, we get the following equation for the difference of the Stieltjes transforms
\begin{align}\begin{split}\label{e:mainEq}
n\Delta (\tilde m_t- m_t)
&=\Delta M_t-(\tilde m_t- m_t)\frac{\del_t \rd z_t}{\rd z_t}+\frac{1}{2\pi\ri}\oint_{\omega_+} \sfK_t(w,z_t;\beta,s)(\tilde m_t(w)-m_t(w))\rd w\\
&+\frac{1}{n}\sfE_t(z_t;\beta,s)+\OO\left((\tilde m_t-m_t)^2+\frac{(\tilde m_t-m_t)}{n}+\frac{1}{n^2}\right),
\end{split}\end{align}
where $\sfE_t(z_t;\beta,s)$ is obtained from $\cE_2(z_t)$ as in \eqref{e:cE2} by a Taylor expansion around $h_t^*$
\begin{align*}\begin{split}
    &\phantom{{}={}}\sfE_t(z_t;\beta,s)=\frac{1}{2}\del_t^2 m_t(z_t;\beta, s)\\
    &+\frac{1}{2\pi \ri}\oint_{\cin}\left(\frac{e^{ m_t}\varphi_t^+}{\cB^*_t}\left(\del_w  m_t+\psi_t^++\frac{1}{z_t-w}+\del_t m_t-
\int K(w,x;h_t^*,t)\del_t h^*_t\rd x\right) -\frac{\del_w\cB^*_t}{2\cB^*_t}\right)\frac{\rd w}{(w-z_{t})^2}.
\end{split}\end{align*}
The key point for \eqref{e:mainEq} is that both $\sfK_t(w, z_t;\beta,s)$ and $\sfE_t(z_t;\beta,s)$ are deterministic, depending only on the boundary condition $\beta$.

\subsection{Covariance Structure}\label{s:Cov}

In this section we study the covariance structure of the martingale difference terms $\Delta M_t(z)$ in \eqref{e:dm}, explicitly given by
\begin{align}\label{e:dMt}
\Delta M_t(z)
=n\int_{z-1/n}^z \sum_i\frac{1}{u-x_i(t)-e_i/n}\rd u-\bE_\bme\left[n\int_{z-1/n}^z \sum_i\frac{1}{u-x_i(t)-e_i/n}\rd u\right].
\end{align}
In this section, the time $t$ is fixed, for simplicity of notations, we simply write $x_i(t)$ as $x_i$ for $1\leq i\leq m(t)$. 
We can write $\Delta M_t(z)$ in terms of the measure $\mu_n=\sum_i e_i (\delta_{x_i+1/n}-\delta_{x_i})$, and it has the same  covariance structure as 
\begin{align*}
\left\{\sqrt n \int_{z-1/n}^z\int  \frac{\sum_i e_i(\delta_{x_i+1/n}-\delta_{x_i})}{u-x}\rd x\rd u\right\}_{z\in \bC\setminus [\fa_1(t), \fb_1(t)]\cup[\fa_2(t), \fb_2(t)]\cup\cdots [\fa_{r(t)}(t), \fb_{r(t)}(t)]}.
\end{align*}
Explicitly, after integrating out $u$ in the above expression, we get
\begin{align*}\begin{split}
\sqrt n \int_{z-1/n}^z\int  \frac{\sum_i e_i(\delta_{x_i+1/n}-\delta_{x_i})}{u-x}\rd x\rd u
&=-\frac{1}{\sqrt n}\sum_i e_i \ln \left(1-\frac{1}{(z-x_i-1/n)^2}\right)=:\sum_{i}e_i\theta(x_i;z).
\end{split}\end{align*}

To use Proposition \ref{p:firstod}, we need a deformed version of \eqref{e:defLtnew}. Fix any  numbers $\bms=\{s_1, s_2,\cdots, s_q\}\subset \bR$, and complex numbers $\bmv=\{v_1, v_2, \cdots, v_q\}\subset \bC\setminus [\fa_1(t), \fb_1(t)]\cup[\fa_2(t), \fb_2(t)]\cup\cdots [\fa_{r(t)}(t), \fb_{r(t)}(t)]$, we define a deformed version of \eqref{e:defLtnew}:
\begin{align}\label{e:defLtnew}
\tilde \bP_{\bms,\bmv}(\bme)\deq \frac{1}{\tilde Z_t(\bmx)}\frac{V(\bmx+\bme/n)}{V(\bmx)}\prod_{i=1}^m\tilde \phi_t^+(x_i;\beta,t)^{e_i}\phi_t^-(x_i;\beta,t)^{1-e_i}\exp\left\{\sum_{1\leq i,j\leq m}\frac{e_ie_j}{n^2}\kappa(x_i,x_j;\beta,t)\right\},
\end{align}
where 
\begin{align}\label{e:defvphitnew}
\tilde \phi_t^+(z;\beta,t)=\tilde \varphi_t^+(z;\beta,t)e^{\frac{1}{n}\psi_t^+(z;\beta,t)},\quad \tilde \varphi_t^+(z;\beta,t)= \varphi_t^+(z;\beta,t)e^{\sum_{k}s_k \theta(z; v_k)}.
\end{align}
Then the expectation with respect to the deformed measure $\tilde \bP_{\bms, \bmv}$, can be rewritten as the expectation with respect to the original measure \eqref{e:defLtnew} in the following way:
\begin{align}\label{e:rewrite}
\bE_{\tilde \bP_{\bms, \bmv}}\left[ \cdot\right]
=\frac{\bE_\bme\left[e^{\sum_{k}s_k \sum_i e_i\theta(x_i; v_k)}(\cdot)\right]}{\bE_\bme\left[e^{\sum_{k}s_k \sum_i e_i\theta(x_i;v_k)}\right]}.
\end{align}

Then for the deformed measure $\tilde \bP_{\bms, \bmv}$, Proposition \ref{p:difmeasure}  gives
\begin{align}\label{e:EPT}
\bE_{\tilde \bP_{\bms, \bmv}}\left[ \frac{\rd\mu_n}{u-x}\right]=
\bE_{\tilde \bP_{\bms, \bmv}}\left[ \frac{1}{n}\sum_{i}\frac{e_i}{(u-x_i-1/n)(u-x_i)}\right]=\frac{1}{2\pi \ri}\oint_{\cin} \frac{\del_w\ln \cB_t^{\bms,\bmv}}{w-u}\rd w +\OO\left(\frac{1}{ n}\right),
\end{align}
where the contour $\omega_-$ encloses $z^{-1}(\{x:(x,t+)\in\fP \})$, but not $u, v_1, v_2, \cdots, v_k$, and 
\begin{align}\begin{split}\label{e:bsvexp}
\ln \cB_t^{\bms,\bmv}(w)
&=\ln (e^{\tilde m_t}\varphi_t^+ e^{\sum_k s_k\theta(v_k)}+\varphi_t^-)\\
&=\ln \cB_t(w)+\ln \left(1+\frac{e^{\tilde m_t}\varphi_t^+ (e^{\sum_k s_k\theta(w; v_k)}-1)}{e^{\tilde m_t}\varphi_t^++\varphi_t^-}\right)\\
&=\ln \cB_t(w)+\frac{f_t(w;\beta,t)}{f_t(w;\beta,t)+1}\sum_k s_k\theta(w; v_k)+\OO\left(\frac{1}{ n}\right).
\end{split}\end{align}
By rewriting \eqref{e:EPT} in terms of the original measure \eqref{e:defLtnew} using \eqref{e:rewrite}, we get
\begin{align}\label{e:covar}
\frac{\bE_\bme\left[e^{\sum_{k}s_k \sum_i e_i\theta(x_i; v_k)} \frac{1}{\sqrt n}\sum_{i}\frac{e_i}{(u-x_i-1/n)(u-x_i)}\right]}{\bE_\bme\left[e^{\sum_{k}s_k \sum_i e_i\theta(x_i;v_k)}\right]}
=\frac{\sqrt n}{2\pi \ri}\oint_{\cin} \frac{\del_w\ln \cB_t^{\bms,\bmv}}{w-u}\rd w +\OO\left(\frac{1}{\sqrt n}\right).
\end{align}
Then we can plug \eqref{e:bsvexp} into \eqref{e:covar} and integrate both sides  from $v_0-1/n$ to $v_0$, and 
 \begin{align}\begin{split}\label{e:covar2}
\frac{\bE_\bme\left[e^{\sum_{k}s_k \sum_i e_i\theta(x_i; v_k)} \sum_{i}e_i \theta(x_i;v_0)\right]}{\bE_\bme\left[e^{\sum_{k}s_k \sum_i e_i\theta(x_i;v_k)}\right]}
=\frac{\sqrt n}{2\pi \ri}\oint_{\cin} \frac{\del_w\ln \cB_t}{w-v_0}\rd w\\
+\sum_k \frac{\sqrt n s_k}{2\pi \ri}\oint_{\cin} \frac{f_t(w;\beta,s)}{f_t(w;\beta,s)+1}\frac{\theta(w;v_k)}{(w-v_0)^2}\rd w
 +\OO\left(\frac{1}{\sqrt n}\right).
\end{split}\end{align}

The derivatives
\begin{align*}
\left.\del_{s_1, s_2,\cdots, s_q}\frac{\bE_\bme\left[e^{\sum_{k}s_k \sum_i e_i\theta(x_i; v_k)} \sum_{i}e_i \theta(x_i;v_0)\right]}{\bE_\bme\left[e^{\sum_{k}s_k \sum_i e_i\theta(x_i;v_k)}\right]}\right|_{s_1=s_2=\cdots=s_q=0}
\end{align*}
gives the joint cumulant of 
\begin{align*}
\left\{\sqrt n \Delta M_t(v_k)= \sum_i e_i\theta(x_i;v_k)-\bE_\bme\left[\sum_i e_i\theta(x_i;v_k)\right]\right\}_{k=0}^q.
\end{align*}
Using Taylor expansion, and \eqref{e:covar2} we have that
\begin{align*}\begin{split}
\bE_\bme\left[\sqrt n \Delta M_t(v_0), \sqrt n \Delta M_t(v_1)\right]
=\left.\del_{s_1}\frac{\bE_\bme\left[e^{s_1 \sum_i e_i\theta(x_i; v_1)} \sum_{i}e_i \theta(x_i;v_0)\right]}{\bE_\bme\left[e^{s_1\sum_i e_i\theta(x_i;v_1)}\right]}\right|_{s_1=0}\\
=\frac{1}{2\pi \ri}\oint_{\cin} \frac{f_t(w;\beta,t)}{f_t(w;\beta,t)+1}\frac{1}{(v_1-w)^2}\frac{1}{(v_0-w)^2}\rd w+\OO\left(\frac{1}{\sqrt n}\right),
\end{split}\end{align*}
where the contour $\omega_-$ encloses $z^{-1}(\{x:(x,t+)\in\fP \})$, but not $v_0, v_1$,
and higher order cumulants vanish
\begin{align*}
\left.\del_{s_1, s_2,\cdots, s_q}\frac{\bE_\bme\left[e^{\sum_{k}s_k \sum_i e_i\theta(x_i; v_k)} \sum_{i}e_i \theta(x_i;v_0)\right]}{\bE_\bme\left[e^{\sum_{k}s_k \sum_i e_i\theta(x_i;v_k)}\right]}\right|_{s_1=s_2=\cdots=s_q=0}=\OO\left(\frac{1}{\sqrt n}\right),
\end{align*}
for $q\geq 2$.

\subsection{Convergence to Gaussian Processes}

In this section, we prove that after proper normalization, $\{\tilde m_t(z_t;\beta,s)\}_{s\leq t\leq T}$ converges to a Gaussian Process, characterized by a stochastic differential equation, given by the limit of \eqref{e:mainEq}. To state the main result, we need to introduce more notations. For any $Z\in \gamma_s^{\beta,s}$, the characteristic flow $\{Z_t\}_{s\leq t\leq T}$ starting from $Z$ at time $s$ (as in \eqref{e:ccff}) will meet the bottom boundary of $\gamma_t^{\beta,s,+}$. This is when $z_t=z(Z_t)\in \bR$, then $\Im[z_t]$ will change sign. We denote the time $t(Z)$, when $Z_t\in \gamma_t^{\beta,s,+}$ gets  close to $z^{-1}(\{x: (x,t+)\in \fP\})$. Fix small $\fc>0$, let
\begin{align*}
t(Z)=\inf_{s\leq t\leq T}\{t: \dist(Z_t, z^{-1}(\{x: (x,t+)\in \fP\})\leq \fc\}.
\end{align*}
Then for any $s\leq t\leq t(Z)$, $Z_t\in \gamma_t^{\beta,s,+}$ stays away from $z^{-1}(\{x: (x,t+)\in \fP\})$.

\begin{theorem}\label{t:CLT}
Let $\{\bmx_t\}_{t\in[s,T]\cap\bZ_n}$ be the nonintersecting Bernoulli random walk with generator \eqref{e:defLtnew} $\{\bmx_t\}_{t\in[s,T]\cap\bZ_n}$ starting at $\bmx_s=\bmx$. We denote  its empirical particle density and Stieltjes transform as
\begin{align*}
\rho_t(x;\bmx_t)=\sum_{i=1}^{m(t)}x_i(t),\quad \tilde m_t(z;\beta,s)=\int\frac{\rho_t(x;\bmx_t)}{z-x}\rd x,
\end{align*}
where $\beta=\beta(x;\bmx)$.

The random processes $\{n(\tilde m_t(z_t(Z);\beta,s)-m_t(z_t(Z);\beta,s))\}_{s\leq t\leq t(Z), Z\in \gamma_s^{\beta,s}}$ converge weakly towards a Gaussian process $\{{\sf G}_t(Z)\}_{s\leq t\leq t(Z), Z\in \gamma_s^{\beta,s}}$ with zero initial data, which is the unique solution of the system of stochastic differential equations
\begin{align}\begin{split}\label{e:limitprocess}
\rd\sfG_t(Z)&= \rd\sfW_t(Z)-\sfG_t(Z)\frac{\del_t \rd z_t}{\rd z_t}+\frac{1}{2\pi\ri}\oint_{\omega} \sfK_t(z_t(W),z_t;\beta,s)\sfG_t(W)\rd z_t(W)+\sfE_t(z_t;\beta,s),
\end{split}\end{align}
where $z_t=z_t(Z)$, the contour $\{z_t(W): W\in \omega\subset \gamma_t^{\beta,s,+}\}$  encloses $z^{-1}(\{x:(x,t+)\in \fP\})$ and $z_t$,
$\sfK_t(w,z_t;\beta,s)$ is given by
\begin{align*}
\sfK_t(w,z_t;\beta,s)=\frac{f_t(w;\beta,s)}{f_t(w;\beta,s)+1}\frac{1}{(w-z_t)^2}-\frac{1}{2\pi\ri}\oint_{\omega_-}\frac{f_t(u;\beta,s)}{f_t(u;\beta,s)+1}\frac{\int^w K(u,x;h_t^*,t)\rd x}{(u-z_t)^2}\rd u, 
\end{align*}
 $\sfE_t(z_t;\beta,s)$ is give by
\begin{align*}\begin{split}
    &\phantom{{}={}}\sfE_t(z_t;\beta,s)=\frac{1}{2}\del_t^2 m_t(z_t;\beta, s)\\
    &+\frac{1}{2\pi \ri}\oint_{\cin}\left(\frac{e^{ m_t}\varphi_t^+}{\cB^*_t}\left(\del_w  m_t+\psi_t^++\frac{1}{z_t-w}+\del_t m_t-
\int K(w,x;h_t^*,t)\del_t h^*_t\rd x\right) -\frac{\del_w\cB^*_t}{2\cB^*_t}\right)\frac{\rd w}{(w-z_{t})^2},
\end{split}\end{align*}
and $\{\sfW_t(Z)\}_{s\leq t\leq t(Z), Z\in \gamma_s^{\beta,s}}$ is a centered Gaussian process with $\overline{W_t(Z)}=W_t(\bar Z)$, and covariance structure given by
\begin{align}\begin{split}\label{e:covW1}
&\langle \sfW(Z), \sfW(Z')\rangle_t=\int_s^t \frac{1}{2n\pi \ri}\oint_{\cin} \frac{f_r(w;\beta,s)}{f_r(w;\beta,s)+1}\frac{1}{(z_t(Z)-w)^2}\frac{1}{(z_t(Z')-w)^2}\rd w\rd r,
\end{split}\end{align}
$\omega_-$ encloses $z^{-1}(\{x:(x,t+)\in \fP\})$ but not $z_t(Z), z_t(Z')$.
\end{theorem}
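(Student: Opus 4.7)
The plan is to identify the discrete difference equation \eqref{e:mainEq} as a discretization of the limiting SDE \eqref{e:limitprocess} and then pass to the limit using a martingale CLT. The first step is to establish an a priori bound $|\tilde m_t(z_t;\beta,s) - m_t(z_t;\beta,s)| = \OO(\log n / n)$, uniformly in $s\leq t\leq t(Z)$ and uniformly for $Z$ at macroscopic distance from the cut, with overwhelming probability. This will be proved by a discrete Gronwall-type argument applied to \eqref{e:mainEq}: writing $D_t(z) \deq n(\tilde m_t(z_t;\beta,s)-m_t(z_t;\beta,s))$, the equation reads $\Delta D_t = \Delta M_t + \cL_t^{\text{lin}} D_t + \text{lower order}$, where $\cL_t^{\text{lin}}$ is the bounded linear operator on the righthand side of \eqref{e:limitprocess}. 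Iterating this equation for $\OO(n)$ steps and using the martingale covariance bound from Section \ref{s:Cov} (which gives $\bE[(\Delta M_t)^2]=\OO(1/n)$) yields $D_t = \OO(\log n)$ with high probability, which is precisely the bound needed to absorb the $\OO((\tilde m_t - m_t)^2)$ error terms appearing in \eqref{e:mainEq}.

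Next, I would verify tightness of the rescaled process $\{D_t(Z)\}$ viewed as a process in $C([s,t(Z)],\cH)$ for a suitable space $\cH$ of analytic functions on a fixed neighborhood of the cut-complement. Tightness in the time variable follows from the a priori bound together with the martingale increment estimate from Section \ref{s:Cov}; tightness in the spatial variable $Z$ follows from analyticity and uniform boundedness of $D_t$ (so Montel/Arzelà--Ascoli applies). For the identification of the limit, I would invoke the martingale CLT: the quadratic variation of $M_t$ converges (by the explicit computation in Section \ref{s:Cov}) to the deterministic covariance \eqref{e:covW1}, and the higher-order cumulants were shown there to be $\OO(1/\sqrt n)$, so the martingale part converges to the Gaussian process $\sfW_t$. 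Combined with the convergence of the drift and correction terms in \eqref{e:mainEq} (which are deterministic and continuous functionals of the process), every subsequential limit solves \eqref{e:limitprocess}.

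The uniqueness of the limiting SDE \eqref{e:limitprocess} finishes the argument: since the linear operator $\sfG \mapsto -\sfG \,\del_t\rd z_t/\rd z_t + (2\pi\ri)^{-1}\oint_\omega \sfK_t(z_t(W),z_t;\beta,s)\sfG(W)\rd z_t(W)$ is bounded on the space of analytic functions on a neighborhood of the contour $\omega$, and $\sfW_t$ and $\sfE_t$ are given, the SDE \eqref{e:limitprocess} has a unique Gaussian solution by a standard contraction/Gronwall argument. Hence every subsequential limit coincides, and the full sequence converges.

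The main obstacle will be justifying that the error term $\OO((\tilde m_t - m_t)^2)$ in \eqref{e:mainEq}, when accumulated over $\OO(n)$ time steps, remains negligible at the scale of $D_t = n(\tilde m_t - m_t)$. This requires the a priori bound $D_t = \OO(\log n)$, which itself relies on \eqref{e:mainEq} in a self-bootstrapping manner. A secondary subtle point is the control of the characteristic flow near $t(Z)$: as $Z_t$ approaches $z^{-1}(\{x:(x,t+)\in\fP\})$, both the drift kernel $\sfK_t$ and the martingale covariance have singularities, so one must cut off the time interval strictly before $t(Z)$ (at distance $\fc$ from the cut as specified) and verify uniformity of all constants with respect to this cutoff.
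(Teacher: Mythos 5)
Your proposal is correct and follows essentially the same route as the paper: (i) establish stochastic boundedness of the martingale term via the moment estimates of Section \ref{s:Cov} together with Doob's inequality and a dyadic covering in $Z$, then apply a discrete Gronwall argument to \eqref{e:mainEq} to get the a priori bound on $n(\tilde m_t - m_t)$ (the paper actually obtains $\OO(1)$ stochastic boundedness, \eqref{e:stochasticbdm}, slightly sharper than your $\OO(\log n)$, but either suffices to absorb the $\OO((\tilde m_t - m_t)^2)$ error over $\OO(n)$ steps); (ii) verify the modulus conditions for tightness of both the martingale and the centered Stieltjes transform; (iii) identify the martingale limit as the Gaussian process $\sfW_t$ via convergence of quadratic variations and vanishing higher cumulants, invoking a martingale CLT; (iv) conclude that subsequential limits solve \eqref{e:limitprocess} and finish by uniqueness of the linear SDE. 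The only notable cosmetic difference is that you phrase the spatial tightness through a Banach space of analytic functions and Montel/Arzelà--Ascoli, whereas the paper executes the equivalent estimate with the explicit dyadic-grid covering of $\gamma_s^{\beta,s}$; both rest on the same two-point moment bound \eqref{e:twopoint}.

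Two minor cautions that would need to be made precise in a full write-up: the discrete Gronwall step requires the stochastic bound on the martingale first (the paper proves \eqref{e:stochasticbd} before Gronwall, not simultaneously), so the ``self-bootstrapping'' you worry about is resolved by ordering the estimates rather than by an actual bootstrap; and your concern about singularities near $t(Z)$ is already handled by the paper's definition of $t(Z)$ as the first time the characteristic $Z_t$ comes within fixed macroscopic distance $\fc$ of the cut, so all kernels and covariances stay uniformly bounded on $[s,t(Z)]$ by construction.
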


We divide the proof of Theorem \ref{t:CLT} into two steps. 
In step one, we prove that the martingale term in \eqref{e:mainEq} converges weakly to a centered complex Gaussian process.
In step two we prove  the tightness of the processes $\{n(\tilde m_t(z_t(Z);\beta,s)-m_t(z_t(Z);\beta,s))\}_{s\leq t\leq t(Z), Z\in \gamma_s^{\beta,s}}$ as $n$ goes to infinity and  the subsequential limits of $\{n(\tilde m_t(z_t(Z);\beta,s)-m_t(z_t(Z);\beta,s))\}_{s\leq t\leq t(Z), Z\in \gamma_s^{\beta,s}}$ solve the stochastic differential equation \eqref{e:limitprocess}. Theorem \ref{t:CLT} follows from this fact and the uniqueness of the solution to \eqref{e:limitprocess}.

We denote the martingale starting at $M_s(Z)=0$,
\begin{align*}
M_t(Z)\deq \sum_{r\in [s, t)\cap \bZ_n}  \Delta M_r(z_{r+1/n}(Z)),\quad t\in [s,T]\cap \bZ_n,
\end{align*}
and its (predictable) quadratic variation as
\begin{align}\label{e:quadraticv}
\langle M(Z), M(Z')\rangle_t=\sum_{r\in [s,t)\cap \bZ_n} \bE[\Delta M_r(z_{r+1/n}(Z)) \Delta M_r(z_{r+1/n}(Z'))|\bmx_{r}].
\end{align}

In the following, we first show the weak convergence of the martingales $\{M_t(Z)\}_{s\leq t\leq t(Z), Z\in \gamma_s^{\beta,s}}$ to a centered Gaussian process.
We recall from Section \ref{s:Cov}, the covariance structure of the martingale difference terms.
\begin{proposition}\label{p:martingalebd}
The martingale difference terms $\Delta M_t(z)$ are asymptotically joint Gaussian, with covariance given by
\begin{align*}
n\bE[\Delta M_t(z)\Delta M_t(z')|\bmx_t]=\frac{1}{2\pi \ri}\oint_{\cin} \frac{\tilde f_t(w;\beta,s)}{\tilde f_t(w;\beta,s)+1}\frac{1}{(z-w)^2}\frac{1}{(z'-w)^2}\rd w+\OO\left(\frac{1}{\sqrt n}\right),
\end{align*}
where the contour encloses $z^{-1}(\{x:(x,t+)\in\fP\})$ but not $z,z'$, and for any even $p\geq 2$,
\begin{align}
\label{e:onepoint}&\bE[|\sqrt n\Delta M_t(z)|^p|\bmx_t]\leq \frac{\fC_p}{\langle z\rangle^{2p}},\quad \langle z\rangle \deq (1+|z|^2)^{1/2},\\
\label{e:twopoint}&\bE[|\sqrt n(\Delta M_t(z)-\Delta M_t(z'))|^p|\bmx_t]\leq \frac{\fC_p |z-z'|^p}{(\langle z\rangle \langle z'\rangle\min\{\langle z\rangle, \langle z'\rangle\})^{p}}.
\end{align}
\end{proposition}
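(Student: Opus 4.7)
The plan is to extract this proposition essentially from the machinery already assembled in Section \ref{s:Cov}. The key observation is that the martingale difference $\sqrt{n}\,\Delta M_t(z)$ can be rewritten as
\[
\sqrt{n}\,\Delta M_t(z)=\sum_{i}e_i\theta(x_i(t);z)-\bE_\bme\!\left[\sum_{i}e_i\theta(x_i(t);z)\right],\qquad \theta(x;z)\deq -\frac{1}{\sqrt n}\ln\!\left(1-\frac{1}{(z-x-1/n)^2}\right),
\]
so the joint distribution of $(\sqrt n\,\Delta M_t(v_0),\dots,\sqrt n\,\Delta M_t(v_q))$ under the original transition law \eqref{e:defLtnew} is encoded in the log-moment generating function
\[
\Psi(\bms,\bmv)\deq \ln \bE_\bme\!\left[\exp\!\Big\{\sum_{k}s_k\sum_i e_i\theta(x_i(t);v_k)\Big\}\right],
\]
whose mixed partials $\partial_{s_1}\cdots\partial_{s_q}\Psi|_{\bms=0}$ are exactly the joint cumulants of $\sum_i e_i\theta(x_i;v_k)$. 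Our task is therefore to compute these cumulants using the deformed weight \eqref{e:defvphitnew} and the discrete loop equations of Section \ref{s:loopeq}.

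First I would compute the second cumulant. Applying Proposition \ref{p:difmeasure} to the deformed measure $\tilde\bP_{\bms,\bmv}$ (note that inserting the factor $e^{\sum_k s_k\theta(z;v_k)}$ in $\varphi_t^+$ is an $\OO(1/\sqrt n)$ perturbation, so Assumptions~\ref{a:weight} and \ref{a:cancelB} remain valid) and differentiating in $s_1$ at $\bms=0$, the expansion \eqref{e:bsvexp} of $\ln \cB_t^{\bms,\bmv}$ together with the identification \eqref{e:rewrite} yields after integrating the $u$-variable over $[v_0-1/n,v_0]$ the identity
\[
\bE_\bme\!\left[\sqrt n\,\Delta M_t(v_0),\,\sqrt n\,\Delta M_t(v_1)\right]=\frac{1}{2\pi\ri}\oint_{\cin}\frac{\tilde f_t(w;\beta,s)}{\tilde f_t(w;\beta,s)+1}\frac{\rd w}{(v_0-w)^2(v_1-w)^2}+\OO\!\left(\frac{1}{\sqrt n}\right),
\]
with $\cin$ enclosing the support of the particle configuration but neither $v_0$ nor $v_1$. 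This is precisely the claimed covariance. Next I would argue that cumulants of order $\geq 3$ vanish at rate $\OO(1/\sqrt n)$: each additional $\partial_{s_k}$ applied to \eqref{e:covar2} brings down an extra factor of $1/\sqrt n$ from the expansion \eqref{e:bsvexp} of $\ln \cB_t^{\bms,\bmv}$ (the term $\sum_k s_k\theta$ carries $1/\sqrt n$), so repeated differentiation produces cumulants of size $\OO(n^{-(p-2)/2})$ which vanish for $p\geq 3$. By the standard cumulant criterion, this establishes the asymptotic joint Gaussianity.

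For the moment bounds \eqref{e:onepoint} and \eqref{e:twopoint}, I would combine two inputs. First, a direct pointwise bound: $|\theta(x;z)|\leq \fC/(\sqrt n\,\langle z\rangle^2)$ uniformly for $x\in \supp\rho_t$, and similarly $|\theta(x;z)-\theta(x;z')|\leq \fC|z-z'|/(\sqrt n\,\langle z\rangle^2\langle z'\rangle^2)$ by the mean value theorem, so the summand is of size $\OO(n^{-1/2}\langle z\rangle^{-2})$. Second, the variance bound from the covariance formula gives a contour integral that, once deformed to have radius comparable to $\langle z\rangle$, contributes $\OO(\langle z\rangle^{-4})$. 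Combined with the vanishing of higher cumulants (hence sub-Gaussian tails at the $\OO(1)$ scale), the bound \eqref{e:onepoint} follows by the standard cumulant-to-moment conversion
\[
\bE[|X|^p]\leq \fC_p\Bigl(\bE[X^2]^{p/2}+\sum_{k=3}^p |c_k(X)|^{p/k}\Bigr),
\]
and \eqref{e:twopoint} follows identically applied to the difference $\Delta M_t(z)-\Delta M_t(z')$, whose variance picks up the factor $|z-z'|^2/(\langle z\rangle\langle z'\rangle\min\{\langle z\rangle,\langle z'\rangle\})^2$ from the covariance formula by subtracting two kernels.

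The main obstacle, in my view, is the third paragraph: the cumulant expansion of $\Psi(\bms,\bmv)$ is controlled by the loop equations only to leading order, and one must verify that the $\OO(1/\sqrt n)$ error terms in \eqref{e:covar2} are uniform in $\bms$ in a neighborhood of the origin so that differentiation commutes with the error estimate. This requires tracking the dependence of the error in Proposition \ref{p:difmeasure} on the analytic parameters $\psi_t^+$ of the weight, ensuring they remain in the analytic class required by Assumption~\ref{a:weight} under the $s_k\theta(\cdot;v_k)$-perturbation. Once this uniformity is in hand, the rest of the proof is bookkeeping via the cumulant formalism.
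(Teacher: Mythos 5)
Your proposal for the covariance and asymptotic joint Gaussianity is essentially a restatement of what the paper develops in Section \ref{s:Cov}: the identification $\sqrt n\,\Delta M_t(z)=\sum_i e_i\theta(x_i;z)-\bE_\bme[\cdots]$, the exponential tilting $\tilde\bP_{\bms,\bmv}$ built by absorbing $e^{\sum_k s_k\theta(\cdot;v_k)}$ into $\varphi_t^+$, the application of Proposition \ref{p:difmeasure} to the tilted measure to get \eqref{e:covar2}, and the conclusion that the $q$-th joint cumulant is $\OO(n^{-(q-1)/2})$ for $q\geq 2$. This part is correct and is the paper's approach. One small notational mismatch: the paper's computation at the end of Section \ref{s:Cov} produces $f_t(w;\beta,t)$ in the covariance integrand (i.e., the complex slope associated to the current empirical height $\beta_t$), whereas you (following the proposition's statement) write $\tilde f_t(w;\beta,s)$. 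Worth keeping straight when citing \eqref{e:leq1} where $\tilde f_t=f_t(\cdot;\beta_t,t)$ is defined.

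The genuine gap in your proposal is the moment bounds \eqref{e:onepoint}--\eqref{e:twopoint}. The cumulant-to-moment conversion you invoke is morally correct (for centered $X$ and even $p$ it follows from the moment--cumulant formula together with Young's inequality applied block-by-block), and the dominant contribution indeed comes from pair partitions, giving $c_2^{p/2}\lesssim\langle z\rangle^{-2p}$. However, to conclude \eqref{e:onepoint} uniformly in $z$ you need each cumulant to satisfy $c_k(\sqrt n\Delta M_t(z))=\OO(n^{-(k-2)/2}\langle z\rangle^{-2k})$, which requires the \emph{error terms} in the loop-equation expansion to decay in $\langle z\rangle$ as well. Propositions \ref{p:firstod}, \ref{p:difmeasure} and \ref{p:second} state errors of size $\OO(1/n)$ without tracking the $z$-dependence; since the main terms decay like $\langle z\rangle^{-2}$, an error of bare size $\OO(1/n)$ would overwhelm the main term for $|z|\gtrsim n^{1/2}$ and \eqref{e:onepoint} would fail there. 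One does expect those errors to decay in $z$ (both sides of the loop identity vanish as $z\to\infty$ since $\mu_n$ has total mass zero), but this needs to be tracked explicitly through the contour-integral manipulations — it is not something you can take for granted from the stated form of the propositions. Your fourth paragraph flags the cousin problem of uniformity in $\bms$; that is a real concern, but the $\langle z\rangle$-decay of the errors is a separate and equally essential point that your proposal does not address. (Note the paper itself does not exhibit a proof of \eqref{e:onepoint}--\eqref{e:twopoint}, so you are attempting to fill a hole the paper leaves implicit.)
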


We first show that the random processes $\{M_t(Z)\}_{s\leq t\leq t(Z), Z\in \gamma_s^{\beta,s}}$ are stochastically bounded: for any  $\varepsilon>0$ there exists a large constant $\fC>0$ such that 
\begin{align}\label{e:stochasticbd}
&\bP\left(\sup_{Z\in \gamma^{\beta,s}_s}\sup_{t\in [s,t(Z)]\cap \bZ_n}|M_{t}(Z)|\geq \fC\right)\leq \varepsilon.
\end{align}
Thanks to Doob's inequality and \eqref{e:onepoint}, for any $Z\in \gamma_s^{\beta,s}$ and $s\leq t\leq t(Z)$, we have
\begin{align}\label{e:ubbM}
\bE\left[\left|\sup_{t\in [s,t(Z)]\cap\bZ_n}M_t(Z)\right|^p\right]
\lesssim \fC_p\bE\left[\left|M_{t(Z)}(Z)\right|^p\right]
=\bE\left[\left|\sum_{t\in [s,t(Z))\cap \bZ_n}\Delta M_{t}(z_{t+1/n}(Z))\right|^p\right]
\lesssim (t(Z)-s)^{p/2},
\end{align}
where the implicit constant depends on $p$. Similarly, using Doob's inequality and \eqref{e:twopoint}, for any $Z,Z'\in \gamma_s^{\beta,s}$ with $t(Z)\leq t(Z')$, and $s\leq t\leq t(Z)$, we have
\begin{align}\begin{split}\label{e:diffubbM0}
&\phantom{{}={}}\bE\left[\left|\sup_{t\in [s,t(Z)]\cap\bZ_n}(M_t(Z)-M_t(Z'))\right|^p\right]
\lesssim \bE\left[\left|M_{t(Z)}(Z)-M_{t(Z)}(Z')\right|^p\right]\\
&=\bE\left[\left|\sum_{t\in [s,t(Z))\cap \bZ_n}\Delta M_{t}(z_{t+1/n}(Z))-\Delta M_{t}(z_{t+1/n}(Z))\right|^p\right]
\lesssim \sup_{s\leq t\leq t(Z)}\frac{(t(Z)-s)^{p/2} |z_t-z_t'|^p}{(\langle z_t\rangle \langle z_t'\rangle\min\{\langle z_t\rangle, \langle z_t'\rangle\})^{p}} ,
\end{split}\end{align}
where $z_t=z(Z_t)$ and $z_t'=z(Z_{t'})$.
To take a union bound for $Z\in \gamma_s^{\beta,s}$, we need to equip $\gamma_s^{\beta,s}$ a metric to make it compact. We recall that using the map \eqref{e:coveratr}, 
$\gamma_s^{\beta,s}$ can be identified with the gluing of two copies of $\Omega(\fP;\beta,s)$. This induces a metric on $\gamma_s^{\beta,s}$, using the metric of $\Omega(\fP;\beta,s)$ as a bounded subset of $\bR^2$. We denote this distance, by $\dist(Z,Z')$ for $Z,Z'\in \gamma_s^{\beta,s}$. It is easy to check, using the definition of characteristic flow, for any $s\leq t\leq t(Z)$, 
\begin{align}
\frac{ |z_t-z_t'|}{\langle z_t\rangle \langle z_t'\rangle\min\{\langle z_t\rangle, \langle z_t'\rangle\}} \lesssim \dist(Z,Z'),
\end{align}
and we can rewrite \eqref{e:diffubbM0} as
\begin{align}\label{e:diffubbM}
&\phantom{{}={}}\bE\left[\left|\sup_{t\in [s,t(Z)]\cap\bZ_n}(M_t(Z)-M_t(Z'))\right|^p\right]\lesssim (t(Z)-s)^{p/2}\dist(Z,Z')^p.
\end{align}

We take an infinite sequence of grids $\Pi_0\subset\Pi_1\subset \Pi_2\subset\cdots$ of $\gamma^{\beta,s}_s$, such that for any $Z\in \gamma^{\beta,s}_s$, there exists some $\pi_k(Z)\in \Pi_k$ with $\dist(Z,\pi_k(Z))\leq 2^{-k}$. We can take $\Pi_k$ to have cardinality $|\Pi_k|\lesssim 2^{2k}$. Let
\begin{align*}
U_0=\max_{\{Z\in \Pi_0\}}\left|\sup_{t\in[s,t(Z)]\cap \bZ_n}M_t(Z)\right|,\quad
U_k=\max_{\{Z,Z'\in \Pi_k: \dist(Z,Z')\leq 2^{-k}\}}\left|\sup_{t\in[s,t(Z)]\cap \bZ_n}(M_t(Z)-M_t(Z'))\right|,\quad k\geq 1.
\end{align*}
Using \eqref{e:ubbM}, and $|\Pi_0|\lesssim 1$, we have
\begin{align}\label{e:U0}
\bE[U_0^p]\leq \sum_{Z\in \Pi_0}\bE\left[\left|\sup_{t\in[s,t(Z)]\cap \bZ_n}M_t(Z)\right|^p\right]\lesssim (t(Z)-s)^{p/2}.
\end{align}
Using \eqref{e:diffubbM}, and noticing that $|\{Z,Z'\in \Pi_k: \dist(Z,Z')\leq 2^{-k}\}|\lesssim |\Pi_k|\lesssim 2^{2k}$, we have
\begin{align}\label{e:Uk}
\bE[U_k^p]\leq \sum_{\{Z,Z'\in \Pi_k: \dist(Z,Z')\leq 2^{-k}\}}\bE\left[\left|\sup_{t\in[s,t(Z)]\cap \bZ_n}(M_t(Z)-M_t(Z'))\right|^p\right]
\lesssim (t(Z)-s)^{p/2}2^{-(p-2)k}.
\end{align}
 For any $Z\in \gamma^{\beta,s}_s$, we can rewrite $M_t(Z)$ as the following dyadic sum
\begin{align}\begin{split}\label{e:dyadic}
\left|\sup_{t\in [s,t(Z)]\cap\bZ_n}M_t(Z)\right|&\leq
\left|\sup_{t\in [s,t(Z)]\cap\bZ_n}M_t(\pi_0(Z))\right|+\sum_{k\geq 0}\left|\sup_{t\in [s,t(Z)]\cap\bZ_n}(M_t(\pi_{k+1}(Z))-M_t(\pi_k(Z)))\right|\\
&
\leq U_0+\sum_{k\geq 1} U_k.
\end{split}
\end{align} 
By taking $L_p$ norm of \eqref{e:dyadic}, and using the triangle inequality of $L_p$ norm, we get
\begin{align}\begin{split}\label{e:pthm}
\bE\left[\sup_{Z\in \gamma^{\beta,s}_s}\left|\sup_{t(Z)\in [s,t(Z)]\cap\bZ_n}M_t(Z)(Z)\right|^p\right]^{1/p}
&\leq \bE[U_0^p]^{1/p}+\sum_{k\geq 1}\bE[U_k^p]^{1/p}\\
&\leq (t(Z)-s)^{1/2}\left(1+\sum_{k\geq 1}2^{-(1-2/p)k}\right)\lesssim(t(Z)-s)^{1/2},
\end{split}\end{align}
provided we take $p\geq 3$, where in the second line we used \eqref{e:U0} and \eqref{e:Uk}.
The claim \eqref{e:stochasticbd} follows from \eqref{e:pthm}, and a Markov's inequality.

For any $s\leq t\leq t(Z)$, we can rewrite \eqref{e:mainEq}
\begin{align}\begin{split}\label{e:mainEqcopy}
n (\tilde m_t- m_t)
&= M_t(Z)+\sum_{r\in [s,t]\cap \bZ_n}\left((\tilde m_r- m_r)\frac{\del_r \rd z_r}{\rd z_r}+\frac{1}{2\pi\ri}\oint_{\omega_+} \sfK_r(w,z_r;\beta,s)(\tilde m_t(w)-\del_z m_r(w))\rd w\right.\\
&\left.+\frac{1}{n}\sfE_r(z_r;\beta,s)+\OO(n(\tilde m_r-m_r)^2+(\tilde m_r-m_r)+1/n)\right).
\end{split}\end{align}
Using \eqref{e:stochasticbd}, and Gronwell's inequality, we can conclude that $n (\tilde m_t- m_t)$ is stochastically bounded,
for any  $\varepsilon>0$ there exists a large constant $\fC>0$ such that 
\begin{align}\label{e:stochasticbdm}
&\bP\left(\sup_{Z\in \gamma^{\beta,s}_s}\sup_{t\in [s,t(Z)]\cap \bZ_n}n|\tilde m_t(z_t(Z))-m_t(z_t(Z)|\geq \fC\right)\leq \varepsilon.
\end{align}

To show that the random processes $\{M_t(Z)\}_{s\leq t\leq t(Z), Z\in \gamma_s^{\beta,s}}$, and $\{\langle M(Z), M(Z)\rangle_t\}_{s\leq t\leq t(Z), Z\in \gamma_s^{\beta,s}}$ are tight, 
we apply the sufficient condition for tightness of \cite[Chapter 6, Proposition 3.26]{MR1943877}. We need to check the modulus conditions: for any $\varepsilon>0$ there exists a $\delta>0$ such that 
\begin{align}\begin{split}\label{e:modulus}
&\bP\left(\sup_{Z\in \gamma^{\beta,s}_s}\sup_{s\leq t\leq t'\leq t(Z), t'-t\leq \delta}|M_{t'}(Z)-M_t(Z)|\geq \varepsilon\right)\leq \varepsilon,\\
&\bP\left(\sup_{Z,Z'\in \gamma^{\beta,s}_s}\sup_{s\leq t\leq t'\leq t(Z)\wedge t(Z'), t'-t\leq \delta}\left|(\langle M(Z), M(Z')\rangle_{t'}-\langle M(Z), M(Z')\rangle_{t})\right|\geq \varepsilon\right)\leq \varepsilon.
\end{split}\end{align}
The statements \eqref{e:modulus} can be proven by a covering argument in the same way as \eqref{e:stochasticbd}. 

Thanks to , the quadratic variations $\langle M(Z), M(Z')\rangle_t$ as in \eqref{e:quadraticv}  converges weakly,
\begin{align}\begin{split}
&\phantom{{}={}}\label{e:cov} \langle M(Z), M(Z') \rangle_t
=\sum_{r\in [s,t)\cap \bZ_n} \bE[\Delta M_r(z_{r+1/n}(Z)) \Delta M_r(z_{r+1/n}(Z'))|\bmx_{r}]\\
&=\sum_{r\in[s,t)\cap \bZ_n}\frac{1}{2n\pi \ri}\oint_{\cin} \frac{\tilde f_r(w;\beta,s)}{\tilde f_r(w;\beta,s)+1}\frac{1}{(z_t(Z)-w)^2}\frac{1}{(z_t(Z')-w)^2}\rd w+\OO(1/n^{3/2})\\
&=\int_s^t \frac{1}{2n\pi \ri}\oint_{\cin} \frac{f_r(w;\beta,s)}{f_r(w;\beta,s)+1}\frac{1}{(z_t(Z)-w)^2}\frac{1}{(z_t(Z')-w)^2}\rd w\rd r+\OO(1/n^{1/2}),
\end{split}\end{align}
where in the last line, thanks to \eqref{e:stochasticbd}, we can replace $\tilde f_r(w;\beta,s)$ by $f_r(w;\beta,s)$.

We notice that $\overline{M_t(Z)}=M_t(\bar{Z})$. It follows from \cite[Chapter 7,  Theorem 1.4]{MR838085} and the modulus conditions \eqref{e:modulus},  the complex martingales $\{(M_t(Z))\}_{s\leq t\leq t(Z), Z\in \gamma_s^{\beta,s}}$ converge weakly towards a centered complex Gaussian process $\{(\sfW_t(Z))\}_{s\leq t\leq t(Z), Z\in \gamma_s^{\beta,s}}$, with quadratic variation given by \eqref{e:covW1}.

In step two we prove  the tightness of the processes $\{n(\tilde m_t(z_t(Z))-m_t(z_t(Z)))\}_{s\leq t\leq t(Z), Z\in \gamma_s^{\beta,s}}$ as $n$ goes to infinity and  the subsequential limits of $\{n(\tilde m_t(z_t(Z))-m_t(z_t(Z)))\}_{s\leq t\leq t(Z), Z\in \gamma_s^{\beta,s}}$ solve the stochastic differential equation \eqref{e:limitprocess}.
We apply the sufficient condition for tightness of \cite[Chapter 6, Proposition 3.26]{MR1943877}, and check the modulus condition: for any $\varepsilon>0$ there exists a $\delta>0$ such that 
\begin{align}\label{e:modulusm}
&\bP\left(\sup_{Z\in \gamma^{\beta,s}_s}\sup_{s\leq t\leq t'\leq t(Z)\wedge t(Z'), t'-t\leq \delta}(n(\tilde m_t(z_t(Z))-m_t(z_t(Z))-n(\tilde m_t(z_t(Z'))-m_t(z_t(Z')))\geq \varepsilon\right)\leq \varepsilon.
\end{align}
In \eqref{e:stochasticbdm} and \eqref{e:modulus}, we have proven that the process $\{n(\tilde m_t(z_t(Z))-m_t(z_t(Z))\}_{s\leq t\leq t(Z), Z\in \gamma_s^{\beta,s}}$ is stochastically bounded, and the martingale process $\{M_t(Z)\}_{s\leq t\leq t(Z), Z\in \gamma_s^{\beta,s}}$ is tight.
Therefore, on the event $\{\sup_{Z\in\gamma^{\beta,s}_s}\sup_{s\leq t\leq t(Z)}|n(\tilde m_t(z_t(Z))-m_t(z_t(Z))|\leq \fC\}$ and $\sup_{Z\in \gamma^{\beta,s}_s}\sup_{s\leq t\leq t'\leq t(Z), t'-t\leq \delta}|M_{t'}(Z)-M_t(Z)|\leq \varepsilon/2$, for any $s\leq t\leq t(Z)$ and $t\leq t'\leq t(Z)\vee t+\delta$, we have
\begin{align*}\begin{split}
&\phantom{{}={}}\left|n (\tilde m_{t'}- m_{t'})-n (\tilde m_{t}- m_{t})\right|\\
&= |M_{t'}(Z)-M_{t'}(Z)|+\sum_{r\in [t,t']\cap \bZ_n}\left|(\tilde m_r- m_r)\frac{\del_r \rd z_r}{\rd z_r}+\frac{1}{2\pi\ri}\oint_{\omega_+} \sfK_r(w,z_r;\beta,s)(\tilde m_t(w)-\del_z m_r(w))\rd w\right.\\
&\left.+\frac{1}{n}\sfE_r(z_r;\beta,s)+\OO(n(\tilde m_r-m_r)^2+(\tilde m_r-m_r)+1/n)\right|\\
&\leq |M_{t'}(Z)-M_{t'}(Z)|+\OO(\fC(t-t'))\leq \varepsilon.
\end{split}\end{align*}
The claim \eqref{e:modulus} follows provided we take $\delta$ small enough.

So far, we have proven that the random processes $\{n(\tilde m_t(z_t(Z))-m_t(z_t(Z))\}_{s\leq t\leq t(Z), Z\in \gamma_s^{\beta,s}}$ are stochastically bounded and tight. Without loss of generality, by passing to a subsequence, we assume that they weakly converge towards to a random process $\{\sfG_t(Z)\}_{s\leq t\leq t(Z), Z\in \gamma_s^{\beta,s}}$. The limit process satisfies the limit version of the stochastic difference equation \eqref{e:mainEq}. 
\begin{align*}\begin{split}
\rd\sfG_t(Z)&= \rd\sfW_t(Z)-\sfG_t(Z)\frac{\del_t \rd z_t}{\rd z_t}+\frac{1}{2\pi\ri}\oint_{\omega} \sfK_t(z_t(W),z_t;\beta,s)\sfG_t(W)\rd z_t(W)+\sfE_t(z_t;\beta,s),
\end{split}\end{align*}
where the contour $\{z_t(W): W\in \omega\subset \gamma_t^{\beta,s,+}\}$  encloses $z^{-1}(\{x:(x,t+)\in \fP\})$ and $z_t$.
This finishes the proof of Theorem \ref{t:CLT}.

The mean of the random process $\{n(\tilde m_t(z_t(Z);\beta,s)-m_t(z_t(Z);\beta,s))\}_{s\leq t\leq t(Z), Z\in \gamma_s^{\beta,s}}$ is nonzero. Instead of centering $\tilde m_t(z_t(Z);\beta,s)$ around $m_t(z_t(Z);\beta,s)$, which is from the solution of the complex Burgers equation, we can center it around its mean.  The following is an easy corollary of Theorem \ref{t:CLT}
\begin{corollary}\label{c:CLT}
Let $\{\bmx_t\}_{t\in[s,T]\cap\bZ_n}$ be the nonintersecting Bernoulli random walk with generator \eqref{e:defLtnew} $\{\bmx_t\}_{t\in[s,T]\cap\bZ_n}$ starting at $\bmx_s=\bmx$. We denote  its empirical particle density and Stieltjes transform as
\begin{align*}
\rho_t(x;\bmx_t)=\sum_{i=1}^{m(t)}x_i(t),\quad \tilde m_t(z;\beta,s)=\int\frac{\rho_t(x;\bmx_t)}{z-x}\rd x,
\end{align*}
where $\beta=\beta(x;\bmx)$.

The random processes $\{n(\tilde m_t(z_t(Z);\beta,s)-\bE[\tilde m_t(z_t(Z);\beta,s)])\}_{s\leq t\leq t(Z), Z\in \gamma_s^{\beta,s}}$ converge weakly towards a centered Gaussian process $\{{\sf G}_t^\circ(Z)\}_{s\leq t\leq t(Z), Z\in \gamma_s^{\beta,s}}$ with zero initial data, which is the unique solution of the system of stochastic differential equations
\begin{align}\begin{split}\label{e:limitprocess2}
\rd\sfG_t^\circ(Z)&= \rd\sfW_t(Z)-\sfG_t^\circ(Z)\frac{\del_t \rd z_t}{\rd z_t}+\frac{1}{2\pi\ri}\oint_{\omega} \sfK_t(z_t(W),z_t;\beta,s)\sfG_t^\circ(W)\rd z_t(W),
\end{split}\end{align}
where $z_t=z_t(Z)$, the contour $\{z_t(W): W\in \omega\subset \gamma_t^{\beta,s,+}\}$  encloses $z^{-1}(\{x:(x,t+)\in \fP\})$ and $z_t$,
$\sfK_t(w,z_t;\beta,s)$ is given by
\begin{align*}
\sfK_t(w,z_t;\beta,s)=\frac{f_t(w;\beta,s)}{f_t(w;\beta,s)+1}\frac{1}{(w-z_t)^2}-\frac{1}{2\pi\ri}\oint_{\omega_-}\frac{f_t(u;\beta,s)}{f_t(u;\beta,s)+1}\frac{\int^w K(u,x;h_t^*,t)\rd x}{(u-z_t)^2}\rd u, 
\end{align*}
 and $\{\sfW_t(Z)\}_{s\leq t\leq t(Z), Z\in \gamma_s^{\beta,s}}$ is a centered Gaussian process with $\overline{W_t(Z)}=W_t(\bar Z)$, and covariance structure given by
\begin{align*}\begin{split}
&\langle \sfW(Z), \sfW(Z')\rangle_t=\int_s^t \frac{1}{2n\pi \ri}\oint_{\cin} \frac{f_r(w;\beta,s)}{f_r(w;\beta,s)+1}\frac{1}{(z_t(Z)-w)^2}\frac{1}{(z_t(Z')-w)^2}\rd w\rd r,
\end{split}\end{align*}
$\omega_-$ encloses $z^{-1}(\{x:(x,t+)\in \fP\})$ but not $z_t(Z), z_t(Z')$.
%
%
\end{corollary}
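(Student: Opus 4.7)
The plan is to deduce the corollary directly from Theorem \ref{t:CLT} by exploiting the linearity of the limiting stochastic differential equation \eqref{e:limitprocess}. Write
\begin{align*}
n\bigl(\tilde m_t(z_t(Z);\beta,s)-\bE[\tilde m_t(z_t(Z);\beta,s)]\bigr)
= n\bigl(\tilde m_t(z_t(Z);\beta,s)-m_t(z_t(Z);\beta,s)\bigr) - \bE\bigl[n(\tilde m_t(z_t(Z);\beta,s)-m_t(z_t(Z);\beta,s))\bigr].
\end{align*}
Theorem \ref{t:CLT} provides the joint weak convergence of the first term on the right-hand side to $\sfG_t(Z)$. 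To transfer this to the centered process I need convergence of the means as well, so the first step will be to upgrade weak convergence to convergence of first moments by establishing uniform integrability of $n(\tilde m_t(z_t(Z);\beta,s)-m_t(z_t(Z);\beta,s))$. This follows from the moment bounds on the martingale $M_t(Z)$ in Proposition \ref{p:martingalebd} combined with a Gronwall argument applied to the discrete equation \eqref{e:mainEq}, which yields uniform $L^p$ bounds for any $p\geq 2$ (in fact the estimate \eqref{e:pthm} is already this type of bound for $M_t$ alone). Once uniform integrability holds, $\bE[n(\tilde m_t-m_t)]$ converges to $\bE[\sfG_t(Z)]$, and the centered process converges weakly to $\sfG_t(Z)-\bE[\sfG_t(Z)]$.

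Next I would identify this limit as the unique solution of \eqref{e:limitprocess2}. Taking expectation in the linear SDE \eqref{e:limitprocess}, using that $\sfW_t$ is a centered Gaussian martingale so $\bE[\rd\sfW_t(Z)]=0$, and that $\sfE_t(z_t;\beta,s)$ is deterministic, gives
\begin{align*}
\rd\bE[\sfG_t(Z)] = -\bE[\sfG_t(Z)]\frac{\del_t\rd z_t}{\rd z_t}+\frac{1}{2\pi\ri}\oint_\omega \sfK_t(z_t(W),z_t;\beta,s)\bE[\sfG_t(W)]\rd z_t(W)+\sfE_t(z_t;\beta,s),
\end{align*}
with zero initial condition. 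Subtracting this from \eqref{e:limitprocess} yields precisely the SDE \eqref{e:limitprocess2} for the centered process $\sfG_t^\circ(Z)\deq\sfG_t(Z)-\bE[\sfG_t(Z)]$, driven by the same Gaussian martingale $\sfW_t$ and with the deterministic forcing $\sfE_t$ removed.

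The remaining points are that $\sfG_t^\circ$ is centered Gaussian and that the SDE \eqref{e:limitprocess2} has a unique solution. Gaussianity is automatic: $\sfG_t$ is Gaussian as the solution of a linear SDE driven by a Gaussian process with deterministic coefficients (the argument already appears implicitly in Theorem \ref{t:CLT}), so subtracting its deterministic mean preserves Gaussianity and produces a centered process. For uniqueness, the equation \eqref{e:limitprocess2} is a linear integral equation in $\sfG_t^\circ$ with bounded kernel $\sfK_t$ on the admissible range $s\leq t\leq t(Z)$ (here the restriction $t\leq t(Z)$ is essential, since Proposition \ref{p:branchloc} and the definition of $t(Z)$ ensure $z_t(Z)$ stays bounded away from $z^{-1}(\{x:(x,t+)\in\fP\})$ so that $\sfK_t$ is well defined); standard Gronwall-type estimates for linear integral equations then give uniqueness, paralleling the uniqueness argument underlying Theorem \ref{t:CLT}.

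The main technical obstacle I anticipate is the uniform integrability step, i.e.\ upgrading the stochastic boundedness \eqref{e:stochasticbdm} to $L^1$ convergence of the mean. This requires uniform-in-$n$ control of moments of $n(\tilde m_t-m_t)$, which in turn demands uniform moment bounds on the discrete martingale $M_t(Z)$ that are a little stronger than what is strictly needed for tightness in Theorem \ref{t:CLT}; but these come from the one-point bound \eqref{e:onepoint} together with Doob's inequality and the Gronwall argument applied to \eqref{e:mainEqcopy}, so the obstacle is essentially bookkeeping rather than a genuinely new difficulty.
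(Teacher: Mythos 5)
The paper itself supplies no proof of Corollary~\ref{c:CLT}; it simply states that the result is ``an easy corollary of Theorem~\ref{t:CLT}''. Your reconstruction is therefore the natural candidate for what is intended, and it is essentially correct: the decomposition $n(\tilde m_t-\bE[\tilde m_t]) = n(\tilde m_t-m_t)-\bE[n(\tilde m_t-m_t)]$, the observation that taking expectations in \eqref{e:limitprocess} kills $\rd\sfW_t$ while keeping the linear drift and the deterministic forcing $\sfE_t$, so that subtracting removes exactly $\sfE_t$ and leaves \eqref{e:limitprocess2}, the appeal to linearity of the SDE for Gaussianity, and the Gronwall-type uniqueness argument (legitimate precisely because the restriction $t\leq t(Z)$ keeps $z_t(Z)$ away from $z^{-1}(\{x:(x,t+)\in\fP\})$, so $\sfK_t$ is bounded) are all what the corollary requires.

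The one spot where your sketch is a bit too quick is the uniform integrability step. You assert that a Gronwall argument applied to \eqref{e:mainEqcopy} ``yields uniform $L^p$ bounds for any $p\geq 2$'' for $n(\tilde m_t-m_t)$. As written this is not quite a closed argument, because \eqref{e:mainEqcopy} carries the error $\OO\bigl(n(\tilde m_r-m_r)^2\bigr)$, quadratic in the quantity being controlled, so a direct Gronwall iteration does not close. The standard repair is to introduce a stopping time $\tau$ at a slowly growing level $\fC=\fC(n)$ (e.g.\ $\fC=n^{1/4}$): before $\tau$ the quadratic term is $\OO(\fC^2/n)=\oo(1)$ and Gronwall, combined with the uniform martingale moment bound \eqref{e:pthm}, gives uniform $L^p$ bounds for the stopped process; Markov then gives a polynomial tail bound $\bP(\tau<t(Z))=\OO(\fC^{-p})$; and the crude deterministic bound $|n(\tilde m_t-m_t)|=\OO(n)$ on the exceptional event closes the estimate. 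This is routine but it is an extra step that should be made explicit --- it is not literally ``apply Gronwall and read off $L^p$ bounds.''
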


\begin{remark}
The limiting equation \eqref{e:limitprocess2} depends only on the solution $f_t(Z;\beta,s)$ of the complex Burgers equation \eqref{e:bgcopy}, and independent of the correction term $\psi_t^+(z;\beta,t)$ as in \eqref{e:defvphit}. In Section \ref{s:solve}, we will solve for the error terms $E_s(\bmx)$ in our Ansatz \eqref{a:defE}. It turns out the random Bernoulli walk \eqref{e:randomwalk} which has the same law as the uniform lozenge tiling, is also of the 
form \eqref{e:defLtnew}. Therefore, we can conclude from Corollary \ref{c:CLT}, the centered height function has Gaussian fluctuation. 
\end{remark}
\section{Solve for the Error term}\label{s:solve}

In this section we solve for the error term $E_s(\bmx)$ using Proposition \ref{p:FK}.
We recall the Feynman-Kac formula from \eqref{e:FK}
\begin{align}\label{e:FKcopy}
E_s(\bmx)
=\bE\left[\left.\prod_{t\in [s,T)\cap \bZ_n}Z_t(\bmx_t)
\prod_{k:s<T_k<T}\frac{E_{T_k-}(\bmx_{T_k}')}{E_{T_k}(\bmx_{T_k})}
\right| \bmx_s=\bmx\right].
\end{align}

\subsection{Estimate the Partition Function}
Before analyzing \eqref{e:FKcopy}, we need to have an expression of those partition functions $Z_t(\bmx_t)$.
In this section we estimate those partition functions,
\begin{align}\label{e:defZtb}
Z_t(\bmx)=\sum_{\bme} a_t(\bme;\bmx),\quad \bmx=(x_1, x_2,\cdots, x_m)\in \fM_t(\fP).
\end{align}
Let  $\beta=\beta(x;\bmx)$, we recall from \eqref{e:newat0}, 
\begin{align}\label{e:newat1}
a_t(\bme;\bmx)&=\frac{V(\bmx+\bme/n)}{V(\bmx)}\prod_{j} \phi^+(x_j;\beta,t)^{e_i}\phi^-(x_j;\beta,t)^{1-e_i}
e^{\frac{1}{n^2}\sum_{i} e_i e_j \kappa(x_i,x_j;\beta,t) +\zeta(\beta,t)+\OO(1/n)},
\end{align}
where
\begin{align*}
\phi^+(x;\beta,t)=\prod_{b\in B_{t}(x)} (b-x-1/n)e^{ \chi(x;\beta,t)},\quad 
\phi^-(x;\beta,t)=\prod_{a\in A_{t}(x)}  (x-a).
\end{align*}
and
 \begin{align*}\begin{split}
\chi(x_i;\beta,t)&=
n^2\int \ln  g_t'(x;\beta,t)\Lambda(x-x_i)+n\int \del_t(\ln  g_t'(x;\beta,t))\Lambda(x-x_i)\rd x\\
\kappa(x_i,x_j;\beta,t)&=-\frac{1}{2}n^4\int K(x,y;\beta,t)\Lambda(x-x_i)\Lambda(y-x_j)\rd x\rd y,\\
\zeta(\beta,t)&=n\del_tY_t(\beta)+\frac{1}{2}\del_t^2Y_t(\beta),
\end{split}\end{align*}
the functions $ g_t'(x;\beta,t), K(x,y;\beta,t)$ are defined in \eqref{e:deftgt}, \eqref{e:deeKt}. 

We recall some notations from Section \ref{s:Gauss}, 
$\varphi_t^+=\varphi_t^+(z;\beta,t)=g_t'(z;\beta,t)\prod_{b\in B_t(z)} { (b-z)}$,
$\varphi_t^-=\varphi_t^-(z;\beta,t)=\prod_{a\in A_t(z)}(z-a)$, then 
$\phi^+(z;\beta,t)=\varphi_t^+ \exp(\psi_t^+(z;\beta,t)/n)$.
Let
$m_t=m_t(z;\beta,t)$ be the Stietjes transform of $\rho(x;\bmx)$, and 
$\cB_t=\cB_t(z;\beta,t)=e^{m_t}\varphi_t^++\varphi_t^-$. We prove that $Z_t(\beta)$ satisfies the following perturbation formula.

\begin{proposition}\label{p:lnZder}
Let  $\beta=\beta(x;\bmx)$, then $Z_t(\beta)$ as in \eqref{e:defZtb} satisfies
\begin{align*}
\ln Z_t(\beta+\delta \beta)
=\ln Z_t(\beta)+\int \del_\beta \ln Z_t(\beta)\delta\beta \rd x+\OO\left(\frac{1}{n^2}\right),
\end{align*}
where 
\begin{align*}\begin{split}
&\phantom{{}={}}\del_\beta \ln Z_t(\beta)=\frac{1}{2\pi\ri}\oint_\omega \ln \cB_t(z) \left( \frac{1}{2}\del_z \frac{\del\ln g_t(z;\beta,t)}{\del \beta }+\frac{\del (\del_t \ln  g_t'(z;\beta,t))}{\del \beta}\right)\rd z\\
&+\frac{1}{2\pi\ri}\oint_\omega \cE_1(z) \frac{\del\ln g_t(z;\beta,t)}{\del \beta }\rd z+\frac{1}{8\pi^2}\oint \ln \cB_t(z) \ln \cB_t(w)\frac{\del K(z,w;\beta,t)}{\del \beta}\rd z\rd w\\
&-
\frac{1}{4\pi \ri}\oint\frac{\del_w \cB_t}{ \cB_t}\frac{\rd w}{(w-x)^2}+
\frac{1}{2\pi \ri}\oint_{\omega}\del_w\left(\cE_0(w)-\frac{e^{ m_t}\varphi_t^+}{\cB_t(w-x)}\right)\frac{\rd w}{w-x}+\frac{1}{2}\frac{\del \del_t^2 Y_t(\beta)}{\del\beta}+\OO\left(\frac{1}{n}\right).
\end{split}\end{align*}
where the contour $\omega$ encloses $z^{-1}(\{x: (x,t+)\in \fP\})$,
and the error terms  depend continuously on $\beta$.
\end{proposition}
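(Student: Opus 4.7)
The plan is to compute $\del_\beta \ln Z_t(\beta)$ by differentiating the defining sum $Z_t(\beta) = \sum_{\bme} a_t(\bme;\bmx)$ and then evaluating the resulting expectations under the Gibbs measure $\bP_\bme = a_t(\bme;\bmx)/Z_t(\bmx)$ using the discrete loop equations of Section \ref{s:loopeq}. The $\beta$-dependence of the weights in \eqref{e:newat1} enters through three channels: the single-particle factor $\chi(x_j;\beta,t)$ inside $\phi^+$, the pairwise interaction $\kappa(x_i,x_j;\beta,t)$, and the scalar action $\zeta(\beta,t)$; these contribute three separate terms to $\bE_{\bP_\bme}\bigl[\del_\beta \ln a_t(\bme;\bmx)\bigr]$.

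First, I would evaluate the single-particle contribution $\bE_{\bP_\bme}\bigl[\sum_j e_j \del_\beta \chi(x_j;\beta,t)\bigr]$. Using the explicit expansion \eqref{e:defka} of $\chi$, a Taylor expansion of the tent function $\Lambda$, and Proposition \ref{p:second}, this expectation is rewritten as a contour integral of $\del_w\cB_t/\cB_t$ against a test function proportional to $\del_\beta \ln g_t$ and $\del_\beta \del_t \ln g_t'$, plus a $(1/n)$-correction controlled by $\del_w\cE_1$. Integration by parts against $\ln\cB_t$ then produces the terms $\frac{1}{2\pi\ri}\oint \ln\cB_t(z)\bigl(\tfrac12 \del_z \del_\beta\ln g_t + \del_\beta\del_t\ln g_t'\bigr)\rd z$ and $\frac{1}{2\pi\ri}\oint \cE_1(z)\del_\beta \ln g_t \rd z$ that appear in the statement.

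Next, the pair contribution $\frac{1}{n^2}\bE_{\bP_\bme}\bigl[\sum_{i,j}e_ie_j \del_\beta\kappa(x_i,x_j;\beta,t)\bigr]$ factorizes at leading order as a product of single-particle expectations, producing via the loop equations a double contour integral of the form $\frac{1}{8\pi^2}\oint\oint \ln\cB_t(z)\ln\cB_t(w)\,\del_\beta K(z,w;\beta,t)\rd z\rd w$, where $\del_\beta\kappa$ is identified with $\del_\beta K$ through \eqref{e:defka} and \eqref{e:deeKt}. The connected part of this factorization, governed by the two-point covariance formula derived in Section \ref{s:Cov}, combines with the $\cE_0$-contribution from \eqref{e:defcE} to yield the $\frac{1}{2\pi\ri}\oint_\omega \del_w\bigl(\cE_0 - e^{m_t}\varphi_t^+/(\cB_t(w-x))\bigr)\rd w/(w-x)$ term. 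Meanwhile the scalar contribution $\del_\beta \zeta = n\del_\beta\del_t Y_t + \tfrac12\del_\beta\del_t^2 Y_t$ directly gives the $\tfrac12\del\del_t^2 Y_t/\del\beta$ piece, while its $O(n)$ part $n\del_\beta\del_t Y_t = -n\del_t\ln g_t'$ (from \eqref{e:deftg} and \eqref{e:dsdY}) must exactly cancel the $O(n)$ piece coming from the $n\int\del_t\ln g_t'\,\Lambda\rd x$ summand in $\chi$.

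The main obstacle will be the bookkeeping required to track all these contributions at the correct order in $1/n$ and in particular to verify the cancellation of the $O(n)$ pieces between the $\chi$- and $\zeta$-channels, as well as to identify the exact combination that produces the two remaining contour integrals $-\frac{1}{4\pi\ri}\oint(\del_w\cB_t/\cB_t)\rd w/(w-x)^2$ and the subtracted $\cE_0$-integral; the explicit counter-term $e^{m_t}\varphi_t^+/(\cB_t(w-x))$ reflects a spurious simple pole at $w=x$ in the raw $\cE_0$-integral produced by the loop equations that must be removed by hand. Continuity of the $O(1/n^2)$ remainder in $\beta$ will follow from the analyticity of $g_t$ and of the Schiffer-kernel-based functional $K$ in $\beta$ established in Proposition \ref{p:derg}.
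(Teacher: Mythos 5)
Your proposal omits a necessary channel of $\beta$-dependence, and as a result misidentifies both the mechanism of the $\OO(n)$ cancellation and the origin of the contour-integral terms involving $\cE_0$ and $\del_w\cB_t/\cB_t$.

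The structural point you miss is that $\beta=\beta(x;\bmx)$ is determined by $\bmx$, so writing $Z_t(\beta)$ means that the explicit $\bmx$-dependence of the weights — the Vandermonde ratio $V(\bmx+\bme/n)/V(\bmx)$ and the positions $x_j$ at which $\phi^\pm$ and $\kappa$ are evaluated — must move together with $\beta$. Your three channels $\chi$, $\kappa$, $\zeta$ capture only the dependence of the weights on $\beta$ at fixed $\bmx$. The paper handles the complementary piece by viewing $Z_t(\bmx)$ as a function $\cC^{(j)}(z)$ of $z=x_j$ with $\beta$ frozen, observing this is precisely the $\cC(z)$ of Lemma \ref{l:loopeq}, and applying Proposition \ref{p:order2} to obtain \eqref{e:dxjZ}; the two pieces are then glued via \eqref{e:dbdU}, $\del_{x_j}U(\bmx)=-\int_{x_j}^{x_j+1/n}\del_\beta U\,\rd x$. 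It is this $\cC^{(j)}$ channel — not the connected part of the $\kappa$-factorization, which is $\OO(1/n)$ and negligible here — that produces both $-\tfrac{1}{4\pi\ri}\oint\tfrac{\del_w\cB_t}{\cB_t}\tfrac{\rd w}{(w-x)^2}$ (the $1/(2n)$ correction in converting $\del_{x_j}$ to $\del_\beta$) and $\tfrac{1}{2\pi\ri}\oint\del_w\bigl(\cE_0-\tfrac{e^{m_t}\varphi_t^+}{\cB_t(w-x)}\bigr)\tfrac{\rd w}{w-x}$ (the $1/n$ correction of Proposition \ref{p:order2} applied to $\cC^{(j)}$).

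Your claimed $\OO(n)$ cancellation is also incorrect. The summand $n\int\del_t(\ln g_t')\Lambda(x-x_i)\rd x$ in $\chi$ contributes only $\OO(1)$ to $\del_\beta\ln Z_t$ at fixed $\bmx$, so there is nothing there for $n\del_\beta\del_t Y_t$ to cancel. What actually happens, per \eqref{e:leadingn}--\eqref{e:leadingn2} and \eqref{e:dbdtY}, is that the $\OO(n)$ pieces from the first summand of $\chi$ (proportional to $n\,\del_\beta\ln g_t$) and from $\zeta$ (equal to $n\,\del_\beta\del_t Y_t$) combine to $-n\bigl((\del_t\ln g_t)(x;\beta,t)+\sum_{a\in A_t(x)}\tfrac{1}{x-a}\bigr)$, and this in turn cancels against the $\OO(n)$ term produced by the $\del_\beta$-conversion of the leading term $\tfrac{1}{2\pi\ri}\oint\tfrac{\del_w\cB_t}{\cB_t}\tfrac{\rd w}{w-x_j}$ of $\del_{x_j}\cC^{(j)}/\cC^{(j)}$, thanks to the complex-Burgers identity \eqref{e:B2}. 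Without the $\cC^{(j)}$ channel in your decomposition, this $\OO(n)$ piece has no cancellation partner and the computation cannot close.
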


\begin{proof}
We can view $Z_t(\bmx)$ as a function of $x_j$ with $x_j$ for $1\leq j\leq m$, and $\beta$ fixed, and denote it by $\cC^{(j)}(z)$. Then we can use Proposition \ref{p:order2} to get $\OO(1/n)$ expansion of $\del_z \cC^{(j)}/\cC^{(j)}$.
Then Proposition \ref{p:order2} gives
\begin{align*}
\frac{\del_{z} \cC^{(j)}(z)}{\cC^{(j)}(z)}
&=\frac{1}{2\pi \ri}\oint_{\cout}\left(\frac{\del_w\cB_t^{(j)}}{\cB_t^{(j)}}+\frac{\del_w \cE_0}{n}\right)\frac{\rd w}{w-z}+\OO\left(\frac{1}{n^2}\right),
\end{align*}
where the contour $\omega_+$ encloses $z^{-1}(\{x:(x,t+)\in \fP\})$ and $z$, and
\begin{align}\label{e:bbt10}
\cE_0(w)=
\frac{e^{ m_t}\varphi_t^+}{\cB_t}\left(\del_w  m_t+\psi_t^++\int\frac{\del_x \del_t h_t(x;\beta_t,t)\rd x}{z-x}-
\int K(x,w;\beta_t,t)\del_t h_t(x;\beta_t,t)\rd x\right),
\end{align}
and 
\begin{align*}\begin{split}
\cB_t^{(j)}(w)
&=\prod_{i\neq j}\left(1+\frac{1/n}{w-x_j-1/n}\right)\varphi_t^+(w)+\varphi_t^-(w)\\
&=\prod_{i=1}^{m}\left(1+\frac{1/n}{w-x_i-1/n}\right)\varphi^+(w)\frac{w-x_j-1/n}{w-x_j}+\varphi^-(w)\\
&=e^{\tilde m_t}\varphi_t^+(w)\left(1-\frac{1}{n}\frac{1}{w-x_j}\right)+\varphi_t^-(w).
\end{split}\end{align*}
We can rewrite $\cB_t^{(j)}$ in terms of $\cB_t$, 
\begin{align*}
\frac{\del_w\cB_t^{(j)}}{\cB_t^{(j)}}
=\frac{\del_w\cB_t}{ \cB_t}+\del_w\left(\frac{\cB_t^{(j)}- \cB_t}{\cB_t}\right)+\OO\left(\frac{1}{n^2}\right)
=\frac{\del_w \cB_t}{ \cB_t}-\frac{1}{n}\del_w\left(\frac{e^{\tilde m_t}\varphi_t^+}{ \cB_t}\frac{1}{w-x_j}\right)+\OO\left(\frac{1}{n^2}\right).
\end{align*}
Putting them all together, we get
\begin{align}\begin{split}\label{e:dxjZ}
\frac{\del_{x_j} \cC^{(j)}(x_j)}{\cC^{(j)}(x_j)}
&=\frac{1}{2\pi \ri}\oint\frac{\del_w \cB_t}{ \cB_t}\frac{\rd w}{w-x_j}\\
&+
\frac{1}{2\pi \ri}\frac{1}{n}\oint_{\cout}\del_w\left(\cE_0(w)-\frac{e^{\tilde m_t}\varphi_t^+}{\cB_t(w-x_j)}\right)\frac{\rd w}{w-x_j}+\OO\left(\frac{1}{n^2}\right).
\end{split}\end{align}

We recall that $\beta$ also depends on $x_1, x_2,\cdots, x_m$, characterized by 
\begin{align}\label{e:defbetax}
\del_x \beta(x;\bmx)=\sum_{i=1}^m \bm1(x\in [x_i, x_i+1/n]).
\end{align} 
The partition function $
Z_t(\bmx)$ has another dependence on $x_j$ through $\beta$. The dependence on $\beta$ is from $\phi_t^+(x_i; \beta,t)$ ($\phi^-(x_i; \beta,t)$ does not depend on $\beta$), $\kappa(x_i,x_j;\beta,t)$ and $\zeta(\beta,t)$. We denote $``\del_\beta \ln Z_t(\bmx) "$, the derivative of $\ln Z_t(\bmx)$ with respect to $\beta$ with $\bmx$ fixed,
\begin{align}\begin{split}\label{e:dbZb}
{\phantom{\frac{1}{1}}}^{``}\frac{\del \ln Z_t(\bmx)}{\del \beta}^"
&=\bE_\bme\left[\sum_{i=1}^m e_i \frac{\del\ln \phi^+(x_i;\beta,t)}{\del \beta}+\sum_{i,j} \frac{e_ie_j}{n^2} \frac{\del \kappa(x_i, x_j;\beta,t)}{\del \beta}\right]+\frac{\del \ln \zeta(\beta,t)}{\del \beta}\\
&=\bE_\bme\left[\sum_{i=1}^m e_i \frac{\del\ln \chi(x_i; \beta,s)}{\del \beta}+\sum_{i,j} \frac{e_ie_j}{n^2} \frac{\del \kappa(x_i, x_j;\beta,t)}{\del \beta}\right]+\frac{\del \ln \zeta(\beta,t)}{\del \beta}.
\end{split}\end{align}

We can use Proposition \ref{p:second} to estimate \eqref{e:dbZb}. Under $\bE_\bme$, the measure $\mu_n$ as defined in \eqref{e:mun} satisfies
\begin{align}\label{e:bezx}
\bE_\bme\left[
\int  \frac{\rd\mu_n}{z-x}\right]
=\frac{1}{2\pi \ri}\oint_{\cin}\frac{\del_w\cB_t}{\cB_t}\frac{\rd w}{w-z}+\frac{1}{n}\frac{1}{2\pi \ri}\oint_{\cin}\frac{\del_w \cE_1(w)\rd w}{w-z}+\OO\left(\frac{1}{n^2}\right),
\end{align}
where 
\begin{align}\label{e:bbt10}
\cE_1(w)=
\frac{e^{ m_t}\varphi_t^+}{\cB_t}\left(\del_w  m_t+\psi_t^+ +\frac{1}{z-w}+\int\frac{\del_x \del_t h_t(x;\beta,t)\rd x}{z-x}-
\int K(x,w;\beta,t)\del_t h_t(x;\beta,t)\rd x\right).
\end{align}

For any function $\theta(x)$ which is analytic in a neighborhood of $z^{-1}(\{x: (x,t+)\in \fP\})$, we have
\begin{align*}
n\int \theta'(x)\Lambda(x-x_i)=
\left(\theta(x_i+1/n)+\frac{1}{2n}\theta'(x_i+1/n))-(\theta(x_i)+\frac{1}{2n}\theta'(x_i))\right)+\OO\left(\frac{1}{n^2}\right).
\end{align*}
Then we can use a contour integral to compute 
\begin{align}\begin{split}\label{e:ddL}
&\phantom{{}={}}\bE_\bme\left[\sum_{i=1}^m  e_i n^2\int \theta'(x)\Lambda(x-x_i)\rd x\right]
=n\bE_\bme\left[\frac{1}{2\pi\ri}\oint_\omega \frac{\theta(z)+\theta'(z)/(2n)}{z-x}\rd z\rd \mu_n \right]
+\OO\left(\frac{1}{n}\right)\\
&=-\frac{n}{2\pi \ri}\oint_{\omega}\frac{\del_w\cB_t}{\cB_t}\left(\theta(w)+\frac{\theta'(w)}{2n}\right)\rd w-\frac{1}{2\pi \ri}\oint_{\omega}\del_w \cE_1(w)\left(\theta(w)+\frac{\theta'(w)}{2n}\right)\rd w+\OO\left(\frac{1}{n}\right)\\
&=\frac{n}{2\pi \ri}\oint_{\omega} \ln\cB_t\left(\theta'(w)+\frac{\theta''(w)}{2n}\right)\rd w+\frac{1}{2\pi \ri}\oint_{\omega} \cE_1(w)\left(\theta'(w)+\frac{\theta''(w)}{2n}\right)\rd w+\OO\left(\frac{1}{n}\right),
\end{split}\end{align}
where the contour $\omega$ encloses $z^{-1}(\{x: (x,t+)\in \fP\})$.
By taking $\theta'(x)=n \del_\beta g_t'(x;\beta,t)$, $\del_\beta \del_t(\ln g_t'(x;\beta,t))$ and $\del_\beta K(x,y;\beta,t)$ in \eqref{e:ddL}, we can simplify \eqref{e:dbZb} as
\begin{align}\begin{split}\label{e:dZdb2}
{\phantom{\frac{1}{1}}}^{``}\frac{\del \ln Z_t(\bmx)}{\del \beta}^"
&=\frac{1}{2\pi\ri}\oint_\omega \ln \cB_t \left( n\frac{\del\ln g_t(z;\beta,t)}{\del \beta }+\frac{1}{2}\del_z \frac{\del\ln g_t(z;\beta,t)}{\del \beta }+\frac{\del (\del_t \ln  g_t'(z;\beta,t))}{\del \beta}\right)\rd z\\
&\frac{1}{2\pi\ri}\oint_\omega \cE_1(z) \frac{\del\ln g_t(z;\beta,t)}{\del \beta }\rd z+\frac{1}{8\pi^2}\oint_\omega \ln \cB_t(z) \ln \cB_t(w)\frac{\del K(z,w;\beta,t)}{\del \beta}\rd z\rd w\\
&+n\frac{\del \del_t Y_t(\beta)}{\del \beta}+\frac{1}{2}\frac{\del \del_t^2 Y_t(\beta)}{\del\beta}
+\OO(1/n),
\end{split}\end{align}
where the contour $\omega$ encloses $z^{-1}(\{x: (x,t+)\in \fP\})$.
The leading order term in \eqref{e:dZdb2} is
\begin{align}\label{e:leadingn}
&\phantom{{}={}}n\left(\frac{1}{2\pi\ri}\oint_\omega \ln \cB_t(z) \frac{\del\ln g_t(z;\beta,t)}{\del \beta }\rd z+\frac{\del \del_t Y_t(\beta)}{\del \beta}\right)
\end{align}
and other terms are of order $\OO(1)$. For the derivative of $Y_t(\beta)$, we can take time derivative on both sides of \eqref{e:deftg}, 
\begin{align}\begin{split}\label{e:dbdtY}
\frac{\del \del_t Y_t(\beta)}{\del \beta}
&=-\del_t \ln g_t(x;\beta,t)+\sum_{a\in A_{t}(x)} \frac{1}{x-a}\\
&=-\del_t \ln g_t(x;h_t^*(x;\beta,t),t)+\int \frac{\del \ln g_t(x; \beta,t)}{\del \beta}\del_t h_t^*(x;\beta,t)\rd x+\sum_{a\in A_{t}(x)} \frac{1}{x-a}\\
&=-(\del_t \ln g_t)(x;\beta,t)+\int \frac{\del \ln g_t(x; \beta,t)}{\del \beta}\del_t h_t^*(x;\beta,t)\rd x+\sum_{a\in A_{t}(x)} \frac{1}{x-a}\\
&=-(\del_t \ln g_t)(x;\beta,t)-\frac{1}{2\pi\ri}\oint_\omega \ln \cB_t(z) \frac{\del \ln g_t(z; \beta,t)}{\del \beta}\rd z+\sum_{a\in A_{t}(x)} \frac{1}{x-a},
\end{split}\end{align}
where the contour $\omega$ encloses $z^{-1}(\{x: (x,t+)\in \fP\})$.
By plugging \eqref{e:dbdtY} back into \eqref{e:leadingn} we get
\begin{align}\label{e:leadingn2}
\eqref{e:leadingn}=-n\left((\del_t \ln g_t)(x;\beta,t)+\sum_{a\in A_{t}(x)} \frac{1}{x-a}\right).
\end{align}
We recall the definition of $\beta(x;\bmx)$ from \eqref{e:defbetax}. For any function $U(\bmx)$ of $\bmx=(x_1, x_2, \cdots, x_m)$, we can view it as a function of $\beta$, i.e. $U(\beta)$. The derivative with respect to $\beta$ and the derivative with respect to $x_j$ are related 
\begin{align}\label{e:dbdU}
\del_{x_j}U(\bmx)=-\int_{x_j}^{x_j+1/n}\frac{\del U}{\del \beta} \rd x.
\end{align}
Using the relation \eqref{e:dbdU}, we can view $Z_t(\bmx)$ as a function of $\beta$, its functional derivative with respect to $\beta$ follows from combining \eqref{e:dxjZ}, \eqref{e:dZdb2} and \eqref{e:leadingn2}. Thanks to \eqref{e:B2}, the leading order term in \eqref{e:dxjZ} 
\begin{align*}
\frac{1}{2\pi \ri}\oint_\omega\frac{\del_w \cB_t}{ \cB_t}\frac{\rd w}{w-x_j}
=-(\del_t \ln g_t)(x_j;\beta,t)+\sum_{a\in A_{t}(x_j)} \frac{1}{x_j-a},
\end{align*}
cancels with \eqref{e:leadingn2}.
%
%
We conclude from combining \eqref{e:dxjZ}, \eqref{e:dZdb2},  \eqref{e:dbdtY} and \eqref{e:dbdU}
\begin{align*}\begin{split}
&\phantom{{}={}}\del_\beta \ln Z_t(\beta)=\frac{1}{2\pi\ri}\oint_\omega \ln \cB_t(z) \left( \frac{1}{2}\del_z \frac{\del\ln g_t(z;\beta,t)}{\del \beta }+\frac{\del (\del_t \ln  g_t'(z;\beta,t))}{\del \beta}\right)\rd z\\
&+\frac{1}{2\pi\ri}\oint_\omega \cE_1(z) \frac{\del\ln g_t(z;\beta,t)}{\del \beta }\rd z+\frac{1}{8\pi^2}\oint \ln \cB_t(z) \ln \cB_t(w)\frac{\del K(z,w;\beta,t)}{\del \beta}\rd z\rd w\\
&-
\frac{1}{4\pi \ri}\oint\frac{\del_w \cB_t}{ \cB_t}\frac{\rd w}{(w-x)^2}+
\frac{1}{2\pi \ri}\oint_{\omega}\del_w\left(\cE_0(w)-\frac{e^{ m_t}\varphi_t^+}{\cB_t(w-x)}\right)\frac{\rd w}{w-x}+\frac{1}{2}\frac{\del \del_t^2 Y_t(\beta)}{\del\beta}+\OO\left(\frac{1}{n}\right).
\end{split}\end{align*}
This finishes the proof of Proposition \ref{p:lnZder}.

\end{proof}

\subsection{Solve for the Error term}
Instead of solving \eqref{e:FKcopy} directly, we make another ansatz. 
The formula of the error term $E_s(\bmx)$ as in \eqref{e:FKcopy}, is expressed in terms of the nonintersecting Bernoulli random walk $\{\bmx_t\}_{t\in[s,T)\cap \bZ_n}$ starting from $\bmx_s=\bmx$. In Section \ref{s:Gauss}, we have proven that the empirical height functions of $\bmx_t$ have Gaussian fluctuation around $h_t^*(x;\beta,s)$. Their difference is of order $\OO(1/n)$. We make the following ansatz that the leader order term of $E_s(\bmx)$ is given by the expression \eqref{e:FKcopy} with $\bmx_t$ replaced by $h_t^*(x;\beta,s)$.

\begin{ansatz}\label{a:defE1}
For any time $s\in [0,T)\cap \bZ_n$, and any particle configuration $(x_1,x_2,\cdots,x_{m})\in {\mathfrak M}_{s}({\mathfrak P})$, let $\beta=\beta(x;\bmx)$. We make the following ansatz
\begin{align}\begin{split}\label{e:ansatz2}
&E_{s}(x_1, x_2, \cdots, x_m)=e^{F_s(\beta)}E_s^{(1)}(x_1,x_2,\cdots, x_m),\\
&F_s(\beta)\deq \ln\left(\prod_{t\in [s,T)\cap \bZ_n}Z_t(h_t^*)\prod_{k:s<T_k<T}\frac{E_{T_k-}(h_{T_k}^*)}{E_{T_k}(h_{T_k}^*)}\right),
\end{split}\end{align}
where $h_t^*=h_t^*(x;\beta,s)$.
\end{ansatz}


To rewrite the equation \eqref{e:cEeq} of $E_s(\bmx)$ in terms of $F_s(\beta)$, we also need $F_{s-}$ corresponding to $E_{s-}$. let  $(x'_1,x'_2,\cdots,x'_{m'})$ be the particle configuration from $\bmx$ by removing particles created at time $s$ and $\beta'=\beta(x;\bmx')$. The height function $\beta$ is an extension of $\beta'$. By slightly abuse of notation, we will not distinguish them.
 We define
\begin{align}\begin{split}\label{e:ansatz3}
&E_{s-}(x'_1, x'_2, \cdots, x'_{m'})=e^{F_{s-}(\beta)}E_{s-}^{(1)}(x'_1, x'_2, \cdots, x'_{m'}),\\
& F_{s-}(\beta)\deq \ln\left(\prod_{t\in [s,T)\cap \bZ_n}Z_t(h_t^*)\prod_{k:s\leq T_k<T}\frac{E_{T_k-}(h_{T_k}^*)}{E_{T_k}(h_{T_k}^*)}\right).
\end{split}\end{align}
Comparing \eqref{e:ansatz2} and \eqref{e:ansatz3}, we always have $E^{(1)}_s(\beta)=E_{s-}^{(1)}(\beta)$. In this section we show that \eqref{e:ansatz2} is a good approximation.
\begin{proposition}\label{p:Et1bb}
For any time $s\in [0,T)\cap \bZ_n$, and any particle configuration $(x_1,x_2,\cdots,x_{m})\in {\mathfrak M}_{s}({\mathfrak P})$, let $\beta=\beta(x;\bmx)$. 
Then $E_s^{(1)}(\beta)$ as in Ansatz \ref{e:defZtb} satisfies
\begin{align*}
\del_\beta \ln E_s^{(1)}(\beta)=\OO\left(\frac{1}{n}\right),
\end{align*}
and especially 
\begin{align}\label{e:Et1bb}
\frac{E_t^{(1)}(\bmx+\bme/n)}{E_t^{(1)}(\bmx)}=e^{\OO(1/n)}.
\end{align}
\end{proposition}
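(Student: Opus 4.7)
The plan is to exploit the Feynman--Kac representation \eqref{e:FKcopy} together with the ansatz \eqref{e:ansatz2}, which yields
\begin{align*}
E_s^{(1)}(\bmx)=\bE\!\left[\prod_{t\in[s,T)\cap\bZ_n}\!\frac{Z_t(\bmx_t)}{Z_t(h_t^*)}\prod_{k:s<T_k<T}\!\frac{E_{T_k-}(\bmx_{T_k}')/E_{T_k}(\bmx_{T_k})}{E_{T_k-}(h_{T_k}^*)/E_{T_k}(h_{T_k}^*)}\,\Big|\,\bmx_s=\bmx\right],
\end{align*}
where $h_t^{*}=h_t^{*}(x;\beta,s)$ is the deterministic maximizer of \eqref{e:varWt}. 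The basic input is the fluctuation estimate \eqref{e:stochasticbdm} coming from Corollary~\ref{c:CLT}: under the generator $\cL_t$ of \eqref{e:defLtnew}, the displacement $\beta_t-h_t^{*}$ is uniformly of order $1/n$ with overwhelming probability.

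First I would Taylor expand each factor $\ln Z_t(\bmx_t)-\ln Z_t(h_t^{*})$ to second order in $\beta_t-h_t^{*}$, using the explicit formula for $\del_\beta\ln Z_t$ supplied by Proposition~\ref{p:lnZder}. Naively the linear contribution is of order $\OO(n)$ per time step and sums to $\OO(n^2)$, because \eqref{e:leadingn} contains a term of size $n$; this is the heart of the analysis. The key identity \eqref{e:dbdtY} shows that this dominant piece equals $-(\del_t\ln g_t)(x;\beta,t)+\sum_{a\in A_t(x)}(x-a)^{-1}$, a time derivative along the characteristic flow \eqref{e:ccff}. Summation by parts in $t$ therefore telescopes the leading contribution into boundary pieces plus an $\OO(1/n)$ remainder per step, integrating over $\OO(n)$ steps to an $\OO(1)$ total. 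The quadratic Taylor remainder involves the kernel $K$ from \eqref{e:deeKt}; after taking expectation it becomes deterministic via the CLT and contributes an additional $\OO(1)$.

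Second, I would dispose of the particle-creation ratios at times $T_k$ via the single-particle splitting \eqref{e:split}--\eqref{e:onet}: each factor is $1+\OO(1/n)$ when $\bmx_{T_k}-h_{T_k}^{*}=\OO(1/n)$, so the full product is $1+\OO(1/n)$. Combined with the previous step one obtains $\ln E_s^{(1)}(\beta)=\OO(1)$ at the level of the expectation, but the deterministic part (i.e.\ the expression one would get by replacing $\bmx_t\mapsto h_t^{*}$) is already absorbed into $F_s(\beta)$ by construction, so what remains is of size $\OO(1/n)$. To promote this into the required bound $\del_\beta\ln E_s^{(1)}=\OO(1/n)$, I would use that all $\beta$-dependence enters through the smooth objects $h_t^{*}$, $g_t'$, and $K$: Proposition~\ref{p:derg} ensures analyticity of $\del_\beta g_s$ in a neighborhood of $z^{-1}(\{x:(x,s+)\in\fP\})$, and this is precisely where the assumption on $\fP$ having one upper horizontal edge enters. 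Differentiating the Feynman--Kac expression then produces only $\OO(1/n)$ corrections, from which \eqref{e:Et1bb} follows by integrating along the variation $\beta(\cdot;\bmx)\to\beta(\cdot;\bmx+\bme/n)$, which has $L^1$ norm $\OO(1/n)$.

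The main obstacle is the cancellation of the leading $\OO(n)$ terms in the first step. Without the exact Hamilton--Jacobi identity hidden in the definition of $Y_s$ through \eqref{e:dsdY}--\eqref{e:deftg}---itself a consequence of the complex Burgers equation \eqref{e:bgcopy}---the telescoping would fail and the linear part of the Taylor expansion would dominate. Performing the telescoping uniformly in the stochastic trajectory $\{\bmx_t\}$ and commuting it with the functional derivative $\del_\beta$, while retaining the analyticity needed for Proposition~\ref{p:lnZder} to apply at each step, is the delicate technical point of the argument.
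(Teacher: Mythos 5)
Your starting point --- combining the Feynman--Kac representation \eqref{e:FKcopy} with Ansatz \ref{a:defE1} and using the concentration of $\beta_t$ around $h_t^*$ --- is the right one, but the route you then take diverges from the paper's and has a couple of gaps.

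First, the ``telescoping by summation by parts in $t$'' is not what the paper does, and as described it does not hold up. Proposition \ref{p:lnZder}'s final statement is that $\del_\beta\ln Z_t(\beta)=\OO(1)$; the $\OO(n)$ piece \eqref{e:leadingn} that you worry about has \emph{already} cancelled inside the proof of that proposition, locally in $t$, against the leading term of \eqref{e:dxjZ} via the identity \eqref{e:dbdtY} and the Burgers-equation relation \eqref{e:B2}. It is a pointwise cancellation within each $Z_t$, not a cancellation across time steps. Consequently, once you invoke Proposition \ref{p:lnZder} as a black box, each factor $\ln\bigl(Z_t(\bmx_t)/Z_t(h_t^*)\bigr)=\int\del_\beta\ln Z_t(h_t^*)\,(\beta_t-h_t^*)\,\rd x+\cdots$ is directly $\OO(1/n)$ and the sum over $\OO(n)$ steps is $\OO(1)$. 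There is no $\OO(n^2)$ to kill and nothing for your telescope to act on.

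Second, and more substantively, you have not identified the mechanism by which the \emph{expectation} of $\prod_t Z_t(\bmx_t)/Z_t(h_t^*)$ is controlled. Even once each log-factor is $\OO(1/n)$ the full exponent is an $\OO(1)$ random variable; to show that it is captured, to leading order, by the deterministic $F_s(\beta)$ requires that the linear response of $\ln Z_t$ along the random fluctuation $\beta_t-h_t^*$ be exactly compensated in expectation. The paper does not Taylor-expand the original Feynman--Kac at all. Instead, it absorbs $e^{F_{(s+1/n)-}(\bmx+\bme/n)-F_s(\bmx)}$ into the jump weights, producing a new recursion with weights $a_s^{(1)}$ \eqref{e:defas} and a new partition function $Z_s^{(1)}$ \eqref{e:Zt1}. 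The crucial cancellation --- the numerator $\bE_\bme\bigl[e^{-\sum_i e_i\int\del_\beta F_{(s+1/n)-}(h_{s+1/n}^*)\Lambda(x-x_i)\rd x}\bigr]$ against the denominator $e^{\frac{1}{n}\int\del_\beta F_{(s+1/n)-}(h_{s+1/n}^*)\del_s h_s^*\rd x}$ --- is supplied by Proposition \ref{p:deform}, a consequence of the discrete loop equation (Proposition \ref{p:second}), and gives $\ln Z_s^{(1)}=\OO(1/n)$. The new Feynman--Kac formula \eqref{e:esolve} with generator $\cL_t^{(1)}$ then yields the claimed smoothness of $E_s^{(1)}$ immediately. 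Your proposal never produces this compensation, and ``differentiating the Feynman--Kac expression'' at the end does not by itself control $\del_\beta\ln E_s^{(1)}$, since the law of $\{\bmx_t\}$ also depends on $\beta$. The appeal to \eqref{e:dbdtY} as the ``key identity'' points at the right object (it encodes the Hamilton--Jacobi structure) but at the wrong place in the argument --- it is used inside Proposition \ref{p:lnZder}, whereas the cancellation that actually makes Proposition \ref{p:Et1bb} work is the loop-equation one in Proposition \ref{p:deform}.
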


By using the ansatz \eqref{e:ansatz2}, we can rewrite the equation \eqref{e:cEeq} of $E_s(\bmx)$ as
\begin{align}\label{e:E1ecc}
E^{(1)}_{s}(\bmx)
=\sum_{\bme\in\{0,1\}^m}a_t(\bme;\bmx)e^{F_{(s+1/n)-}(\bmx+\bme/n)-F_s(\bmx)}
E^{(1)}_{(s+1/n)-}(\bmx+\bme/n).
\end{align}
Since $h_t^*(x;\beta,s)$ depends on $\beta$ smoothly, and thanks to Proposition \ref{p:lnZder}, $\del_\beta \ln Z_t(h_t^*)$ is of order $\OO(1)$.  Moreover, the ratio $E_{t-}((h_t^*)')/E_t(h_t^*)$ as in \eqref{e:split}, depends smoothly on $\beta$, and $\del_\beta \ln E_{t-}((h_t^*)')/E_t(h_t^*)$ is of order $\OO(1)$.  
We conclude that $\del_\beta F_s(\beta)$ and $\del_\beta F_{s-}(\beta)$ are of order $\OO(n)$.
Explicitly, we can expand the difference  $F_{(s+1/n)-}(\bmx+\bme/n)-F_s(\bmx)$ around $h_{s+1/n}^*$
\begin{align}\begin{split}\label{e:Fdidff}
&\phantom{{}={}}F_{(s+1/n)-}(\bmx+\bme/n)-F_s(\bmx)
=F_{(s+1/n)-}(\bmx+\bme/n)-F_{(s+1/n)-}(h^*_{s+1/n}) -\ln Z_s(\beta)\\
&=\int \del_\beta F_{(s+1/n)-}(h_{s+1/n}^*) (\beta(x;\bmx+\bme/n)-h_{s+1/n}^*)\rd x-\ln Z_s(\beta)+\OO\left(\frac{1}{n}\right)\\
&=\int \del_\beta F_{(s+1/n)-}(h_{s+1/n}^*) (\beta(x;\bmx+\bme/n)-\beta(x;\bmx))\rd x\\
&-\frac{1}{n}\int \del_\beta F_{(s+1/n)-}(h_{s+1/n}^*) \del_s h_s^*\rd x-\ln Z_s(\beta)+\OO\left(\frac{1}{n}\right)\\
\end{split}\end{align}

By plugging \eqref{e:Fdidff} into \eqref{e:E1ecc}, we can rewrite the recursion of $E^{(1)}_{s}(\bmx)$ as
%
\begin{align*}
E^{(1)}_{s}(\bmx)
=\sum_{\bme\in\{0,1\}^m}a_s^{(1)}(\bme;\bmx)
E^{(1)}_{(s+1/n)-}(\bmx+\bme/n),
\end{align*}
where 
\begin{align}\label{e:defas}
a_s^{(1)}(\bme;\bmx)=\frac{a_s(\bme;\bmx)e^{ \int \del_\beta F_{(s+1/n)-}(h_{s+1/n}^*) (\beta(x;\bmx+\bme/n)-\beta(x;\bmx))\rd x+\OO(1/n)}}{ Z_s(\bmx) e^{\frac{1}{n}\int \del_\beta F_{(s+1/n)-}(h_{s+1/n}^*) \del_s h_s^*\rd x}},
\end{align}
and the new partition function is given by
\begin{align}\begin{split}\label{e:Zt1}
Z_s^{(1)}(\bmx)
&=\sum_{\bme\in\{0,1\}^m}\frac{a_s(\bme;\bmx)e^{ \int \del_\beta F_{(s+1/n)-}(h_{s+1/n}^*) (\beta(x;\bmx+\bme/n)-\beta(x;\bmx))\rd x+\OO(1/n)}}{ Z_s(\bmx) e^{\frac{1}{n}\int \del_\beta F_{(s+1/n)-}(h_{s+1/n}^*) \del_s h_s^*\rd x}}\\
&=\frac{\bE_\bme \left[e^{ -\sum_i e_i \int \del_\beta F_{(s+1/n)-}(h_{s+1/n}^*) \Lambda(x-x_i)\rd x+\OO(1/n)}\right]}{e^{\frac{1}{n}\int \del_\beta F_{(s+1/n)-}(h_{s+1/n}^*) \del_s h_s^*\rd x}}.
\end{split}\end{align}
We can use the following Proposition, which follows from the loop equation Proposition \ref{p:difmeasure}, to get the $1/n$-expansion of $Z_s^{(1)}$.
\begin{proposition}\label{p:deform}
For any test function $\theta(x;\beta)$, which is analytic in a neighborhood of $z^{-1}(\{x: (x,s+)\in \fP\})$, we have 
\begin{align}\label{e:cdef}
\bE_\bme \left[e^{n\sum_{i }e_i \int \theta'(x;\beta)\Lambda(x-x_i)\rd x}\right]
=e^{-\int \theta'(x;\beta) \del_s h_s^*\rd x +\OO(1/n)},
\end{align}
where $\bE_\bme$ is with respect to the measure $\bP_\bme=a_s(\bme;\bmx)/Z_s(\bmx)$, and the error term $\OO(1/n)$ depends smoothly on $\beta$.
\end{proposition}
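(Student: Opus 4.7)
The plan is to interpret the expectation on the left-hand side of \eqref{e:cdef} as a ratio of partition functions obtained by tilting the weights $a_s(\bme;\bmx)$, and to compute this ratio via a one-parameter interpolation argument, exploiting the fact that the loop equations of Proposition \ref{p:difmeasure} are insensitive to the perturbation at leading order.

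The key observation is that a Taylor expansion of the tent function $\Lambda$ yields $n\int\theta'(x;\beta)\Lambda(x-x_i)\rd x = \theta'(x_i;\beta)/n+\OO(1/n^2)$, so multiplying the weights by $\exp\{n e_i\int\theta'\Lambda(x-x_i)\rd x\}$ amounts to replacing $\phi^+(x_i)$ by $\phi^+(x_i)e^{\theta'(x_i;\beta)/n+\OO(1/n^2)}$. In the decomposition $\phi^+(z)=\varphi^+(z)e^{\psi^+(z)/n}$ this perturbs only $\psi^+ \mapsto \psi^+ + \theta'(\cdot;\beta) + \OO(1/n)$, leaving $\varphi^+$ unchanged. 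Therefore the tilted weights still satisfy Assumptions \ref{a:weight} and \ref{a:cancelB} with the same function $\cB_s(z)$ at leading order, so Proposition \ref{p:difmeasure} applies uniformly to the whole family of tilted measures with a single right-hand side.

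Concretely, I introduce $F(\lambda)\deq\ln\bE_\bme[e^{\lambda n\sum_i e_i\int\theta'(x;\beta)\Lambda(x-x_i)\rd x}]$ for $\lambda\in[0,1]$, so that $F(0)=0$ and $F(1)$ is the quantity to be estimated. Denoting by $\bE_{\bme,\lambda}$ the corresponding tilted expectation, $F'(\lambda)=\bE_{\bme,\lambda}[n\sum_i e_i\int\theta'\Lambda(x-x_i)\rd x]$. A further Taylor expansion in $1/n$ gives
\[
n\sum_{i}e_i\int\theta'(x;\beta)\Lambda(x-x_i)\rd x=\int\theta(x;\beta)\rd\mu_n(x)+\OO(1/n),
\]
where $\mu_n=\sum_i e_i(\delta_{x_i+1/n}-\delta_{x_i})$. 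Using the Cauchy representation $\theta(x;\beta)=(2\pi\ri)^{-1}\oint_\omega \theta(z;\beta)/(z-x)\rd z$, with $\omega$ a contour enclosing $\supp(\mu_n)$ inside the analyticity domain of $\theta$, together with Proposition \ref{p:difmeasure} and the identity \eqref{e:B1}, I obtain
\[
\bE_{\bme,\lambda}\left[\int\frac{\rd\mu_n}{z-x}\right]=\del_s m_s(z;\beta,s)+\OO(1/n),
\]
uniformly in $\lambda\in[0,1]$. Since $\del_s m_s(z;\beta,s)=\int \del_x\del_s h_s^*(x;\beta,s)/(z-x)\rd x$, swapping the contour with Cauchy's formula yields
\[
\frac{1}{2\pi\ri}\oint_\omega\theta(z;\beta)\del_s m_s(z;\beta,s)\rd z=\int\theta(x;\beta)\del_x\del_s h_s^*\rd x=-\int\theta'(x;\beta)\del_s h_s^*\rd x,
\]
where the last identity is integration by parts and the boundary terms vanish because $\del_s h_s^*$ has compact support in $x$ at fixed $s$ (vanishing outside the liquid region).

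Combining the previous displays gives $F'(\lambda)=-\int\theta'(x;\beta)\del_s h_s^*\rd x+\OO(1/n)$ uniformly for $\lambda\in[0,1]$, so integrating in $\lambda$ from $0$ to $1$ yields \eqref{e:cdef}. The main technical point will be verifying uniformity of the error term in Proposition \ref{p:difmeasure} in the tilt parameter $\lambda$; this reduces to checking that the analyticity domain $\Upsilon$ of Assumption \ref{a:weight} can be chosen independently of $\lambda\in[0,1]$, which is automatic because the tilt only adds the fixed bounded analytic function $\lambda\theta'(\cdot;\beta)$ to $\psi^+$ and hence introduces no new singularities or branch points.
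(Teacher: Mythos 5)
Your proposal is correct and follows essentially the same route as the paper's own proof: interpolate with a scalar parameter, recognize the tilt as a perturbation of $\psi^+$ only (leaving $\varphi^+$ and hence $\cB$ unchanged at leading order), invoke the loop equation of Proposition \ref{p:difmeasure} under the tilted measure uniformly in the parameter, and identify the resulting contour integral with $-\int\theta'\,\del_s h_s^*$ via \eqref{e:B1}. The paper cites Proposition \ref{p:second} where you cite Proposition \ref{p:difmeasure}, but only the leading-order term is used there, so the two are interchangeable; your explicit observation that the tilt domain $\Upsilon$ is $\lambda$-independent is a small but worthwhile clarification of a point the paper leaves implicit.
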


\begin{proof}
We can rewrite the exponent in the righthand side of \eqref{e:cdef} as
\begin{align*}
n\sum_{i }e_i \int \theta'(x;\beta)\Lambda(x-x_i)\rd x= \sum_{i }e_i (\theta(x_i+1/n;\beta)-\theta(x_i;\beta))+\OO\left(\frac{1}{n}\right).
\end{align*}
We use the interpolation to compute the expectation 
\begin{align}\label{e:taue0}
\ln\bE_\bme\left[e^{\sum_{i }e_i (\theta(x_i+1/n;\beta)-\theta(x_i;\beta)) }\right]
=\int_0^1 \del_\tau \ln\bE_\bme\left[ e^{\tau\sum_{i }e_i (\theta(x_i+1/n;\beta)-\theta(x_i;\beta)) }\right]\rd \tau.
\end{align}
The righthand side can be written as the expectation of $\sum_{i }e_i (\theta(x_i+1/n;\beta)-\theta(x_i;\beta))$ under the deformed measure 
\begin{align}\label{e:taue}
\del_\tau \ln\bE_\bme\left[e^{\tau\sum_{i }e_i (\theta(x_i+1/n;\beta)-\theta(x_i;\beta))}\right]
=\bE_\bme^\tau \left[\sum_i e_i (\theta(x_i+1/n;\beta)-\theta(x_i;\beta))\right],
\end{align}
where
\begin{align*}
\bE_\bme^\tau[\cdot]
=\frac{\bE_\bme\left[e^{\tau \sum_i e_i (\theta(x_i+1/n;\beta)-\theta(x_i;\beta)) }(\cdot)\right]}
{\bE_\bme\left[e^{\tau \sum_i e_i (\theta(x_i+1/n;\beta)-\theta(x_i;\beta)) }\right]}.
\end{align*}
The deformed measure $\bE_\bme^\tau$ corresponds to change the weight $\phi^+(z;\beta,s)$ by a factor $e^{(\theta(z+1/n;\beta)-\theta(z;\beta))}$.
Using Proposition \ref{p:second}, we have the $1/n$-expansion of \eqref{e:taue}, which gives us an $1/n$-expansion of  \eqref{e:taue0}
\begin{align*}\begin{split}
&\phantom{{}={}}\ln\bE_\bme\left[e^{\sum_i e_i (\theta(x_i+1/n;\beta)-\theta(x_i;\beta))}\right]\\
&=\int_0^1\bE_\bme^\tau \left[\sum_i e_i (\theta(x_i+1/n;\beta)-\theta(x_i;\beta)) \right]\rd \tau
=\int_0^1\bE_\bme^\tau \left[\frac{1}{2\pi\ri}\int\oint\frac{\theta(z;\beta)}{z-x}\rd z\rd\mu_n \right]\rd \tau\\
&=-\frac{1}{2\pi \ri}\oint \del_z\ln\cB_s \theta(z;\beta)\rd z+\OO\left(\frac{1}{n}\right)
=-\int \theta'(x;\beta) \del_s h_s^*\rd x +\OO\left(\frac{1}{n}\right),
\end{split}\end{align*}
and the error term $\OO(1/n)$ depends smoothly on $\beta$. This finishes the proof of Proposition \ref{p:deform}.
\end{proof}

\begin{proof}[Proof of Proposition \ref{p:Et1bb}]
To compute the numerator In \eqref{e:Zt1}, we can use Proposition \ref{p:difmeasure} with $\theta'(x;\beta)=F_{(s+1/n)-}(h_{s+1/n}^*)(x)/n$, 
then it follows that $Z_t^{(1)}(\beta)=\OO(1/n)$, and it depends on $\beta$ smoothly. Especially, we have $\del_\beta Z_t^{(1)}(\beta)=\OO(1/n)$.
The same as in Proposition \ref{p:FK}, we can construct a Markov process $\{\bmx_t\}_{t\in [s,T]\cap \bZ_n}$ with generator $\cL_t^{(1)}$ given by
\begin{align}\label{e:gener1}
\cL^{(1)}_{t} f(\bmx)=\sum_{\bme \in \{0,1\}^{m(t)}}\frac{a^{(1)}_t(\bme;\bmx)}{Z^{(1)}_t(\bmx)}(f(\bmx+\bme/n)-f(\bmx)),
 \quad \bmx=(x_1, x_2, \cdots, x_{m(t)})\in \fM_{t}(\fP),
\end{align}
where $a^{(1)}_t(\bme;\bmx), Z^{(1)}_t(\bmx)$ are as defined in \eqref{e:defas}, \eqref{e:Zt1}. The error term $E_s^{(1)}(\bmx)$ can be solved using Feynman-Kac formula
\begin{align}\label{e:esolve}
E_s^{(1)}(\bmx)
=\bE\left[\left.\prod_{t\in[s,T)\cap \bZ_n}Z^{(1)}_t(\bmx_t)\right|\bmx_s=\bmx\right].
\end{align}
By repeating the above analysis for $E_s(\bmx)$, i.e. plugging the characteristic flow $h_t^*=h_t^*(x;\beta,s)$ into \eqref{e:esolve}, we will have that
\begin{align}\label{e:esovle2}
E_s^{(1)}(\bmx)
\approx\prod_{t\in[s,T)\cap \bZ_n}Z^{(1)}_t(h^*_t).
\end{align}
We do not need explicit formula for $E_s^{(1)}(\bmx)$. Since $\del_\beta Z^{(1)}_t(\beta)=\OO(1/n)$, 
we will have that $\del_\beta E_s^{(1)}(\beta)=\OO(1)$, and especially,
\begin{align}\label{e:Et1bb}
\frac{E_t^{(1)}(\bmx+\bme/n)}{E_t^{(1)}(\bmx)}=e^{\OO(1/n)}.
\end{align}
\end{proof}

\subsection{Gaussian Fluctuations for Random Lozenge Tilings}
With the estimates for the error terms $E_s^{(1)}(\bmx)$ as in Proposition \ref{p:Et1bb}, in this section, we can check that the nonintersecting Bernoulli random walk \eqref{e:randomwalk}, which has the same law as random lozenge tiling,  is in the form of \eqref{e:defLtnew}. Then it follows from Corollary \ref{c:CLT},  the nonintersecting Bernoulli random walk \eqref{e:randomwalk} has Gaussian fluctuations.
\begin{proposition}\label{p:LozengeCLT}
Let $\{\bmx_t\}_{t\in[s,T]\cap\bZ_n}$ be the nonintersecting Bernoulli random walk \eqref{e:randomwalk} $\{\bmx_t\}_{t\in[s,T]\cap\bZ_n}$ starting at $\bmx_s=\bmx$, which corresponds to uniform random lozenge tilings. We denote  its empirical particle density and Stieltjes transform as
\begin{align*}
\rho_t(x;\bmx_t)=\sum_{i=1}^{m(t)}x_i(t),\quad \tilde m_t(z;\beta,s)=\int\frac{\rho_t(x;\bmx_t)}{z-x}\rd x,
\end{align*}
where $\beta=\beta(x;\bmx)$.

The random processes $\{n(\tilde m_t(z_t(Z);\beta,s)-\bE[\tilde m_t(z_t(Z);\beta,s)])\}_{s\leq t\leq t(Z), Z\in \gamma_s^{\beta,s}}$ converge weakly towards a centered Gaussian process $\{{\sf G}_t^\circ(Z)\}_{s\leq t\leq t(Z), Z\in \gamma_s^{\beta,s}}$ with zero initial data, which is the unique solution of the system of stochastic differential equations
\begin{align}\begin{split}\label{e:limitprocess2}
\rd\sfG_t^\circ(Z)&= \rd\sfW_t(Z)-\sfG_t^\circ(Z)\frac{\del_t \rd z_t}{\rd z_t}+\frac{1}{2\pi\ri}\oint_{\omega_+} \sfK_t(z_t(W),z_t;\beta,s)\sfG_t^\circ(W)\rd z_t(W)
\end{split}\end{align}
where $z_t=z_t(Z)$, the contour $\{z_t(W): W\in \omega\subset \gamma_t^{\beta,s,+}\}$  encloses $z^{-1}(\{x:(x,t+)\in \fP\})$ and $z_t$,
$\sfK_t(w,z_t;\beta,s)$ is given by
\begin{align*}
\sfK_t(w,z_t;\beta,s)=\frac{f_t(w;\beta,s)}{f_t(w;\beta,s)+1}\frac{1}{(w-z_t)^2}-\frac{1}{2\pi\ri}\oint_{\omega_-}\frac{f_t(u;\beta,s)}{f_t(u;\beta,s)+1}\frac{\int^w K(u,x;h_t^*,t)\rd x}{(u-z_t)^2}\rd u, 
\end{align*}
 and $\{\sfW_t(Z)\}_{s\leq t\leq t(Z), Z\in \gamma_s^{\beta,s}}$ is a centered Gaussian process with $\overline{W_t(Z)}=W_t(\bar Z)$, and covariance structure given by
\begin{align*}\begin{split}
&\langle \sfW(Z), \sfW(Z')\rangle_t=\int_s^t \frac{1}{2n\pi \ri}\oint_{\cin} \frac{f_r(w;\beta,s)}{f_r(w;\beta,s)+1}\frac{1}{(z_t(Z)-w)^2}\frac{1}{(z_t(Z')-w)^2}\rd w\rd r,
\end{split}\end{align*}
$\omega_-$ encloses $z^{-1}(\{x:(x,t+)\in \fP\})$ but not $z_t(Z), z_t(Z')$.
\end{proposition}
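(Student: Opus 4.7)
The plan is to show that the transition law \eqref{e:randomwalk} fits into the family of nonintersecting Bernoulli walks analyzed in Corollary \ref{c:CLT}, so that the Gaussian fluctuation result transfers directly. The crucial observation is that the limiting SDE \eqref{e:limitprocess2} and covariance \eqref{e:covW1} in Corollary \ref{c:CLT} depend only on the solution $f_t(\cdot;\beta,s)$ of the complex Burgers equation and \emph{not} on the sub-leading correction $\psi_t^+$ appearing in \eqref{e:defvphit}. Hence it suffices to produce any valid choice of $\psi_t^+$, analytic in a neighborhood of $z^{-1}(\{x:(x,t+)\in \fP\})$, which brings the uniform-tiling walk into the form \eqref{e:defLtnew} up to an $\bme$-independent multiplicative factor.

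First, I would substitute Ansatz \ref{a:defE} into \eqref{e:randomwalk} to obtain
\begin{align*}
\bP(\bmx(t+1/n) = \bmx + \bme/n \mid \bmx(t) = \bmx) = \frac{a_t(\bme;\bmx)}{Z_t(\bmx)}\cdot \frac{E_{(t+1/n)-}(\bmx+\bme/n)}{E_t(\bmx)},
\end{align*}
where $a_t(\bme;\bmx)$ is given by \eqref{e:newat0} and already has the product structure \eqref{e:defLtnew} with an explicit $\psi_t^+$ (described below \eqref{e:defvphit}). I would then insert Ansatz \ref{a:defE1}, writing $E_t = e^{F_t(\beta)}E_t^{(1)}$, and expand $F_{(t+1/n)-}(\bmx+\bme/n) - F_t(\bmx)$ via \eqref{e:Fdidff}. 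The leading contribution is $\int \del_\beta F_{(t+1/n)-}(h_{t+1/n}^*)(\beta(x;\bmx+\bme/n) - \beta(x;\bmx))\rd x$, which, by \eqref{e:defLa}, splits as a sum over $i$ of $e_i$ times an analytic function of $x_i$; this is precisely a modification of $\phi_t^+$ by a factor $e^{\theta_t(x_i;\beta)/n}$, and can be absorbed into a new $\psi_t^+$ satisfying Assumption \ref{a:weight}. The $\bme$-independent pieces combine with $Z_t(\bmx)$ to yield the effective weight $a_t^{(1)}(\bme;\bmx)$ of \eqref{e:defas}.

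Next, Proposition \ref{p:Et1bb} gives $E_{(t+1/n)-}^{(1)}(\bmx+\bme/n)/E_t^{(1)}(\bmx) = e^{O(1/n)}$, and by the same Taylor argument the $\bme$-dependence of this correction factors as $(1/n)\sum_i e_i \eta_t(x_i;\beta) + O(1/n)$ with $\eta_t$ analytic near $z^{-1}(\{x:(x,t+)\in \fP\})$. Combining these two modifications produces a walk of the form \eqref{e:defLtnew} with some new analytic $\tilde\psi_t^+$, up to a multiplicative factor independent of $\bme$ that cancels upon normalization. Then I would apply Corollary \ref{c:CLT} to conclude. A separate bookkeeping step is required at the horizontal-boundary times $t = T_k$, where the number of particles jumps: using the explicit ratio \eqref{e:etratio}--\eqref{e:split} one verifies that this ratio splits as a product over $i$ of analytic factors of order $1 + O(1/n)$, so it can again be absorbed into $\tilde\psi_t^+$ without introducing singularities near $z^{-1}(\{x:(x,t+)\in \fP\})$.

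The main obstacle will be verifying rigorously that the $\bme$-dependent $O(1/n)$ correction produced by Proposition \ref{p:Et1bb} admits the required factorization $\sum_i e_i \eta_t(x_i;\beta)/n + O(1/n)$ with an \emph{analytic} $\eta_t$, uniformly in $t\in[s,T]\cap\bZ_n$ and in $\bmx$. This amounts to propagating the smooth $\beta$-dependence of $E_s^{(1)}$ established in Proposition \ref{p:Et1bb} to a statement about a single functional derivative that extends analytically off the support, which should be feasible because all the objects entering the Feynman--Kac representation \eqref{e:esolve} (namely $Z_t^{(1)}$ and the drift in \eqref{e:gener1}) are built from contour integrals of functions that are analytic on a neighborhood of $z^{-1}(\{x:(x,t+)\in \fP\})$ thanks to Proposition \ref{p:branchloc} and Proposition \ref{p:derg}. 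Once this analyticity is in hand, the reduction to Corollary \ref{c:CLT} is immediate, and Proposition \ref{p:LozengeCLT} follows.
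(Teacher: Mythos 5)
Your proposal is correct and follows essentially the same route as the paper: substitute Ansatz \ref{a:defE} and \ref{a:defE1} into the defining transition probability \eqref{e:randomwalk}, expand $F_{(t+1/n)-}(\bmx+\bme/n)-F_t(\bmx)$ via \eqref{e:Fdidff} to extract a factorized $\sum_i e_i(\cdot)$ term, absorb that factor into an analytic $\psi_t^+$, invoke Proposition \ref{p:Et1bb} to bound the $E^{(1)}$ ratio, and conclude via Corollary \ref{c:CLT}.

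The one place where you and the paper diverge is your proposed ``main obstacle.'' You want to show that the $\bme$-dependent $O(1/n)$ correction coming from $E_{(t+1/n)-}^{(1)}(\bmx+\bme/n)/E_t^{(1)}(\bmx)$ admits an analytic factorization $\frac{1}{n}\sum_i e_i\,\eta_t(x_i;\beta)+O(1/n)$ so that it can be absorbed into $\tilde\psi_t^+$. The paper does not attempt this, and it is not needed: it is enough to note that this ratio is $e^{O(1/n)}$, which makes the transition law of the form \eqref{e:defLtnew} \emph{up to a multiplicative $e^{O(1/n)}$ error}. The machinery of Section \ref{s:loopeq}--\ref{s:Gauss} (see Remark \ref{r:leadingorder} and the structure of the $1/n$-expansion in Propositions \ref{p:firstod}, \ref{p:order2}) shows that an $O(1/n)$ multiplicative perturbation of the weight contributes $O(1/n^2)$ to the Stieltjes transform, which enters only the mean $\bE[\tilde m_t]$ and not the centered fluctuation. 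Since Corollary \ref{c:CLT} is stated for the process centered by its own mean, the $O(1/n)$ error is automatically absorbed and the analytic factorization you worry about is not required. So your proof plan is sound, but the last step you flag as the main difficulty is actually unnecessary once one observes that Corollary \ref{c:CLT} is insensitive to $\psi_t^+$ (and to $O(1/n)$ modifications of it).
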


\begin{proof}

Using Proposition \ref{p:YYdif},  we can rewrite $a_s(\bme;\bmx)$ in the following form
\begin{align}\label{e:newat0}
a_s(\bme;\bmx)&=\frac{V(\bmx+\bme/n)}{V(\bmx)}\prod_{j} \phi^+(x_j;\beta,s)^{e_i}\phi^-(x_j;\beta,s)^{1-e_i}
e^{\frac{1}{n^2}\sum_{i} e_i e_j \kappa(x_i,x_j;\beta,s) +\zeta(\beta,s)+\OO(1/n)},
\end{align}
where $\beta=\beta(x;\bmx)$, and 
\begin{align*}
\phi^+(x;\beta,s)=\prod_{b\in B_{s}(x)} (b-x-1/n)e^{ \chi(x;\beta,s)},\quad 
\phi^-(x;\beta,s)=\prod_{a\in A_{s}(x)}  (x-a).
\end{align*}

More precisely, we recall $a_t(\bme,\bmx)$ from \eqref{e:newat0}. Using the defining relation \eqref{e:randomwalk}, Ansatz \eqref{a:defE}, \eqref{a:defE1} and Proposition \eqref{p:Et1bb}, we have
\begin{align}\begin{split}\label{e:firstP}
&\phantom{{}={}}\bP(\bmx((t+1/n)-)
=\bmx+\bme/n|\bmx(t)=\bmx)\propto a_t(\bme;\bmx)\frac{E_{(t+1/n)-}(\bmx+\bme/n)}{E_t(\bmx)}\\
&\propto a_t(\bme;\bmx)e^{F_{(t+1/n)-}(\bmx+\bme/n)-F_t(\bmx)}
\frac{E^{(1)}_{(t+1/n)-}(\bmx+\bme/n)}{E^{(1)}_t(\bmx)}=a_t(\bme;\bmx)e^{F_{(t+1/n)-}(\bmx+\bme/n)-F_t(\bmx)+\OO(1/n)},
\end{split}\end{align}
where $\propto$ means that there are some constant factor independent of $\bme$. Using \eqref{e:Fdidff}, we have
\begin{align}\begin{split}\label{e:secondP}
e^{F_{(t+1/n)-}(\bmx+\bme/n)-F_t(\bmx)}
&\propto 
e^{\int \del_\beta F_{(s+1/n)-}(h_{s+1/n}^*) (\beta(x;\bmx+\bme/n)-\beta(x;\bmx))\rd x+\OO(1/n)}\\
&=e^{\frac{1}{n}\sum_{i=1}^{m(t)}e_i \left.\del_ x\del_\beta F_{(s+1/n)-}(h_{s+1/n}^*) \right|_{x=x_i}+\OO(1/n)}.
\end{split}\end{align}
By plugging the expression \eqref{e:newat0} of $a_t(\bme;\bmx)$, and \eqref{e:secondP} into \eqref{e:firstP}, we get the following expression of $\bP(\bmx((t+1/n)-)
=\bmx+\bme/n|\bmx(t)=\bmx)$:
\begin{align*}
\frac{V(\bmx+\bme/n)}{V(\bmx)}\prod_{j} \left(\varphi_t^+(z;\beta,t)e^{\frac{1}{n}\psi_t^+(z;\beta,t)}\right)^{e_i}\phi_t^-(x_j;\beta,t)^{1-e_i}
e^{\frac{1}{n^2}\sum_{i} e_i e_j \kappa(x_i,x_j;\beta,t) +\OO(1/n)},
\end{align*}
where $\varphi_t^+(z;\beta,t), \varphi_t^-(x_j;\beta,t), \kappa(x_i,x_j;\beta,t)$ are defined in 
\eqref{e:defvarphi} and \eqref{e:defka}, and
\begin{align*}\begin{split}
\psi_t^+(z;\beta,t)&=-\sum_{b\in B_t(x)}n\ln\left(1-\frac{1}{n(b-z)}\right)+
\frac{1}{2}\del_z \ln  g_t'(z;\beta,t)\\
&+\del_t(\ln g_t'(z;\beta,t))+
\del_z\del_\beta F_{t+1/n}(h_{t+1/n}^*)+\OO(1/n).
\end{split}\end{align*}
We conclude that 
the nonintersecting Bernoulli random walk \eqref{e:randomwalk} is in the form of \eqref{e:defLtnew}, up to some $\OO(1/n)$ error. As we have seen in the proof of Section \ref{s:Gauss}, the $\OO(1/n)$ error contributes to an error of $\OO(1/n^2)$ for the Stieltjes transform of the empirical particle density, which is negligible for our analysis. Proposition \ref{p:LozengeCLT} follows from Corollary \ref{c:CLT}.

\end{proof}

\section{Identification as Gaussian Free Field}\label{s:GFF}
In Sections \ref{s:Gauss} and \ref{s:solve},  we have proven that the height fluctuations of random lozenge tilings of any polygonal domain ${\mathfrak P}$, which have exactly one horizontal upper boundary edge (as in Definition \ref{def:oneend}), are asymptotically Gaussian. In this section, we identify these fluctuations as a Gaussian Free Field on the liquid region, as predicted by  Kenyon and Okounkov \cite{MR2358053}. One way to do it is to directly analyze the limiting stochastic differential equation \eqref{e:limitprocess2} of the height fluctuations, and show it corresponds to a Gaussian Free Field.  This can be done, but will not help our understanding for the appearance of the Gaussian Free Field.
Instead, we will take another  more intuitive approach, based on the following heuristic  for the Kenyon-Okounkov conjecture \cite{MR2358053}
given by Vadim Gorin \cite[Lecture 12]{VG2020}. 
Imagine we have a probability density on the set of height functions, given by
\begin{align}\label{e:density}
\bP(h)\propto \exp\left(n^2 \int \int \sigma(\del_x h, \del_t h)\rd x\rd t\right)
\end{align}
Since we would like to study limiting fluctuations around $h^*$, we can let $h=h^*+\Delta h/n$, where under \eqref{e:density} $\Delta h$ is a random function supported in the liquid region which represents the fluctuations about the limiting height function $h^*$. Plugging this into \eqref{e:density}, we get that the probability density of $\Delta h$ is asymptotically given by
\begin{align}\label{e:Dhlaw}
\bP(\Delta h)\propto\exp\left(\pi \ri \int_{\Omega(\fP)}\frac{\rd \Delta h}{\rd f}\frac{\rd \Delta h}{\rd \bar f}\rd f\rd \bar f+\oo(1)\right),
\end{align}
where $f(x,t)$ is the complex slope on the liquid region $\Omega(\fP)$. The density \eqref{e:Dhlaw} is the Gaussian Free Field on the liquid region $\Omega(\fP)$, with complex structure given by the complex slope $f(x,t)$. Then we can conclude that the law of $\Delta h$ under \eqref{e:density} converges to as Gaussian Free Field on the liquid region $\Omega(\fP)$ with zero boundary condition 
\begin{align*}
\sqrt{\pi}\Delta h(x,t)\rightarrow {\rm GFF}, \quad (x,t)\in \Omega(\fP).
\end{align*} 

In our approach, we identify the lozenge tilings of the domain $\fP$ with nonintersecting Bernoulli walks. It is natural to view the probability density \eqref{e:density} on the set of height functions, as a dynamic $\{h_t\}_{0\leq t\leq T}$ of time slices of height functions.  In this way, the conditional distribution for $h_{t+\Delta t}=\beta+\delta \beta$, conditioned on $h_t=\beta$ is given by
\begin{align}\begin{split}\label{e:Pht}
&\phantom{{}={}}\bP(h_{t+\Delta t}=\beta+\delta \beta|h_t=\beta)\\
&\propto
\exp\left(n^2 \int_{t}^{t+\Delta t}\int \sigma(\del_x h, \delta\beta/\Delta t)\rd x\rd t\right)
\bE\left[\exp\left(n^2 \int_{t+\Delta t}^T \int \sigma(\del_x h, \del_t h)\rd x\rd t\right)\right]\\
&\propto
\exp\left(n^2 \int \sigma(\del_x \beta, \delta \beta/\Delta t)\rd x \Delta t\right)
\exp\left(n^2 W_{t+\Delta t}(\beta+\delta \beta)\right)\\
&\propto
\exp\left(n^2\left( \int \sigma(\del_x \beta, \delta \beta/\Delta t)\rd x \Delta t-\int\ln|f_t(x;\beta,t)| \delta \beta(x)\rd x\right)\right),
\end{split}\end{align}
where the expectation in the second line is over all height functions $\{h_{s}\}_{t+\Delta t\leq s\leq T}$ with $h_{t+\Delta t}=\beta+\delta \beta$, and it concentrates around $W_{t+\Delta t}(\beta+\delta \beta)$ as defined in \eqref{e:varWt} with negligible error term. To get the last line, we used Proposition \ref{p:Ws} that
\begin{align*}\begin{split}
W_{t+\Delta t}(\beta+\delta \beta)
&\approx W_{t}(\beta)+\int \del_\beta W_t(\beta)\delta \beta\rd x+\Delta_t \del_t W_t(\beta)\\
&= -\int \ln|f_t(x;\beta,t)|\delta \beta\rd x+ \text{constant independent of $\delta \beta$}.
\end{split}\end{align*}
To further simplify \eqref{e:Pht}, we need the following lemma on the higher derivatives of $\sigma(\del_x h, \del_t h)$.
\begin{lemma}
The derivatives of $\sigma(\del_x h, \del_t h)$ are given by
\begin{align*}
\frac{\del \sigma(\del_x h, \del_t h)}{\del (\del_t h)}
=\ln|f_t(x;\beta,t)|,\quad
\frac{\del^2 \sigma(\del_x h, \del_t h)}{\del (\del_t h)^2}
=\frac{\pi}{\Im[f_t(x;\beta,t)/(f_t(x;\beta,t)+1)]}.
\end{align*}
\end{lemma}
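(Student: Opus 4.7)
The first identity is essentially already recorded in the paper: immediately after introducing the complex slope and the Lobachevsky function it is observed that $\partial\sigma/\partial(\partial_x h)=\ln|f+1|$ and $\partial\sigma/\partial(\partial_t h)=\ln|f|$. I would restate this as a one-line verification using the chain rule through the densities $p_{\up{scale=0.8}}=\partial_x h+\partial_t h$, $p_{\rig{scale=0.8}}=-\partial_t h$, $p_{\emp{scale=0.8}}=1-\partial_x h$, together with $L'(\theta)=-\ln(2\sin\theta)$ and the triangle law-of-sines identity $|f|=\sin(\pi p_{\rig{scale=0.8}})/\sin(\pi p_{\up{scale=0.8}})$ that follows from the definitions $\arg^*(f)=\pi(p_{\emp{scale=0.8}}-1)$ and $\arg^*(f+1)=-\pi p_{\rig{scale=0.8}}$.

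For the second derivative my plan is to argue complex-analytically rather than by brute trigonometric expansion. Varying $\partial_t h$ with $\partial_x h$ held fixed leaves $p_{\emp{scale=0.8}}=1-\partial_x h$ unchanged, hence $\arg^*(f)=\pi(p_{\emp{scale=0.8}}-1)$ is also unchanged. Therefore $\partial_{\partial_t h}\ln f$ is real, and coincides with $\partial_{\partial_t h}\ln|f|=\partial^2\sigma/\partial(\partial_t h)^2$. Denoting this real scalar by $c$, one has $\partial_{\partial_t h} f=cf$, so
\begin{equation*}
\partial_{\partial_t h}\ln(f+1)=\frac{cf}{f+1}.
\end{equation*}
Taking imaginary parts and using $\Im[\ln(f+1)]=\arg^*(f+1)=-\pi p_{\rig{scale=0.8}}=\pi\,\partial_t h$, whose derivative in $\partial_t h$ is $\pi$, one gets $c\,\Im[f/(f+1)]=\pi$, i.e. $c=\pi/\Im[f/(f+1)]$, which is exactly the desired identity.

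There is no real obstacle here; the only subtlety is a sign/branch bookkeeping issue, namely that $f\in\bC_-$ forces $\Im[f/(f+1)]=\Im f/|f+1|^2<0$, so that the constant $c$ produced by the argument is negative, consistent with concavity of $\sigma$ (which is how this Hessian enters as a diffusion coefficient in the heuristic density \eqref{e:Pht}). I would double-check this sign against the explicit trigonometric computation $\partial_{\partial_t h}\ln|f|=-\pi[\cot(\pi p_{\rig{scale=0.8}})+\cot(\pi p_{\up{scale=0.8}})]=-\pi\sin(\pi p_{\emp{scale=0.8}})/[\sin(\pi p_{\rig{scale=0.8}})\sin(\pi p_{\up{scale=0.8}})]$ and match it with $\pi/\Im[f/(f+1)]$ via the same law-of-sines formulas for $|f|$ and $|f+1|$; agreement of the two answers then simultaneously confirms the lemma and justifies the complex-analytic shortcut that is actually used elsewhere in Section~\ref{s:GFF} to identify the limiting fluctuations with the pullback Gaussian Free Field.
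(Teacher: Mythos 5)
Your argument is correct, and it follows a genuinely different route from the paper. The paper proves the lemma by brute force: it writes down the explicit trigonometric parametrization $f=e^{-\ri\pi\alpha}\sin(-\pi\beta)/\sin(\pi(\alpha+\beta))$, $f+1=e^{\ri\pi\beta}\sin(\pi\alpha)/\sin(\pi(\alpha+\beta))$ with $\alpha=\del_x h$, $\beta=\del_t h$, differentiates $\ln|f|$ via the cotangent difference formula to obtain $\pi\sin(\pi\alpha)/(\sin(\pi\beta)\sin(\pi(\alpha+\beta)))$, and separately reduces $1/\Im[f/(f+1)]=|f+1|^2/\Im f$ to the same trigonometric expression. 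Your proof instead exploits only the two defining argument relations $\arg^*(f)=\pi(p_{\emp{scale=0.8}}-1)$ and $\arg^*(f+1)=-\pi p_{\rig{scale=0.8}}$: since $p_{\emp{scale=0.8}}$ is fixed when one varies $\del_t h$ at fixed $\del_x h$, the derivative of $\ln f$ is the real number $c$ one is after; then differentiating $\ln(f+1)$ along the same variation and extracting imaginary parts via $\arg^*(f+1)=\pi\,\del_t h$ gives $c\,\Im[f/(f+1)]=\pi$ in one line. This is cleaner and more structural: it shows that the Hessian identity is forced by the two arg-relations alone, without ever invoking the law-of-sines expressions for $|f|$ and $|f+1|$, and it makes the appearance of $\Im[f/(f+1)]$ (the imaginary part of the characteristic velocity in the complex Burgers equation \eqref{e:ccff}) look inevitable rather than coincidental. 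The paper's computation has the compensating virtue of being fully explicit and elementary, and is what you'd fall back on to pin down the overall sign; your observation that $f\in\bC_-$ gives $\Im[f/(f+1)]=\Im f/|f+1|^2<0$, hence $c<0$, already settles the concavity sign without it. Either proof is acceptable; yours is the one I would actually use in Section~\ref{s:GFF}, where exactly this identity is needed to recognize the limiting SDE as the Gaussian Free Field in the complex structure determined by $f$.
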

\begin{proof}
Let $\del_x h=\alpha$, and $\del_t h=\beta$. Then 
\begin{align*}
f=e^{-\ri\pi \alpha }\frac{\sin(-\beta \pi)}{\sin((\alpha+\beta)\pi)},
\quad 
f+1=e^{\ri\pi \beta }\frac{\sin(\alpha \pi)}{\sin((\alpha+\beta)\pi)},
\end{align*}
By plugging in
\begin{align*}
\frac{\del^2 \sigma(\alpha,\beta)}{\del \beta^2}
=\frac{\del \ln|f|}{\del \beta}
=\pi\left(\frac{\cos(\beta\pi)}{\sin(\beta \pi)}-\frac{\cos((\alpha+\beta)\pi)}{\sin((\alpha+\beta) \pi)}\right)
=\frac{\pi\sin(\alpha\pi)}{\sin(\beta\pi)\sin((\alpha+\beta)\pi)},
\end{align*}
and 
\begin{align*}
\frac{1}{\Im[f/(f+1)]}
=\frac{|f+1|^2}{\Im[f]}
=\frac{\sin(\alpha\pi)}{\sin(\beta\pi)\sin((\alpha+\beta)\pi)}.
\end{align*}
\end{proof}
Then we can do a Taylor expansion around $\del_t h^*_t(x;\beta,t)$
\begin{align}\begin{split}\label{e:sig}
&\phantom{{}={}}\sigma(\del_x \beta, \delta \beta/\Delta t)
=\sigma(\del_x \beta, \del_t h_t^*(x;\beta,t))
+\ln|f_t(x;\beta,t)|(\delta \beta/\Delta t-\del_t h_t^*(x;\beta,t))\\
&+\frac{1}{2}\frac{\pi}{\Im[f_t(x;\beta,t)/(f_t(x;\beta,t)+1)]}(\delta \beta/\Delta t-\del_t h_t^*(x;\beta,t))^2
+\OO(\delta \beta/\Delta t-\del_t h_t^*(x;\beta,t))^3.
\end{split}\end{align}
By plugging \eqref{e:sig} into \eqref{e:Pht}, we get
\begin{align*}
\bP(h_{t+\Delta t}=\beta+\delta \beta|h_t=\beta)\propto \exp\left(\frac{\pi}{2 \Delta t}\int\frac{n^2(\delta \beta-\Delta t\del_t h_t^*(x;\beta,t))^2}{\Im[f_t(x;\beta,t)/(f_t(x;\beta,t)+1)]}\rd x+\oo(1)\right).
\end{align*}
Therefore, the increase $\delta \beta$, after rescaling by a factor $n$, i.e. $n\delta \beta$ is asymptotically Gaussian. 

We denote the Stieltjes transform of the measure valued process $\{\del_x h_t\}_{0\leq t\leq T}$ as
\begin{align*}
\tilde m_t(z)=\int \frac{\del_x h_t}{z-x}\rd x,
\end{align*}
then its increase is
\begin{align}\label{e:increase}
\tilde m_{t+\Delta t}(z)-
\tilde m_t(z)
=\int \frac{\del_x h_{t+\Delta t}}{z-x}\rd x-\int \frac{\del_x h_t}{z-x}\rd x
=\int \frac{\del_x\delta \beta(x)\rd x}{z-x},
\end{align}
After normalize by a factor $n$, the mean of the increase \eqref{e:increase} is given by
\begin{align*}
\bE\left[\int \frac{n\del_x \delta \beta(x)\rd x}{z-x}\right]
=\Delta t \int \frac{\del_t \del_x h_t^*(x;\beta,t)\rd x}{z-x}
=\Delta t\del_t m_t(z;\beta,t),
\end{align*}
and covariance
\begin{align*}\begin{split}
\cov\left[\int \frac{\del_x\delta \beta(x)\rd x}{z-x}, \int \frac{\del_x\delta \beta(x)\rd x}{z'-x}\right]
&=-\frac{\Delta t}{\pi} \int \frac{\Im[f_t(x;\beta,t)/(f_t(x;\beta,t)+1)]}{(z-x)^2(z'-x)^2}\rd x\\
&=\frac{\Delta t}{2\pi\ri}\oint \frac{f_t(w;\beta,t)}{f_t(w;\beta,t)+1} \frac{1}{(z-w)^2(z'-w)^2}\rd w.
\end{split}\end{align*}

Therefore, we can rewrite the Stieltjes transform $\tilde m_t(z)$ of the measure valued process $\{\del_x h_t\}_{0\leq t\leq T}$ as
\begin{align}\label{e:diffeq9}
\frac{1}{\Delta t} (\tilde m_{t+\Delta t}(z)-
\tilde m_t(z))=\Delta M_t(z)+\del_tm_t(z;\beta,t)+\OO(\Delta t),
\end{align}
where the covariance of the Gaussian noise term $\Delta M_t(z)$ satisfies
\begin{align*}\begin{split}
\frac{1}{\Delta t} \cov\left[\Delta M_t(z),\Delta M_t(z')\right]
&=-\frac{\Delta t}{\pi} \int \frac{\Im[f_t(x;\beta,t)/(f_t(x;\beta,t)+1)]}{(z-x)^2(z'-x)^2}\rd x\\
&=\frac{1}{2\pi\ri}\oint \frac{f_t(w;\beta,t)}{f_t(w;\beta,t)+1} \frac{1}{(z-w)^2(z'-w)^2}\rd w.
\end{split}\end{align*}

By taking $\Delta t=1/n$, the leading order terms in the equation \eqref{e:diffeq9} is exactly the same as equation \eqref{e:leadingorder} for the Stieltjes transform of the nonintersecting Bernoulli random walks studied in Section \ref{s:Gauss}.
The fluctuations of the Stieltjes transform is determined by the Gaussian noise $\Delta M_t$ and the leading order term $\del_t m_t(z;\beta,t)$. The same analysis as in Section \ref{s:Gauss}, we will get that the statement of Corollary \ref{c:CLT} holds for the centered Stieltjes transform $(\tilde m_{t}(z)-
\bE[\tilde m_t(z)])/\Delta t$. As a consequence, the stochastic differential equation \eqref{e:limitprocess} is the Gaussian Free Field \eqref{e:Dhlaw}. Then Theorem \ref{c:GFF} follows from Corollary \ref{c:CLT}.

%
%
%
%


\bibliography{References.bib}
\bibliographystyle{abbrv}

\end{document}